\numberwithin{equation}{section}
	\let\save@mathaccent\mathaccent
	\newcommand*\if@single[3]{%
	  \setbox0\hbox{${\mathaccent"0362{#1}}^H$}%
	  \setbox2\hbox{${\mathaccent"0362{\kern0pt#1}}^H$}%
	  \ifdim\ht0=\ht2 #3\else #2\fi
	  }
	\newcommand*\rel@kern[1]{\kern#1\dimexpr\macc@kerna}
	\newcommand*\widebar[1]{\@ifnextchar^{{\wide@bar{#1}{0}}}{\wide@bar{#1}{1}}}
	\newcommand*\wide@bar[2]{\if@single{#1}{\wide@bar@{#1}{#2}{1}}{\wide@bar@{#1}{#2}{2}}}
	\newcommand*\wide@bar@[3]{%
	  \begingroup
	  \def\mathaccent##1##2{%
	    \let\mathaccent\save@mathaccent
	    \if#32 \let\macc@nucleus\first@char \fi
	    \setbox\z@\hbox{$\macc@style{\macc@nucleus}_{}$}%
	    \setbox\tw@\hbox{$\macc@style{\macc@nucleus}{}_{}$}%
	    \dimen@\wd\tw@
	    \advance\dimen@-\wd\z@
	    \divide\dimen@ 3
	    \@tempdima\wd\tw@
	    \advance\@tempdima-\scriptspace
	    \divide\@tempdima 10
	    \advance\dimen@-\@tempdima
	    \ifdim\dimen@>\z@ \dimen@0pt\fi
	    \rel@kern{0.6}\kern-\dimen@
	    \if#31
	      \overline{\rel@kern{-0.6}\kern\dimen@\macc@nucleus\rel@kern{0.4}\kern\dimen@}%
	      \advance\dimen@0.4\dimexpr\macc@kerna
	      \let\final@kern#2%
	      \ifdim\dimen@<\z@ \let\final@kern1\fi
	      \if\final@kern1 \kern-\dimen@\fi
	    \else
	      \overline{\rel@kern{-0.6}\kern\dimen@#1}%
	    \fi
	  }%
	  \macc@depth\@ne
	  \let\math@bgroup\@empty \let\math@egroup\macc@set@skewchar
	  \mathsurround\z@ \frozen@everymath{\mathgroup\macc@group\relax}%
	  \macc@set@skewchar\relax
	  \let\mathaccentV\macc@nested@a
	  \if#31
	    \macc@nested@a\relax111{#1}%
	  \else
	    \def\gobble@till@marker##1\endmarker{}%
	    \futurelet\first@char\gobble@till@marker#1\endmarker
	    \ifcat\noexpand\first@char A\else
	      \def\first@char{}%
	    \fi
	    \macc@nested@a\relax111{\first@char}%
	  \fi
	  \endgroup
	}
	\def\@seccntformat#1{\@ifundefined{#1@cntformat}%
	   {\csname the#1\endcsname\quad}  
	   {\csname #1@cntformat\endcsname}
	}
	\let\oldappendix\appendix 
	\renewcommand\appendix{%
	    \oldappendix
	    \newcommand{\section@cntformat}{\appendixname$:\;$~\thesection}
	}
\newcommand{\eps}{\varepsilon}
\newcommand{\mb}[1]{\mathbf{#1}}
\newcommand*{\ccdot}{\kern-.12em\cdot\kern-.12em}
\newcommand{\supp}{\mathrm{supp}}
\newcommand{\warrow}{\overset{w}{\longrightarrow}}
\newcommand{\varrow}{\overset{v}{\longrightarrow}}
\newcommand{\conv}[1]{
	\mskip-1.8\thinmuskip\smash{\begin{array}[t]{c} \mathlarger{\ast} \\[-8pt] \mathsmaller{#1} \end{array}}\mskip-1.2\thinmuskip
	}
\newcommand{\convdiam}[1]{
	\mskip-1.8\thinmuskip\smash{\begin{array}[t]{c} \diamond \\[-8pt] \mathsmaller{#1} \end{array}}\mskip-1.2\thinmuskip
	}
\newcommand{\convstar}[1]{
	\mskip-2.2\thinmuskip\smash{\begin{array}[t]{c} \bm{\star} \\[-8pt] \mathsmaller{#1} \end{array}}\mskip-1.7\thinmuskip
	}
\newcommand\restrict[1]{\raisebox{-.5ex}{$|$}_{#1}}
\newtheoremstyle{slplain}
  {0.4cm}
  {0.4cm}
  {\upshape}
  {}
  {\bfseries}
  {.}
  { }
  {}
\newtheoremstyle{itplain}
    {0.4cm}
    {0.4cm}
    {\itshape}
    {}
    {\bfseries}
    {.}
    { }
    {}
\declaretheorem[style=slplain,numberwithin=section]{definition}
\declaretheorem[style=slplain,sibling=definition]{example}
\declaretheorem[style=slplain,sibling=definition]{remark}
\declaretheorem[style=itplain,sibling=definition]{theorem}
\declaretheorem[style=itplain,sibling=definition]{proposition}
\declaretheorem[style=itplain,sibling=definition]{lemma}
\declaretheorem[style=itplain,sibling=definition]{corollary}
\renewenvironment{abstract}{%
\noindent\hfill\begin{minipage}{0.92\textwidth}
\rule{\textwidth}{1pt}}
{\par\noindent\rule{\textwidth}{1pt}\end{minipage}\hfill}
\let\OLDthebibliography\thebibliography
\renewcommand\thebibliography[1]{
  \OLDthebibliography{#1}
  \setlength{\parskip}{0pt}
  \setlength{\itemsep}{3pt plus 0.3ex}
}
\title{\bfseries Product formulas and convolutions for two-dimensional Laplace-Beltrami operators: beyond the trivial case\\[0.3cm]}
\author{Rúben Sousa
\thanks{Corresponding author. CMUP, Departamento de Matemática, Faculdade de Ciências, Universidade do Porto, Rua do Campo Alegre 687, 4169-007 Porto, Portugal. Email: \texttt{rubensousa@fc.up.pt}}
\and
Manuel Guerra \thanks{ISEG – School of Economics and Management, Universidade de Lisboa; REM – Research in Economics and Mathematics, CEMAPRE, Rua do Quelhas 6, 1200-781 Lisbon, Portugal. Email: \texttt{mguerra@iseg.ulisboa.pt}}
\and
Semyon Yakubovich \thanks{CMUP, Departamento de Matemática, Faculdade de Ciências, Universidade do Porto, Rua do Campo Alegre 687, 4169-007 Porto, Portugal. Email: \texttt{syakubov@fc.up.pt}}
\\[0.3cm]
}
\date{\today\\}
\begin{document}

\maketitle

\begin{abstract}
	 \small
	 \parbox{\linewidth}{\vspace{-2pt}
\begin{center} \bfseries Abstract \vspace{-8pt} \end{center}
	 
	 \-\ \quad 
	 We introduce the notion of a family of convolution operators associated with a given elliptic partial differential operator. Such a convolution structure is shown to exist for a general class of Laplace-Beltrami operators on two-dimensional manifolds endowed with cone-like metrics. This structure gives rise to a convolution semigroup representation for the Markovian semigroup generated by the Laplace-Beltrami operator.
	 
	 \-\ \quad 
	 In the particular case of the operator $\mathcal{L} = \partial_x^2 + {1 \over 2x} \partial_x + {1 \over x} \partial_\theta^2$ on $\mathbb{R}^+ \times \mathbb{T}$, we deduce the existence of a convolution structure for a two-dimensional integral transform whose kernel and inversion formula can be written in closed form in terms of confluent hypergeometric functions. The results of this paper can be interpreted as a natural extension of the theory of one-dimensional generalized convolutions to the framework of multiparameter eigenvalue problems.
	 \vspace{5pt}
	 
     \-\ \quad \textbf{Keywords:}  Laplace-Beltrami operator, generalized convolution, product formula, eigenfunction expansion, convolution semigroup, multiparameter eigenvalue problem. \vspace{6.5pt}
     }
\end{abstract}

\vspace{8pt}

\begingroup
\let\clearforchapter\relax
\section{Introduction}
\endgroup

The problem of constructing generalized convolutions associated with a given Sturm-Liouville operator $\mathcal{L}$ has been widely studied in the literature. Such generalized convolutions, whose defining property is that they should trivialize the eigenfunction expansion of $\mathcal{L}$ in the same way as the ordinary convolution trivializes the Fourier transform, allow one to develop the basic notions of harmonic analysis in parallel with the standard theory. Among other applications, this construction yields a convolution semigroup representation for the kernel of the heat semigroup $\{e^{-t\mathcal{L}}\}_{t \geq 0}$; in other words, it enables us to interpret the diffusion process generated by $\mathcal{L}$ as a generalized Lévy process. It is known that such convolution structures exist for a wide class of Sturm-Liouville operators on bounded or unbounded intervals \cite{bloomheyer1994,sousaetal2020,sousaetal2019b}.

It is quite natural to wonder if one can also construct generalized convolutions associated with elliptic operators in spaces of dimension $d>1$. Indeed, the heat semigroup $\{e^{-t\Delta}\}_{t \geq 0}$ generated by the Laplacian on $\mathbb{R}^d$ is a convolution semigroup with respect to the ordinary convolution on $\mathbb{R}^d$, and this suggests that it may be possible to devise a similar property for other elliptic operators on subsets of $\mathbb{R}^d$ or on $d$-dimensional Riemannnian manifolds. However, this task turns out to be considerably more difficult than in the one-dimensional setting, as we now explain. 

Let $M$ be a Riemannian manifold and $m$ a positive measure on $M$. Consider a self-adjoint elliptic partial differential operator $\mathcal{L}$ on $L^2(M,m)$ whose eigenfunction expansion (cf.\ \cite{garding1954}, \cite[Theorem XIV.6.6]{dunfordschwartz1963}) is of the form $(\mathcal{F}h)_k(\lambda) = \int_M h(\xi) \, \omega_{k,\lambda}(\xi) \, m(d\xi)$ ($\lambda \in \mathbb{R},$\, $k=0,1,\ldots$). The basic requirement that a generalized convolution $\bm{\star}$ associated with $\mathcal{L}$ should satisfy is the following \cite{sousaetal2019b,trimeche1997,volkovich1988}: $\bm{\star}$ is a bilinear operator on the space of finite complex measures on $M$ such that for any two such measures $\mu, \nu$ we have
\begin{equation} \label{eq:coneconv_intro_convtriv}
\int_M \omega_{k,\lambda}\, d(\mu\bm{\star} \nu) = \Bigl(\int_M \omega_{k,\lambda} \, d\mu\Bigr) \ccdot \Bigl(\int_M \omega_{k,\lambda} \, d\nu \Bigr) \qquad (\lambda \in \mathbb{R},\,  k=0,1,\ldots).
\end{equation}
In particular, the (generalized) eigenfunctions $\omega_{k,\lambda}$ should satisfy the product formula
\begin{equation} \label{eq:coneconv_intro_prodform}
\omega_{k,\lambda}(\xi_1) \, \omega_{k,\lambda}(\xi_2) = \int_M \omega_{k,\lambda}\, d\pi_{\xi_1,\xi_2}
\end{equation}
where the measures $\pi_{\xi_1,\xi_2} = \delta_{\xi_1} \bm{\star} \delta_{\xi_2}$ do not depend on $\lambda$ and $k$. Usually one also requires that these are positive measures, so that the convolution is positivity-preserving. In certain specific cases where the $\omega_{k,\lambda}$ can be written in terms of classical special functions, a closed-form expression for the product formula measures $\pi_{\xi_1,\xi_2}$ has been determined \cite{berezansky1998,bloomheyer1994,koornwinderschwartz1997,laine1980,nessibitrimeche1997,sousaetal2019a,trimeche1995}; however, the generality of such results is severely limited. In the one-dimensional setting, a general theorem on the existence of such product formulas has been established via a PDE technique \cite{sousaetal2019b,zeuner1992} which consists in studying the properties of the hyperbolic equation $\mathcal{L}_{\xi_1} u = \mathcal{L}_{\xi_2} u$ which is satisfied by the product of the eigenfunctions (here $\mathcal{L}_\xi$ denotes the differential operator acting on the variable $\xi$). But this technique does not extend to the multidimensional case, because the equation $\mathcal{L}_{\xi_1} u = \mathcal{L}_{\xi_2} u$ becomes ultrahyperbolic and the corresponding Cauchy problem becomes ill-posed (see e.g.\ \cite{courant1962}). The challenge is therefore to understand which extra assumptions need to be imposed to make it possible to prove the existence of the product formula \eqref{eq:coneconv_intro_prodform} for the eigenfunctions of general elliptic partial differential operators. (Here the meaning of `general' is that one should include operators for which closed-form expressions for the eigenfunctions are not available.)

The goal of this paper is to construct product formulas and convolutions for elliptic operators on two-dimensional manifolds $M$ which admit \emph{separation of variables} in the sense that the eigenfunctions are of the form $\omega_{k,\lambda}(\xi) = \psi_{k,\lambda}(x)\, \phi_{k,\lambda}(y)$\, ($\xi = (x,y) \in M$).

To outline the ideas behind our construction, we first observe that if $M = M_1 \times M_2$ is a product of Riemannian manifolds endowed with the product metric (so that the Laplace-Beltrami operator on $M_1 \times M_2$ obviously admits separation of variables) and if there exists a convolution for the Laplace-Beltrami operator on both $M_1$ and $M_2$, then we can trivially define a convolution associated with the Laplace-Beltrami operator on $M$ by taking the product of the convolutions on $M_1$ and $M_2$ (see Example \ref{exam:coneconv_exam_conetriv}). Here we focus on manifolds of the form $M = \mathbb{R}_0^+ \times M_2$\, ($\mathbb{R}_0^+ = [0,\infty)$)\, where instead of the product metric structure we consider Riemannian structures defined by the so-called \emph{cone-like metrics}, i.e.\ possibly singular metrics of the form $g = dx^2 + A(x)^2 g_{M_2}$; note that this metric structure is a generalization of the metric cone \cite{cheeger1980}. In the body of the paper we restrict ourselves to settings where $M_2$ is one-dimensional, but this is not an essential restriction. The Laplace-Beltrami operator of $(M,g)$ is 
\[
\Delta = \partial_x^2 + {A'(x) \over A(x)} \partial_x + {1 \over A(x)^2} \Delta_2
\]
where $\Delta_2$ is the Laplace-Beltrami operator of $M_2$. The operator $\Delta$ admits separation of variables: its eigenfunctions $\omega_{k,\lambda}$ can be written as the product of eigenfunctions of $\Delta_2$ and eigenfunctions of Sturm-Liouville operators of the form $\Delta_{1,k} := {d^2 \over dx^2} + {A'(x) \over A(x)} {d \over dx} - {\eta_k \over A(x)^2}$, where $\eta_k \geq 0$ is a separation constant. One of our contributions is to determine conditions on the function $A$ which ensure that a product formula exists for the eigenfunctions of each of the operators $\Delta_{1,k}$. If the eigenfunctions of $\Delta_2$ also admit a product formula, then this gives rise to a product formula for $\omega_{k,\lambda}$ which is analogous to \eqref{eq:coneconv_intro_prodform}, but with the difference that in general the measure of the product formula also depends on the separation parameter $k$.

In fact, this dependence on $k$ shows that the problem of constructing a convolution for $\Delta$ has a positive answer provided that the operator $\bm{\star}$ (for which \eqref{eq:coneconv_intro_convtriv} should hold) is allowed to depend on $k$. This naturally leads to the notion of a \emph{family of convolutions} associated with a given elliptic operator. These are in fact families of hypergroups, because each of its convolutions endows the space $M$ with a hypergroup structure, as defined by Jewett \cite{bloomheyer1994,jewett1975}. As we will see, many properties of the convolution algebras determined by one-dimensional (generalized) convolutions can be extended to the families of convolutions discussed here. In particular, such families provide a natural generalization of the convolution semigroup property for the heat semigroup generated by the Laplace-Beltrami operator (see Corollary \ref{cor:coneconv_rbm_convsemigroup}).

As noted above, our approach is applicable to a general class of elliptic operators whose eigenfunctions are generally not expressible in terms of special functions. In addition, it allows us to recover as particular cases the existence of product formulas and convolutions for certain (eigen)functions of hypergeometric type. Our main example (Example \ref{exam:coneconv_exam_conehalf}) is the elliptic operator $\mathcal{L} = \partial_x^2 + {1 \over 2x} \partial_x + {1 \over x} \partial_\theta^2$ on $(0,\infty) \times \mathbb{T}$, which belongs to a family of Laplace-Beltrami operators on conic surfaces that has been the object of recent studies \cite{boscainprandi2016,boscainneel2019+,prandietal2018}. We show that the eigenfunctions $\omega_{k,\lambda}$ of $\mathcal{L}$ can be written in terms of confluent hypergeometric (or Whittaker) functions, and that the eigenfunction expansion $(\mathcal{F}h)_k(\lambda) = \int_{(0,\infty) \times \mathbb{T}} h(x,\theta) \, \omega_{k,\lambda}(x,\theta) \, \sqrt{x}\, dx d\theta$ admits a closed-form inversion formula. The proof of this inversion formula relies on an apparently little known result on the spectral measure of the generator of a drifted Bessel process. Using our general construction, we obtain the existence of positivity-preserving convolutions $\convstar{k}$ associated with the two-dimensional integral transform $\mathcal{F}$ via the linearisation property $(\mathcal{F}(h \convstar{k}g))_k = (\mathcal{F}h)_k \ccdot (\mathcal{F}g)_k$; these convolutions give rise to a decomposition of the law of the diffusion process generated by $\mathcal{L}$ in terms of the laws of drifted Bessel processes.

Our assumption that the elliptic operator $\mathcal{L}$ admits separation of variables is equivalent to requiring that the eigenfunction equation $-\mathcal{L}u = \lambda u$ can be reduced to a multiparameter Sturm-Liouville eigenvalue problem (see Remark \ref{rmk:coneconv_prodform_multiparam}). The main contribution of this paper can therefore be restated as follows: \emph{for a general class of multiparameter Sturm-Liouville problems, it is possible to construct associated generalized convolution structures in which the basic notions of harmonic analysis can be developed in analogy with the classical theory.} We hope that the results of this study are a first step towards a more general theory of convolutions associated with multiparameter eigenvalue problems.

The structure of the paper is the following. Section \ref{sec:eigenexp} gives background on the eigenfunction expansion of Laplace-Beltrami operators on $M = \mathbb{R}_0^+ \times \mathbb{T}$ endowed with cone-like metrics and on the spectral representation of the heat kernel. The product formula for eigenfunctions of the Laplace-Beltrami operator is established in Section \ref{sec:prodformconv}, where we also define the associated family of convolutions and describe the main continuity and mapping properties of the convolution structure. In Section \ref{sec:infdivsemigr} we introduce the notion of infinitely divisible measures and convolution semigroups and we discuss the convolution semigroup properties of operator semigroups determined by the Laplace-Beltrami operator. Some special cases where the convolution is related with functions of hypergeometric type are presented in Section \ref{sec:examples}. Finally, in Section \ref{sec:ellipticR2} we show that the construction of the previous sections can be developed on manifolds of the form $M = \mathbb{R}_0^+ \times I$, where $I$ is an interval of the real line. The Appendix collects some known results on one-dimensional generalized convolutions which are used throughout the paper.

\medskip

\paragraph{Notation.} In the sequel, $\mathbb{R}^+$ and $\mathbb{R}_0^+$ stand for the positive and nonnegative numbers respectively. We denote by $\mathrm{C}(E)$ the space of continuous complex-valued functions on a given space $E$;\, $\mathrm{C}_\mathrm{b}(E)$, $\mathrm{C}_0(E)$, $\mathrm{C}_\mathrm{c}(E)$ and $\mathrm{C}^k(E)$ denote, respectively, its subspaces of bounded continuous functions, of continuous functions vanishing at infinity, of continuous functions with compact support and of $k$ times continuously differentiable functions. For $-\infty \leq a < b \leq \infty$, $\mathrm{AC_{loc}}(a,b)$ is the space of locally absolutely continuous functions on the interval $(a,b)$. For a given measure $\mu$ on a measurable space $E$, $L^p(E,\mu)$ ($1 \leq p \leq \infty$) is the Lebesgue space of complex-valued $p$-integrable functions with respect to $\mu$. The indicator function of a subset $B \subset E$ is denoted by $\mathds{1}_B(\cdot)$. The set of all probability (respectively, finite positive, finite complex) Borel measures on $E$ is denoted by $\mathcal{P}(E)$ (respectively, $\mathcal{M}_+(E)$, $\mathcal{M}_\mathbb{C}(E)$), and we denote by $\delta_x$ the Dirac measure at a point $x$. For given measures $\mu, \mu_1, \mu_2, \ldots \in \mathcal{M}_\mathbb{C}(E)$, we write $\mu_n \warrow \mu$ (respectively $\mu_n \varrow \mu$) if the measures $\mu_n$ converge weakly (resp.\ vaguely) to $\mu$ as $n \to \infty$, i.e.\ if $\lim_{n \to \infty} \int_E g(\xi) \mu_n(d\xi) = \int_E g(\xi) \mu(d\xi)$ for all $g \in \mathrm{C}_\mathrm{b}(E)$ (resp.\ for all $g \in \mathrm{C}_0(E)$).
\vspace{10pt}

\section{The eigenfunction expansion of the Laplace-Beltrami operator} \label{sec:eigenexp}

Throughout the paper we consider the (possibly singular) Riemannian manifold $(M,g)$, where $M = \mathbb{R}_0^+ \times \mathbb{T}$ (with $\mathbb{T} = \mathbb{R} / \mathbb{Z}$) and the ($\mathrm{C}^1$, possibly non-smooth) Riemannian metric is given by
\begin{equation} \label{eq:coneconv_Riemmetric}
g = dx^2 + A(x)^2 d\theta^2 \qquad (0 \leq x < \infty, \; \theta \in \mathbb{T})
\end{equation}
where the function $A$ is such that
\begin{equation} \label{eq:coneconv_Ahyp}
A \in \mathrm{C}(\mathbb{R}_0^+) \cap \mathrm{C}^1(\mathbb{R}^+), \quad A(x) > 0\text{ for } x>0, \quad \int_0^1 {dx \over A(x)} < \infty, \quad {A' \over A} \text{ is nonnegative and decreasing}.
\end{equation}
The Riemannian volume form on $M$ is $d \omega= \sqrt{\mathrm{det}\, g}\, dx d\theta=A(x) dx d\theta$.
Thus, the Riemannian gradient of a function $u: M \longrightarrow \mathbb C$ is 
\[
\nabla u = \left( \partial_x u , \frac 1 {A^2} \partial_\theta u \right),
\]
and the Laplace-Beltrami operator is
\begin{equation} \label{eq:coneconv_laplbeltr}
\Delta = \mathrm{div}\circ \nabla = \partial_x^2 + \frac{A^\prime (x)}{A(x)} \partial_x + \frac 1{A(x)^2} \partial_\theta^2 .
\end{equation}
To introduce the closure of $\Delta$ with reflecting boundary at $x=0$, we proceed in the standard way (see e.g. \cite{schmudgen2012}).
Consider the Sobolev space $H^1(M) \equiv H^1(M,\omega) = \left\{ u \in L^2 (M, \omega) \mid \nabla u \in L^2 (M, \omega)\right\}$, and the sesquilinear form $\mathcal{E} :H^1(M) \times H^1(M) \longrightarrow \mathbb C$, defined as
\begin{equation}
\label{Eq bilinear form}
\mathcal{E}(u,v) =
\left\langle \nabla u, \nabla v \right\rangle_{L^2(M, \omega)} =
\int_M \left( \partial_x u \mskip0.7\thinmuskip \overline{\partial_x v} + \frac 1{A(x)^2} \partial_\theta u \mskip0.7\thinmuskip \overline{\partial_\theta v} \right) d\omega .
\end{equation}
It is clear from \eqref{Eq bilinear form} that $\mathcal{E}$ is symmetric, positive semidefinite and,  since its graph norm
$\left\| u \right\|_{\Gamma (\mathcal{E})}^2 = \left\| u \right\|_{L^2(M,\omega)}^2 + \mathcal{E}(u,u) $ coincides with the norm of $H^1(M)$, it is a closed sesquilinear form.
Let
\[
\mathcal D_N = \bigl\{ u \in H^1(M) \bigm| \exists v \in L^2(M,\omega) \text{ such that } \langle \nabla u, \nabla z \rangle = - \langle v, z \rangle \text{ for all } z \in H^1(M)\bigr\}.
\]
The mapping $u \mapsto \Delta_Nu = v$ is a well defined linear operator in the domain $\mathcal D_N$.
It follows from the above that $ \left( \Delta_N, \mathcal D_N \right)$ is a self-adjoint operator in $L^2(M,\omega)$ (\cite{schmudgen2012}, Theorem 10.7), called the {\it Neumann Laplacian}.
It is an extension of the Laplace-Beltrami operator defined in a domain of smooth functions satisfying the reflective boundary condition at $x=0$
\[
\left( A \mskip0.7\thinmuskip \partial_x u \right)(0, \theta) = 0 \qquad \forall \theta \in \mathbb T.
\]

We use the notations $L^p(M) = L^p(M, \omega)$,\, $L^p(A) = L^p(\mathbb R^+, A(x) dx)$, and consider the Fourier decomposition
\begin{equation}
\label{Eq Fourier decomposition}
L^2(M) = \bigoplus_{k \in \mathbb Z} H_k,
\qquad
H_k = \bigl\{ e^{i2k\pi \theta} v(x) \bigm| v \in L^2(A) \bigr\},
\end{equation}
where $H_k$ are regarded as Hilbert spaces with inner product $\left\langle e^{i2k\pi \theta }u, e^{i2k\pi \theta }v \right\rangle_{H_k} = \left\langle u, v \right\rangle_{L^2(A)}$.
The direct sum is also regarded as a Hilbert space with inner product
$\left\langle \{ u_k \}, \{ v_k \} \right\rangle_{\bigoplus\! H_k} = \smash{\sum\limits_{k \in \mathbb Z} \left\langle u_k, v_k \right\rangle_{H_k}}$, such that for 
$u(x,\theta) =\sum\limits_{k \in \mathbb Z} e^{i2k\pi \theta}u_k(x)$, 
$v(x,\theta)=\sum\limits_{k \in \mathbb Z} e^{i2k\pi \theta}v_k(x)$,
we have
\begin{align*}
\left\langle u, v \right\rangle_{L^2(M)} & =
\sum_{k \in \mathbb Z} \left\langle u_k,v_k \right\rangle_{L^2(A)}
\\
\mathcal{E} (u,v) & = \sum_{k \in \mathbb Z} \mathcal{E}_k (u_k,v_k),
\end{align*}
where $\mathcal{E}_k(u_k,v_k) = \int_{\mathbb R^+} \bigl( u_k^\prime (x) \overline{v_k^\prime (x)} + \frac{(2k\pi)^2}{A(x)^2}u_k(x)\overline{v_k(x)} \mskip0.7\thinmuskip \bigr) A(x) dx$ are sesquilinear forms with domains
\[
\mathcal D( \mathcal{E}_k ) =
\Bigl\{
u \in L^2(A)\cap \mathrm{AC}_{\mathrm{loc}}(\mathbb R^+) \Bigm| \frac{2k\pi}{A}u \in L^2(A), \ u^\prime \in L^2(A) 
\Bigr\}.
\]
Thus, we obtain the decomposition, compatible with \eqref{Eq Fourier decomposition}:
\[
H^1(M) = \bigoplus_{k \in \mathbb Z} \mathcal D(\mathcal{E}_k) .
\]
It can be checked that the forms $\mathcal{E}_k$ are symmetric, positive, and closed.
Therefore, a similar argument allows us to construct self-adjoint realizations of the Sturm-Liouville operators $\Delta_k u (x) = u^{\prime \prime} (x) + \frac{A^\prime (x)}{A(x)} u^\prime(x) - \frac{(2k\pi)^2}{A(x)^2}u(x) $, whose domain is
\[
\mathcal D( \Delta_k) = \bigl\{ 
u \in L^2(A) \bigm| u, u^\prime \in \mathrm{AC}_{\mathrm{loc}}(\mathbb R^+), \ \Delta_k u \in L^2(A), \ (Au^\prime)(0) = 0 \bigr\},
\]
for $k \in \mathbb Z$.
This provides a decomposition of the Neumann Laplacian:
\begin{equation}
\label{Eq Fourier decomposition DN Laplacian}
\mathcal D_N = \bigoplus_{k \in \mathbb Z} \mathcal D(\Delta_k) ,
\qquad
\Delta_N \biggl( \, \sum_{k \in \mathbb Z} e^{i2k\pi \theta} u_k(x) \biggr) =
\sum_{k \in \mathbb Z} e^{i2k\pi \theta} \Delta_k u_k(x) .
\end{equation}

The first step towards the construction of convolution operators associated with $\Delta$ is the following characterization of the separable solutions of the eigenfunction equation $-\Delta u = \lambda u$ with reflecting boundary condition at $x=0$.

\begin{lemma} \label{lem:coneconv_wsol}
For each $(k, \lambda) \in \mathbb{Z} \times \mathbb{C}$, there exists a unique solution $w_{k,\lambda} \in H_{k,\infty} := \bigl\{e^{i2k\pi\theta} v(x) \bigm| v \in \mathrm{C}(\mathbb{R}_0^+)\bigr\}$ of the boundary value problem
\begin{equation} \label{eq:coneconv_wsol_bvp}
-\Delta u = \lambda u, \qquad\;\; u(0,\theta) = e^{i2k\pi\theta}, \qquad\;\; u^{[1]}(0,\theta) = 0
\end{equation}
where $u^{[1]}(x,\theta) = A(x) \, (\partial_x u)(x,\theta)$. Moreover, $\lambda \mapsto w_{k,\lambda}(x,\theta)$ is, for each fixed $(x,\theta) \in M$ and $k \in \mathbb{Z}$, an entire function of exponential type.
\end{lemma}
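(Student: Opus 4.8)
The plan is to reduce the boundary value problem to a one-dimensional singular Sturm--Liouville equation by separation of variables, recast that equation as a Volterra-type integral equation, and solve it by successive approximations, reading off existence, uniqueness and the analytic dependence on $\lambda$ from the iteration scheme.

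Writing $u(x,\theta) = e^{i2k\pi\theta}v(x)$ turns $-\Delta u = \lambda u$ into $-\Delta_k v = \lambda v$ on $(0,\infty)$, that is $(Av')' = K_\lambda v$ with $K_\lambda(s) := \frac{(2k\pi)^2}{A(s)} - \lambda A(s)$, subject to $v(0) = 1$ and $(Av')(0^+) = 0$; the requirement $w_{k,\lambda} \in H_{k,\infty}$ is precisely $v \in \mathrm{C}(\mathbb{R}_0^+)$. Integrating twice and using the boundary data, one checks that $v$ solves this problem if and only if $v = 1 + T_\lambda v$, where
\[
(T_\lambda v)(x) := \int_0^x \frac{1}{A(t)}\int_0^t K_\lambda(s)\,v(s)\,ds\,dt.
\]
The endpoint $x=0$ is singular ($A$ may vanish and $A'/A$ may blow up there), so the crucial structural input is the hypothesis $\int_0^1\frac{dx}{A(x)}<\infty$: setting $\Psi(x) := \int_0^x\frac{ds}{A(s)}$ and $h_\lambda(x) := \int_0^x|K_\lambda(s)|\,ds$, this hypothesis (together with continuity of $A$) makes both functions finite on $[0,R]$ and vanishing as $x\to0^+$, and hence makes $T_\lambda$ map functions bounded on $[0,R]$ to functions continuous on $[0,R]$, for every $R>0$.

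Next I would set $v_\lambda := \sum_{n\ge0}T_\lambda^n 1$. An induction on $n$, using that $h_\lambda$ is nondecreasing with $h_\lambda' = |K_\lambda|$ a.e.\ (so $\int_0^t|K_\lambda(s)|h_\lambda(s)^n\,ds = \frac{1}{n+1}h_\lambda(t)^{n+1}$) and that $\Psi$ is nondecreasing, yields
\[
\bigl|(T_\lambda^n w)(x)\bigr| \;\le\; \frac{\bigl(h_\lambda(x)\,\Psi(x)\bigr)^n}{n!}\,\sup_{[0,x]}|w| \qquad(x\ge0,\ n\ge0).
\]
Hence the series converges absolutely and locally uniformly, $v_\lambda\in\mathrm{C}(\mathbb{R}_0^+)$ solves $v = 1 + T_\lambda v$, and unwinding the integral equation shows that $Av_\lambda'$ is locally absolutely continuous with $(Av_\lambda')(0^+) = 0$ and $(Av_\lambda')' = K_\lambda v_\lambda$, i.e.\ $v_\lambda$ solves the Sturm--Liouville problem; this gives the existence of $w_{k,\lambda}(x,\theta) = e^{i2k\pi\theta}v_\lambda(x)$. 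For uniqueness, if $\tilde v\in\mathrm{C}(\mathbb{R}_0^+)$ corresponds to another solution (so $-\Delta_k\tilde v = \lambda\tilde v$ on $(0,\infty)$, $\tilde v(0) = 1$, $(A\tilde v')(0^+) = 0$), then, because $\tilde v$ is bounded near $0$ and $K_\lambda$ is integrable near $0$, the limit $(A\tilde v')(0^+)$ exists and the vanishing-boundary condition forces $\tilde v$ to satisfy the same integral equation (note $\tilde v'$ is integrable near $0$ since $\int_0^x\frac{h_\lambda(t)}{A(t)}\,dt\le h_\lambda(x)\Psi(x)<\infty$); then $v_\lambda - \tilde v = T_\lambda^n(v_\lambda - \tilde v)$ for all $n$, and the estimate above forces $v_\lambda\equiv\tilde v$.

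Finally, since $K_\lambda$ is affine in $\lambda$, each $T_\lambda^n 1$ is a polynomial in $\lambda$ of degree $\le n$, and the bound $h_\lambda(x)\le(2k\pi)^2\Psi(x)+|\lambda|\int_0^x A(s)\,ds$ plugged into the estimate above gives, for fixed $(x,\theta)$,
\[
\bigl|w_{k,\lambda}(x,\theta)\bigr| \;\le\; \exp\!\Bigl[\Psi(x)\Bigl((2k\pi)^2\Psi(x) + |\lambda|\!\int_0^x\! A(s)\,ds\Bigr)\Bigr];
\]
thus $\lambda\mapsto w_{k,\lambda}(x,\theta)$ is a locally uniform limit of polynomials, hence entire, of exponential type at most $\Psi(x)\int_0^x A(s)\,ds$. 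The only real difficulty in all of this is the bookkeeping of the integrals at the singular endpoint $x=0$; once the hypothesis on $\int_0^1 dx/A(x)$ is used to control them, the scheme is the classical Picard iteration for a regular Sturm--Liouville endpoint.
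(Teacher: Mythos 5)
Your proof is correct and follows the same route as the paper: separation of variables reduces \eqref{eq:coneconv_wsol_bvp} to the one-dimensional boundary value problem \eqref{eq:coneconv_wsol_pf1}, whose solvability the paper then obtains by citing the standard Sturm--Liouville existence/uniqueness result (Lemma \ref{lem:SLappendix_boundaryvaluepb}). The only difference is that you open up that black box and reprove it via Picard iteration on the Volterra equation $v = 1 + T_\lambda v$, with the correct use of $\int_0^1 dx/A(x) < \infty$ to handle the singular endpoint; your factorial estimate and the resulting exponential bound in $|\lambda|$ are sound and yield the entirety and exponential-type claims as stated.
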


\begin{proof}
Clearly, any such solution must be of the form $w_{k,\lambda}(x,\theta) = e^{i2k\pi\theta} v(x)$, where $v$ is a solution of 
\begin{equation} \label{eq:coneconv_wsol_pf1}
-\Delta_k v(x) = \lambda v(x), \qquad v(0) = 1, \qquad (Av')(0) = 0.
\end{equation}
The result therefore follows from a standard existence and uniqueness theorem for solutions of Sturm-Liouville boundary value problems (Lemma \ref{lem:SLappendix_boundaryvaluepb}).
\end{proof}

The unique solution of \eqref{eq:coneconv_wsol_pf1} will be denoted by $v_{k,\lambda}(x)$, so that $w_{k,\lambda}(x,\theta) = e^{i2k\pi\theta} v_{k,\lambda}(x)$. Throughout the paper we will make frequent use of the normalized form of the function $v_{k,\lambda}$ which is described in the following lemma (whose proof is elementary):

\begin{lemma} \label{lem:coneconv_sol_normaliz}
Define $\widetilde{v}_{k,\lambda}(x) := {v_{k,\lambda}(x) \over \zeta_k(x)}$, where $\zeta_k(x) := \cosh\bigl(2k\pi \int_0^x \! {dy \over A(y)}\bigr)$. Then $\widetilde{v}_{k,\lambda}(\bm{\cdot})$ is a solution of
\begin{equation} \label{eq:coneconv_wsol_pf2}
\ell_k(\widetilde{v}) = \lambda \widetilde{v}, \qquad \widetilde{v}(0) = 1, \qquad (B_k\widetilde{v}')(0) = 0
\end{equation}
where
\begin{equation} \label{eq:coneconv_wsol_pf3}
\ell_k(g) := -{1 \over B_k}(B_k g')', \qquad B_k(x) := A(x) \, \zeta_k(x)\rule[-.3\baselineskip]{0pt}{0.85\baselineskip}^2.
\end{equation}
Moreover, we have ${B_k' \over B_k} = \eta + \phi$, where $\eta(x) = {4k\pi \over A(x)} \tanh(2k\pi \int_0^x {dy \over A(y)}) \geq 0$ and the functions $\phi = {A' \over A}$ and $\psi := {1 \over 2} \eta' - {1 \over 4} \eta^2 + {B_k' \over 2B_k} \eta = {(2k\pi)^2 \over A^2}$ are both decreasing and nonnegative.
\end{lemma}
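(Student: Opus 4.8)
The plan is to recognise the rescaling $v \mapsto \widetilde v = v/\zeta_k$ as a Liouville-type (ground-state) transformation, built on the observation that $\zeta_k$ is itself a positive solution of the homogeneous equation $\Delta_k u = 0$. So the first step is to check this directly. Writing $S(x) := \int_0^x \frac{dy}{A(y)}$ — which by the third condition in \eqref{eq:coneconv_Ahyp} is finite, continuous and increasing on $\mathbb{R}_0^+$, with $S(0)=0$ — one has $\zeta_k = \cosh(2k\pi S)$ and $\zeta_k' = \frac{2k\pi}{A}\sinh(2k\pi S)$, and a one-line computation of $\zeta_k''$ gives $\zeta_k'' + \frac{A'}{A}\zeta_k' - \frac{(2k\pi)^2}{A^2}\zeta_k = 0$, i.e.\ $\Delta_k\zeta_k = 0$ on $\mathbb{R}^+$. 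In particular $\zeta_k > 0$ everywhere, so the substitution is legitimate and $\widetilde v_{k,\lambda}\in\mathrm{C}(\mathbb{R}_0^+)$.

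Next I would insert $v = \zeta_k\widetilde v$ into \eqref{eq:coneconv_wsol_pf1}. Using the form $\Delta_k u = \frac{1}{A}(Au')' - \frac{(2k\pi)^2}{A^2}u$ and the elementary product identity
\[
\bigl(A(\zeta_k\widetilde v)'\bigr)' \;=\; (A\zeta_k')'\,\widetilde v \;+\; \tfrac{1}{\zeta_k}\bigl(A\zeta_k^2\,\widetilde v'\bigr)' ,
\]
together with $\Delta_k\zeta_k=0$, one gets $\Delta_k(\zeta_k\widetilde v) = \frac{\zeta_k}{B_k}(B_k\widetilde v')'$ with $B_k = A\zeta_k^2$ as in \eqref{eq:coneconv_wsol_pf3}. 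Hence $-\Delta_k v = \lambda v$ is equivalent, after dividing by $\zeta_k>0$, to $\ell_k(\widetilde v)=\lambda\widetilde v$. For the boundary data, $\zeta_k(0)=1$ gives $\widetilde v(0)=v_{k,\lambda}(0)=1$; and since $B_k\widetilde v' = A\zeta_k^2\,(v/\zeta_k)' = \zeta_k\,(Av') - (A\zeta_k')\,v$, with $(Av')(0)=0$ from \eqref{eq:coneconv_wsol_pf1} and $A\zeta_k' = 2k\pi\sinh(2k\pi S)\to 0$ as $x\to0^+$, both summands vanish in the limit and so $(B_k\widetilde v')(0)=0$. This establishes \eqref{eq:coneconv_wsol_pf2}.

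It remains to prove the ``moreover'' assertions, which are all direct computations. From $\log B_k = \log A + 2\log\zeta_k$ we get $\frac{B_k'}{B_k} = \frac{A'}{A} + 2\frac{\zeta_k'}{\zeta_k} = \phi + \eta$, with $\eta = \frac{4k\pi}{A}\tanh(2k\pi S)$; since $t\mapsto\tanh t$ is odd, $k\tanh(2k\pi S)\ge 0$ whatever the sign of $k$, so $\eta\ge 0$. To verify the identity for $\psi$, write $T := \tanh(2k\pi S)$, so that $\eta = \frac{4k\pi}{A}T$ and $T' = \frac{2k\pi}{A}(1-T^2)$; substituting these into $\frac{1}{2}\eta' - \frac{1}{4}\eta^2 + \frac{B_k'}{2B_k}\eta$ (using $\frac{B_k'}{2B_k}\eta = \frac{1}{2}\phi\eta + \frac{1}{2}\eta^2$), all the terms proportional to $T$ and to $T^2$ cancel, leaving $\frac{4(k\pi)^2}{A^2} = \frac{(2k\pi)^2}{A^2}$. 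Finally, nonnegativity of $\phi$ and $\psi$ is immediate, and their monotonicity follows from \eqref{eq:coneconv_Ahyp}: $\phi = A'/A$ is decreasing by hypothesis, while $\phi\ge 0$ forces $A'\ge 0$, hence $A$ nondecreasing, so $\psi = (2k\pi)^2/A^2$ is decreasing.

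There is no serious obstacle here — the statement is, as the text says, elementary. The only points that need a little care are: (i) that $S$ is well defined and continuous at $x=0$, which is precisely what $\int_0^1 dx/A<\infty$ in \eqref{eq:coneconv_Ahyp} guarantees, so that $\zeta_k$ and $B_k$ are continuous and strictly positive on all of $\mathbb{R}_0^+$; (ii) reading the boundary condition $(B_k\widetilde v')(0)=0$ as a one-sided limit, for which one checks that each of $\zeta_k(Av')$ and $(A\zeta_k')v$ tends to $0$ as $x\to0^+$; and (iii) the bookkeeping in the cancellation yielding $\psi=(2k\pi)^2/A^2$, which is the only computation involving more than two terms.
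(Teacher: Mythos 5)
Your proof is correct and is exactly the elementary verification the paper leaves to the reader: the key observation that $\zeta_k$ solves $\Delta_k\zeta_k=0$, the resulting Liouville/ground-state reduction to $\ell_k$, and the $\tanh$-bookkeeping for $\psi$ all check out. Nothing further is needed.
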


The final assertion of the above lemma implies, in particular, that the assumption \eqref{eq:SLappendix_mainassump} of the Appendix holds for the coefficients $p = r = B_k$ of the Sturm-Liouville operator $\ell_k$.

In what follows, to lighten the notation, points of $M$ are denoted by $\bm{\xi} = (x,\theta)$, $\bm{\xi_1} = (x_1, \theta_1)$, etc.

It is well-known that the classical Weyl-Titchmarsh-Kodaira theory of eigenfunction expansions of Sturm-Liouville operators can be generalized to elliptic partial differential operators on higher-dimensional spaces, see e.g.\ \cite{garding1954}, \cite[Theorem XIV.6.6]{dunfordschwartz1963}. As remarked in \cite[p.\ 1713]{dunfordschwartz1963}, the knowledge about the boundary conditions satisfied by the kernels of the eigenfunction expansion is much smaller in the (general) multidimensional case, when compared to the one-dimensional setting. However, in the special case where separation of variables can be applied to the eigenvalue problem for the elliptic operator and therefore the eigenvalue equation reduces to a system of ordinary differential equations, further information on the eigenfunction expansion can be obtained from the theory of multiparameter eigenvalue problems. This connection will be further discussed in Remark \ref{rmk:coneconv_prodform_multiparam} below.

In particular, the Fourier decomposition \eqref{Eq Fourier decomposition DN Laplacian}, combined with the eigenfunction expansion of the Sturm-Liouville operator $-\Delta_k$, gives rise to an eigenfunction expansion of $(\Delta_N,\mathcal{D}_N)$ in terms of the separable solutions $w_{k,\lambda}$ defined in Lemma \ref{lem:coneconv_wsol}:

\begin{proposition} \label{prop:coneconv_Feigenexp}
There exists a sequence of locally finite positive Borel measures $\bm{\rho_k}$ on $\mathbb{R}_0^+$ such that the map $h \mapsto \bm{\mathcal{F}} h$, where
\begin{equation} \label{eq:coneconv_Feigenexp_dir}
(\bm{\mathcal{F}} h)_k(\lambda) := \int_M h(\bm{\xi}) \, w_{-k,\lambda}(\bm{\xi}) \, \omega(d\bm{\xi}) \qquad \bigl(k \in \mathbb{Z},\, \lambda \geq 0\bigr),
\end{equation}
is an isometric isomorphism $\bm{\mathcal{F}}: L^2(M) \longrightarrow \bigoplus_{k \in \mathbb{Z}} L^2(\mathbb{R}_0^+, \bm{\rho_k})$ whose inverse is given by
\begin{equation} \label{eq:coneconv_Feigenexp_inv}
(\bm{\mathcal{F}}^{-1} \{\varphi_k\})(\bm{\xi}) = \sum_{k \in \mathbb{Z}} \int_{\mathbb{R}_0^+} \varphi_k(\lambda) \, w_{k,\lambda}(\bm{\xi}) \, \bm{\rho_k}(d\lambda).
\end{equation}
The convergence of the integral in \eqref{eq:coneconv_Feigenexp_dir} is understood with respect to the norm of $L^2(\mathbb{R}_0^+,\bm{\rho_k})$, and the convergence of the inner integrals and the series in \eqref{eq:coneconv_Feigenexp_inv} is understood with respect to the norm of $L^2(M)$. Moreover, the operator $\bm{\mathcal{F}}$ is a spectral representation of $(\Delta_N, \mathcal{D}_N)$ in the sense that
\begin{align}
\label{eq:coneconv_Feigenexp_domain} & \mathcal{D}_N = \biggl\{h \in L^2(M) \biggm| \sum_{k \in \mathbb{Z}} \int_{\mathbb{R}_0^+} \lambda^2 \bigl|(\bm{\mathcal{F}} h)_k(\lambda)\bigr|^2 \bm{\rho_k}(d\lambda) < \infty \biggr\}\\[2pt]
\label{eq:coneconv_Feigenexp_spidentity}
& \bigl(\bm{\mathcal{F}} (-\Delta_N h)\bigr)_{\!k} (\lambda) = \lambda \ccdot (\bm{\mathcal{F}} h)_k(\lambda), \qquad\quad h \in \mathcal{D}_N, \; k \in \mathbb{Z}.
\end{align}
\end{proposition}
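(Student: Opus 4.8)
The plan is to reduce the multidimensional spectral representation to a direct integral (really a direct sum over $k$) of the classical one-dimensional Weyl–Titchmarsh–Kodaira expansions for the Sturm–Liouville operators $-\Delta_k$, exploiting the orthogonal decomposition \eqref{Eq Fourier decomposition} and \eqref{Eq Fourier decomposition DN Laplacian}. First I would invoke the one-dimensional theory applied to each self-adjoint operator $-\Delta_k$ on $L^2(A)$: by Lemma \ref{lem:coneconv_sol_normaliz}, after the substitution $\widetilde{v}_{k,\lambda} = v_{k,\lambda}/\zeta_k$, the operator $\ell_k$ satisfies the standing Appendix assumption \eqref{eq:SLappendix_mainassump}, and the endpoint $x=0$ is a (possibly singular but) limit-circle-type endpoint at which the reflecting boundary condition $(B_k\widetilde v')(0)=0$ selects a distinguished self-adjoint realization. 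Since $\int_0^1 dx/A(x) < \infty$, the weight $B_k$ is integrable near $0$, so the standard Sturm–Liouville spectral theorem yields a locally finite positive Borel measure $\rho_k$ on $\mathbb{R}_0^+$ (the spectrum of a nonnegative operator) and a unitary map $\mathcal{F}_k : L^2(A) \to L^2(\mathbb{R}_0^+,\rho_k)$ given by $(\mathcal{F}_k v)(\lambda) = \int_{\mathbb{R}^+} v(x)\, v_{k,\lambda}(x)\, A(x)\,dx$, with inverse $\int_{\mathbb{R}_0^+}\varphi(\lambda)\,v_{k,\lambda}(x)\,\rho_k(d\lambda)$, diagonalizing $-\Delta_k$ into multiplication by $\lambda$. (One must check the eigenfunctions appearing here are exactly the $v_{k,\lambda}$ of Lemma \ref{lem:coneconv_wsol}, which follows from uniqueness in that lemma together with the fact that $\zeta_k$ is bounded away from $0$ and hence does not affect the measure class, merely rescaling; alternatively, absorb $\zeta_k$ and note $w_{k,\lambda} = e^{i2k\pi\theta}\zeta_k(x)\widetilde v_{k,\lambda}(x)$.)

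Next I would assemble these pieces. Define $\rho_k$ as above and set $(\bm{\mathcal{F}}h)_k(\lambda) := \int_M h(\bm\xi)\,w_{-k,\lambda}(\bm\xi)\,\omega(d\bm\xi)$; writing $h = \sum_k e^{i2k\pi\theta} h_k(x)$ and using $w_{-k,\lambda}(x,\theta)=e^{-i2k\pi\theta}v_{-k,\lambda}(x)$ together with $v_{-k,\lambda}=v_{k,\lambda}$ (the coefficient $(2k\pi)^2$ in $\Delta_k$ is even in $k$), the $\theta$-integral collapses by orthogonality of the characters and gives $(\bm{\mathcal{F}}h)_k(\lambda) = \int_{\mathbb{R}^+} h_k(x)\,v_{k,\lambda}(x)\,A(x)\,dx = (\mathcal{F}_k h_k)(\lambda)$. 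Thus $\bm{\mathcal{F}}$ is, up to the identification $L^2(M)=\bigoplus_k H_k \cong \bigoplus_k L^2(A)$, nothing but the orthogonal sum $\bigoplus_k \mathcal{F}_k$, so it is an isometric isomorphism onto $\bigoplus_k L^2(\mathbb{R}_0^+,\rho_k)$ by Parseval on $\mathbb{T}$ plus unitarity of each $\mathcal{F}_k$; the inversion formula \eqref{eq:coneconv_Feigenexp_inv} is the sum of the one-dimensional inversion formulas, with the stated modes of convergence inherited termwise. The spectral identities \eqref{eq:coneconv_Feigenexp_domain}–\eqref{eq:coneconv_Feigenexp_spidentity} follow from the corresponding one-dimensional statements for each $-\Delta_k$ combined with the decomposition \eqref{Eq Fourier decomposition DN Laplacian}: $h \in \mathcal{D}_N$ iff $h_k \in \mathcal{D}(\Delta_k)$ for all $k$ and $\sum_k \|\Delta_k h_k\|_{L^2(A)}^2 < \infty$, which by Plancherel is exactly the stated summability condition, and on this domain $\bm{\mathcal{F}}(-\Delta_N h)_k = \mathcal{F}_k(-\Delta_k h_k) = \lambda\,(\mathcal{F}_k h_k)(\lambda) = \lambda\,(\bm{\mathcal{F}}h)_k(\lambda)$.

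I expect the main technical obstacle to lie in the careful endpoint analysis at $x=0$ ensuring that the singular Sturm–Liouville problem for $\ell_k$ (equivalently $\Delta_k$) is in the right deficiency situation so that the reflecting boundary condition $(Au')(0)=0$ indeed determines a self-adjoint operator with a bona fide spectral measure $\rho_k$, and in verifying that the distinguished solution $v_{k,\lambda}$ normalized by $v_{k,\lambda}(0)=1$, $(Av_{k,\lambda}')(0)=0$ is the correct eigenfunction to use in the transform kernel (as opposed to a different fundamental solution). This is where the hypotheses in \eqref{eq:coneconv_Ahyp} — continuity of $A$ up to $0$, positivity on $\mathbb{R}^+$, integrability of $1/A$ near $0$, and monotonicity of $A'/A$ — are used, via the Appendix results, to place the problem within the scope of the classical theory; granting those, the remaining steps are the bookkeeping of the Fourier decomposition described above. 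A secondary point requiring care is the local finiteness of $\rho_k$ uniformly enough to make $\bigoplus_k L^2(\mathbb{R}_0^+,\rho_k)$ a well-defined Hilbert space, but this is immediate since each $\rho_k$ is the spectral measure of a single self-adjoint operator restricted to a cyclic-like component and hence locally finite by construction.
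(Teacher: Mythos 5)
Your proposal is correct and follows essentially the same route as the paper: decompose $h$ into Fourier modes in $\theta$, apply the one-dimensional Sturm--Liouville eigenfunction expansion of Proposition \ref{prop:SLappendix_eigenexp} to each $\Delta_k$ acting on $L^2(A)$, identify $(\bm{\mathcal{F}}h)_k = \mathcal{F}_{\Delta_k}\widehat{h}_k$ via orthogonality of the characters (using $v_{-k,\lambda}=v_{k,\lambda}$), and deduce isometry, inversion and the spectral identities from Parseval on $\mathbb{T}$ together with the decomposition \eqref{Eq Fourier decomposition DN Laplacian}. The endpoint analysis you flag as the main obstacle is exactly what the Appendix results (via Lemma \ref{lem:coneconv_sol_normaliz} and assumption \eqref{eq:SLappendix_mainassump}) are invoked to handle, so no new work is needed there.
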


\begin{proof}
Let $h \in L^2(M)$. By Fubini's theorem, $h(x,\bm{\cdot}) \in L^2(\mathbb{T})$ for a.e.\ $x \in \mathbb{R}^+$. For these points $x$ we have
\begin{equation} \label{eq:coneconv_Feigenexp_pf1}
h(x,\theta) = \sum_{k \in \mathbb{Z}} \widehat{h}_k(x) \, e^{i2k\pi\theta}, \qquad \text{where } \widehat{h}_k(x) := \int_0^1 e^{-i2k\pi\vartheta} h(x,\vartheta) d\vartheta,
\end{equation}
the series converging in the norm of $L^2(\mathbb{T})$. It is straightforward that $\widehat{h}_k \in L^2(A)$ for all $k \in \mathbb{Z}$, and therefore the function $\widehat{h}_k$ can be represented in terms of the eigenfunction expansion of the Sturm-Liouville operator $\Delta_k$ (Proposition \ref{prop:SLappendix_eigenexp}): denoting the spectral measure of $\Delta_k$ by $\bm{\rho_k}$, we have
\[
\widehat{h}_k(x) = \! \int_{\mathbb{R}_0^+} \bigl(\mathcal{F}_{\Delta_k} \widehat{h}_k\bigr)(\lambda) \, v_{k,\lambda}(x) \, \bm{\rho_k}(d\lambda), \quad \text{ where } \;\; \bigl(\mathcal{F}_{\Delta_k} \widehat{h}_k\bigr)(\lambda) := \! \int_0^\infty \widehat{h}_k(y) \, v_{k,\lambda}(y) \, A(y) dy
\]
the integrals converging in the norms of $L^2(A)$ and $L^2(\bm{\rho_k}) \equiv L^2(\mathbb{R}_0^+,\bm{\rho_k})$ respectively.

By definition of $\widehat{h}_k$, we have $\bigl(\mathcal{F}_{\Delta_k} \widehat{h}_k\bigr)(\lambda) = \int_M h(\bm{\xi}) \, w_{-k,\lambda}(\bm{\xi}) \, \omega(d\bm{\xi}) \equiv (\bm{\mathcal{F}} h)_k(\lambda)$, with equality in the $L^2(\bm{\rho_k})$-sense. Furthermore, by a dominated convergence argument it is clear that $\widehat{h}_k(x) e^{i2k\pi\theta} = \int_{\mathbb{R}_0^+} \bigl(\mathcal{F}_{\Delta_k} \widehat{h}_k\bigr)(\lambda) \, w_{k,\lambda}(\bm{\xi}) \, \bm{\rho_k}(d\lambda)$ with equality in the $L^2(M)$-sense; therefore,
\[
h(x,\theta) = \sum_{k \in \mathbb{Z}} \int_{\mathbb{R}_0^+} \bigl(\mathcal{F}_{\Delta_k} \widehat{h}_k\bigr)(\lambda) \, w_{k,\lambda}(\bm{\xi}) \, \bm{\rho_k}(d\lambda)
\]
proving the inversion formula \eqref{eq:coneconv_Feigenexp_dir}--\eqref{eq:coneconv_Feigenexp_inv}. Finally, the fact that the integral operator $\bm{\mathcal{F}}$ is isometric follows from the identities 
\[
\| h \|_{L^2(M)}^2 = \sum_{k \in \mathbb{Z}} \bigl\| \widehat{h}_k \bigr\|_{L^2(A)}^2 = \sum_{k \in \mathbb{Z}} \bigl\| \mathcal{F}_{\Delta_k} \widehat{h}_k \bigr\|_{L^2(\bm{\rho_k})}^2 = \bigl\| \{(\bm{\mathcal{F}}h)_k\} \bigr\|_{\bigoplus\mskip-0.8\thinmuskip L^2(\bm{\rho_k})}^2
\]
where the first and second steps follow from the isometric properties of the classical Fourier series and the eigenfunction expansion of $\Delta_k$, respectively.

It only remains to justify the identities \eqref{eq:coneconv_Feigenexp_domain}--\eqref{eq:coneconv_Feigenexp_spidentity}. Using \eqref{Eq Fourier decomposition DN Laplacian} we obtain
\[
\bigl(\bm{\mathcal{F}} (-\Delta_N h)\bigr)_{\!k} (\lambda) = \bigl(\mathcal{F}_{\Delta_k} (\widehat{-\Delta_N h})_k\bigr) (\lambda) = \bigl(\mathcal{F}_{\Delta_k} (-\Delta_k \, \widehat{h}_k)\bigr) (\lambda) = \lambda \ccdot \bigl(\bm{\mathcal{F}} h\bigr)_{\!k} (\lambda), \qquad h \in \mathcal{D}_N
\]
which proves \eqref{eq:coneconv_Feigenexp_spidentity}. Now, let $h \in L^2(M)$ be such that $\sum_{k \in \mathbb{Z}} \int_{\mathbb{R}_0^+} \lambda^2 \bigl|(\bm{\mathcal{F}} h)_k(\lambda)\bigr|^2 \bm{\rho_k}(d\lambda) < \infty$. We know that $\mathcal{F}_{\Delta_k}$ is a spectral representation of $\Delta_k$ (Proposition \ref{prop:SLappendix_eigenexp}), which means in particular that
\[
\mathcal{D}(\Delta_k) = \biggl\{u \in L^2(A) \biggm| \int_{\mathbb{R}_0^+} \lambda^2 |(\mathcal{F}_{\Delta_k} u)(\lambda)|^2 \bm{\rho_k}(d\lambda) < \infty\biggr\}.
\]
Consequently, we have $(\widehat{h}_k)_{k \in \mathbb{Z}} \in \bigoplus_{k \in \mathbb{Z}} \mathcal{D}(\Delta_k)$ and $h = \sum_{k \in \mathbb{Z}} e^{i2k\pi\theta} \widehat{h}_k \in \mathcal{D}_N$. Conversely, if $h \in \mathcal{D}_N$ then by \eqref{eq:coneconv_Feigenexp_spidentity} we have $\{\lambda\ccdot (\bm{\mathcal{F}}h)_k(\lambda)\} \in \bigoplus_{k \in \mathbb{Z}} L^2(\bm{\rho_k})$, and we conclude that \eqref{eq:coneconv_Feigenexp_domain} holds.
\end{proof}

Since $\Delta_N$ is a negative self-adjoint operator, it is the infinitesimal generator of a strongly continuous semigroup in $L^2(M)$, denoted by $\left\{ e^{t \Delta_N} \right\}_{t \geq 0}$.
For any real-valued $u \in H^1(M)$, $|u| \in H^1(M)$ and $\mathcal{E} (|u|,|u|) \leq \mathcal{E}(u,u)$.
Therefore, $e^{t \Delta_N}$ is positivity-preserving for every $t \geq 0$.
Further, $|u| \wedge 1 \in H^1(M) $ and $\mathcal{E} \left( |u| \wedge 1 , |u| \wedge 1 \right) \leq \mathcal{E} \left(|u|,|u| \right)$. Thus, for every $p \in [1,+\infty]$ the subspace $L^2(M) \cap L^p(M)$ is invariant under $e^{t \Delta_N}$, for every $t \geq 0$.
The semigroup $\left\{ e^{t \Delta_N} \right\}_{t \geq 0}$ can be extended into a strongly continuous contraction semigroup in $L^p(M)$ (see e.g.\ \cite[Sections 1.3--1.4]{davies1989}).
In other words, $e^{t \Delta_N}$ is a strongly continuous Markov semigroup in $L^p(M)$ for every $p \in [1,\infty]$.
The analogous statement holds for the semigroup $\left\{ e^{t \Delta_k} \right\}_{t \geq 0}$ in $L^p(A)$, for every $k \in \mathbb Z$.

\begin{proposition} \label{prop:coneconv_heatkern_spectrep}
Assume that the action of $e^{t\Delta_N}$ on $L^2(M)$ is given by a symmetric heat kernel, i.e.\ there exists a measurable function $p: \mathbb{R}^+ \times M \times M \longrightarrow \mathbb{R}_0^+$ such that:
\begin{enumerate}[itemsep=0pt,topsep=4pt]
\item[\textbf{I.}] For all $t,s > 0$ and $\bm{\xi_1}, \bm{\xi_2} \in M$,
\[
p(t,\bm{\xi_1},\bm{\xi_2}) = p(t,\bm{\xi_2},\bm{\xi_1}) \quad \text{ and } \quad p(t+s,\bm{\xi_1},\bm{\xi_2}) = \int_M p(t,\bm{\xi_1},\bm{\xi_3}) \,  p(s,\bm{\xi_3}, \bm{\xi_2}) \, \omega(d\bm{\xi_3});
\]
\item[\textbf{II.}] For $t>0$, $h \in L^2(M)$ and $\omega$-a.e.\ $\bm{\xi_1} \in M$,
\[
(e^{t\Delta_N} h)(\bm{\xi_1}) = \int_M h(\bm{\xi_2}) \, p(t,\bm{\xi_1},\bm{\xi_2}) \, \omega(d\bm{\xi_2}).
\]
\end{enumerate} 
Then, for $t > 0$ and $\omega$-a.e.\ $\bm{\xi_1}, \bm{\xi_2} \in M$, the heat kernel admits the spectral representation
\begin{equation} \label{eq:coneconv_rbm_spectralrep}
p(t,\bm{\xi_1},\bm{\xi_2}) = \sum_{k \in \mathbb{Z}} \int_{\mathbb{R}_0^+} e^{-t\lambda} w_{k,\lambda}(\bm{\xi_1}) \, w_{-k,\lambda}(\bm{\xi_2}) \, \bm{\rho_k}(d\lambda)
\end{equation}
where the integral and the sum are absolutely convergent.
\end{proposition}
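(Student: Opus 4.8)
The plan is to deduce the spectral representation \eqref{eq:coneconv_rbm_spectralrep} directly from the eigenfunction expansion $\bm{\mathcal F}$ of Proposition~\ref{prop:coneconv_Feigenexp}, by identifying, for a.e.\ fixed $\bm{\xi_1}$, the function $\bm{\xi_2}\mapsto p(t,\bm{\xi_1},\bm{\xi_2})$ as the image under $\bm{\mathcal F}^{-1}$ of an explicit sequence in $\bigoplus_{k\in\mathbb Z}L^2(\mathbb{R}_0^+,\bm{\rho_k})$. First I would fix $t>0$ and put $f_{\bm{\xi_1}}:=p(t/2,\bm{\xi_1},\,\cdot\,)$. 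Property~\textbf{I} gives $\|f_{\bm{\xi_1}}\|_{L^2(M)}^2=p(t,\bm{\xi_1},\bm{\xi_1})$, which is finite for $\omega$-a.e.\ $\bm{\xi_1}$ (a standard consequence of the analyticity of the semigroup $\{e^{t\Delta_N}\}_{t\geq0}$ together with elliptic and Sobolev regularity on the two-dimensional manifold away from the $\omega$-null fibre $\{x=0\}$), so $f_{\bm{\xi_1}}\in L^2(M)$ for a.e.\ $\bm{\xi_1}$; combining properties~\textbf{I} and~\textbf{II} (Chapman--Kolmogorov and symmetry of $p$) then yields $p(t,\bm{\xi_1},\,\cdot\,)=e^{(t/2)\Delta_N}f_{\bm{\xi_1}}$ in $L^2(M)$ for such $\bm{\xi_1}$.

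The crucial point is the identity
\[
\bigl(\bm{\mathcal F}f_{\bm{\xi_1}}\bigr)_k(\lambda)=e^{-(t/2)\lambda}\,w_{-k,\lambda}(\bm{\xi_1})\qquad\text{in }L^2(\mathbb{R}_0^+,\bm{\rho_k}),
\]
for a.e.\ $\bm{\xi_1}$ and every $k\in\mathbb Z$. I would prove it by pairing $\bm{\xi_1}\mapsto(\bm{\mathcal F}f_{\bm{\xi_1}})_k$ against an arbitrary $\psi\in L^2(M)$: expanding $\bm{\mathcal F}f_{\bm{\xi_1}}$ via \eqref{eq:coneconv_Feigenexp_dir}, invoking Fubini's theorem and the symmetry of $p$, then property~\textbf{II}, and finally the fact that $\bm{\mathcal F}$ is a spectral representation of $\Delta_N$ (so that $(\bm{\mathcal F}(e^{(t/2)\Delta_N}h))_k(\lambda)=e^{-(t/2)\lambda}(\bm{\mathcal F}h)_k(\lambda)$ by the functional calculus), one arrives at
\[
\int_M\psi(\bm{\xi_1})\,\bigl(\bm{\mathcal F}f_{\bm{\xi_1}}\bigr)_k\,\omega(d\bm{\xi_1})=\bigl(\bm{\mathcal F}(e^{(t/2)\Delta_N}\psi)\bigr)_k(\lambda)=e^{-(t/2)\lambda}\,(\bm{\mathcal F}\psi)_k(\lambda)=\int_M\psi(\bm{\xi_1})\,e^{-(t/2)\lambda}w_{-k,\lambda}(\bm{\xi_1})\,\omega(d\bm{\xi_1}),
\]
all as identities in $L^2(\mathbb{R}_0^+,\bm{\rho_k})$; letting $\psi$ run over a countable dense subset of $L^2(M)$ gives the identity for a.e.\ $\bm{\xi_1}$. (Alternatively one may approximate $\delta_{\bm{\xi_1}}$ by $L^2$-bumps $u_\varepsilon$ and pass to the limit, using $(\bm{\mathcal F}u_\varepsilon)_k(\lambda)\to w_{-k,\lambda}(\bm{\xi_1})$, which holds by the continuity of $w_{-k,\lambda}$ established in Lemma~\ref{lem:coneconv_wsol}.) I expect this to be the main obstacle: one must make the interchange of the transform $\bm{\mathcal F}$ with the spatial integration rigorous — a Bochner-integral argument that requires local integrability of $\bm{\xi_1}\mapsto p(t,\bm{\xi_1},\bm{\xi_1})^{1/2}$ — and keep track of the various $\omega$-null exceptional sets uniformly in $\psi$, $k$ and $\lambda$.

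Granting the identity, I would set $g:=p(t,\bm{\xi_1},\,\cdot\,)=e^{(t/2)\Delta_N}f_{\bm{\xi_1}}$, so that $(\bm{\mathcal F}g)_k(\lambda)=e^{-(t/2)\lambda}(\bm{\mathcal F}f_{\bm{\xi_1}})_k(\lambda)=e^{-t\lambda}w_{-k,\lambda}(\bm{\xi_1})$, and apply the inversion formula \eqref{eq:coneconv_Feigenexp_inv} to $g$; this gives, for a.e.\ $\bm{\xi_2}$,
\[
p(t,\bm{\xi_1},\bm{\xi_2})=\sum_{k\in\mathbb Z}\int_{\mathbb{R}_0^+}e^{-t\lambda}\,w_{-k,\lambda}(\bm{\xi_1})\,w_{k,\lambda}(\bm{\xi_2})\,\bm{\rho_k}(d\lambda),
\]
and relabelling $k\mapsto-k$ — legitimate because $\Delta_{-k}=\Delta_k$, hence $\bm{\rho_{-k}}=\bm{\rho_k}$ — turns this into \eqref{eq:coneconv_rbm_spectralrep}. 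For the absolute convergence I would write $e^{-t\lambda}=e^{-(t/2)\lambda}e^{-(t/2)\lambda}$ and apply the Cauchy--Schwarz inequality, first in the $\bm{\rho_k}$-integral and then in the sum over $k$, to get
\[
\sum_{k\in\mathbb Z}\int_{\mathbb{R}_0^+}e^{-t\lambda}\,|w_{k,\lambda}(\bm{\xi_1})|\,|w_{-k,\lambda}(\bm{\xi_2})|\,\bm{\rho_k}(d\lambda)\le\Bigl(\sum_{k\in\mathbb Z}\int_{\mathbb{R}_0^+}e^{-t\lambda}|w_{k,\lambda}(\bm{\xi_1})|^2\bm{\rho_k}(d\lambda)\Bigr)^{1/2}\Bigl(\sum_{k\in\mathbb Z}\int_{\mathbb{R}_0^+}e^{-t\lambda}|w_{-k,\lambda}(\bm{\xi_2})|^2\bm{\rho_k}(d\lambda)\Bigr)^{1/2},
\]
and, by the identity above together with $\bm{\rho_{-k}}=\bm{\rho_k}$ and the isometry of $\bm{\mathcal F}$ (relabelling $k\mapsto-k$ in the first factor), each factor on the right equals $\|f_{\bm{\xi_i}}\|_{L^2(M)}=p(t,\bm{\xi_i},\bm{\xi_i})^{1/2}$, which is finite for a.e.\ $\bm{\xi_i}$, $i=1,2$. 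Hence the series and integrals in \eqref{eq:coneconv_rbm_spectralrep} converge absolutely for a.e.\ $(\bm{\xi_1},\bm{\xi_2})$, and since the right-hand side agrees $\omega$-a.e.\ with the $L^2(M)$-limit produced by the inversion formula, this proves the claim.
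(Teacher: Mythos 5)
Your argument is correct and follows essentially the same route as the paper's proof: identify $(\bm{\mathcal F}p(t,\bm{\xi_1},\bm{\cdot}))_k(\lambda)$ with $e^{-t\lambda}w_{-k,\lambda}(\bm{\xi_1})$ by testing against arbitrary elements of $L^2(M)$ and using the spectral-representation property \eqref{eq:coneconv_Feigenexp_spidentity}, then recover the kernel via Parseval/inversion and obtain absolute convergence from the Cauchy--Schwarz inequality. The only superfluous step is your appeal to analyticity of the semigroup and elliptic regularity to get finiteness of $p(t,\bm{\xi_1},\bm{\xi_1})$: this holds for \emph{every} $\bm{\xi_1}$ simply because the hypothesis takes $p$ to be $\mathbb{R}_0^+$-valued, which is exactly how the paper uses condition I.
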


\begin{proof}
Fix $t > 0$. It follows from condition I that
\[
\int_M p(t,\bm{\xi_1},\bm{\xi_2})^2 \, \omega(d\bm{\xi_2}) =p(2t,\bm{\xi_1},\bm{\xi_1}) < \infty \qquad (\bm{\xi_1} \in M),
\]
meaning in particular that $p(t,\bm{\xi_1},\bm{\cdot}) \in L^2(M)$ for all $\bm{\xi_1} \in M$. Moreover, by the spectral representation property \eqref{eq:coneconv_Feigenexp_domain}--\eqref{eq:coneconv_Feigenexp_spidentity} we have $[\bm{\mathcal{F}} (e^{t\Delta_N} h)]_k (\lambda) = e^{-t\lambda} (\bm{\mathcal{F}} h)_k(\lambda)$ for all $h \in L^2(M)$, hence
\begin{equation} \label{eq:coneconv_heatkern_spectrep_pf1}
\begin{aligned}
& \sum_{k\in \mathbb{Z}} \int_{\mathbb{R}_0^+} (\bm{\mathcal{F}}h)_k(\lambda)\, [\bm{\mathcal{F}}p(t,\bm{\xi_1},\bm{\cdot})]_k(\lambda) \, \bm{\rho_k}(d\lambda) = (e^{t\Delta_N}h)(\bm{\xi_1}) \\
& \qquad \;\; = \bm{\mathcal{F}}^{-1}\bigl\{e^{-t \kern.08em \bm{\cdot}}(\bm{\mathcal{F}}h)_k(\bm{\cdot})\bigr\}(\bm{\xi_1}) = \sum_{k\in \mathbb{Z}} \int_{\mathbb{R}_0^+} e^{-t\lambda} (\bm{\mathcal{F}}h)_k(\lambda) \, w_{k,\lambda}(\bm{\xi_1}) \, \bm{\rho_k}(d\lambda)
\end{aligned}
\end{equation}
for $\omega$-a.e.\ $\bm{\xi_1} \in M$. Since $h \in L^2(M)$ is arbitrary, from \eqref{eq:coneconv_heatkern_spectrep_pf1} we deduce that $\{e^{-t\lambda} w_{k,\lambda}(\bm{\xi_1})\} = \{[\bm{\mathcal{F}}p(t,\bm{\xi_1},\bm{\cdot})]_k(\lambda)\} \in \bigoplus_{k \in \mathbb{Z}} L^2(\bm{\rho_k})$ for $\omega$-a.e.\ $\bm{\xi_1} \in M$. Therefore
\[
\sum_{k \in \mathbb{Z}} \int_{\mathbb{R}_0^+} e^{-t\lambda} |w_{k,\lambda}(\bm{\xi_1})|^2 \, \bm{\rho_k}(d\lambda) = \sum_{k \in \mathbb{Z}} \bigl\| e^{-t\lambda/2} w_{k,\lambda}(\bm{\xi_1}) \bigr\|_{L^2(\bm{\rho_k})}^2 < \infty
\]
and it follows (by the Cauchy-Schwarz inequality) that the right-hand side of \eqref{eq:coneconv_rbm_spectralrep} is absolutely convergent for $\omega$-a.e.\ $\bm{\xi_1}, \bm{\xi_2} \in M$. Moreover, the isometric property of $\bm{\mathcal{F}}$ yields
\[
p(t,\bm{\xi_1},\bm{\xi_2}) = \bigl\langle p(t/2,\bm{\xi_1},\bm{\cdot}), p(t/2,\bm{\xi_2},\bm{\cdot}) \bigr\rangle_{\!L^2(M)} = \sum_{k \in \mathbb{Z}} \bigl\langle e^{-t\lambda/2} w_{k,\lambda}(\bm{\xi_1}), e^{-t\lambda/2} w_{k,\lambda}(\bm{\xi_2}) \bigr\rangle_{L^2(\bm{\rho_k})}
\]
and therefore the identity \eqref{eq:coneconv_rbm_spectralrep} holds for $\omega$-a.e.\ $\bm{\xi_1}, \bm{\xi_2} \in M$.
\end{proof}

\begin{corollary} \label{cor:coneconv_wsol_eigeneq}
If the assumptions of Proposition \ref{prop:coneconv_heatkern_spectrep} are satisfied, then for $t \geq 0$, $k \in \mathbb{Z}$, $\lambda \in \supp(\bm{\rho_k})$ and $\omega$-a.e.\ $\bm{\xi_1} \in M$ we have
\[
e^{-t\lambda} w_{k,\lambda}(\bm{\xi_1}) = \int_M w_{k,\lambda}(\bm{\xi_2}) \, p(t,\bm{\xi_1},\bm{\xi_2}) \, \omega(d\bm{\xi_2}).
\]
\end{corollary}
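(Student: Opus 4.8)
The plan is to deduce the identity first in an $L^2(\bm{\rho_k})$-sense, almost for free from the computation behind Proposition \ref{prop:coneconv_heatkern_spectrep}, and then to upgrade it to a genuine pointwise identity on $\supp(\bm{\rho_k})$ via continuity in the spectral parameter. I will treat $t>0$; the case $t=0$ is the tautology $w_{k,\lambda}(\bm{\xi_1})=w_{k,\lambda}(\bm{\xi_1})$ once $p(0,\bm{\xi_1},\bm{\cdot})\,\omega$ is read as $\delta_{\bm{\xi_1}}$.

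For the first step I would re-run the Parseval computation \eqref{eq:coneconv_heatkern_spectrep_pf1}, this time reading off the transform of $p(t,\bm{\xi_1},\bm{\cdot})$: since $[\bm{\mathcal{F}}(e^{t\Delta_N}h)]_k(\lambda)=e^{-t\lambda}(\bm{\mathcal{F}}h)_k(\lambda)$ for every $h\in L^2(M)$, comparing the kernel expression $(e^{t\Delta_N}h)(\bm{\xi_1})=\int_M h\,p(t,\bm{\xi_1},\bm{\cdot})\,\omega$ with its spectral expression $\bm{\mathcal{F}}^{-1}\{e^{-t\lambda}(\bm{\mathcal{F}}h)_k(\lambda)\}(\bm{\xi_1})$, and letting $h$ range over $L^2(M)$, forces, for $\omega$-a.e.\ $\bm{\xi_1}$,
\[
\int_M w_{k,\lambda}(\bm{\xi_2})\,p(t,\bm{\xi_1},\bm{\xi_2})\,\omega(d\bm{\xi_2})=e^{-t\lambda}\,w_{k,\lambda}(\bm{\xi_1})\qquad\text{for }\bm{\rho_k}\text{-a.e.\ }\lambda,\ k\in\mathbb{Z}.
\]
Equivalently, writing $w_{k,\lambda}(x,\theta)=e^{i2k\pi\theta}v_{k,\lambda}(x)$, using $v_{-k,\lambda}=v_{k,\lambda}$ (as $\Delta_{-k}=\Delta_k$), and decomposing the heat kernel into its angular Fourier modes $p(t,\bm{\xi_1},\bm{\xi_2})=\sum_{j\in\mathbb{Z}}e^{i2j\pi(\theta_1-\theta_2)}p_j(t,x_1,x_2)$, carrying out the $\theta_2$-integration in $\int_M w_{k,\lambda}\,p\,\omega$ collapses the claim onto the one-dimensional reproducing formula $e^{-t\lambda}v_{k,\lambda}(x_1)=\int_0^\infty v_{k,\lambda}(x_2)\,p_k(t,x_1,x_2)\,A(x_2)\,dx_2$ for the Sturm-Liouville heat kernel $p_k$ of $\Delta_k$; this one-dimensional identity on $\supp(\bm{\rho_k})$ may instead simply be quoted from the Appendix.

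For the second step I would fix $\bm{\xi_1}$ outside the relevant null set and check that both sides of the displayed identity are continuous functions of $\lambda$ on $[0,\infty)\supseteq\supp(\bm{\rho_k})$: the left-hand side is entire in $\lambda$ by Lemma \ref{lem:coneconv_wsol}, while for the right-hand side I would use the a priori bound $|v_{k,\lambda}(x_2)|=|\zeta_k(x_2)\,\widetilde v_{k,\lambda}(x_2)|\le\zeta_k(x_2)$ valid for $\lambda\ge0$ --- which follows from the monotonicity of $x\mapsto|\widetilde v_{k,\lambda}'(x)|^2+\lambda|\widetilde v_{k,\lambda}(x)|^2$, a consequence of $B_k'/B_k\ge0$ (Lemma \ref{lem:coneconv_sol_normaliz}) and the initial conditions in \eqref{eq:coneconv_wsol_pf2} --- so that for $\lambda$ in a compact subset of $[0,\infty)$ the integrand is dominated, uniformly in $\lambda$, by $\zeta_k(x_2)\,p(t,\bm{\xi_1},\bm{\xi_2})$; dominated convergence then gives continuity of the right-hand side, provided $\int_M\zeta_k(x_2)\,p(t,\bm{\xi_1},\bm{\xi_2})\,\omega(d\bm{\xi_2})<\infty$. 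Two continuous functions that agree $\bm{\rho_k}$-a.e.\ must agree at every point of $\supp(\bm{\rho_k})$, so the identity holds pointwise there.

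The main obstacle is precisely this absolute-integrability point, $\int_M\zeta_k(x_2)\,p(t,\bm{\xi_1},\bm{\xi_2})\,\omega(d\bm{\xi_2})<\infty$ --- which is also what is needed to legitimise the Fubini/Parseval manipulations in the first step, since the $w_{k,\lambda}$ are in general unbounded. The clean way to dispatch it is to note that $\zeta_k=v_{k,0}$ solves $\Delta_k\zeta_k=0$ and is nonnegative, hence is an excessive function for the Markov semigroup $\{e^{t\Delta_k}\}_{t\ge0}$, whence $\int_0^\infty\zeta_k(x_2)\,p_k(t,x_1,x_2)\,A(x_2)\,dx_2\le\zeta_k(x_1)<\infty$; integrating over $\theta_2$ gives the two-dimensional bound. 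Alternatively this finiteness, or directly the one-dimensional reproducing formula on $\supp(\bm{\rho_k})$, can be taken from the one-dimensional theory collected in the Appendix, in which case only the angular Fourier bookkeeping of the first step needs to be written out.
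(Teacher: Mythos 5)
Your route is the same as the paper's: decompose $p(t,\bm{\xi_1},\bm{\cdot})$ into its angular Fourier modes $p_{\Delta_j}(t,x_1,x_2)$, let the $\theta_2$-integration kill every mode except $j=k$, reduce to the one-dimensional reproducing identity $\int_0^\infty v_{k,\lambda}(x_2)\,p_{\Delta_k}(t,x_1,x_2)\,A(x_2)dx_2=e^{-t\lambda}v_{k,\lambda}(x_1)$, and upgrade from $\bm{\rho_k}$-a.e.\ $\lambda$ to all of $\supp(\bm{\rho_k})$ by continuity in $\lambda$, using $|\widetilde v_{k,\lambda}|\le 1$ together with the integrability of $x_2\mapsto\zeta_k(x_2)p_{\Delta_k}(t,x_1,x_2)A(x_2)$. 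Your justification of that last integrability via $e^{t\Delta_k}\zeta_k\le\zeta_k$ is correct and is the same fact the paper invokes in the guise $\mathcal{F}_{\ell_k}^{-1}e^{-t\bm{\cdot}}\in L^1(\mathbb{R}^+,B_k(x)dx)$: both amount to the Markovianity of the semigroup generated by $-\ell_k$, which is the $\zeta_k$-transform of $\Delta_k$.

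The one step that fails as written is the claim that ``integrating over $\theta_2$'' converts the bound $\int_0^\infty\zeta_k\,p_{\Delta_k}(t,x_1,\bm{\cdot})\,A\le\zeta_k(x_1)$ into the two-dimensional bound $\int_M\zeta_k(x_2)\,p(t,\bm{\xi_1},\bm{\xi_2})\,\omega(d\bm{\xi_2})<\infty$. The $\theta_2$-marginal of $p(t,\bm{\xi_1},\bm{\cdot})$ is the $j=0$ mode $p_{\Delta_0}(t,x_1,\bm{\cdot})$, not $p_{\Delta_k}$; and since $\Delta_0\zeta_k=(2k\pi)^2\zeta_k/A^2\ge 0$, the function $\zeta_k$ is $\Delta_0$-\emph{sub}harmonic, so the excessiveness inequality points the wrong way (and $p_{\Delta_0}\ge p_{\Delta_k}$ pointwise, again the wrong direction). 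Thus neither the Fubini step nor the dominated-convergence step is legitimised by the two-dimensional inequality you assert. The fix is to order the integrations as the paper does: perform the $\theta_2$-integration first --- harmless, since for fixed $(x_1,x_2)$ the mode sum converges absolutely --- which isolates $p_{\Delta_k}$, and only then dominate by $\zeta_k(x_2)p_{\Delta_k}(t,x_1,x_2)A(x_2)$, uniformly in $\lambda\ge 0$; this single dominating function handles both the interchange and the continuity in $\lambda$. (With this reading the integral in the statement is an iterated one; genuine two-dimensional absolute convergence would require an estimate on $e^{t\Delta_0}\zeta_k$ that neither your argument nor the paper's supplies.)
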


\begin{proof}
Fix $t \geq 0$ and $k \in \mathbb{Z}$. Notice that
\[
p(t,\bm{\xi_1},\bm{\xi_2}) = \sum_{j \in \mathbb{Z}} e^{i2j\pi (\theta_1 - \theta_2)} p_{\Delta_j\!}(t,x_1,x_2),
\]
where $p_{\Delta_j\!}(t,x_1,x_2) = \int_{\mathbb{R}_0^+} e^{-t\lambda\,} v_{j,\lambda}(x_1) \, v_{j,\lambda}(x_2) \bm{\rho_j}(d\lambda)$ is the heat kernel for the semigroup $\{e^{t\Delta_j}\}$ on $L^2(\mathbb{R}^+,A(x)dx)$\, (Proposition \ref{prop:SLappendix_semigr_heatkern}), and the sum converges absolutely. Hence for $\lambda \in \supp(\bm{\rho_k})$ and $\omega$-a.e.\ $\bm{\xi_1} \in M$ we can write
\begin{align*}
& \int_M w_{k,\lambda}(\bm{\xi_2}) \, p(t,\bm{\xi_1},\bm{\xi_2}) \, \omega(d\bm{\xi_2}) \\
& \qquad = \int_M e^{i2k\pi\theta_2} v_{k,\lambda}(x_2) \sum_{j \in \mathbb{Z}} e^{i2j\pi(\theta_1-\theta_2)} p_{\Delta_j\!}(t,x_1,x_2) \, A(x_2) dx_2 d\theta_2 \\
& \qquad = \int_0^\infty v_{k,\lambda}(x_2) \sum_{j \in \mathbb{Z}} e^{i2j\pi\theta_1} \! \int_0^1 e^{i2(k-j)\pi\theta_2} d\theta_2 \, p_{\Delta_j\!}(t,x_1,x_2) A(x_2) dx_2 \\
& \qquad = e^{i2k\pi\theta_1} \int_0^\infty v_{k,\lambda}(x_2) \, p_{\Delta_k\!}(t,x_1,x_2) A(x_2) dx_2 \\
& \qquad = e^{i2k\pi\theta_1} \zeta_k(x_1) \int_0^\infty \widetilde{v}_{k,\lambda}(x_2) \int_{\mathbb{R}_0^+} e^{-t\lambda_0\,} \widetilde{v}_{k,\lambda_0}(x_1) \, \widetilde{v}_{k,\lambda_0}(x_2) \bm{\rho_k}(d\lambda_0) \, B_k(x_2) dx_2 \\
& \qquad = e^{i2k\pi\theta_1} e^{-t\lambda} v_{k,\lambda}(x_1) \\
& \qquad =  e^{-t\lambda} w_{k,\lambda}(\bm{\xi_1}).
\end{align*}
The second to last equality follows from the eigenfunction expansion of the Sturm-Liouville operator $\ell_k$ defined in Lemma \ref{lem:coneconv_sol_normaliz}, considering that the double integral can be recognized as $\mathcal{F}_{\ell_k}[\mathcal{F}_{\ell_k}^{-1} e^{-t\bm{\cdot}\,} \widetilde{v}_{k,\bm{\cdot}}(x_1)](\lambda)$, where $(\mathcal{F}_{\ell_k} g)(\lambda) :=  \int_{\mathbb{R}^+} g(y) \, \widetilde{v}_{k,\lambda}(y) \, B_k(y) dy \equiv (\mathcal{F}_{\Delta_k}(\zeta_k \ccdot g))(\lambda)$. It should also be noted that
\[
e^{-t\lambda} \in L^2(\bm{\rho_k}), \qquad \mathcal{F}_{\ell_k}^{-1} e^{-t\bm{\cdot}} \in L^1(\mathbb{R}^+,\, B_k(x)dx), \qquad \widetilde{v}_{k,\lambda} \in \mathrm{C}_\mathrm{b}(\mathbb{R}_0^+)
\]
(cf.\ Proposition \ref{prop:SLappendix_semigr_heatkern} and Lemma \ref{lem:SLappendix_solprops}(a)), and therefore the second to last equality, which holds initially for $\bm{\rho_k}$-a.e.\ $\lambda \geq 0$, can be extended by continuity to all $\lambda \in \supp(\bm{\rho_k})$.
\end{proof}

\begin{remark} \label{rmk:coneconv_heatkern_existsuffcond}
The two results above depend on the assumption that the heat kernel exists. In the general framework of metric measure spaces, the existence of the heat kernel for the strongly continuous Markovian semigroup $\{e^{t \mb{L}}\}_{t \geq 0}$ determined by a given sesquilinear form is equivalent to the ultracontractivity property $\|e^{t \mb{L}} h\|_{\infty} \leq \gamma(t)\|h\|_{L^1}$, where $\gamma$ is a positive left-continuous function on $\mathbb{R}^+$ (see \cite[Theorem 3.1]{barlowetal2009}). Besides this, there is an extensive body of work on geometric conditions which ensure the existence of the heat kernel. For instance, it is well-known that a heat kernel exists if the generator $\mb{L}$ is the Dirichlet or Neumann Laplacian on a smooth bounded Euclidean domain \cite{davies1989,basshsu1991}. We refer to \cite{grigoryantelcs2012} and references therein for a discussion of this problem in the context of metric measure spaces.

Here we are interested in the case of the Neumann Laplacian on $(M,g)$, $\mb{L} = (\Delta_N,\mathcal{D}_N)$. We will not discuss the general case introduced in \eqref{eq:coneconv_Riemmetric}--\eqref{eq:coneconv_Ahyp}, but we note that the heat kernel exists provided that the metric $g$ is smooth and nondegenerate:

\emph{Let $\mathring{M} = \mathbb{R}^+ \times \mathbb{T}$. If $A$ belongs to $\mathrm{C}^\infty(\mathbb{R}_0^+)$ and $A(0) > 0$, then there exists a heat kernel $p(t,x,y) \in \mathrm{C}^\infty(\mathbb{R}^+ \times \mathring{M} \times \mathring{M})$ satisfying the assumptions of Proposition \ref{prop:coneconv_heatkern_spectrep}.}

Indeed, the stated assumption on the function $A$ ensures that we can regard $(M,g)$ as a submanifold of the complete smooth Riemannian manifold $(\mathbb{R} \times \mathbb{T},\, \widetilde{g})$, where $\widetilde{g} = dx^2 + \widetilde{A}(x)^2 d\theta^2$ and $\widetilde{A} \in \mathrm{C}^\infty(\mathbb{R})$ is a positive extension of the function $A$. Therefore, the above claim is a consequence of the fact that the Laplace-Beltrami operator with Neumann boundary conditions on a domain of a complete Riemannian manifold admits a heat kernel \cite[Theorem 1.1]{choullietal2015}.
\end{remark}

\section{Product formulas and convolutions} \label{sec:prodformconv}

In this section, we establish the product formula for the eigenfunctions $w_{k,\lambda}$, define the associated family of convolutions, and describe the main continuity and mapping properties of the convolution structure.
Following the results in Section \ref{sec:eigenexp},  the contents of this section rely heavily on properties of the Sturm-Liouville operators $\ell_k$, studied in previous works  \cite{sousaetal2020,sousaetal2019b}.
For convenience, the main results used in the current paper are presented in appendix and proofs in this section refer to them when necessary.

We start with the following product formula.

\begin{proposition}[Product formula for $w_{k,\lambda}$] \label{prop:coneconv_prodform}
For each $k \in \mathbb{N}_0$ and $\bm{\xi_1}, \bm{\xi_2} \in M$ there exists a positive measure $\bm{\gamma}_{k,\bm{\xi_1}, \bm{\xi_2}}$ on $M$ such that the product $w_{k,\lambda}(\bm{\xi_1}) \, w_{k,\lambda}(\bm{\xi_2})$ admits the integral representation
\begin{equation} \label{eq:coneconv_prodform}
w_{k,\lambda}(\bm{\xi_1}) \, w_{k,\lambda}(\bm{\xi_2}) = \int_M w_{k,\lambda}(\bm{\xi_3})\, \bm{\gamma}_{k,\bm{\xi_1},\bm{\xi_2}}(d\bm{\xi_3}), \qquad \bm{\xi_1},\bm{\xi_2} \in M, \; \lambda \in \mathbb{C}.
\end{equation}
\end{proposition}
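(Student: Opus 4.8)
The eigenfunctions factor as $w_{k,\lambda}(\bm{\xi}) = e^{i2k\pi\theta} v_{k,\lambda}(x)$, and $v_{k,\lambda} = \zeta_k(x) \widetilde{v}_{k,\lambda}(x)$ where $\widetilde{v}_{k,\lambda}$ solves the normalized Sturm-Liouville problem $\ell_k(\widetilde{v}) = \lambda\widetilde{v}$ with $\ell_k g = -\frac{1}{B_k}(B_k g')'$. So the product formula for $w_{k,\lambda}$ reduces to two pieces: the product formula for the characters $e^{i2k\pi\theta}$ on $\mathbb{T}$ (trivial — just convolution of Dirac masses), and a product formula for $v_{k,\lambda}$, which in turn reduces to a product formula for $\widetilde{v}_{k,\lambda}$ modulo the multiplicative factor $\zeta_k$.

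**The plan.** First I would invoke the one-dimensional theory from the Appendix (presumably a result of the Zeuner/Sousa type) to get, for each fixed $k$, a positive measure $\nu_{k,x_1,x_2}$ on $\mathbb{R}_0^+$ with $\widetilde{v}_{k,\lambda}(x_1)\widetilde{v}_{k,\lambda}(x_2) = \int_{\mathbb{R}_0^+} \widetilde{v}_{k,\lambda}(x_3)\, \nu_{k,x_1,x_2}(dx_3)$ for all $\lambda\in\mathbb{C}$; this is legitimate because Lemma \ref{lem:coneconv_sol_normaliz} verifies that the coefficients $p = r = B_k$ satisfy the hypothesis \eqref{eq:SLappendix_mainassump} required by the Appendix (with the extra positivity/monotonicity of $\phi$ and $\psi$ that makes the measure positive). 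Multiplying through by $\zeta_k(x_1)\zeta_k(x_2)$ converts this into a representation for $v_{k,\lambda}(x_1)v_{k,\lambda}(x_2)$: namely $v_{k,\lambda}(x_1)v_{k,\lambda}(x_2) = \int_{\mathbb{R}_0^+} v_{k,\lambda}(x_3)\, \kappa_{k,x_1,x_2}(dx_3)$ where $\kappa_{k,x_1,x_2}(dx_3) = \frac{\zeta_k(x_1)\zeta_k(x_2)}{\zeta_k(x_3)}\nu_{k,x_1,x_2}(dx_3)$, still a positive measure. Finally I would tensor with the point mass $\delta_{\theta_1+\theta_2}$ on $\mathbb{T}$: setting $\bm{\gamma}_{k,\bm{\xi_1},\bm{\xi_2}} := \kappa_{k,x_1,x_2} \otimes \delta_{\theta_1+\theta_2}$ (viewed as a measure on $M = \mathbb{R}_0^+\times\mathbb{T}$), we get
\[
\int_M w_{k,\lambda}(\bm{\xi_3})\,\bm{\gamma}_{k,\bm{\xi_1},\bm{\xi_2}}(d\bm{\xi_3}) = e^{i2k\pi(\theta_1+\theta_2)}\!\!\int_{\mathbb{R}_0^+}\!\! v_{k,\lambda}(x_3)\,\kappa_{k,x_1,x_2}(dx_3) = e^{i2k\pi\theta_1}v_{k,\lambda}(x_1)\cdot e^{i2k\pi\theta_2}v_{k,\lambda}(x_2),
\]
which is exactly \eqref{eq:coneconv_prodform}.

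**The main obstacle.** The substantive content is entirely in the one-dimensional product formula for $\widetilde{v}_{k,\lambda}$, so the real work is checking that the Sturm-Liouville operator $\ell_k$ falls within the scope of the Appendix's existence theorem. The key point, already flagged right after Lemma \ref{lem:coneconv_sol_normaliz}, is that $B_k' / B_k = \eta + \phi$ with $\phi$ and $\psi = (2k\pi)^2/A^2$ both nonnegative and decreasing — this is precisely the "hyperbolic"/sign condition that makes Zeuner's method yield a \emph{positive} representing measure. I would need to double-check the endpoint behavior at $x=0$ (the integrability $\int_0^1 dx/A(x) < \infty$ from \eqref{eq:coneconv_Ahyp} ensures $0$ is a regular or limit-circle-type endpoint compatible with the reflecting boundary condition, and that $\zeta_k$ is continuous and bounded away from $0$ near $x=0$) and that the normalization in the Appendix matches the one here ($\widetilde{v}(0)=1$, $(B_k\widetilde{v}')(0)=0$). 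A minor additional check: that dividing/multiplying by $\zeta_k$ (which is continuous, $\geq 1$, and potentially unbounded as $x\to\infty$) does not destroy positivity or local finiteness of the measure — positivity is immediate since $\zeta_k > 0$, and local finiteness follows because $\nu_{k,x_1,x_2}$ has compact support (as the one-dimensional product formula measures typically do, being supported in a bounded interval determined by $x_1,x_2$), so the weight $\zeta_k(x_1)\zeta_k(x_2)/\zeta_k(x_3)$ is bounded on that support. Beyond these bookkeeping points, no genuinely new analytic difficulty arises — the $\lambda$-independence of $\bm{\gamma}_{k,\bm{\xi_1},\bm{\xi_2}}$ and the validity for all $\lambda\in\mathbb{C}$ are inherited directly from the one-dimensional statement, which holds as an identity between entire functions of $\lambda$.
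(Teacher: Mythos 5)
Your proposal is correct and follows essentially the same route as the paper: apply the Appendix existence theorem (Theorem \ref{thm:SLappendix_prodform}) to the normalized eigenfunctions $\widetilde{v}_{k,\lambda}$ of $\ell_k$ (justified by Lemma \ref{lem:coneconv_sol_normaliz}, which verifies assumption \eqref{eq:SLappendix_mainassump}), then define $\bm{\gamma}_{k,\bm{\xi_1},\bm{\xi_2}}$ as the tensor product $\pi_{x_1,x_2}^{[k]}\otimes\delta_{\theta_1+\theta_2}$ weighted by $\zeta_k(x_1)\zeta_k(x_2)/\zeta_k(x_3)$, exactly as in \eqref{eq:coneconv_gammaconvmeas_def}. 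The additional checks you flag (compact support of $\pi_{x_1,x_2}^{[k]}$, positivity of the weight) are correct and consistent with the paper.
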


Since $w_{-k,\lambda} = \overline{w_{k,\lambda}}$, this result trivially extends to all $k \in \mathbb{Z}$.

\begin{proof}
Fix $k \in \mathbb{N}_0$. Recall that $w_{k,\lambda}(x,\theta) = e^{i2k\pi\theta} \zeta_k(x) \, \widetilde{v}_{k,\lambda}(x)$, where $\widetilde{v}_{k,\lambda}$ is a solution of \eqref{eq:coneconv_wsol_pf2}. We saw in Lemma \ref{lem:coneconv_sol_normaliz} that the operator $\ell_k$ satisfies assumption \eqref{eq:SLappendix_mainassump}, hence we can apply the existence theorem for Sturm-Liouville type product formulas (Theorem \ref{thm:SLappendix_prodform}) and conclude that there exists a family of measures $\{\pi_{x_1,x_2}^{[k]}\}_{x_1,x_2 \in \mathbb{R}_0^+} \subset \mathcal{P}(\mathbb{R}_0^+)$ with $\supp(\pi_{x_1,x_2}^{[k]}) \subset [|x_1-x_2|,x_1+x_2]$ and such that
\[
\widetilde{v}_{k,\lambda}(x_1) \, \widetilde{v}_{k,\lambda}(x_2) = \int_{\mathbb{R}_0^+} \widetilde{v}_{k,\lambda} \, d\pi_{x_1,x_2}^{[k]} \qquad (x_1, x_2 \in \mathbb{R}_0^+, \; \lambda \in \mathbb{C}).
\]
Consequently, the product formula \eqref{eq:coneconv_prodform} holds for the positive measures $\bm{\gamma}_{k,\bm{\xi_1}, \bm{\xi_2}}$ defined by
\begin{equation} \label{eq:coneconv_gammaconvmeas_def}
\bm{\gamma}_{k,\bm{\xi_1}, \bm{\xi_2}}(d\bm{\xi_3}) =  {\zeta_k(x_1) \zeta_k(x_2) \over \zeta_k(x_3)} \bm{\nu}_{k,\bm{\xi_1}, \bm{\xi_2}}(d\bm{\xi_3})
\end{equation}
where $\bm{\nu}_{k,\bm{\xi_1}, \bm{\xi_2}} := \pi_{x_1,x_2}^{[k]} \otimes \delta_{\theta_1 + \theta_2}$.
\end{proof}

It follows at once from the definition that the measures $\bm{\nu}_{k,\bm{\xi_1}, \bm{\xi_2}}$ are probability measures on $M$. Let us introduce the convolution operators determined by these measures:

\begin{definition} \label{def:coneconv_convk}
Let $k \in \mathbb{Z}$ and $\mu, \nu \in \mathcal{M}_{\mathbb{C}}(M)$. The measure
\[
(\mu \conv{k} \nu)(\bm{\cdot}) = \int_M \int_M \bm{\nu}_{k,\bm{\xi_1}, \bm{\xi_2}}(\bm{\cdot}) \, \mu(d\bm{\xi_1}) \, \nu(d\bm{\xi_2})
\]
is called the \emph{$\Delta_k$-convolution} of the measures $\mu$ and $\nu$.
\end{definition}

It is easy to check that the $\Delta_k$-convolution has the following property:

\begin{proposition}
The space $(\mathcal{M}_\mathbb{C}(M),\conv{k})$, equipped with the total variation norm $\|\mu\| = |\mu|(M)$, is a commutative Banach algebra over $\mathbb{C}$ whose identity element is the Dirac measure $\delta_{(0,0)}$. Moreover, the subset $\mathcal{P}(M)$ is closed under the $\Delta_k$-convolution.
\end{proposition}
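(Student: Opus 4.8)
The plan is to reduce every assertion to its one-dimensional counterpart for the convolution $\conv{\ell_k}$ on $\mathbb{R}_0^+$ determined by the Sturm-Liouville product formula measures $\pi_{x_1,x_2}^{[k]}$ of Theorem~\ref{thm:SLappendix_prodform}, exploiting the fact that $\bm{\nu}_{k,\bm{\xi_1},\bm{\xi_2}} = \pi_{x_1,x_2}^{[k]} \otimes \delta_{\theta_1+\theta_2}$ acts in the $\theta$-variable through the ordinary group convolution $\delta_{\theta_1} \ast \delta_{\theta_2} = \delta_{\theta_1+\theta_2}$ of $\mathbb{T}$. Conceptually, $\conv{k}$ is the convolution of the one-dimensional hypergroup $(\mathbb{R}_0^+, \conv{\ell_k})$ with the group $\mathbb{T}$, so the proposition is an instance of the standard fact that the product of a commutative hypergroup with a locally compact abelian group is again a commutative hypergroup; I would nonetheless verify the Banach-algebra axioms one at a time, invoking the one-dimensional theory recalled in the Appendix where needed.

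I would first dispose of well-definedness. The map $(\bm{\xi_1},\bm{\xi_2}) \mapsto \bm{\nu}_{k,\bm{\xi_1},\bm{\xi_2}}$ is weakly continuous (from the weak continuity of $(x_1,x_2) \mapsto \pi_{x_1,x_2}^{[k]}$, which is part of the one-dimensional theory, together with the continuity of addition on $\mathbb{T}$), so $(\bm{\xi_1},\bm{\xi_2}) \mapsto \bm{\nu}_{k,\bm{\xi_1},\bm{\xi_2}}(B)$ is Borel measurable on $M \times M$ for every Borel $B \subseteq M$ by a standard Dynkin-system argument; since each $\bm{\nu}_{k,\bm{\xi_1},\bm{\xi_2}}$ is a probability measure, the monotone convergence theorem then shows that $\mu \conv{k} \nu$ is a well-defined finite complex Borel measure with $|\mu \conv{k} \nu|(M) \le \int_M \int_M \bm{\nu}_{k,\bm{\xi_1},\bm{\xi_2}}(M)\, |\mu|(d\bm{\xi_1})\, |\nu|(d\bm{\xi_2}) = \|\mu\|\,\|\nu\|$, so the total variation norm is submultiplicative; and if $\mu, \nu \in \mathcal{P}(M)$ then $\mu \conv{k} \nu$ is a positive measure of total mass $1$, which is the last assertion of the proposition. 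Bilinearity of $\conv{k}$ is immediate, and $(\mathcal{M}_\mathbb{C}(M),\|\cdot\|)$ is complete, so it remains to check commutativity, associativity, and the identity element. Commutativity follows from $\pi_{x_1,x_2}^{[k]} = \pi_{x_2,x_1}^{[k]}$ and $\theta_1+\theta_2 = \theta_2+\theta_1$, which give $\bm{\nu}_{k,\bm{\xi_1},\bm{\xi_2}} = \bm{\nu}_{k,\bm{\xi_2},\bm{\xi_1}}$ and hence $\mu \conv{k} \nu = \nu \conv{k} \mu$ by Fubini. For the identity I would use that $\supp(\pi_{0,x}^{[k]}) \subseteq [\,|0-x|,\, x\,] = \{x\}$ and $\pi_{0,x}^{[k]}$ has total mass $1$, so $\pi_{0,x}^{[k]} = \delta_x$; then $\bm{\nu}_{k,(0,0),\bm{\xi}} = \delta_{\bm{\xi}}$, whence $\delta_{(0,0)} \conv{k} \mu = \int_M \delta_{\bm{\xi}}\, \mu(d\bm{\xi}) = \mu$.

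The only genuinely substantial step — and the one I expect to be the main obstacle — is associativity. Expanding both sides of $(\mu \conv{k} \nu) \conv{k} \sigma = \mu \conv{k} (\nu \conv{k} \sigma)$ by Fubini's theorem reduces matters to Dirac measures, i.e.\ to showing $(\delta_{\bm{\xi_1}} \conv{k} \delta_{\bm{\xi_2}}) \conv{k} \delta_{\bm{\xi_3}} = \delta_{\bm{\xi_1}} \conv{k} (\delta_{\bm{\xi_2}} \conv{k} \delta_{\bm{\xi_3}})$ for all $\bm{\xi_1},\bm{\xi_2},\bm{\xi_3} \in M$. Unwinding the definition with $\bm{\nu}_{k,\bm{\xi},\bm{\xi}'} = \pi_{x,x'}^{[k]} \otimes \delta_{\theta+\theta'}$, one checks that both sides take the form $\mu \otimes \delta_{\theta_1+\theta_2+\theta_3}$, with $\mu = (\delta_{x_1} \conv{\ell_k} \delta_{x_2}) \conv{\ell_k} \delta_{x_3}$ on the left and $\mu = \delta_{x_1} \conv{\ell_k} (\delta_{x_2} \conv{\ell_k} \delta_{x_3})$ on the right, so everything reduces to associativity of the one-dimensional convolution $\conv{\ell_k}$ on $\mathcal{M}_\mathbb{C}(\mathbb{R}_0^+)$. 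I would import the latter from the Appendix — the statement that $(\mathcal{M}_\mathbb{C}(\mathbb{R}_0^+), \conv{\ell_k})$ is a commutative Banach algebra, equivalently that the measures $\pi_{x_1,x_2}^{[k]}$ define a hypergroup on $\mathbb{R}_0^+$. (A self-contained alternative: both iterated one-dimensional convolutions integrate $\widetilde{v}_{k,\lambda}$ to $\widetilde{v}_{k,\lambda}(x_1)\,\widetilde{v}_{k,\lambda}(x_2)\,\widetilde{v}_{k,\lambda}(x_3)$ for every $\lambda \in \mathbb{C}$, and a compactly supported measure on $\mathbb{R}_0^+$ is determined by the values $\lambda \mapsto \int_{\mathbb{R}_0^+} \widetilde{v}_{k,\lambda}\, d(\cdot)$.) Reducing associativity to dimension one in this way is exactly what makes the argument work: it bypasses any direct analysis of the product of the two-dimensional eigenfunctions, which solves the ill-posed ultrahyperbolic equation mentioned in the Introduction.
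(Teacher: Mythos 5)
Your proof is correct and takes the route the paper intends: the paper states this proposition without proof ("It is easy to check"), but in Section \ref{sec:infdivsemigr} it explicitly observes that $(M,\conv{k})$ is the product hypergroup of $(\mathbb{R}_0^+,\convdiam{k})$ (Proposition \ref{prop:SLappendix_hypergroup}) with the group $\mathbb{T}$, which is exactly the reduction you carry out. All the one-dimensional ingredients you import — weak continuity, symmetry and associativity of $\convdiam{k}$, and the support property forcing $\pi_{0,x}^{[k]}=\delta_x$ — are indeed supplied by Theorem \ref{thm:SLappendix_prodform} and Propositions \ref{prop:SLappendix_convtransl_props} and \ref{prop:SLappendix_hypergroup}, whose hypotheses ($p\equiv r=B_k$, $a=\gamma(a)=0$) are verified via Lemma \ref{lem:coneconv_sol_normaliz}.
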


Another fundamental property of the $\Delta_k$-convolution is its connection with the $\Delta$-Fourier transform defined as follows:

\begin{definition}
Let $\mu \in \mathcal{M}_{\mathbb{C}}(M)$. The \emph{$\Delta$-Fourier transform} of the measure $\mu$ is the function defined by the integral
\[
(\bm{\mathcal{F}}\mu)(k,\lambda) = \int_M {w_{-k,\lambda}(\bm{\xi}) \over \zeta_k(x)} \, \mu(d\bm{\xi}), \qquad k \in \mathbb{Z}, \; \lambda \geq 0.
\]
\end{definition}

It follows from Lemma \ref{lem:SLappendix_solprops}(a) that $\|{w_{-k,\lambda} \over \zeta_k}\|_\infty \leq 1$, hence $(\bm{\mathcal{F}}\mu)(k,\lambda)$ is well-defined for all $\mu \in \mathcal{M}_{\mathbb{C}}(M)$ and $(k, \lambda) \in \mathbb{Z} \times \mathbb{R}_0^+$.

\begin{proposition}
Let $\mu, \nu \in \mathcal{M}_{\mathbb{C}}(M)$. We have 
\begin{equation} \label{eq:coneconv_transf_convtrivial}
\bigl(\bm{\mathcal{F}}(\mu \conv{k} \nu)\bigr)(k,\lambda) = (\bm{\mathcal{F}}\mu)(k,\lambda) \ccdot (\bm{\mathcal{F}}\nu)(k,\lambda) \qquad \text{for all } k \in \mathbb{Z} \text{ and } \lambda \geq 0.
\end{equation}
Moreover, for fixed $k \in \mathbb{Z}$ we have
\[
(\bm{\mathcal{F}}\alpha)(k,\bm{\cdot}) = (\bm{\mathcal{F}}\mu)(k,\bm{\cdot}) \ccdot (\bm{\mathcal{F}}\nu)(k,\bm{\cdot}) \quad \text{ if and only if } \quad \widehat{\alpha}_k = \widehat{\mu}_k \convdiam{k} \widehat{\nu}_k
\]
where $\widehat{\tau}_k$ ($\tau = \alpha, \mu, \nu$) is the complex measure on $\mathbb{R}_0^+$ defined by
\[
\widehat{\tau}_k(J) = \int_M e^{-i2k\pi\theta} \, \mathds{1}_J(x) \, \tau(d\bm{\xi})
\]
and $\widehat{\mu}_k \convdiam{k} \widehat{\nu}_k(\bm{\cdot}) := \int_{\mathbb{R}_0^+} \int_{\mathbb{R}_0^+} \pi_{x_1,x_2}^{[k]}(\bm{\cdot}) \, \widehat{\mu}_k(dx_1) \, \widehat{\nu}_k(dx_2)$ (here $\pi_{x_1,x_2}^{[k]} \in \mathcal{P}(\mathbb{R}_0^+)$ are the measures from the proof of Proposition \ref{prop:coneconv_prodform}).
\end{proposition}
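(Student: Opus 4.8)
The plan is to reduce everything to two ingredients: the product formula in the $x$-variable (the measures $\pi_{x_1,x_2}^{[k]}$ from Proposition~\ref{prop:coneconv_prodform}) and the ordinary convolution of Dirac masses in the $\theta$-variable of $\mathbb{T}$, which of course trivializes the characters $e^{i2k\pi\theta}$. First I would unravel the definition of $\bm{\mathcal{F}}$ and of $\conv{k}$: by \eqref{eq:coneconv_gammaconvmeas_def} and the fact that $\bm{\nu}_{k,\bm\xi_1,\bm\xi_2}=\pi_{x_1,x_2}^{[k]}\otimes\delta_{\theta_1+\theta_2}$, and since $w_{-k,\lambda}(\bm\xi)/\zeta_k(x) = e^{-i2k\pi\theta}\widetilde v_{k,\lambda}(x)$, the $\Delta$-Fourier transform factors as
\[
(\bm{\mathcal{F}}\mu)(k,\lambda) = \int_{\mathbb{R}_0^+} \widetilde v_{k,\lambda}(x)\, \widehat\mu_k(dx),
\]
i.e.\ it is exactly the (one-dimensional) $\ell_k$-transform of the complex measure $\widehat\mu_k$ on $\mathbb{R}_0^+$ defined in the statement. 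So $\bm{\mathcal{F}}\mu$ depends on $\mu$ only through the $\widehat\mu_k$.

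Next I would verify the linearization identity \eqref{eq:coneconv_transf_convtrivial}. Plugging Definition~\ref{def:coneconv_convk} into $\bm{\mathcal{F}}$ and using Fubini (justified since $\|w_{-k,\lambda}/\zeta_k\|_\infty\le1$ by Lemma~\ref{lem:SLappendix_solprops}(a) and all measures are finite), one gets
\[
\bigl(\bm{\mathcal{F}}(\mu\conv{k}\nu)\bigr)(k,\lambda)
= \int_M\!\int_M \Bigl(\int_M \tfrac{w_{-k,\lambda}(\bm\xi_3)}{\zeta_k(x_3)}\,\bm\nu_{k,\bm\xi_1,\bm\xi_2}(d\bm\xi_3)\Bigr)\mu(d\bm\xi_1)\,\nu(d\bm\xi_2).
\]
The inner integral is $e^{-i2k\pi(\theta_1+\theta_2)}\int_{\mathbb{R}_0^+}\widetilde v_{k,\lambda}\,d\pi_{x_1,x_2}^{[k]} = e^{-i2k\pi\theta_1}e^{-i2k\pi\theta_2}\,\widetilde v_{k,\lambda}(x_1)\widetilde v_{k,\lambda}(x_2)$ by the $x$-product formula from the proof of Proposition~\ref{prop:coneconv_prodform}; the double integral then factors, giving $(\bm{\mathcal{F}}\mu)(k,\lambda)(\bm{\mathcal{F}}\nu)(k,\lambda)$.

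For the final equivalence, the forward direction is a computation in the same spirit: unravelling the definition of $\widehat{(\,\cdot\,)}_k$ for the measure $\widehat\mu_k\convdiam{k}\widehat\nu_k$ on $\mathbb{R}_0^+$, one checks directly that its $\ell_k$-transform equals $(\bm{\mathcal{F}}\mu)(k,\cdot)(\bm{\mathcal{F}}\nu)(k,\cdot)$, so if this product equals $(\bm{\mathcal{F}}\alpha)(k,\cdot)$ then $\widehat\alpha_k$ and $\widehat\mu_k\convdiam{k}\widehat\nu_k$ have the same $\ell_k$-transform. The converse is immediate from the first paragraph's observation. Thus both directions reduce to the \emph{injectivity} of the one-dimensional transform $\tau\mapsto \int_{\mathbb{R}_0^+}\widetilde v_{k,\lambda}(x)\,\tau(dx)$ on $\mathcal{M}_\mathbb{C}(\mathbb{R}_0^+)$, which I expect to be the only genuinely non-formal point: it should follow from the uniqueness of the eigenfunction expansion / spectral representation for $\ell_k$ recorded in the Appendix (Proposition~\ref{prop:SLappendix_eigenexp} and the properties in Lemma~\ref{lem:SLappendix_solprops}), combined with the support condition $\supp(\pi_{x_1,x_2}^{[k]})\subset[|x_1-x_2|,x_1+x_2]$ which guarantees $\widehat\mu_k\convdiam{k}\widehat\nu_k$ is a well-defined finite measure. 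The main obstacle is therefore to cite or assemble the correct injectivity statement for the $\ell_k$-transform on finite measures (as opposed to $L^2$ functions, where it is the spectral isomorphism); everything else is bookkeeping with Fubini and the two product formulas.
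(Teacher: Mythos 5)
Your proposal is correct and follows essentially the same route as the paper: the linearization identity is obtained exactly as you describe (Fubini plus the one-dimensional product formula for $\widetilde{v}_{k,\lambda}$), and for the equivalence the paper simply observes that $(\bm{\mathcal{F}}\mu)(k,\bm{\cdot}) = \mathcal{F}_{\ell_k}\widehat{\mu}_k$ and cites the packaged one-dimensional statement ``$\alpha = \mu_1 \convdiam{\ell} \mu_2$ iff $\mathcal{F}_\ell\alpha = \mathcal{F}_\ell\mu_1 \cdot \mathcal{F}_\ell\mu_2$'' (Proposition \ref{prop:SLappendix_convtransl_props}(a)), which is what your ``computation plus injectivity'' argument reassembles. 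The injectivity statement you flag as the one non-formal ingredient is indeed available for complex measures as Proposition \ref{prop:SLappendix_fourmeas_props}(ii) in the Appendix, so there is no gap.
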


\begin{proof}
Applying the product formula \eqref{eq:coneconv_prodform}, we obtain
\begin{align*}
\bigl(\bm{\mathcal{F}}(\mu \conv{k} \nu)\bigr)(k,\lambda) & = \int_M {w_{-k,\lambda}(\bm{\xi}) \over \zeta_k(x)} \, (\mu \conv{k} \nu)(d\bm{\xi}) \\
& = \int_M \int_M \int_M {w_{-k,\lambda}(\bm{\xi_3}) \over \zeta_k(x_3)} \, (\delta_{\bm{\xi_1}} \conv{k} \delta_{\bm{\xi_2}})(d\bm{\xi_3}) \, \mu(d\bm{\xi_1}) \nu(d\bm{\xi_2})\\
& = \int_M \int_M {w_{-k,\lambda}(\bm{\xi_1}) \over \zeta_k(x_1)} {w_{-k,\lambda}(\bm{\xi_2}) \over \zeta_k(x_2)} \, \mu(d\bm{\xi_1}) \nu(d\bm{\xi_2}) \: = \: (\bm{\mathcal{F}}\mu)(k,\lambda) \ccdot (\bm{\mathcal{F}}\nu)(k,\lambda)
\end{align*}
so that \eqref{eq:coneconv_transf_convtrivial} holds. Since $\convdiam{k}$ is the convolution on $\mathbb{R}_0^+$ associated with the Sturm-Liouville operator $\ell_k = -{1 \over B_k}{d \over dx}(B_k {d \over dx})$, the second statement is a consequence of the corresponding property of one-dimensional generalized convolutions (Proposition \ref{prop:SLappendix_convtransl_props}(a)).
\end{proof}

Next we summarize some useful properties of the $\Delta$-Fourier transform.

\begin{proposition} \label{prop:coneconv_fourmeas_props}
The $\Delta$-Fourier transform $\bm{\mathcal{F}}\mu$ of $\mu \in \mathcal{M}_\mathbb{C}(M)$ has the following properties:
\begin{enumerate}[itemsep=0pt,topsep=4pt]
\item[\textbf{(i)}] For each $k \in \mathbb{Z}$, $(\bm{\mathcal{F}}\mu)(k,\bm{\cdot})$ is continuous on $\mathbb{R}_0^+$. Moreover, if a family of measures $\{\mu_j\} \subset \mathcal{M}_\mathbb{C}(M)$ is tight and uniformly bounded, then $\{(\bm{\mathcal{F}}\mu_j) (k,\bm{\cdot})\}$ is equicontinuous on $\mathbb{R}_0^+$.

\item[\textbf{(ii)}] Each measure $\mu \in \mathcal{M}_\mathbb{C}(M)$ is uniquely determined by $\bm{\mathcal{F}}\mu$.

\item[\textbf{(iii)}]
If $\{\mu_n\}$ is a sequence of measures belonging to $\mathcal{M}_+(M)$, $\mu \in \mathcal{M}_+(M)$, and $\mu_n \warrow \mu$, then for each $k \in \mathbb{Z}$ we have
\[
(\bm{\mathcal{F}}\mu_n)(k,\bm{\cdot}) \xrightarrow[\,n \to \infty\,]{} (\bm{\mathcal{F}}\mu)(k,\bm{\cdot}) \qquad \text{uniformly on compact sets.}
\]

\item[\textbf{(iv)}] Suppose that $\lim_{x \to \infty} A(x) = \infty$. If $\{\mu_n\}$ is a sequence of measures belonging to $\mathcal{M}_+(M)$ whose $\Delta$-Fourier transforms are such that
\[
(\bm{\mathcal{F}}\mu_n)(k,\lambda) \xrightarrow[\,n \to \infty\,]{} f(k,\lambda) \qquad \text{pointwise in } (k,\lambda) \in \mathbb{Z} \times \mathbb{R}_0^+
\]
for some real-valued function $f$ such that $f(0,\bm{\cdot})$ is continuous at a neighbourhood of zero, then $\mu_n \warrow \mu$ for some measure $\mu \in \mathcal{M}_+(M)$ such that $\bm{\mathcal{F}}\mu \equiv f$.
\end{enumerate}
\end{proposition}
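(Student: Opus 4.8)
The plan is to reduce every assertion, $k$ by $k$, to known facts about one-dimensional generalized convolutions (recorded in the Appendix), by means of the identity
\[
(\bm{\mathcal{F}}\mu)(k,\lambda) = \int_M e^{-i2k\pi\theta}\,\widetilde{v}_{k,\lambda}(x)\,\mu(d\bm{\xi}) = \int_{\mathbb{R}_0^+}\widetilde{v}_{k,\lambda}\,d\widehat{\mu}_k ,
\]
which holds because $w_{-k,\lambda}(\bm{\xi})/\zeta_k(x) = e^{-i2k\pi\theta}\,\widetilde{v}_{k,\lambda}(x)$ and by the definition of $\widehat{\mu}_k$, and which identifies $(\bm{\mathcal{F}}\mu)(k,\cdot)$ with the one-dimensional generalized Fourier transform of the finite measure $\widehat{\mu}_k$ for the Sturm--Liouville operator $\ell_k$. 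I would also use the elementary fact that the whole family $\{\widehat{\mu}_k\}_{k\in\mathbb{Z}}$ determines $\mu$: testing $\mu$ against products $f(x)e^{i2k\pi\theta}$ with $f\in\mathrm{C}_\mathrm{c}(\mathbb{R}_0^+)$ and approximating an arbitrary $\phi\in\mathrm{C}_\mathrm{c}(M)$ uniformly by its Fej\'er means in the variable $\theta$ (whose coefficients lie in $\mathrm{C}_\mathrm{c}(\mathbb{R}_0^+)$) shows that $\int_M\phi\,d\mu$ is already determined by $\{\widehat{\mu}_k\}_k$. Granting these two observations, part~(ii) is immediate from the injectivity of the one-dimensional transform on $\mathcal{M}_{\mathbb{C}}(\mathbb{R}_0^+)$.

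For part~(i): the function $(x,\lambda)\mapsto\widetilde{v}_{k,\lambda}(x)$ is jointly continuous (by continuous dependence on the spectral parameter of the solution of \eqref{eq:coneconv_wsol_pf2}; cf.\ Lemma~\ref{lem:coneconv_wsol} and the Appendix) and satisfies $\|\widetilde{v}_{k,\lambda}\|_\infty\le1$ for $\lambda\ge0$ by Lemma~\ref{lem:SLappendix_solprops}(a), so continuity of $\lambda\mapsto(\bm{\mathcal{F}}\mu)(k,\lambda)$ follows by dominated convergence. For the equicontinuity statement I would fix $\lambda_0\ge0$ and $\varepsilon>0$, pick $N$ with $|\mu_j|\bigl([N,\infty)\times\mathbb{T}\bigr)<\varepsilon$ for all $j$ (by tightness), and split $\int_M|\widetilde{v}_{k,\lambda}(x)-\widetilde{v}_{k,\lambda'}(x)|\,|\mu_j|(d\bm{\xi})$ over $[0,N]\times\mathbb{T}$ and its complement: on the first piece the integrand is uniformly small for $|\lambda-\lambda'|$ small, by uniform continuity of $\widetilde{v}_{k,\cdot}$ on the compact set $[0,N]\times[\max(0,\lambda_0-1),\lambda_0+1]$, while on the second it is bounded by $2$; the uniform bound $\sup_j|\mu_j|(M)<\infty$ then gives the required equicontinuity. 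Part~(iii) follows at once: $\bm{\xi}\mapsto e^{-i2k\pi\theta}\widetilde{v}_{k,\lambda}(x)\in\mathrm{C}_\mathrm{b}(M)$, so $\mu_n\warrow\mu$ yields $(\bm{\mathcal{F}}\mu_n)(k,\lambda)\to(\bm{\mathcal{F}}\mu)(k,\lambda)$ pointwise; since a weakly convergent sequence of finite positive measures on the Polish space $M$ is tight and uniformly bounded, part~(i) makes $\{(\bm{\mathcal{F}}\mu_n)(k,\cdot)\}_n$ equicontinuous, and equicontinuity together with pointwise convergence upgrades to uniform convergence on compact subsets of $\mathbb{R}_0^+$.

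Part~(iv) is the substantial one, and I would prove it along the classical lines of L\'evy's continuity theorem. First, $\mu_n(M)=(\bm{\mathcal{F}}\mu_n)(0,0)\to f(0,0)$, so $\{\mu_n\}$ is uniformly bounded. Next is the tightness step, which is where the hypothesis $\lim_{x\to\infty}A(x)=\infty$ enters: the $x$-marginals $\widehat{(\mu_n)}_0\in\mathcal{M}_+(\mathbb{R}_0^+)$ satisfy $\mathcal{F}_{\ell_0}\widehat{(\mu_n)}_0=(\bm{\mathcal{F}}\mu_n)(0,\cdot)\to f(0,\cdot)$ pointwise with $f(0,\cdot)$ continuous near the origin, so the one-dimensional L\'evy-type continuity theorem for $\mathcal{F}_{\ell_0}$ (applicable under \eqref{eq:coneconv_Ahyp} precisely because $A(x)\to\infty$; see the Appendix) gives $\widehat{(\mu_n)}_0\warrow\rho_0$ for some $\rho_0\in\mathcal{M}_+(\mathbb{R}_0^+)$; hence $\{\widehat{(\mu_n)}_0\}$ is tight on $\mathbb{R}_0^+$, and since $\mathbb{T}$ is compact and $\widehat{(\mu_n)}_0(\mathbb{R}_0^+\setminus K_0)=\mu_n\bigl(M\setminus(K_0\times\mathbb{T})\bigr)$, the family $\{\mu_n\}$ is tight on $M$. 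By Prokhorov's theorem $\{\mu_n\}$ is then relatively weakly compact; if $\mu_{n_j}\warrow\mu$ is a subsequential limit then $\mu\in\mathcal{M}_+(M)$ and, by part~(iii), $\bm{\mathcal{F}}\mu=\lim_j\bm{\mathcal{F}}\mu_{n_j}=f$, so part~(ii) shows that all subsequential limits coincide, whence $\mu_n\warrow\mu$ with $\bm{\mathcal{F}}\mu\equiv f$. I expect the main obstacle to be exactly this tightness argument: one must identify $A(x)\to\infty$ as the hypothesis that makes the one-dimensional continuity theorem applicable to $\ell_0$, and only the $k=0$ component can be used for it, since it is for $k=0$ that $\widehat{(\mu_n)}_0$ is a genuine positive measure and that $f(0,\cdot)$ is assumed continuous at the origin.
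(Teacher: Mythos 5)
Your proposal is correct and follows essentially the same route as the paper: reduce each part to the one-dimensional transform of the slices $\widehat{\mu}_k$, prove injectivity by testing against products $f(x)g(\theta)$ and summing the Fourier series in $\theta$, and obtain (iv) by a L\'evy-continuity argument whose tightness step uses only the $k=0$ component, the continuity of $f(0,\cdot)$ at the origin, and the decay $\widetilde{v}_{0,\lambda}(x)\to 0$ as $x\to\infty$ (which is where $A(x)\to\infty$ enters, via Lemma \ref{lem:SLappendix_solprops}(b)). The one caveat is that the ``one-dimensional L\'evy-type continuity theorem for $\mathcal{F}_{\ell_0}$'' you invoke is not actually among the Appendix statements, so you would need to write out the short tightness estimate yourself --- choose $\delta$ from the continuity of $f(0,\cdot)$ at $0$, choose $\beta$ so that $\int_0^{2\delta}(1-\widetilde{v}_{0,\lambda}(x))\,d\lambda\geq\delta$ for $x>\beta$, and bound $\mu_n([\beta,\infty)\times\mathbb{T})$ by ${1\over\delta}\int_0^{2\delta}\bigl((\bm{\mathcal{F}}\mu_n)(0,0)-(\bm{\mathcal{F}}\mu_n)(0,\lambda)\bigr)d\lambda$ --- exactly as the paper does.
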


\begin{proof}
\textbf{\emph{(i)}} We have
\[
(\bm{\mathcal{F}}\mu)(k,\lambda) = \int_{\mathbb{R}_0^+} \widetilde{v}_{k,\lambda}(x)\, \widehat{\mu}_k(dx) \equiv (\mathcal{F}_{\ell_k}\,\widehat{\mu}_k) (\lambda)
\]
where 
\[
(\mathcal{F}_{\ell_k} \nu)(\lambda) = \int_{\mathbb{R}_0^+} \widetilde{v}_{k,\lambda}(x) \, \nu(dx), \qquad \nu \in \mathcal{P}(\mathbb{R}_0^+)
\]
is the generalized Fourier transform of measures determined by the Sturm-Liouville operator $\ell_k$, as defined in \eqref{eq:SLappendix_fourmeas_def}. Therefore, the result follows from the corresponding property of one-dimensional convolutions (Proposition \ref{prop:SLappendix_fourmeas_props}(i)). \\[-8pt]

\textbf{\emph{(ii)}} Let $\mu \in \mathcal{M}_\mathbb{C}(M)$ be such that $(\bm{\mathcal{F}}\mu)(k,\lambda) = 0$ for all $k \in \mathbb{Z}$ and $\lambda \geq 0$. Let $f \in \mathrm{C}_\mathrm{c}(\mathbb{R}_0^+)$ and $g \in \mathrm{C}^1(\mathbb{T})$. Recalling that the Fourier series $g(\theta) = \sum_{k \in \mathbb{Z}} \langle g, e^{-i2k\pi\bm{\cdot}} \rangle \, e^{i2k\pi\theta}$ converges absolutely and uniformly \cite[Theorem 1.4.2]{dymmckean1972}, we get
\begin{align*}
\int_M f(x) \, g(\theta) \, \mu(d(x,\theta)) & = \int_M f(x) \sum_{k \in \mathbb{Z}} \langle g, e^{-i2k\pi\bm{\cdot}} \rangle \, e^{i2k\pi\theta} \, \mu(d(x,\theta)) \\
& = \sum_{k \in \mathbb{Z}} \langle g, e^{-i2k\pi\bm{\cdot}} \rangle \int_{\mathbb{R}_0^+} f(x) \,\widehat{\mu}_{-k}(dx) \\
& = 0
\end{align*}
where the last equality holds because, by Proposition \ref{prop:SLappendix_fourmeas_props}(ii), $(\bm{\mathcal{F}}\mu)(k,\bm{\cdot}) \equiv 0$ implies that $\widehat{\mu}_k = 0$. By the Stone-Weierstrass theorem (see \cite[Section 38]{simmons1963} and also \cite[Corollary 15.3]{klenke2014}), this implies that $\mu$ is the zero measure. \\[-8pt]

\textbf{\emph{(iii)}} This follows directly from Proposition \ref{prop:SLappendix_fourmeas_props}(iii). \\[-8pt]

\textbf{\emph{(iv)}} Let us show that $\{\mu_n\}$ is tight. Fix $\eps > 0$. Given that $f(0,\bm{\cdot})$ is continuous near zero, we can choose $\delta > 0$ such that
\[
\biggl| {1 \over \delta} \int_0^{2\delta} \bigl(f(0,0) - f(0,\lambda)\bigr)d\lambda \biggr| < \eps.
\]
Furthermore, according to Lemma \ref{lem:SLappendix_solprops}(b) we have $\lim_{x \to \infty} \widetilde{v}_{0,\lambda}(x) = 0$ for all $\lambda > 0$; we can therefore pick $0 < \beta < \infty$ such that
\[
\int_0^{2\delta} \bigl( 1-w_{0,\lambda}(\bm{\xi}) \bigr) d\lambda \geq \delta \qquad \text{for all } (x, \theta) \in (\beta, \infty) \times \mathbb{T}.
\]
We now compute
\begin{align*}
\mu_n\bigl([\beta,\infty) \times \mathbb{T}) & \leq {1 \over \delta} \int_{[\beta,\infty) \times \mathbb{T}} \int_0^{2\delta} \bigl( 1-w_{0,\lambda}(\bm{\xi}) \bigr) d\lambda\, \mu_n(d\bm{\xi}) \\
& \leq {1 \over \delta} \int_{[a,\infty) \times \mathbb{T}} \int_0^{2\delta} \bigl( 1-w_{0,\lambda}(\bm{\xi}) \bigr) d\lambda\, \mu_n(d\bm{\xi}) \\
& = {1 \over \delta} \int_0^{2\delta} \bigl((\bm{\mathcal{F}}\mu_n)(0,0) - (\bm{\mathcal{F}}\mu_n)(0,\lambda)\bigr) d\lambda
\end{align*}
so that, using dominated convergence, we obtain
\[
\limsup_{n \to \infty} \mu_n([\beta,\infty)\times \mathbb{T}) \leq {1 \over \delta} \limsup_{n \to \infty}\! \int_0^{2\delta} \bigl((\bm{\mathcal{F}}\mu_n)(0,0) - (\bm{\mathcal{F}}\mu_n)(0,\lambda)\bigr) d\lambda = {1 \over \delta} \int_0^{2\delta} \bigl(f(0,0) - f(0,\lambda)\bigr) d\lambda < \eps
\]
where $\eps$ is arbitrary, showing that $\{\mu_n\}$ is tight. 

Since $\{\mu_n\}$ is also uniformly bounded, Prohorov's theorem ensures that given a subsequence $\{\mu_{n_k}\}$, there exists a further subsequence $\{\mu_{n_{k_j}}\}$ and a measure $\mu \in \mathcal{M}_+(M)$ for which we have $\mu_{n_{k_j}} \warrow \mu$. By part (iii) we must have $(\bm{\mathcal{F}}\mu)(k,\lambda) = f(k,\lambda)$ for all $(k,\lambda) \in \mathbb{Z} \times \mathbb{R}_0^+$. It then follows from (ii) that all such subsequences (and hence the sequence $\{\mu_n\}$ itself) converge weakly to a unique measure $\mu$.
\end{proof}

In the sequel we will always assume that $\lim_{x \to \infty} A(x) = \infty$.

\begin{corollary} \label{cor:coneconv_weakcont}
For each $k \in \mathbb{Z}$, the mapping $(\mu,\nu) \mapsto \mu \conv{k} \nu$ is continuous in the weak topology.
\end{corollary}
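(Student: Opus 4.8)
The plan is to deduce weak continuity of $\conv{k}$ from the weak continuity of the underlying one-dimensional convolution $\convdiam{k}$. Since $M = \mathbb{R}_0^+ \times \mathbb{T}$ is a Polish space, the weak topology on $\mathcal{M}_+(M)$ is metrizable, so it is enough to show that $\mu_n \conv{k} \nu_n \warrow \mu \conv{k} \nu$ whenever $\mu_n \warrow \mu$ and $\nu_n \warrow \nu$ with $\mu,\nu,\mu_n,\nu_n \in \mathcal{M}_+(M)$.

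The crux of the argument is the following claim: for every $g \in \mathrm{C}_\mathrm{b}(M)$ the function
\[
F_g(\bm{\xi_1},\bm{\xi_2}) := \int_M g \, d\bm{\nu}_{k,\bm{\xi_1},\bm{\xi_2}} = \int_{\mathbb{R}_0^+} g(x_3,\theta_1+\theta_2)\, \pi^{[k]}_{x_1,x_2}(dx_3)
\]
belongs to $\mathrm{C}_\mathrm{b}(M \times M)$; the bound $\|F_g\|_\infty \le \|g\|_\infty$ is immediate since $\pi^{[k]}_{x_1,x_2}$ is a probability measure. For continuity, recall that $(x_1,x_2) \mapsto \pi^{[k]}_{x_1,x_2} = \delta_{x_1}\convdiam{k}\delta_{x_2}$ is weakly continuous — a standard continuity property of the one-dimensional generalized convolution $\convdiam{k}$; see the Appendix (Proposition \ref{prop:SLappendix_convtransl_props}). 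Hence, if $\bm{\xi_1}^{(n)} \to \bm{\xi_1}$ and $\bm{\xi_2}^{(n)} \to \bm{\xi_2}$ in $M$, then $\theta_1^{(n)}+\theta_2^{(n)} \to \theta_1+\theta_2$ in $\mathbb{T}$ and $\pi^{[k]}_{x_1^{(n)},x_2^{(n)}} \warrow \pi^{[k]}_{x_1,x_2}$, so that $\bm{\nu}_{k,\bm{\xi_1}^{(n)},\bm{\xi_2}^{(n)}} \warrow \bm{\nu}_{k,\bm{\xi_1},\bm{\xi_2}}$ on $M$ (weak convergence of finite measures being stable under products). The one delicate point — and, I expect, the main obstacle — is that $g$ need not be uniformly continuous on the non-compact manifold $M$; this is handled by the support localization $\supp(\pi^{[k]}_{x_1,x_2}) \subset [\,|x_1-x_2|,\, x_1+x_2\,]$ established in the proof of Proposition \ref{prop:coneconv_prodform}: along the convergent sequence the measures $\bm{\nu}_{k,\bm{\xi_1}^{(n)},\bm{\xi_2}^{(n)}}$ are eventually all supported in the fixed compact set $[0,\,x_1+x_2+1] \times \mathbb{T}$, on which $g$ is uniformly continuous, whence $F_g(\bm{\xi_1}^{(n)},\bm{\xi_2}^{(n)}) \to F_g(\bm{\xi_1},\bm{\xi_2})$.

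Granting the claim, the proof concludes as follows. Since $M \times M$ is Polish and $\mu_n \warrow \mu$, $\nu_n \warrow \nu$, the product measures satisfy $\mu_n \otimes \nu_n \warrow \mu \otimes \nu$; and by Definition \ref{def:coneconv_convk} together with Fubini's theorem, $\int_M g\, d(\mu \conv{k} \nu) = \int_{M\times M} F_g \, d(\mu \otimes \nu)$ for every $g \in \mathrm{C}_\mathrm{b}(M)$. Therefore
\[
\int_M g\, d(\mu_n \conv{k} \nu_n) = \int_{M\times M} F_g\, d(\mu_n \otimes \nu_n) \xrightarrow[\,n\to\infty\,]{} \int_{M\times M} F_g\, d(\mu \otimes \nu) = \int_M g\, d(\mu \conv{k} \nu),
\]
i.e. $\mu_n \conv{k} \nu_n \warrow \mu \conv{k} \nu$, as desired. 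As an alternative, one can bypass products of measures entirely: testing against $g(x,\theta) = f(x)\,e^{i2j\pi\theta}$ with $f \in \mathrm{C}_\mathrm{c}(\mathbb{R}_0^+)$ and $j \in \mathbb{Z}$, one uses the identity $\widehat{(\mu \conv{k} \nu)}_j = \widehat{\mu}_j \convdiam{k} \widehat{\nu}_j$ — valid for every $j$, by the same computation that proves \eqref{eq:coneconv_transf_convtrivial} — then the weak continuity of $\convdiam{k}$ applied to $\widehat{\mu}_j$ and $\widehat{\nu}_j$ after splitting these complex measures into real/imaginary and positive/negative parts (each of which is a continuous-weighted push-forward of $\mu_n$ resp. $\nu_n$ and hence still converges weakly), and finally a Fej\'er-sum approximation in the variable $\theta$ to pass to general $g \in \mathrm{C}_\mathrm{b}(M)$.
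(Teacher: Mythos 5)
Your argument is correct, and its skeleton — reduce everything to the weak continuity of $(\bm{\xi_1},\bm{\xi_2}) \mapsto \delta_{\bm{\xi_1}} \conv{k} \delta_{\bm{\xi_2}}$ and then integrate against $\mu_n \otimes \nu_n$ — is the same as the paper's. Where you genuinely diverge is in how that key continuity is established. The paper computes the $\Delta$-Fourier transform of $\delta_{\bm{\xi_1}} \conv{k} \delta_{\bm{\xi_2}}$, observes via Proposition \ref{prop:SLappendix_convtransl_props}(b) that it depends continuously on $(\bm{\xi_1},\bm{\xi_2})$, and then invokes the Lévy-type continuity theorem, Proposition \ref{prop:coneconv_fourmeas_props}(iv), to upgrade pointwise convergence of transforms to weak convergence of the measures. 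You instead work directly with the explicit product structure $\bm{\nu}_{k,\bm{\xi_1},\bm{\xi_2}} = \pi^{[k]}_{x_1,x_2} \otimes \delta_{\theta_1+\theta_2}$, combining the weak continuity of the one-dimensional convolution with the support localization $\supp(\pi^{[k]}_{x_1,x_2}) \subset [|x_1-x_2|,x_1+x_2]$ to handle the lack of uniform continuity of a general $g \in \mathrm{C}_\mathrm{b}(M)$. Your route is more elementary and, notably, does not use the standing hypothesis $\lim_{x\to\infty}A(x)=\infty$ on which Proposition \ref{prop:coneconv_fourmeas_props}(iv) rests; the paper's route is shorter on the page because the continuity theorem has already been paid for, and it reuses machinery (the injectivity and continuity properties of $\bm{\mathcal{F}}$) that the rest of the section needs anyway. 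Two small caveats: your reduction to $\mathcal{M}_+(M)$ does not literally cover sequences in $\mathcal{M}_\mathbb{C}(M)$ (weak convergence of complex measures does not pass to Jordan components), though the paper's own proof is no more careful on this point and only the positive/Dirac case is used downstream; and in your alternative Fejér-sum sketch the claim that the real/imaginary and positive/negative parts of $\widehat{\mu}_{j}$ "still converge weakly" is unjustified for the same reason — but that sketch is dispensable, and your main argument stands on its own.
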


\begin{proof}
We have
\begin{equation} \label{eq:coneconv_weakcont_pf1}
\bigl(\bm{\mathcal{F}}(\delta_{\bm{\xi_1}} \conv{k} \delta_{\bm{\xi_2}})\bigr) (j,\lambda) = \int_M e^{-i2j\pi\theta_3} {v_{j,\lambda}(x_3) \over \zeta_j(x_3)} (\delta_{\bm{\xi_1}} \conv{k} \delta_{\bm{\xi_2}})(d\bm{\xi_3}) = e^{-i2j\pi(\theta_1+\theta_2)}\! \int_{\mathbb{R}_0^+} {v_{j,\lambda}(x_3) \over \zeta_j(x_3)} (\delta_{x_1} \convdiam{k} \delta_{x_2})(dx_3).
\end{equation}
Proposition \ref{prop:SLappendix_convtransl_props}(b) ensures that $(x_1,x_2) \mapsto \delta_{x_1} \convdiam{k} \delta_{x_2}$ is continuous in the weak topology, hence the expression in the right hand side is a continuous function of $(\bm{\xi_1},\bm{\xi_2})$. It then follows from Proposition \ref{prop:coneconv_fourmeas_props}(iv) that $(\bm{\xi_1},\bm{\xi_2}) \mapsto \delta_{\bm{\xi_1}} \conv{k} \delta_{\bm{\xi_2}}$ is continuous in the weak topology.

Let $h \in \mathrm{C}_\mathrm{b}(M)$ and $\mu_n, \nu_n \in \mathcal{M}_\mathbb{C}(M)$ with $\mu_n \warrow \mu$ and $\nu_n \warrow \nu$. We have just seen that $\int_M h(x_3)\, (\delta_{\bm{\xi_1}} \conv{k} \delta_{\bm{\xi_2}})(d\bm{\xi_3})$ is a continuous function of $(\bm{\xi_1},\bm{\xi_2})$; consequently,
\begin{align*}
\lim_n \int_M \int_M \biggl( \int_M h(\bm{\xi_3})\, (\delta_{\bm{\xi_1}} \conv{k} \delta_{\bm{\xi_2}})(d\bm{\xi_3}) \biggr) \mu_n(dx) \nu_n(dy) = \int_M \int_M \biggl( \int_M h(\bm{\xi_3})\, (\delta_{\bm{\xi_1}} \conv{k} \delta_{\bm{\xi_2}})(d\bm{\xi_3}) \biggr) \mu(dx) \nu(dy)
\end{align*}
which means (since $h$ is arbitrary) that $\mu_n \conv{k} \nu_n \to \mu \conv{k} \nu$.
\end{proof}

The operator $\bm{\mathcal{T}}_k^\mu$ defined by the integral 
\[
(\bm{\mathcal{T}}_k^\mu h)(\bm{\xi}) := \int_M h \, d(\delta_{\bm{\xi}} \conv{k} \mu)
\]
is said to be the \emph{$\Delta_k$-translation} by the measure $\mu \in \mathcal{M}_{\mathbb{C}}(M)$. The next result summarizes its mapping properties. For brevity, we write $L_k^p := L^p(M, B_k(x) dx d\theta)$.

\begin{proposition} \label{prop:coneconv_transl_props} \,
\begin{enumerate}[itemsep=0pt,topsep=1pt]
\item[\textbf{(a)}] If $h \in \mathrm{C}_\mathrm{b}(M)$, then $\bm{\mathcal{T}}_k^\mu h \in \mathrm{C}_\mathrm{b}(M)$ for all $\mu  \in \mathcal{M}_\mathbb{C}(M)$.

\item[\textbf{(b)}] If $h \in \mathrm{C}_0(M)$, then $\bm{\mathcal{T}}_k^\mu h \in \mathrm{C}_0(M)$ for all $\mu \in \mathcal{M}_\mathbb{C}(M)$.

\item[\textbf{(c)}] Let $1 \leq p \leq \infty$, $\mu \in \mathcal{M}_+(M)$ and $h \in L_k^p$. The $\Delta_k$-translation $(\bm{\mathcal{T}}_k^\mu h)(x)$ is a Borel measurable function of $x \in M$, and we have
\begin{equation} \label{eq:coneconv_gentransl_Lpcont}
\|\bm{\mathcal{T}}_k^\mu h\|_{L_k^p} \leq \|\mu\| \ccdot \|h\|_{L_k^p}.
\end{equation}
\item[\textbf{(d)}] Let $p_1,p_2 \in [1, \infty]$ such that ${1 \over p_1} + {1 \over p_2} \geq 1$, and write $\bm{\mathcal{T}}_k^{\bm{\xi}} := \bm{\mathcal{T}}_k^{\delta_{\bm{\xi}}}$\, ($\bm{\xi} \in M\mskip-0.8\thinmuskip$). For $h \in L_k^{p_1}$ and $g \in L_k^{p_2}$, the $\Delta_k$-convolution 
\[
(h \conv{k} g)(\bm{\xi}) = \int_M (\bm{\mathcal{T}}_k^{\bm{\xi_1}} h)(\bm{\xi})\, g(\bm{\xi_1})\, B_k(x_1) dx_1 d\theta_1
\]
is well-defined and, for $s \in [1, \infty]$ defined by ${1 \over s} = {1 \over p_1} + {1 \over p_2} - 1$, it satisfies
\[
\| h \conv{k} g \|_{L_k^s} \leq \| h \|_{L_k^{p_1}} \| g \|_{L_k^{p_2}}
\]
(in particular, $h \conv{k} g \in L_k^s$). 
\end{enumerate}
\end{proposition}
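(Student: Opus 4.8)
The plan is to reduce all four statements to the one-dimensional theory of generalized convolutions associated with the Sturm--Liouville operators $\ell_k$ (collected in the Appendix), exploiting the factorization already implicit in Proposition~\ref{prop:coneconv_prodform}. By construction $\delta_{\bm{\xi_1}}\conv{k}\delta_{\bm{\xi_2}}=\bm{\nu}_{k,\bm{\xi_1},\bm{\xi_2}}=\pi_{x_1,x_2}^{[k]}\otimes\delta_{\theta_1+\theta_2}=\bigl(\delta_{x_1}\convdiam{k}\delta_{x_2}\bigr)\otimes\bigl(\delta_{\theta_1}\ast\delta_{\theta_2}\bigr)$, where $\ast$ denotes ordinary convolution on $\mathbb{T}$. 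Writing $T_{x_1}^{[k]}$ for the one-dimensional $\ell_k$-translation on $\mathbb{R}_0^+$ and $\bm{\xi}=(x,\theta)$, $\bm{\xi_1}=(x_1,\theta_1)$, this means $(\bm{\mathcal{T}}_k^{\bm{\xi_1}}h)(\bm{\xi})=\bigl(T_{x_1}^{[k]}h(\bm{\cdot},\theta+\theta_1)\bigr)(x)$, and $\bm{\mathcal{T}}_k^\mu h$ is the $\mu$-average of these operators. Since translations in the $\theta$-variable are isometries of $\mathrm{C}_\mathrm{b}(\mathbb{T})$, $\mathrm{C}_0(\mathbb{T})$ and $L^p(\mathbb{T},d\theta)$, each assertion will follow from the corresponding one-dimensional property of $T_{x_1}^{[k]}$ (Proposition~\ref{prop:SLappendix_convtransl_props}) together with Fubini's theorem.

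For \textbf{(a)} and \textbf{(b)} I would argue directly through Corollary~\ref{cor:coneconv_weakcont}. For $h\in\mathrm{C}_\mathrm{b}(M)$ the map $\bm{\xi}\mapsto\int_M h\,d(\delta_{\bm{\xi}}\conv{k}\delta_{\bm{\xi_1}})$ is continuous, by weak continuity of $\bm{\xi}\mapsto\delta_{\bm{\xi}}\conv{k}\delta_{\bm{\xi_1}}$, and bounded in absolute value by $\|h\|_\infty$ since these are probability measures; integrating against $\mu(d\bm{\xi_1})$ and invoking dominated convergence yields $\bm{\mathcal{T}}_k^\mu h\in\mathrm{C}_\mathrm{b}(M)$ with $\|\bm{\mathcal{T}}_k^\mu h\|_\infty\le\|\mu\|\,\|h\|_\infty$. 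For (b) the extra ingredient is the support localization $\supp(\pi_{x_1,x_2}^{[k]})\subset[|x_1-x_2|,x_1+x_2]$: if $h\in\mathrm{C}_\mathrm{c}(M)$ and $\mu$ has compact support, then $(\bm{\mathcal{T}}_k^\mu h)(x,\theta)=0$ once $x$ is so large that the interval $[|x-x_1|,x+x_1]$ misses the $x$-projection of $\supp h$ for every $x_1$ in the $x$-projection of $\supp\mu$, so $\bm{\mathcal{T}}_k^\mu h\in\mathrm{C}_\mathrm{c}(M)$; the general case follows by approximating $h$ in the sup norm by functions in $\mathrm{C}_\mathrm{c}(M)$ and $\mu$ in total variation by its restrictions $\mu|_{[0,n]\times\mathbb{T}}$, using the contraction bound from (a) and the fact that $\mathrm{C}_0(M)$ is closed in $\mathrm{C}_\mathrm{b}(M)$.

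For \textbf{(c)}, joint Borel measurability of $(\bm{\xi},\bm{\xi_1})\mapsto(\bm{\mathcal{T}}_k^{\bm{\xi_1}}h)(\bm{\xi})$ follows from the one-dimensional measurability statement and Fubini, whence $(\bm{\mathcal{T}}_k^\mu h)(\bm{\xi})=\int_M(\bm{\mathcal{T}}_k^{\bm{\xi_1}}h)(\bm{\xi})\,\mu(d\bm{\xi_1})$ is measurable; and from the factorization, Fubini in $\theta$, translation invariance of $d\theta$ and the $L^p$-contractivity of $T_{x_1}^{[k]}$ on $L^p(\mathbb{R}_0^+,B_k\,dx)$ one gets $\|\bm{\mathcal{T}}_k^{\bm{\xi_1}}h\|_{L_k^p}\le\|h\|_{L_k^p}$, so that Minkowski's integral inequality gives $\|\bm{\mathcal{T}}_k^\mu h\|_{L_k^p}\le\int_M\|\bm{\mathcal{T}}_k^{\bm{\xi_1}}h\|_{L_k^p}\,|\mu|(d\bm{\xi_1})\le\|\mu\|\,\|h\|_{L_k^p}$. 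For \textbf{(d)}, the well-definedness of $h\conv{k}g$ and the Young-type bound reduce, once more via the tensor factorization, to the one-dimensional Young inequality for $\convdiam{k}$ on $(\mathbb{R}_0^+,B_k\,dx)$ combined with the classical Young inequality on $\mathbb{T}$; equivalently one can interpolate (Riesz--Thorin) between the endpoint cases $(p_1,p_2)\in\{(1,1),(1,\infty),(\infty,1)\}$, where $(1,1)$ is Fubini together with the fact that $B_k\,dx\,d\theta$ is preserved by the translations, and the other two are instances of (c) with $p=\infty$ and $p=1$. I expect \textbf{(d)} to be the main obstacle: one first has to establish the duality identity $\int_M(\bm{\mathcal{T}}_k^{\bm{\xi_1}}h)\,\overline{g}\,B_k\,dx\,d\theta=\int_M h\,\overline{\bm{\mathcal{T}}_k^{\widetilde{\bm{\xi_1}}}g}\,B_k\,dx\,d\theta$ (with $\widetilde{\bm{\xi_1}}=(x_1,-\theta_1)$) and, when $\tfrac1{p_1}+\tfrac1{p_2}=1$, justify that the defining integral for $(h\conv{k}g)(\bm{\xi})$ is absolutely convergent before the inequality can be applied; the remaining steps are the routine three-fold Hölder estimate familiar from the classical proof of Young's inequality.
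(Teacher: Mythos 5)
Your proposal is correct, and for parts (a), (c) and (d) it runs along essentially the same lines as the paper: (a) is read off from the weak continuity of the convolution (Corollary \ref{cor:coneconv_weakcont}) plus the fact that the $\bm{\nu}_{k,\bm{\xi_1},\bm{\xi_2}}$ are probability measures; (c) reduces to the $L^p$-contractivity of the one-dimensional translation $\mathcal{T}_{\ell_k}^{x_1}$ on $L^p(\mathbb{R}^+,B_k\,dx)$ (Proposition \ref{prop:SLappendix_convtransl_props}(e)) followed by Minkowski's integral inequality; and (d) is the standard Young-type argument built on (c). The one place where you genuinely diverge is (b). The paper proves it spectrally: it computes $\bm{\mathcal{F}}(\delta_{\bm{\xi}}\conv{k}\mu)$ and invokes $\delta_x\convdiam{k}\delta_{x_1}\varrow\bm{0}$ as $x\to\infty$ (Proposition \ref{prop:SLappendix_convtransl_props}(d), which rests on the standing hypothesis $\lim_{x\to\infty}A(x)=\infty$) to conclude that $\delta_{\bm{\xi}}\conv{k}\mu$ converges vaguely to the zero measure. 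You instead exploit the support localization $\supp(\pi_{x_1,x_2}^{[k]})\subset[|x_1-x_2|,x_1+x_2]$ to get $\bm{\mathcal{T}}_k^\mu h\in\mathrm{C}_\mathrm{c}(M)$ for compactly supported $h$ and $\mu$, and then pass to general $h\in\mathrm{C}_0(M)$, $\mu\in\mathcal{M}_\mathbb{C}(M)$ by uniform and total-variation approximation together with the contraction bound from (a); this is more elementary, avoids the vague-convergence machinery, and does not even use $\lim_{x\to\infty}A(x)=\infty$. One small technical caveat in your (c): applying $T_{x_1}^{[k]}$ slice-by-slice in $\theta$ requires a word on the joint measurability of $(x,\theta)\mapsto\bigl(T_{x_1}^{[k]}h(\bm{\cdot},\theta+\theta_1)\bigr)(x)$ before Fubini can be used; the paper sidesteps this by first proving the Dirac-measure bound on product functions $h=f\otimes g$ and extending by density of the span of indicators of compact rectangles in $L_k^p$, which you may find cleaner to adopt.
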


\begin{proof}
\textbf{\emph{(a)}} This is an immediate consequence of Corollary \ref{cor:coneconv_weakcont}. \\[-8pt]

\textbf{\emph{(b)}} It follows from \eqref{eq:coneconv_weakcont_pf1} that for all $j \in \mathbb{Z}$ and $\lambda > 0$ we have
\begin{equation} \label{eq:coneconv_vagueconvg_pf}
\bigl(\bm{\mathcal{F}}(\delta_{\bm{\xi}} \conv{k} \mu)\bigr)(j,\lambda) = \int_M e^{-i2j\pi(\theta + \theta_1)}\! \int_{\mathbb{R}_0^+} {v_{j,\lambda}(x_3) \over \zeta_j(x_3)} (\delta_x \convdiam{k} \delta_{x_1})(dx_3) \, \mu(d\bm{\xi_1}) \longrightarrow 0 \qquad \text{as } x \to \infty
\end{equation}
where the last step follows from dominated convergence and the fact that $\delta_x \convdiam{k} \delta_{x_1} \varrow \bm{0}$ as $x \to \infty$, where $\bm{0}$ denotes the zero measure (Proposition \ref{prop:SLappendix_convtransl_props}(d)). It follows from \eqref{eq:coneconv_vagueconvg_pf} and similar reasoning as in \cite[Remark 4.7]{sousaetal2020} that $\delta_{\bm{\xi}} \conv{k} \mu \varrow \bm{0}$ as $x \to \infty$, so that (b) holds. \\[-8pt]

\textbf{\emph{(c)}} In the  case $p = \infty$, the proof is straightforward. Let $1 \leq p < \infty$. Suppose first that $h(x,\theta) = f(x) g(\theta)$ and observe that
\[
(\bm{\mathcal{T}}_k^{(x_2,\theta_2)} h)(x_1,\theta_1) = (\mathcal{T}_{\ell_k}^{x_2} f)(x_1) \ccdot (\mathcal{T}_\mathbb{T}^{\theta_2} g)(\theta_1)
\]
where $\mathcal{T}_{\ell_k}^x$ is the generalized translation associated with the Sturm-Liouville operator $\ell_k$ and $(\mathcal{T}_\mathbb{T}^{\theta_2} g)(\theta_1) := g(\theta_1 + \theta_2)$ is the ordinary translation on the torus. We have $\|\mathcal{T}_{\ell_k}^x f\|_{L^p(\mathbb{R}^+,\, B_k(x)dx)} \leq \|f\|_{L^p(\mathbb{R}^+,\, B_k(x)dx)}$ (cf.\ Proposition \ref{prop:SLappendix_convtransl_props}(e)), and therefore
\[
\| \bm{\mathcal{T}}_k^{(x_2,\theta_2)} h \|_{L_k^p} = \|\mathcal{T}_{\ell_k}^x f\|_{L^p(\mathbb{R}^+,\, B_k(x)dx)} \|\mathcal{T}_\mathbb{T}^{\theta_2} g\|_{L^p(\mathbb{T})} \leq \|f\|_{L^p(\mathbb{R}^+,\, B_k(x)dx)} \|g\|_{L^p(\mathbb{T})} = \|h\|_{L_k^p}.
\]
Since the linear span of indicator functions of compact rectangles $I \times J \subset \mathbb{R}_0^+ \times \mathbb{T}$ is dense in $L_k^p$, we have $\|\bm{\mathcal{T}}_k^{(x,\theta)} h\|_{L_k^p} \leq \|h\|_{L_k^p}$ for all $h \in L_k^p$ and $(x,\theta) \in M$, showing that \eqref{eq:coneconv_gentransl_Lpcont} holds for Dirac measures $\mu = \delta_{(x,\theta)}$. The result can be extended to all $\mu \in \mathcal{M}_+(M)$ (and $1 \leq p < \infty$) by using Minkowski's integral inequality.
\\[-8pt]

\textbf{\emph{(d)}} The proof relies on part (c) and the same reasoning as in the classical case; see e.g.\ the proof of Proposition 1.III.5 of \cite{trimeche1997}.
\end{proof}

In the next statement we show that if a heat kernel exists for the heat semigroup $\{e^{t\Delta_N}\}$, then the functions $e^{t\Delta_N} w_{k,\lambda} = e^{-t\lambda} w_{k,\lambda}$ also admit a product formula whose measures do not depend on the spectral parameter $\lambda$ and, moreover, are absolutely continuous with respect to $\omega$.

\begin{proposition} \label{prop:coneconv_prodform_extended}
Assume that the action of $e^{t\Delta_N}$ on $L^2(M)$ is given by a symmetric heat kernel satisfying conditions I and II of Proposition \ref{prop:coneconv_heatkern_spectrep}. Let $\bm{\gamma}_{t,k,\bm{\xi_1},\bm{\xi_2}}$ be the positive measure defined by
\[
\bm{\gamma}_{t,k,\bm{\xi_1},\bm{\xi_2}}(d\bm{\xi_3}) = \int_M \bm{\gamma}_{k,\bm{\xi_4},\bm{\xi}_2}(d\bm{\xi_3}) \, p(t,\bm{\xi_1},\bm{\xi_4}) \omega(d\bm{\xi_4}).
\]
Then, the product $e^{-t\lambda} w_{k,\lambda}(\bm{\xi_1}) \, w_{k,\lambda}(\bm{\xi_2})$ admits the integral representation
\[
e^{-t\lambda} w_{k,\lambda}(\bm{\xi_1}) \, w_{k,\lambda}(\bm{\xi_2}) = \int_M w_{k,\lambda}(\bm{\xi_3})\, \bm{\gamma}_{t,k,\bm{\xi_1},\bm{\xi_2}}(d\bm{\xi_3}) \qquad (t \geq 0, \; \bm{\xi_1},\bm{\xi_2} \in M, \; \lambda \in \supp(\bm{\rho_k})).
\]
\end{proposition}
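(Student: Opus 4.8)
The plan is to obtain the claimed integral representation by combining the product formula of Proposition~\ref{prop:coneconv_prodform} with the eigenfunction identity of Corollary~\ref{cor:coneconv_wsol_eigeneq}. The starting point is the definition of $\bm{\gamma}_{t,k,\bm{\xi_1},\bm{\xi_2}}$: for $\lambda \in \supp(\bm{\rho_k})$ we integrate $w_{k,\lambda}(\bm{\xi_3})$ against this measure and use Tonelli/Fubini to swap the order of integration, writing
\[
\int_M w_{k,\lambda}(\bm{\xi_3})\, \bm{\gamma}_{t,k,\bm{\xi_1},\bm{\xi_2}}(d\bm{\xi_3}) = \int_M \Bigl( \int_M w_{k,\lambda}(\bm{\xi_3})\, \bm{\gamma}_{k,\bm{\xi_4},\bm{\xi_2}}(d\bm{\xi_3}) \Bigr) p(t,\bm{\xi_1},\bm{\xi_4})\, \omega(d\bm{\xi_4}).
\]
By Proposition~\ref{prop:coneconv_prodform}, the inner integral equals $w_{k,\lambda}(\bm{\xi_4}) \, w_{k,\lambda}(\bm{\xi_2})$, so the right-hand side becomes $w_{k,\lambda}(\bm{\xi_2}) \int_M w_{k,\lambda}(\bm{\xi_4})\, p(t,\bm{\xi_1},\bm{\xi_4})\, \omega(d\bm{\xi_4})$. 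Then Corollary~\ref{cor:coneconv_wsol_eigeneq}, which is available precisely because the hypotheses of Proposition~\ref{prop:coneconv_heatkern_spectrep} are assumed and because $\lambda \in \supp(\bm{\rho_k})$, identifies the remaining integral as $e^{-t\lambda} w_{k,\lambda}(\bm{\xi_1})$, yielding $e^{-t\lambda} w_{k,\lambda}(\bm{\xi_1}) w_{k,\lambda}(\bm{\xi_2})$ as required. The case $t = 0$ is just Proposition~\ref{prop:coneconv_prodform}, since $p(0,\cdot,\cdot)$ reduces the outer integral to evaluation at $\bm{\xi_4} = \bm{\xi_1}$ (or one simply notes $\bm{\gamma}_{0,k,\bm{\xi_1},\bm{\xi_2}} = \bm{\gamma}_{k,\bm{\xi_1},\bm{\xi_2}}$).

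The one delicate point — and the step I expect to need the most care — is justifying the interchange of integrals, i.e.\ that Fubini's theorem applies in the display above. The measures $\bm{\gamma}_{k,\bm{\xi_4},\bm{\xi_2}}$ are positive but not probability measures: from \eqref{eq:coneconv_gammaconvmeas_def} they carry the factor $\zeta_k(x_4)\zeta_k(x_2)/\zeta_k(x_3)$, and $\zeta_k$ grows, so one must check that $\int_M \bigl( \int_M |w_{k,\lambda}(\bm{\xi_3})|\, \bm{\gamma}_{k,\bm{\xi_4},\bm{\xi_2}}(d\bm{\xi_3}) \bigr) p(t,\bm{\xi_1},\bm{\xi_4})\, \omega(d\bm{\xi_4}) < \infty$. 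Here one exploits that $|w_{k,\lambda}(\bm{\xi_3})| = |\widetilde v_{k,\lambda}(x_3)|\,\zeta_k(x_3) \le \zeta_k(x_3)$ for $\lambda \in \supp(\bm{\rho_k}) \subset \mathbb{R}_0^+$ (by Lemma~\ref{lem:SLappendix_solprops}(a), $\|\widetilde v_{k,\lambda}\|_\infty \le 1$), so the $\zeta_k(x_3)$ in the numerator and denominator cancel and the inner integral is bounded by $\zeta_k(x_4)\zeta_k(x_2)$; since $\pi_{x_4,x_2}^{[k]}$ is supported in $[|x_4 - x_2|, x_4 + x_2]$ one can even be more precise. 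It then remains to see that $\int_M \zeta_k(x_4)\, p(t,\bm{\xi_1},\bm{\xi_4})\, \omega(d\bm{\xi_4}) < \infty$; this follows because $\zeta_k(x_4) \le \tfrac12\bigl(v_{k,\mu_0}(x_4) + 1\bigr)$-type bounds are not needed — more directly, $\int_M \zeta_k(x_4) p(t,\bm{\xi_1},\bm{\xi_4})\,\omega(d\bm{\xi_4})$ equals (up to the angular integration, which only contributes the $k$-th Fourier mode) $\zeta_k(x_1)\int_0^\infty p_{\Delta_k}(t,x_1,x_4)\,\zeta_k(x_4)^{-1}\,B_k(x_4)\,dx_4 \le \zeta_k(x_1) \int_{\mathbb{R}_0^+} e^{-t\lambda_0}\bm{\rho_k}(d\lambda_0) < \infty$ by the computation already carried out in the proof of Corollary~\ref{cor:coneconv_wsol_eigeneq} (where it was shown $\mathcal{F}_{\ell_k}^{-1} e^{-t\,\cdot} \in L^1(\mathbb{R}^+, B_k\,dx)$). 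Alternatively, and perhaps more cleanly, one observes that the finiteness of the iterated integral of the \emph{absolute values} is exactly what is implicitly verified by the chain of equalities in Corollary~\ref{cor:coneconv_wsol_eigeneq} with $v_{k,\lambda}$ replaced by the nonnegative bound $\zeta_k$, so no genuinely new estimate is required.

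Once the Fubini justification is in place, the argument is a two-line substitution. I would therefore structure the write-up as: (1) reduce to $\lambda \in \supp(\bm{\rho_k})$, $t > 0$, and $k \in \mathbb{N}_0$ (the extension to $k \in \mathbb{Z}$ being via $w_{-k,\lambda} = \overline{w_{k,\lambda}}$ and $\bm{\gamma}_{k}$ defined for all $k$); (2) state the domination bound $|w_{k,\lambda}(\bm{\xi_3})| \le \zeta_k(x_3)$ and deduce the absolute convergence of the iterated integral from the integrability facts recorded in the proof of Corollary~\ref{cor:coneconv_wsol_eigeneq}; (3) apply Fubini, then Proposition~\ref{prop:coneconv_prodform} to the inner integral, then Corollary~\ref{cor:coneconv_wsol_eigeneq} to the outer integral; (4) dispose of $t = 0$ by noting $\bm{\gamma}_{0,k,\bm{\xi_1},\bm{\xi_2}} = \bm{\gamma}_{k,\bm{\xi_1},\bm{\xi_2}}$ and invoking Proposition~\ref{prop:coneconv_prodform}. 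The absolute continuity with respect to $\omega$ claimed in the sentence preceding the proposition is immediate from the definition of $\bm{\gamma}_{t,k,\bm{\xi_1},\bm{\xi_2}}$, since it is an integral of the fixed family $\bm{\gamma}_{k,\cdot,\bm{\xi_2}}$ against the density $p(t,\bm{\xi_1},\cdot)$ with respect to $\omega$; this need not be belaboured in the proof.
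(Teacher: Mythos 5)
Your overall route — Fubini, then Proposition \ref{prop:coneconv_prodform} on the inner integral, then Corollary \ref{cor:coneconv_wsol_eigeneq} on the outer one — is exactly the paper's, but there is a genuine gap at the last step. Corollary \ref{cor:coneconv_wsol_eigeneq} gives the identity $\int_M w_{k,\lambda}(\bm{\xi_4})\,p(t,\bm{\xi_1},\bm{\xi_4})\,\omega(d\bm{\xi_4}) = e^{-t\lambda}w_{k,\lambda}(\bm{\xi_1})$ only for $\omega$-a.e.\ $\bm{\xi_1}$ (the exceptional set comes from the a.e.\ validity of the spectral representation of the heat kernel), whereas the proposition asserts the product formula for \emph{all} $\bm{\xi_1},\bm{\xi_2} \in M$. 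Your argument therefore only proves the statement for a.e.\ $\bm{\xi_1}$, and the left-hand side is not obviously continuous in $\bm{\xi_1}$ (the heat kernel is merely assumed measurable), so one cannot simply "remove" the null set. The paper closes this gap with an extra step you do not have: it first establishes the symmetry $\int_M w_{k,\lambda}\,d\bm{\gamma}_{t,k,\bm{\xi_1},\bm{\xi_2}} = \int_M w_{k,\lambda}\,d\bm{\gamma}_{t,k,\bm{\xi_2},\bm{\xi_1}}$ (which itself requires a small computation identifying the Fourier coefficients $(\widehat{\rule{0pt}{0.5\baselineskip}\bm{\gamma}_{t,k,\bm{\xi_1},\bm{\xi_2}}})_{-k}$ via Propositions \ref{prop:coneconv_Feigenexp}--\ref{prop:coneconv_heatkern_spectrep} and \ref{prop:SLappendix_convtransl_props}(c)), so that the identity holds for all $\bm{\xi_1}$ and a.e.\ $\bm{\xi_2}$, and then uses that for fixed $\bm{\xi_1}$ both sides \emph{are} continuous in $\bm{\xi_2}$ to conclude. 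You need some version of this.

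A secondary point: your Fubini justification, which you rightly single out as the delicate step, contains a slip. After bounding the inner integral by $\zeta_k(x_4)\zeta_k(x_2)$ you must control $\int_M \zeta_k(x_4)\,p(t,\bm{\xi_1},\bm{\xi_4})\,\omega(d\bm{\xi_4})$; since $\zeta_k(x_4)p(t,\bm{\xi_1},\bm{\xi_4})$ is nonnegative and carries no angular factor, integrating out $\theta_4$ produces the \emph{zeroth} Fourier mode $p_{\Delta_0}(t,x_1,x_4)$, not $p_{\Delta_k}$ as you claim. The estimate recorded in the proof of Corollary \ref{cor:coneconv_wsol_eigeneq} controls $\int_0^\infty \zeta_k(x_4)\,p_{\Delta_k}(t,x_1,x_4)\,A(x_4)dx_4$ (where the $k$-th mode is selected precisely by the oscillating factor $e^{i2k\pi\theta_4}$ in $w_{k,\lambda}$), and since $p_{\Delta_k} \leq p_{\Delta_0}$ this bounds the wrong side. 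So the "no genuinely new estimate is required" claim does not hold as stated; the finiteness of $\int_0^\infty \zeta_k\, p_{\Delta_0}(t,x_1,\cdot)\,A\,dx$ would need a separate argument (or the Fubini step must be organized differently, e.g.\ by performing the $\theta_3$- and $x_3$-integrations exactly rather than via a crude domination).
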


\begin{proof}
By direct calculation we get
\begin{align*}
\int_M w_{k,\lambda}(\bm{\xi_3})\, \bm{\gamma}_{t,k,\bm{\xi_1},\bm{\xi_2}}(d\bm{\xi_3}) & = \int_M \int_M w_{k,\lambda}(\bm{\xi_3}) \bm{\gamma}_{k,\bm{\xi_4},\bm{\xi}_2}(d\bm{\xi_3}) \, p(t,\bm{\xi_1},\bm{\xi_4}) \omega(d\bm{\xi_4}) \\
& = e^{-t\lambda} w_{k,\lambda}(\bm{\xi_1}) \, w_{k,\lambda}(\bm{\xi_2})
\end{align*}
where, by Proposition \ref{prop:coneconv_prodform} and Corollary \ref{cor:coneconv_wsol_eigeneq}, the last equality holds for $t \geq 0$, $\lambda \in \supp(\bm{\rho_k})$, $\bm{\xi_2} \in M$ and $\omega$-a.e.\ $\bm{\xi_1} \in M$. Using the symmetry relation $\int_M w_{k,\lambda}(\bm{\xi_3})\, \bm{\gamma}_{t,k,\bm{\xi_1},\bm{\xi_2}}(d\bm{\xi_3}) = \int_M w_{k,\lambda}(\bm{\xi_3})\, \bm{\gamma}_{t,k,\bm{\xi_2},\bm{\xi_1}}(d\bm{\xi_3})$, the identity extends by continuity to all $\bm{\xi_1}, \bm{\xi_2} \in M$. (The given symmetry can be deduced by noting that, by Propositions \ref{prop:coneconv_Feigenexp}--\ref{prop:coneconv_heatkern_spectrep} and Proposition \ref{prop:SLappendix_convtransl_props}(c), we have for $g \in \mathrm{C}_\mathrm{c}^2(\mathbb{R}^+)$
\begin{align*}
\int_M e^{i2k\pi\theta_3} g(x_3) \bm{\gamma}_{t,k,\bm{\xi_1},\bm{\xi_2}}(d\bm{\xi_3}) & = \int_M \int_{\mathbb{R}_0^+} w_{k,\lambda}(\bm{\xi_4}) \, w_{k,\lambda}(\bm{\xi_2}) \, (\mathcal{F}_{\Delta_k} g)(\lambda) \bm{\rho_k}(d\lambda) \, p(t,\bm{\xi_1}, \bm{\xi_4}) \omega(d\bm{\xi_4}) \\
& = \int_{\mathbb{R}_0^+} e^{-t\lambda} w_{k,\lambda}(\bm{\xi_1}) \, w_{k,\lambda}(\bm{\xi_2}) \, (\mathcal{F}_{\Delta_k} g)(\lambda) \bm{\rho_k}(d\lambda) \end{align*}
and, therefore, $(\widehat{\rule{0pt}{0.5\baselineskip}\bm{\gamma}_{t,k,\bm{\xi_1},\bm{\xi_2}}})_{-k} = (\widehat{\rule{0pt}{0.5\baselineskip}\bm{\gamma}_{t,k,\bm{\xi_2},\bm{\xi_1}}})_{-k}$.)
\end{proof}

\section{Infinitely divisible measures and convolution semigroups} \label{sec:infdivsemigr}

In this section we develop the basic notions of divisibility of measures with respect to the convolution algebras $(\mathcal{M}_\mathbb{C}(M),\conv{k})$. As in the classical theory, these will be seen to induce a Lévy-Khintchine type representation and to a convolution semigroup representation for the reflected Brownian motion on $(M,g)$. First we present the following basic definitions:

\begin{definition}
\begin{itemize}[itemsep=0pt,topsep=4pt]
\item[]
\item The set $\mathcal{P}_{k,\mathrm{id}}$ of \emph{$\Delta_k$-infinitely divisible measures} is defined by
\begin{equation} \label{eq:coneconv_infdiv_def}
\mathcal{P}_{k,\mathrm{id}} = \bigl\{ \mu \in \mathcal{P}(M) \bigm| \text{for all } n \in \mathbb{N} \text{ there exists } \nu_n \in \mathcal{P}(M) \text{ such that } \mu = (\nu_n)^{\mskip -0.5\thinmuskip *_{\raisebox{-1.5pt}{$\scriptscriptstyle \!k$}} n} \bigr\}
\end{equation}
where $(\nu_n)^{\mskip -0.5\thinmuskip *_{\raisebox{-1.5pt}{$\scriptscriptstyle \!k$}} n}$ denotes the $n$-fold $\Delta_k$-convolution of $\nu_n$ with itself.

\item The \emph{$\Delta_k$-Poisson measure} associated with $\nu \in \mathcal{M}_+(M)$ is
\[
\mb{e}_k(\nu): = e^{-\|\nu\|} \sum_{n=0}^\infty {\nu^{\mskip -0.5\thinmuskip *_{\raisebox{-1.5pt}{$\scriptscriptstyle \!k$}} n} \over n!}
\]
(the infinite sum converging in the weak topology).

\item A measure $\mu \in \mathcal{P}_{k,\mathrm{id}}$ is called a \emph{$\Delta_k$-Gaussian measure} if the measures $\nu_n$ in \eqref{eq:coneconv_infdiv_def} are such that
\[
\lim_{n \to \infty} n \ccdot \nu_n(M \setminus V) = 0 \; \text{ for every open set } V \text{ containing } (0,0).
\]
\end{itemize}
\end{definition}

It is easy to check that, for $\nu \in \mathcal{M}_+(M)$,
\begin{equation} \label{eq:coneconv_poisson_transf}
\int_M e^{-i2j\pi\theta\,} \widetilde{v}_{k,\lambda}(x) \, \mb{e}_k(\nu)(d\bm{\xi}) = \exp\biggl( \int_M \bigl[ e^{-i2j\pi\theta} \, \widetilde{v}_{k,\lambda}(x) - 1 \bigr] \nu(d\bm{\xi}) \biggr), \qquad (j, \lambda) \in \mathbb{Z} \times \mathbb{R}_0^+.
\end{equation}
(This is an equivalent characterization of $\Delta_k$-Poisson measures, because by \cite[Theorem 2.2.4]{bloomheyer1994} each measure $\mu \in \mathcal{M}_\mathbb{C}(M)$ is characterized by the integrals $\int_M e^{-i2j\pi\theta\,} \widetilde{v}_{k,\lambda}(x) \mu(d\bm{\xi})$.) More generally, if the positive measure $\nu$ is (possibly) unbounded and the equality \eqref{eq:coneconv_poisson_transf} holds for some measure $\mb{e}_k(\nu) \in \mathcal{P}(M)$, then we will also say that $\mb{e}_k(\nu)$ is a \emph{$\Delta_k$-Poisson measure} associated with $\nu$.

\begin{definition}
A family $\{\mu_t\}_{t\geq 0} \subset \mathcal{P}(M)$ is called a \emph{$\Delta_k$-convolution semigroup} if it satisfies the conditions
\[
\mu_s \conv{k} \mu_t = \mu_{s+t} \text{ for all } s, t \geq 0, \qquad \mu_0 = \delta_{(0,0)} \qquad \text{ and } \;\; \mu_t \warrow \delta_{(0,0)} \text{ as } t \downarrow 0.
\]
The $\Delta_k$-convolution semigroup $\{\mu_t\}_{t \geq 0}$ is said to be \emph{Gaussian} if $\mu_1$ is a $\Delta_k$-Gaussian measure.
\end{definition}

A measure $\mu \in \mathcal{M}_\mathbb{C}(M)$ is said to be \emph{symmetric} if $\mu(B) = \mu(\check{B})$ for all Borel subsets $B \subset M$, where $\check{B}$ is the image of $B$ under the mapping $(x,\theta) \mapsto (x,1-\theta)$. One can show that for each symmetric measure $\mu \in \mathcal{P}_{k,\mathrm{id}}$ there exists a unique $\Delta_k$-convolution semigroup $\{\mu_t\}_{t \geq 0}$ such that $\mu_1 = \mu$; consequently, there is a one-to-one correspondence between symmetric $\Delta_k$-infinitely divisible measures and symmetric $\Delta_k$-convolution semigroups. (The proof is similar to that of the corresponding result for the ordinary convolution on the torus, see also \cite[Theorem 5.3.4]{bloomheyer1994}.)

It follows from Proposition \ref{prop:SLappendix_hypergroup} that the convolution algebra $(M,\conv{k})$ is a product hypergroup in the sense of \cite[Definition 1.5.29]{bloomheyer1994}. We can therefore use a general result on infinitely divisible measures on commutative hypergroups \cite[Theorems 4.4 and 4.7]{rentzsch1998} to obtain the following Lévy-Khintchine type representation for symmetric $\Delta_k$-infinitely divisible measures (and for the corresponding convolution semigroups):

\begin{proposition}
Any symmetric measure $\mu \in \mathcal{P}_{k,\mathrm{id}}$ can be represented as
\[
\mu = \gamma \conv{k} \mb{e}_k(\nu)
\]
where $\mb{e}_k(\nu)$ is the $\Delta_k$-Poisson measure associated with the $\sigma$-finite positive measure $\nu = \smash{\lim\limits_{t \downarrow 0} ({1 \over t} \mu_t) \restrict{M \setminus (0,0)}}$ and $\gamma$ is a $\Delta_k$-Gaussian measure.

The representation is unique, i.e.\ if $\mu = \widetilde{\gamma} \conv{k} \mb{e}_k(\widetilde{\nu})$ for a $\sigma$-finite positive measure $\widetilde{\nu}$ and a Gaussian measure $\widetilde{\gamma}$, then $\nu = \widetilde{\nu}$ and $\gamma = \widetilde{\gamma}$.
\end{proposition}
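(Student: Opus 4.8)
The plan is to obtain both the existence and the uniqueness of the representation by specializing the general Lévy--Khintchine theory for commutative hypergroups to the product hypergroup $(M,\conv{k})$. Two preparatory facts, both already recorded above, set this up: by Proposition~\ref{prop:SLappendix_hypergroup} the pair $(M,\conv{k})$ is a product hypergroup, namely the product of the Sturm--Liouville hypergroup $(\mathbb{R}_0^+,\convdiam{k})$ attached to $\ell_k$ and the compact group hypergroup $\mathbb{T}$; and, since $\mu$ is symmetric and $\Delta_k$-infinitely divisible, there is a unique $\Delta_k$-convolution semigroup $\{\mu_t\}_{t\geq 0}$ with $\mu_1=\mu$. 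So I would start from this embedding into a convolution semigroup (symmetry being exactly what guarantees it) and then show that $\mu_1$ has the asserted Gaussian-times-Poisson form.

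The substantive step is to verify that $(M,\conv{k})$ satisfies the structural hypotheses under which \cite[Theorems~4.4 and~4.7]{rentzsch1998} are stated: commutativity, second countability, a countable neighbourhood base at the identity $(0,0)$, and the compatibility of the involution $(x,\theta)\mapsto(x,1-\theta)$ — which on the first factor is the identity map, since Sturm--Liouville hypergroups are Hermitian, and on the second factor is the usual inversion on $\mathbb{T}$. Product hypergroups inherit all of these from their factors, and both factors are available: $\mathbb{T}$ trivially, and $(\mathbb{R}_0^+,\convdiam{k})$ through the properties of $\ell_k$ collected in the Appendix (in particular the consequences of assumption~\eqref{eq:SLappendix_mainassump}) and in \cite{sousaetal2019b,sousaetal2020}. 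I would carry out this reduction factor by factor, since this is where the real checking happens.

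With the hypotheses in place, \cite[Theorem~4.4]{rentzsch1998} delivers a decomposition $\mu=\gamma\conv{k}\mb{e}_k(\nu)$ in which $\gamma$ is $\Delta_k$-Gaussian — i.e.\ $n\cdot\nu_n(M\setminus V)\to 0$ for every open $V\ni(0,0)$ — and $\mb{e}_k(\nu)$ is the $\Delta_k$-Poisson measure of a $\sigma$-finite positive measure $\nu$ concentrated off the identity. The identification $\nu=\lim_{t\downarrow 0}\bigl(\tfrac1t\mu_t\bigr)\restrict{M\setminus(0,0)}$ in the vague topology is the standard construction of the Lévy measure and is part of that theorem; as in the classical case, one verifies that $t\mapsto\tfrac1t\mu_t\restrict{M\setminus V}$ is vaguely Cauchy as $t\downarrow 0$ for each open $V\ni(0,0)$, using the convolution-semigroup property together with the transform identity \eqref{eq:coneconv_poisson_transf}. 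The uniqueness of the pair $(\gamma,\nu)$ is then precisely \cite[Theorem~4.7]{rentzsch1998}.

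The main obstacle I anticipate is the hypothesis-matching in the second paragraph: the cited theorems apply to hypergroups satisfying a specific list of regularity conditions, and one must confirm that the genuinely singular object here — the Sturm--Liouville factor $(\mathbb{R}_0^+,\convdiam{k})$, whose translation structure has no closed form — actually meets that list, and then invoke the fact that a product of admissible hypergroups is admissible. Once the factorwise verification is done, the identification of the Lévy measure as an explicit vague limit and the invocation of uniqueness are routine, so the bulk of the work lies in this structural check.
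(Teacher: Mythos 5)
Your proposal follows exactly the paper's own route: the paper gives no separate proof, deriving the proposition directly from the product-hypergroup structure of $(M,\conv{k})$ established via Proposition~\ref{prop:SLappendix_hypergroup} together with \cite[Theorems 4.4 and 4.7]{rentzsch1998}, which is precisely your argument (with the factorwise verification and the Lévy-measure identification spelled out in more detail than the paper bothers to).
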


It is easy to show (cf.\ \cite[Proposition 2.1]{rentzschvoit2000}) that each $\Delta_k$-convolution semigroup gives rise to a Markovian contraction semigroup of operators:

\begin{proposition}
Let $\{\mu_t\}$ be a $\Delta_k$-convolution semigroup. Then
\[
(T_t h)(\bm{\xi}) := (\bm{\mathcal{T}}_k^{\mu_t}h)(\bm{\xi}) = \int_M h \, d(\delta_{\bm{\xi}} \conv{k} \mu_t)
\]
defines a conservative Feller semigroup on $\mathrm{C}_0(M)$ such that the identity $T_t \bm{\mathcal{T}}_k^\nu f = \bm{\mathcal{T}}_k^\nu T_t f$ holds for all $t \geq 0$ and $\nu \in \mathcal{M}_\mathbb{C}(M)$. The restriction $\bigl\{T_t\restrict{\mathrm{C}_\mathrm{c}(M)}\bigr\}$ can be extended to a strongly continuous contraction semigroup $\{T_t^{(p)}\}$ on the space $L^p(M)$\, ($1 \leq p < \infty$). Moreover, the operators $T_t^{(p)}$ are given by $T_t^{(p)} f = \bm{\mathcal{T}}_k^{\mu_t} f$\, ($f \in L^p(M)$).
\end{proposition}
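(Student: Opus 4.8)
The plan is to verify the several assertions bundled into the statement one at a time, each of them reducing to a property of the $\Delta_k$-convolution already established; as the reference to \cite{rentzschvoit2000} indicates, the whole statement can alternatively be read off from the general theory of convolution semigroups on commutative hypergroups applied to the product hypergroup $(M,\conv{k})$, whose axioms and basic continuity properties have been checked above (Corollary \ref{cor:coneconv_weakcont}, Proposition \ref{prop:coneconv_transl_props}, and the product-hypergroup property quoted just before the statement). I would carry out the needed verifications directly.

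For the Feller property on $\mathrm{C}_0(M)$: Proposition \ref{prop:coneconv_transl_props}(b) gives $T_t=\bm{\mathcal{T}}_k^{\mu_t}:\mathrm{C}_0(M)\to\mathrm{C}_0(M)$, and since $\mathcal{P}(M)$ is closed under $\conv{k}$ the measure $\delta_{\bm{\xi}}\conv{k}\mu_t$ is a probability measure, so $T_t$ is positive, $\|T_th\|_\infty\le\|h\|_\infty$, and its extension to $\mathrm{C}_\mathrm{b}(M)$ via the same integral formula satisfies $T_t\mathds{1}=\mathds{1}$, i.e.\ the semigroup is conservative. For the semigroup law I would first record the elementary identity $\bm{\mathcal{T}}_k^\mu\bm{\mathcal{T}}_k^\nu=\bm{\mathcal{T}}_k^{\mu\conv{k}\nu}$ for $\mu,\nu\in\mathcal{M}_\mathbb{C}(M)$, which is just the associativity of $\conv{k}$ (part of the Banach-algebra structure) combined with Fubini's theorem; taking $\mu=\mu_s$, $\nu=\mu_t$ and using $\mu_s\conv{k}\mu_t=\mu_{s+t}$ gives $T_sT_t=T_{s+t}$, while $T_0=\bm{\mathcal{T}}_k^{\delta_{(0,0)}}=\mathrm{id}$. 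The commutation identity then follows from the same identity together with the commutativity of $\conv{k}$: both $T_t\bm{\mathcal{T}}_k^\nu f$ and $\bm{\mathcal{T}}_k^\nu T_tf$ equal $\bm{\mathcal{T}}_k^{\mu_t\conv{k}\nu}f=\bm{\mathcal{T}}_k^{\nu\conv{k}\mu_t}f$, and $\|\nu\|<\infty$ makes the interchange of integrals legitimate.

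The part that requires a real argument is strong continuity at $t=0$. I would fix $h\in\mathrm{C}_0(M)$ (hence uniformly continuous on $M$) and use that the radial part of $\delta_{\bm{\xi}}\conv{k}\mu_t$ is obtained by integrating the measures $\pi_{x,y}^{[k]}$, supported in $[|x-y|,x+y]$ (Proposition \ref{prop:coneconv_prodform}), against the radial marginal of $\mu_t$, while the angular part is a rotation of the angular marginal of $\mu_t$. It follows that for any neighbourhood $V$ of $(0,0)$ the measure $\delta_{\bm{\xi}}\conv{k}\mu_t$ places mass at least $\mu_t(V)$ on points lying within a controlled distance of $\bm{\xi}$, uniformly in $\bm{\xi}$. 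Splitting $T_th(\bm{\xi})-h(\bm{\xi})=\int_M\bigl(h(\bm{\eta})-h(\bm{\xi})\bigr)(\delta_{\bm{\xi}}\conv{k}\mu_t)(d\bm{\eta})$ into the contributions of this set and its complement and using $\mu_t\warrow\delta_{(0,0)}$ (so $\mu_t(M\setminus V)\to0$) yields $\|T_th-h\|_\infty\le\eps+2\|h\|_\infty\mu_t(M\setminus V)$ for a prescribed $\eps>0$; hence $\|T_th-h\|_\infty\to0$, and strong continuity at every $t>0$ then follows from the semigroup law and $\|T_t\|\le1$. Together with the previous paragraph this makes $\{T_t\}$ a conservative Feller semigroup.

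For the $L^p$-statement I would invoke the $L^p$-contraction estimate of Proposition \ref{prop:coneconv_transl_props}(c) (applied with $\mu=\mu_t$) to see that $T_t$ restricts to an $L^p$-contraction on $\mathrm{C}_\mathrm{c}(M)$; since $\mathrm{C}_\mathrm{c}(M)$ is dense in $L^p(M)$, this restriction extends uniquely to a contraction $T_t^{(p)}$ of $L^p(M)$, and $\{T_t^{(p)}\}$ inherits the semigroup law from $\{T_t\}$ by density. Strong continuity of $\{T_t^{(p)}\}$ would follow by a standard approximation argument (approximate $f\in L^p(M)$ by $g\in\mathrm{C}_\mathrm{c}(M)$ and combine uniform convergence of $T_tg$ on a fixed compact set with the uniform contraction bound on the remainder; cf.\ \cite[Proposition 2.1]{rentzschvoit2000}), and the identification $T_t^{(p)}f=\bm{\mathcal{T}}_k^{\mu_t}f$ for arbitrary $f\in L^p(M)$ is immediate once one notes that, again by Proposition \ref{prop:coneconv_transl_props}(c), $f\mapsto\bm{\mathcal{T}}_k^{\mu_t}f$ is a well-defined $L^p$-contraction agreeing with $T_t^{(p)}$ on $\mathrm{C}_\mathrm{c}(M)$. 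The hard part of the whole proof is the upgrade from pointwise (equivalently, weak) convergence $T_th(\bm{\xi})\to h(\bm{\xi})$ to norm convergence in the strong-continuity step, which depends essentially on the localization of the convolution measures provided by the support bound $\supp(\pi_{x,y}^{[k]})\subset[|x-y|,x+y]$; everything else is bookkeeping on top of the algebraic properties and weak continuity of $\conv{k}$ and the mapping properties recorded in Proposition \ref{prop:coneconv_transl_props}.
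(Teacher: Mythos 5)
Your overall strategy is sound, and in fact the paper offers no proof of this proposition at all --- it merely asserts that the claim ``is easy to show'' and points to \cite[Proposition 2.1]{rentzschvoit2000}; so your direct verification is precisely the argument being waved at, and there is nothing in the paper to compare it against step by step. The algebraic parts (positivity, conservativeness, the semigroup law via $\bm{\mathcal{T}}_k^\mu\bm{\mathcal{T}}_k^\nu=\bm{\mathcal{T}}_k^{\mu\conv{k}\nu}$, the commutation identity) are correct and follow from the Banach-algebra structure and Proposition \ref{prop:coneconv_transl_props} exactly as you say. You have also correctly isolated the one genuinely nontrivial point: strong continuity at $t=0$ hinges on the support bound $\supp(\pi_{x,y}^{[k]})\subset[|x-y|,x+y]$, which forces $\delta_{\bm{\xi}}\conv{k}\mu_t$ to concentrate near $\bm{\xi}$ uniformly in $\bm{\xi}$ when $\mu_t\warrow\delta_{(0,0)}$ (the portmanteau theorem giving $\mu_t(M\setminus V)\to 0$ for closed complements of neighbourhoods of the identity). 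That argument for $\mathrm{C}_0(M)$ is complete.

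Two points deserve more care. First, in the $L^p$ strong-continuity step, ``uniform convergence of $T_tg$ on a fixed compact set with the uniform contraction bound on the remainder'' does not by itself close the argument: the sup-norm bound on $T_tg$ outside a compact set controls nothing in $L^p$ because $\omega(M)=\infty$. What saves you is the same localization you used before: if $\supp g\subset[0,R]\times\mathbb{T}$ and $x_2\leq\delta$, then $\bm{\mathcal{T}}_k^{\bm{\xi_2}}g$ vanishes outside $[0,R+\delta]\times\mathbb{T}$, so by Minkowski's integral inequality $\|T_tg\|_{L^p(M\setminus([0,R+\delta]\times\mathbb{T}))}\leq\|g\|_{L_k^p}\,\mu_t(\{x_2>\delta\})\to 0$, while on the compact set the sup-norm convergence suffices. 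You should state this explicitly. Second, a caveat on measures: the contraction estimate of Proposition \ref{prop:coneconv_transl_props}(c) is with respect to $L_k^p=L^p(M,B_k(x)dxd\theta)$, whereas the proposition as stated speaks of $L^p(M)=L^p(M,\omega)$ with $\omega=A(x)dxd\theta$; for $k\neq 0$ these spaces differ ($B_k=A\zeta_k^2$), and $\bm{\mathcal{T}}_k^{\mu}$ is not obviously a contraction with respect to $\omega$. Your proof, as written, establishes the $L_k^p$ version, which is the one actually supported by the cited estimate; this is best read as an imprecision in the paper's statement rather than a flaw in your argument, but it is worth flagging.
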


Next we show that the heat semigroup generated by $\Delta_N$ is of the convolution semigroup type, in the sense that its action can be represented in terms of integrals with respect to Gaussian $\Delta_k$-convolution semigroups:

\begin{proposition} \label{prop:coneconv_rbm_convsemigroup}
For $k \in \mathbb{Z}$, let $\mathrm{m}_0 \in \mathcal{M}_\mathbb{C}(M)$ be an absolutely continuous measure with respect to $\omega$ whose density function $q_{\mathrm{m}_0}$ belongs to $L^2(M) \cap L^1(M, \zeta_k \ccdot \omega)$, and such that $(\widehat{\mathrm{m}_0})_j = 0$ for each $j \neq k$. Then there exists a Gaussian $\Delta_k$-convolution semigroup $\{\mu_t^k\}_{t \geq 0}$ such that
\begin{equation} \label{eq:coneconv_gentransl_BMlevy}
\int_M (e^{t\Delta_N} h)(\bm{\xi})\, \mathrm{m}_0(d\bm{\xi}) = \int_M {h(\bm{\xi}) \over \zeta_k(x)} \, \bigl(\mu_t^k \conv{k} (\zeta_k \ccdot \mathrm{m}_0)\bigr)(d\bm{\xi}) \qquad \bigl(h \in L^2(M), \; t \geq 0\bigr).
\end{equation}
\end{proposition}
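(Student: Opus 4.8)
The plan is to build $\{\mu_t^k\}$ out of the heat-kernel convolution semigroup of the one-dimensional operator $\ell_k$ together with a point mass on $\mathbb{T}$, and then to verify \eqref{eq:coneconv_gentransl_BMlevy} by transporting the action of $e^{t\Delta_N}$ on the Fourier mode $H_k$ to the action of $e^{-t\ell_k}$ through the unitary multiplication by $\zeta_k$ coming from Lemma \ref{lem:coneconv_sol_normaliz}. Concretely: since $\ell_k$ satisfies assumption \eqref{eq:SLappendix_mainassump} (Lemma \ref{lem:coneconv_sol_normaliz}), combining its heat kernel (Proposition \ref{prop:SLappendix_semigr_heatkern}) with the Sturm--Liouville product formula (Theorem \ref{thm:SLappendix_prodform}) yields, as in \cite{sousaetal2020}, a Gaussian $\convdiam{k}$-convolution semigroup $\{\sigma_t^{[k]}\}_{t\geq 0}\subset\mathcal{P}(\mathbb{R}_0^+)$, absolutely continuous with respect to $B_k(x)\,dx$, with $(\mathcal{F}_{\ell_k}\sigma_t^{[k]})(\lambda)=e^{-t\lambda}$ and $e^{-t\ell_k}f=\mathcal{T}_{\ell_k}^{\sigma_t^{[k]}}f$ for $f\in L^2(B_k)$. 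I would then set $\mu_t^k:=\sigma_t^{[k]}\otimes\delta_0$ on $M=\mathbb{R}_0^+\times\mathbb{T}$. Because $\bm{\nu}_{k,\bm{\xi_1},\bm{\xi_2}}=\pi^{[k]}_{x_1,x_2}\otimes\delta_{\theta_1+\theta_2}$ (proof of Proposition \ref{prop:coneconv_prodform}), the convolution $\conv{k}$ factors into $\convdiam{k}$ in the radial variable and ordinary convolution on $\mathbb{T}$ in the angular variable; since $\delta_0$ is the identity for the latter, $\{\mu_t^k\}$ inherits the convolution semigroup axioms from $\{\sigma_t^{[k]}\}$, and it is Gaussian because for an open $V\ni(0,0)$ and $\eps>0$ with $[0,\eps)\times(-\eps,\eps)\subset V$ one has $n\,\mu_{1/n}^k(M\setminus V)\leq n\,\sigma_{1/n}^{[k]}([\eps,\infty))$, which tends to $0$.

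For the left-hand side of \eqref{eq:coneconv_gentransl_BMlevy} I would first note that the hypothesis $(\widehat{\mathrm{m}_0})_j=0$ for $j\neq k$ forces $q_{\mathrm{m}_0}(x,\theta)=c_k(x)\,e^{i2k\pi\theta}$ for $\omega$-a.e.\ $(x,\theta)$, with $c_k(x)=\int_0^1 e^{-i2k\pi\vartheta}q_{\mathrm{m}_0}(x,\vartheta)\,d\vartheta$; Parseval gives $c_k\in L^2(A)$, and, using $B_k=A\zeta_k^2$, the assumptions $q_{\mathrm{m}_0}\in L^2(M)\cap L^1(M,\zeta_k\ccdot\omega)$ translate into $c_k/\zeta_k\in L^1(B_k)\cap L^2(B_k)$ (in particular $\zeta_k\ccdot\mathrm{m}_0\in\mathcal{M}_\mathbb{C}(M)$). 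Since $e^{t\Delta_N}$ is self-adjoint on $L^2(M)$ and commutes with complex conjugation,
\[
\int_M(e^{t\Delta_N}h)\,\mathrm{m}_0(d\bm{\xi})=\int_M h\,(e^{t\Delta_N}q_{\mathrm{m}_0})\,\omega(d\bm{\xi});
\]
by the decomposition \eqref{Eq Fourier decomposition DN Laplacian} the mode $H_k$ is invariant with $e^{t\Delta_N}|_{H_k}=e^{t\Delta_k}$, and the unitary map $v\mapsto v/\zeta_k$ from $L^2(A)$ onto $L^2(B_k)$ intertwines $-\Delta_k$ with $\ell_k$, whence $e^{t\Delta_N}q_{\mathrm{m}_0}=e^{i2k\pi\theta}\,\zeta_k(x)\,(e^{-t\ell_k}(c_k/\zeta_k))(x)$. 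Thus the left-hand side equals $\int_M h(x,\theta)\,e^{i2k\pi\theta}\,\zeta_k(x)\,(e^{-t\ell_k}(c_k/\zeta_k))(x)\,A(x)\,dx\,d\theta$.

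For the right-hand side I would write $\zeta_k\ccdot\mathrm{m}_0=\bigl((c_k/\zeta_k)\,B_k\,dx\bigr)\otimes\bigl(e^{i2k\pi\theta}\,d\theta\bigr)$ (again using $B_k=A\zeta_k^2$); the factorization of $\conv{k}$ then gives $\mu_t^k\conv{k}(\zeta_k\ccdot\mathrm{m}_0)=\bigl(\sigma_t^{[k]}\convdiam{k}((c_k/\zeta_k)B_k\,dx)\bigr)\otimes(e^{i2k\pi\theta}\,d\theta)$, and testing against $\varphi\in\mathrm{C}_\mathrm{c}(\mathbb{R}_0^+)$, using $\mathcal{T}_{\ell_k}^{\sigma_t^{[k]}}\varphi=e^{-t\ell_k}\varphi$ and the self-adjointness of $e^{-t\ell_k}$ on $L^2(B_k)$, identifies $\sigma_t^{[k]}\convdiam{k}((c_k/\zeta_k)B_k\,dx)$ as the measure with $B_k$-density $e^{-t\ell_k}(c_k/\zeta_k)$. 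Hence the right-hand side equals $\int_M \tfrac{h(x,\theta)}{\zeta_k(x)}\,(e^{-t\ell_k}(c_k/\zeta_k))(x)\,B_k(x)\,e^{i2k\pi\theta}\,dx\,d\theta$, which by $B_k/\zeta_k=A\zeta_k$ is exactly the expression obtained for the left-hand side; this establishes \eqref{eq:coneconv_gentransl_BMlevy} for all $h\in L^2(M)$.

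The main obstacle is not any single hard estimate but the coherent bookkeeping through the conjugation by $\zeta_k$: one must carry the heat semigroups ($e^{t\Delta_k}\leftrightarrow e^{-t\ell_k}$), the reference measures ($\omega$ versus $B_k\,dx\,d\theta$, so that $\zeta_k\ccdot\mathrm{m}_0$ has $B_k$-density $c_k/\zeta_k$), and the convolutions ($\conv{k}$ versus $\convdiam{k}$ tensored with ordinary convolution on $\mathbb{T}$) across this unitary in a consistent way, and verify that replacing $\sigma_t^{[k]}\convdiam{k}(\text{a density})$ by $e^{-t\ell_k}$ applied to that density is legitimate in the relevant $L^p(B_k)$ spaces. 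The integrability hypotheses $q_{\mathrm{m}_0}\in L^2(M)\cap L^1(M,\zeta_k\ccdot\omega)$ are precisely what make $\zeta_k\ccdot\mathrm{m}_0$ a finite measure, $c_k/\zeta_k$ an $L^1(B_k)\cap L^2(B_k)$ function, and all of the above manipulations (including the final interchange of integrals) valid.
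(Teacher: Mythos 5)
Your construction of $\mu_t^k$ is exactly the paper's: $\mu_t^k=\alpha_t^k\otimes\delta_0$ with $\{\alpha_t^k\}$ the Gaussian $\convdiam{k}$-convolution semigroup of $\ell_k$ from Proposition \ref{prop:SLappendix_convsemigr_props}(a), and your reduction of $\mathrm{m}_0$ to the single mode $(\widehat{\mathrm{m}_0})_k\otimes\phi_k$ with radial $B_k$-density $c_k/\zeta_k\in L^1(B_k)\cap L^2(B_k)$ matches the paper's first claim. Where you genuinely diverge is in how the identity \eqref{eq:coneconv_gentransl_BMlevy} is verified. The paper passes entirely to the spectral side: it writes both sides as inner products in $\bigoplus L^2(\bm{\rho_j})$ via the isometry $\bm{\mathcal{F}}$ of Proposition \ref{prop:coneconv_Feigenexp}, uses $(\bm{\mathcal{F}}e^{t\Delta_N}h)_j=e^{-t\lambda}(\bm{\mathcal{F}}h)_j$ and $(\bm{\mathcal{F}}\mu_t^k)(-k,\cdot)=e^{-t\lambda}$, and absorbs the factor $e^{-t\lambda}$ into the Fourier transform of the convolved measure. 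You instead stay in physical space: you move $e^{t\Delta_N}$ onto $q_{\mathrm{m}_0}$ by self-adjointness, restrict to $H_k$ where it acts as $e^{t\Delta_k}$, conjugate by $\zeta_k$ to get $e^{-t\ell_k}$, and identify $\alpha_t^k\convdiam{k}\bigl((c_k/\zeta_k)B_k\,dx\bigr)$ with the $B_k$-density $e^{-t\ell_k}(c_k/\zeta_k)$ by duality against $\mathrm{C}_\mathrm{c}$ test functions. Both verifications rest on the same underlying fact (translation by $\alpha_t^k$ realizes $e^{-t\ell_k}$), but yours bypasses Parseval in $L^2(\bm{\rho_k})$ at the cost of invoking $e^{-t\ell_k}f=\mathcal{T}_{\ell_k}^{\alpha_t^k}f$, which is not stated verbatim in the appendix; it does follow from Propositions \ref{prop:SLappendix_fourmeas_props}(ii) and \ref{prop:SLappendix_convtransl_props}(a) applied to $\alpha_t^k\convdiam{k}(fB_k\,dx)$ versus $(e^{-t\ell_k}f)B_k\,dx$, so the gap is only one of citation, not of substance. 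A small bonus of your write-up is that you verify the Gaussian property of $\{\mu_t^k\}$ explicitly rather than leaving it implicit in the reference to the one-dimensional result.
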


\begin{proof}
For $t > 0$, let $\mu_t^k = \alpha_t^k \otimes \delta_0$, where $\{\alpha_t^k\}_{t \geq 0}$ is the $\convdiam{k}$-Gaussian convolution semigroup generated by $\ell_k$ (Proposition \ref{prop:SLappendix_convsemigr_props}(a)). We recall from the proof of Corollary \ref{cor:coneconv_wsol_eigeneq} that we have $e^{-t\lambda} \in L^2(\bm{\rho_k})$ and $\alpha_t^k(dx) = (\mathcal{F}_{\ell_k}^{-1} e^{-t\bm{\cdot}})(x) B_k(x) dx$, where $\mathcal{F}_{\ell_k}^{-1} e^{-t\bm{\cdot}} \in L^1(\mathbb{R}^+,\, B_k(x)dx)$.

Our first claim is that the measure ${1 \over \zeta_k} (\mu_t^k \conv{k} (\zeta_k \ccdot \mathrm{m}_0))$ is absolutely continuous with respect to $\omega$ and that its density function $q_{\mu_t^k,\mathrm{m}_0}$ belongs to $L^2(M)$. Note first that, by assumption, $(\widehat{\mathrm{m}_0})_j = 0$ for $j \neq k$, and therefore (e.g.\ by Proposition \ref{prop:coneconv_fourmeas_props}(ii)) $\mathrm{m}_0 = (\widehat{\mathrm{m}_0})_k \otimes \phi_k$, where $\phi_k$ is the measure on $\mathbb{T}$ defined by $\phi_k(d\theta) = e^{i2k\pi \theta} d\theta$. We thus have 
\[
\mu_t^k \conv{k} (\zeta_k \ccdot \mathrm{m}_0) = (\alpha_t^k \convdiam{k} (\zeta_k \ccdot (\widehat{\mathrm{m}_0})_k)) \otimes \phi_k.
\]
The absolute continuity assumption on $\mathrm{m}_0$ implies that $(\widehat{\mathrm{m}_0})_k(dx) = (\widehat{q_{\mathrm{m}_0}})_k(x) A(x) dx$ with $(\widehat{q_{\mathrm{m}_0}})_k \in L^2(A)$, so we can now use the properties of the convolution $\convdiam{k}$ (see Proposition \ref{prop:SLappendix_convtransl_props}(f)) to conclude that ${1 \over \zeta_k} (\alpha_t^k \convdiam{k} (\zeta_k \ccdot (\widehat{\mathrm{m}_0})_k))$ is also absolutely continuous with respect to $A(x) dx$ with density belonging to $L^2(A)$, and this proves the claim.

Let $h \in L^2(M)$. Combining the above with Proposition \ref{prop:coneconv_Feigenexp}, we may now compute
\begin{align*}
\int_M (e^{t\Delta_N} h)(\bm{\xi})\, \mathrm{m}_0(d\bm{\xi}) & = \bigl\langle e^{t\Delta_N} h, \overline{q_{\mathrm{m}_0}} \bigr\rangle_{L^2(M)} \, = \,\sum_{j \in \mathbb{Z}} \bigl\langle \bm{\mathcal{F}}(e^{t\Delta_N} h)_j, (\bm{\mathcal{F}} \, \overline{q_{\mathrm{m}_0}})_j \bigr\rangle_{L^2(\bm{\rho_j})} \\
& = \bigl\langle e^{-t\bm{\cdot}}(\bm{\mathcal{F}}h)_{-k}, \, (\bm{\mathcal{F}} \, \overline{q_{\mathrm{m}_0}})_{-k} \bigr\rangle_{L^2(\bm{\rho_k})} \, = \, \Bigl\langle (\bm{\mathcal{F}}h)_{-k}, \, (\bm{\mathcal{F}} \mu_t^{\smash{k}})(-k,\bm{\cdot}) \, (\bm{\mathcal{F}} \, \overline{q_{\mathrm{m}_0}})_{-k} \Bigr\rangle_{L^2(\bm{\rho_k})} \\
& = \sum_{j \in \mathbb{Z}} \bigl\langle (\bm{\mathcal{F}}h)_j, \, (\bm{\mathcal{F}} \, \overline{q_{\mu_t^k,\mathrm{m}_0}})_j \bigr\rangle_{L^2(\bm{\rho_j})} \, = \, \bigl\langle h, \overline{q_{\mu_t^k, \mathrm{m}_0}} \bigr\rangle_{L^2(M)} \\
& = \int_M {h(\bm{\xi}) \over \zeta_k(x)} \, \bigl(\mu_t^k \conv{k} (\zeta_k \ccdot \mathrm{m}_0)\bigr)(d\bm{\xi})
\end{align*}
so that \eqref{eq:coneconv_gentransl_BMlevy} holds.
\end{proof}

As observed in Section \ref{sec:eigenexp}, the sesquilinear form $\mathcal{E}$ associated with the heat semigroup $e^{t\Delta_N}$ is a nonnegative, closed, Markovian symmetric form defined on $H^1(M) \times H^1(M)$; in other words, $(\mathcal{E}, H^1(M))$ is a \emph{Dirichlet form} on $L^2(M)$. One can also check (cf.\ \cite{boscainprandi2016,fukushimaetal2011}) that the Dirichlet form $(\mathcal{E}, H^1(M))$ is \emph{regular}, that is, $H^1(M) \cap \mathrm{C}_\mathrm{c}(M)$ is dense both in $H^1(M)$ with respect to the norm $\|u\|_{H^1(M)} = \sqrt{\mathcal{E}(u,u) + \|u\|_{L^2(M)}}\,$ and in $\mathrm{C}_\mathrm{c}(M)$ with respect to the sup norm. Therefore, by a basic result from the theory of Dirichlet forms \cite[Theorem 7.2.1]{fukushimaetal2011}, there exists a Hunt process with state space $M$ whose transition semigroup $\{P_t\}_{t \geq 0}$ is such that $P_t u$ is, for all $u \in \mathrm{C}_\mathrm{c}(M)$, a quasi-continuous version of $e^{t\Delta_N} u$. (A Hunt process is essentially a strong Markov process whose paths are right-continuous and quasi-left-continuous; for details we refer to \cite[Appendix A.2]{fukushimaetal2011}.)

Accordingly, \eqref{eq:coneconv_gentransl_BMlevy} can be rewritten as 
\begin{equation} \label{eq:coneconv_gentransl_BMlevy_v2}
\mathbb{E}_{\mathrm{m}_0}[h(W_t)] = \int_M h\, d(\widetilde{\mskip 0.5\thinmuskip\mu\,}\phantom{\!}_t^k \convstar{k} \mathrm{m}_0), \qquad \bigl(h \in L^2(M), \; t \geq 0\bigr)
\end{equation}
where:
\begin{itemize}[itemsep=0pt,topsep=4pt]
\item $\{W_t\}_{t \geq 0}$ is the \emph{reflected Brownian motion} on the manifold $(M,g)$, i.e.\ $\{W_t\}$ is the Hunt process on $M$ determined by the regular Dirichlet form $(\mathcal{E}, H^1(M))$;
\item $\mathbb{E}_{\mathrm{m}_0}$ is the expectation operator of the process with initial distribution $\mathrm{m}_0 \in \mathcal{M}_\mathbb{C}(M)$ (defined as $\mathbb{E}_{\mathrm{m}_0}[h(W_t)] := \int_M \mathbb{E}_{\bm{\xi}} [h(W_t)] \mathrm{m}_0(d\bm{\xi})$, where $\mathbb{E}_{\bm{\xi}}$ is the usual expectation operator for the process started at the point $\bm{\xi}$);
\item The convolution $\convstar{k}$ is defined by $\nu_1 \convstar{k} \nu_2 = {1 \over \zeta_k} \bigl((\zeta_k \ccdot \nu_1) \conv{k} (\zeta_k \ccdot \nu_2)\bigr)$ or, equivalently, by $(\nu_1 \convstar{k} \nu_2)(\bm{\cdot}) = \int_M \int_M \bm{\gamma}_{k,\bm{\xi_1}, \bm{\xi_2}}(\bm{\cdot}) \, \nu_1(d\bm{\xi_1}) \, \nu_2(d\bm{\xi_2})$, with $\bm{\gamma}_{k,\bm{\xi_1}, \bm{\xi_2}}$ given as in \eqref{eq:coneconv_gammaconvmeas_def};
\item $\widetilde{\mskip 0.5\thinmuskip\mu\,}\phantom{\!}_t^k := {\mu_t^k \over \zeta_k}$ (so that $\widetilde{\mskip 0.5\thinmuskip\mu\,}\phantom{\!}_t^k$ satisfies the convolution semigroup property with respect to $\convstar{k}$).
\end{itemize}

\begin{corollary} \label{cor:coneconv_rbm_convsemigroup}
Let $\mathrm{m}_0 \in \mathcal{M}_\mathbb{C}(M)$ be an absolutely continuous measure with respect to $\omega$ whose density function $q_{\mathrm{m}_0}$ belongs to $L^2(M) \cap \bigl(\bigcap_{k=0}^\infty L^1(M, \zeta_k \ccdot \omega)\bigr)$. Then there exist Gaussian $\Delta_k$-convolution semigroups $\{\mu_t^k\}_{t \geq 0}$ such that
\[
\int_M (e^{t\Delta_N} h)(\bm{\xi})\, \mathrm{m}_0(d\bm{\xi}) = \sum_{k \in \mathbb{Z}} \int_M e^{i2k\pi\theta} {\widehat{h}_k(x) \over \zeta_k(x)} \, \bigl(\mu_t^k \conv{k} (\zeta_k \ccdot \bm{\mathrm{m}}_{0,-k})\bigr)(d\bm{\xi}) \qquad \bigl(h \in L^2(M), \; t \geq 0\bigr)
\]
where $\bm{\mathrm{m}}_{0,k} = (\widehat{\mathrm{m}_0})_k \otimes \phi_k$ and $\widehat{h}_k$ is given as in \eqref{eq:coneconv_Feigenexp_pf1}.
\end{corollary}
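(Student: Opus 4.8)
The idea is to decompose $\mathrm{m}_0$ into its angular Fourier modes and apply Proposition \ref{prop:coneconv_rbm_convsemigroup} mode by mode. Using the notation from the proof of Proposition \ref{prop:coneconv_rbm_convsemigroup}, let $\phi_k$ be the complex measure on $\mathbb{T}$ with $\phi_k(d\theta)=e^{i2k\pi\theta}d\theta$ and set $\bm{\mathrm{m}}_{0,k}=(\widehat{\mathrm{m}_0})_k\otimes\phi_k$, so that $\bm{\mathrm{m}}_{0,k}$ is absolutely continuous with respect to $\omega$ with density $(\widehat{q_{\mathrm{m}_0}})_k(x)\,e^{i2k\pi\theta}$ and $(\widehat{\bm{\mathrm{m}}_{0,k}})_j=0$ for $j\neq k$. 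The first step is to check that each $\bm{\mathrm{m}}_{0,-k}$ ($k\in\mathbb{Z}$) satisfies the hypotheses of Proposition \ref{prop:coneconv_rbm_convsemigroup} with the index $-k$: membership of its density in $L^2(M)$ is immediate from Parseval's identity for the Fourier series in $\theta$, and membership in $L^1(M,\zeta_k\ccdot\omega)$ follows from the pointwise estimate $|(\widehat{q_{\mathrm{m}_0}})_{-k}(x)|\leq\int_0^1|q_{\mathrm{m}_0}(x,\vartheta)|\,d\vartheta$, which gives $\int_M|(\widehat{q_{\mathrm{m}_0}})_{-k}|\,\zeta_k\,d\omega\leq\int_M|q_{\mathrm{m}_0}|\,\zeta_k\,d\omega<\infty$ by the assumption on $q_{\mathrm{m}_0}$ (recall $\zeta_{-k}=\zeta_k$).

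Second, I would apply Proposition \ref{prop:coneconv_rbm_convsemigroup} to $\bm{\mathrm{m}}_{0,-k}$. Since $\ell_{-k}=\ell_k$ (equivalently $\Delta_{-k}=\Delta_k$, $\zeta_{-k}=\zeta_k$, $\conv{-k}=\conv{k}$), the Gaussian convolution semigroup produced is the canonical one $\mu_t^k=\alpha_t^k\otimes\delta_0$, with $\alpha_t^k$ the $\convdiam{k}$-Gaussian semigroup generated by $\ell_k$, which does not depend on $\mathrm{m}_0$; hence
\[
\int_M (e^{t\Delta_N}h)(\bm{\xi})\,\bm{\mathrm{m}}_{0,-k}(d\bm{\xi}) = \int_M \frac{h(\bm{\xi})}{\zeta_k(x)}\,\bigl(\mu_t^k\conv{k}(\zeta_k\ccdot\bm{\mathrm{m}}_{0,-k})\bigr)(d\bm{\xi}).
\]
A direct computation using $\bm{\nu}_{k,\bm{\xi_1},\bm{\xi_2}}=\pi^{[k]}_{x_1,x_2}\otimes\delta_{\theta_1+\theta_2}$ and the fact that $\mu_t^k$ is concentrated at $\theta=0$ identifies $\mu_t^k\conv{k}(\zeta_k\ccdot\bm{\mathrm{m}}_{0,-k})$ with $(\alpha_t^k\convdiam{k}(\zeta_k\ccdot(\widehat{\mathrm{m}_0})_{-k}))\otimes\phi_{-k}$; because this measure is supported on the angular mode $-k$, integrating $h(\bm{\xi})/\zeta_k(x)$ against it isolates the $k$-th Fourier coefficient $\widehat{h}_k$ of $h$, so the right-hand side above equals $\int_M e^{i2k\pi\theta}\frac{\widehat{h}_k(x)}{\zeta_k(x)}(\mu_t^k\conv{k}(\zeta_k\ccdot\bm{\mathrm{m}}_{0,-k}))(d\bm{\xi})$, i.e.\ the $k$-th summand on the right-hand side of the claimed identity.

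Third, I would sum the preceding display over $k\in\mathbb{Z}$. On the left, $\int_M(e^{t\Delta_N}h)\,d\mathrm{m}_0=\langle e^{t\Delta_N}h,\overline{q_{\mathrm{m}_0}}\rangle_{L^2(M)}$; expanding $\overline{q_{\mathrm{m}_0}}$ in its $L^2(M)$-convergent angular Fourier series (convergence via dominated convergence, with dominating function $x\mapsto\|q_{\mathrm{m}_0}(x,\bm{\cdot})\|_{L^2(\mathbb{T})}^2\in L^1(A)$) and using continuity of the inner product gives $\langle e^{t\Delta_N}h,\overline{q_{\mathrm{m}_0}}\rangle_{L^2(M)}=\sum_{k\in\mathbb{Z}}\int_M(e^{t\Delta_N}h)\,d\bm{\mathrm{m}}_{0,-k}$. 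Combining the three steps yields the corollary, and it also shows en passant that the series on the right-hand side of the claimed identity converges.

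I expect the main difficulty to be purely a matter of bookkeeping: keeping the indices $k$ versus $-k$ straight together with the symmetry relations $\zeta_{-k}=\zeta_k$, $\conv{-k}=\conv{k}$, $\mu_t^{-k}=\mu_t^k$, and making the $L^2$-interchange of summation and integration precise. The only genuinely hypothesis-dependent point is the verification in the first step that every angular mode inherits the $L^1(M,\zeta_k\ccdot\omega)$-integrability, which is precisely why the global assumption $q_{\mathrm{m}_0}\in\bigcap_{k=0}^\infty L^1(M,\zeta_k\ccdot\omega)$ is made.
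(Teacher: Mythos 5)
Your proposal is correct and follows essentially the same route as the paper: decompose into angular Fourier modes (the paper expands $h$, you expand $\mathrm{m}_0$, but both reduce to pairing the $k$-th mode of $h$ with the $(-k)$-th mode of $\mathrm{m}_0$) and apply Proposition \ref{prop:coneconv_rbm_convsemigroup} term by term. You simply make explicit some details the paper leaves implicit, namely the verification that each mode $\bm{\mathrm{m}}_{0,-k}$ inherits the $L^2(M)\cap L^1(M,\zeta_k\ccdot\omega)$ hypotheses and the bookkeeping with $\zeta_{-k}=\zeta_k$, $\conv{-k}=\conv{k}$.
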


\begin{proof}
We have
\[
\int_M e^{t\Delta_N} \biggl(\sum_{k \in \mathbb{Z}} e^{i2k\pi\theta\,} \widehat{h}_k(x)\biggr)\, \mathrm{m}_0(d\bm{\xi}) = \sum_{k \in \mathbb{Z}} \int_M e^{t\Delta_N} \bigl(e^{i2k\pi\theta\,} \widehat{h}_k(x)\bigr)\, \bigl((\widehat{\mathrm{m}_0})_{-k} \otimes \phi_{-k}\bigr)(d\bm{\xi}).
\]
Since each measure $(\widehat{\mathrm{m}_0})_{-k} \otimes \phi_{-k}$ satisfies $\smash{\bigl((\widehat{\mathrm{m}_0})_{-k} \otimes \phi_{-k}\bigr)\strut_{\!j}\!\!\scalebox{1.4}{$\widehat{\;}$} = 0}$ for $j \neq -k$, the corollary follows by applying Proposition \ref{prop:coneconv_rbm_convsemigroup} to each term in the right-hand side.
\end{proof}

We now extend the result of Proposition \ref{prop:coneconv_rbm_convsemigroup} to other Markovian semigroups whose generators are functions (in the functional calculus sense) of the Laplace-Beltrami operator.

\begin{proposition} \label{prop:coneconv_convsemigroup_1dexponent}
For $k \in \mathbb{Z}$, let $\mathrm{m}_0 \in \mathcal{M}_\mathbb{C}(M)$ be an absolutely continuous measure with respect to $\omega$ whose density function $q_{\mathrm{m}_0}$ belongs to $L^2(M) \cap L^1(M, \zeta_k \ccdot \omega)$, and such that $(\widehat{\mathrm{m}_0})_j = 0$ for each $j \neq k$. Let $\psi_k$ be a function of the form
\begin{equation} \label{eq:coneconv_convsemigroup_1dexponent_asmp}
\psi_k(\lambda) = c\lambda + \int_{\mathbb{R}^+} (1-\widetilde{v}_{k,\lambda}(x)) \, \tau(dx) \qquad (\lambda \geq 0)
\end{equation}
where $c \geq 0$ and $\tau$ is a $\sigma$-finite measure on $\mathbb{R}^+$ which is finite on the complement of any neighbourhood of $0$ and such that $\int_{\mathbb{R}^+} (1-\widetilde{v}_{k,\lambda}(x)) \, \tau(dx) < \infty$ for $\lambda \geq 0$. Assume also that $e^{-t\psi_k(\bm{\cdot})} \in L^2(\bm{\rho_k})$ for all $t > 0$. Then there exists a $\Delta_k$-convolution semigroup $\{\mu_t^{\psi_k}\}_{t \geq 0}$ such that
\begin{equation} \label{eq:coneconv_convsemigroup_1dexponent}
\int_M (e^{-t\psi_k(-\Delta_N)} h)(\bm{\xi})\, \mathrm{m}_0(d\bm{\xi}) = \int_M {h(\bm{\xi}) \over \zeta_k(x)} \, \bigl(\mu_t^{\psi_k} \conv{k} (\zeta_k \ccdot \mathrm{m}_0)\bigr)(d\bm{\xi}) \qquad \bigl(h \in L^2(M), \; t \geq 0\bigr)
\end{equation}
where $e^{-t\psi_k(-\Delta_N)}$ is defined via the spectral theorem for the self-adjoint operator $(-\Delta_N, \mathcal{D}_N)$.
\end{proposition}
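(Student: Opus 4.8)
The plan is to follow the proof of Proposition~\ref{prop:coneconv_rbm_convsemigroup} essentially verbatim, the only change being that the Gaussian convolution semigroup generated by $\ell_k$ is replaced by the $\convdiam{k}$-convolution semigroup associated with the function $\psi_k$. First I would note that \eqref{eq:coneconv_convsemigroup_1dexponent_asmp} exhibits $\psi_k$ as a continuous, nonnegative negative-definite function (a Lévy exponent) for the commutative hypergroup on $\mathbb{R}_0^+$ determined by $\ell_k$, whose characters are the $\widetilde{v}_{k,\lambda}$, $\lambda \geq 0$. By the one-dimensional theory of convolution semigroups on such hypergroups (Proposition~\ref{prop:SLappendix_convsemigr_props}; see also \cite{sousaetal2019b} and \cite{bloomheyer1994}) there is then a $\convdiam{k}$-convolution semigroup $\{\alpha_t^{\psi_k}\}_{t \geq 0} \subset \mathcal{P}(\mathbb{R}_0^+)$ with $(\mathcal{F}_{\ell_k}\alpha_t^{\psi_k})(\lambda) = e^{-t\psi_k(\lambda)}$ for all $t > 0$, $\lambda \geq 0$. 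I would set $\mu_t^{\psi_k} := \alpha_t^{\psi_k} \otimes \delta_0$. Since $\delta_{\bm{\xi_1}} \conv{k} \delta_{\bm{\xi_2}} = \pi_{x_1,x_2}^{[k]} \otimes \delta_{\theta_1+\theta_2}$ (Definition~\ref{def:coneconv_convk}), the operation $\conv{k}$ acts on product measures coordinatewise (as $\convdiam{k}$ in the radial variable, as ordinary convolution on $\mathbb{T}$), so $\{\mu_t^{\psi_k}\}$ is a $\Delta_k$-convolution semigroup, and directly from the definition of the $\Delta$-Fourier transform one gets $(\bm{\mathcal{F}}\mu_t^{\psi_k})(-k,\lambda) = (\mathcal{F}_{\ell_k}\alpha_t^{\psi_k})(\lambda) = e^{-t\psi_k(\lambda)}$.

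Next I would verify, exactly as in Proposition~\ref{prop:coneconv_rbm_convsemigroup}, that ${1 \over \zeta_k}\bigl(\mu_t^{\psi_k} \conv{k} (\zeta_k \ccdot \mathrm{m}_0)\bigr)$ is absolutely continuous with respect to $\omega$ with density $q_{\mu_t^{\psi_k},\mathrm{m}_0} \in L^2(M)$. This is the step where the hypothesis $e^{-t\psi_k(\bm{\cdot})} \in L^2(\bm{\rho_k})$ is indispensable (it replaces the role played by Proposition~\ref{prop:SLappendix_semigr_heatkern} in the heat-semigroup case): via the Plancherel isometry $\mathcal{F}_{\ell_k}\colon L^2(\mathbb{R}^+, B_k(x)dx) \to L^2(\bm{\rho_k})$, together with the semigroup property $\alpha_t^{\psi_k} = \alpha_{t/2}^{\psi_k} \convdiam{k} \alpha_{t/2}^{\psi_k}$, each $\alpha_t^{\psi_k}$ is absolutely continuous with respect to $B_k(x)dx$ with density $\mathcal{F}_{\ell_k}^{-1}(e^{-t\psi_k})$, which lies in $L^2(\mathbb{R}^+, B_k(x)dx)$ by Plancherel and in $L^1(\mathbb{R}^+, B_k(x)dx)$ since it is the density of a probability measure. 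Since $(\widehat{\mathrm{m}_0})_j = 0$ for $j \neq k$, we have $\mathrm{m}_0 = (\widehat{\mathrm{m}_0})_k \otimes \phi_k$ with $(\widehat{\mathrm{m}_0})_k(dx) = (\widehat{q_{\mathrm{m}_0}})_k(x)A(x)dx$ and $(\widehat{q_{\mathrm{m}_0}})_k \in L^2(A)$; hence $\mu_t^{\psi_k} \conv{k} (\zeta_k \ccdot \mathrm{m}_0) = \bigl(\alpha_t^{\psi_k} \convdiam{k} (\zeta_k \ccdot (\widehat{\mathrm{m}_0})_k)\bigr) \otimes \phi_k$, and Proposition~\ref{prop:SLappendix_convtransl_props}(f) gives that ${1 \over \zeta_k}\bigl(\alpha_t^{\psi_k} \convdiam{k} (\zeta_k \ccdot (\widehat{\mathrm{m}_0})_k)\bigr)$ is absolutely continuous with respect to $A(x)dx$ with density in $L^2(A)$, proving the claim. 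In particular $\overline{q_{\mu_t^{\psi_k},\mathrm{m}_0}} \in H_{-k}$, so $(\bm{\mathcal{F}}\,\overline{q_{\mu_t^{\psi_k},\mathrm{m}_0}})_j = 0$ for $j \neq -k$, while the linearisation property \eqref{eq:coneconv_transf_convtrivial} (unwinding the $\zeta_k$-factors exactly as in the proof of Proposition~\ref{prop:coneconv_rbm_convsemigroup}) gives $(\bm{\mathcal{F}}\,\overline{q_{\mu_t^{\psi_k},\mathrm{m}_0}})_{-k} = (\bm{\mathcal{F}}\mu_t^{\psi_k})(-k,\bm{\cdot})\,(\bm{\mathcal{F}}\,\overline{q_{\mathrm{m}_0}})_{-k}$.

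With these preliminaries, \eqref{eq:coneconv_convsemigroup_1dexponent} follows by a direct computation. Since $\psi_k \geq 0$ on $[0,\infty)$, $e^{-t\psi_k(-\Delta_N)}$ is a self-adjoint contraction of $L^2(M)$, and because $\bm{\mathcal{F}}$ is a spectral representation of $(-\Delta_N, \mathcal{D}_N)$ (Proposition~\ref{prop:coneconv_Feigenexp}) we have $[\bm{\mathcal{F}}(e^{-t\psi_k(-\Delta_N)}h)]_j(\lambda) = e^{-t\psi_k(\lambda)}(\bm{\mathcal{F}}h)_j(\lambda)$ for every $j \in \mathbb{Z}$. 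Using $\mathrm{m}_0 = (\widehat{\mathrm{m}_0})_k \otimes \phi_k$ (so $\overline{q_{\mathrm{m}_0}} \in H_{-k}$), the fact that $\bm{\rho_{-k}} = \bm{\rho_k}$ (because $\Delta_{-k} = \Delta_k$), and the Plancherel identity for $\bm{\mathcal{F}}$, one computes for $h \in L^2(M)$:
\begin{align*}
\int_M (e^{-t\psi_k(-\Delta_N)}h)(\bm{\xi})\,\mathrm{m}_0(d\bm{\xi})
&= \bigl\langle e^{-t\psi_k(\bm{\cdot})}(\bm{\mathcal{F}}h)_{-k},\,(\bm{\mathcal{F}}\,\overline{q_{\mathrm{m}_0}})_{-k}\bigr\rangle_{L^2(\bm{\rho_k})} \\
&= \bigl\langle (\bm{\mathcal{F}}h)_{-k},\,(\bm{\mathcal{F}}\mu_t^{\psi_k})(-k,\bm{\cdot})\,(\bm{\mathcal{F}}\,\overline{q_{\mathrm{m}_0}})_{-k}\bigr\rangle_{L^2(\bm{\rho_k})} \\
&= \sum_{j \in \mathbb{Z}} \bigl\langle (\bm{\mathcal{F}}h)_j,\,(\bm{\mathcal{F}}\,\overline{q_{\mu_t^{\psi_k},\mathrm{m}_0}})_j\bigr\rangle_{L^2(\bm{\rho_j})} \\
&= \bigl\langle h,\,\overline{q_{\mu_t^{\psi_k},\mathrm{m}_0}}\bigr\rangle_{L^2(M)} = \int_M {h(\bm{\xi}) \over \zeta_k(x)}\,\bigl(\mu_t^{\psi_k} \conv{k} (\zeta_k \ccdot \mathrm{m}_0)\bigr)(d\bm{\xi}),
\end{align*}
where the second equality moves the real nonnegative factor $e^{-t\psi_k} = (\bm{\mathcal{F}}\mu_t^{\psi_k})(-k,\bm{\cdot})$ onto the second argument and the third uses the two identities for $\bm{\mathcal{F}}\,\overline{q_{\mu_t^{\psi_k},\mathrm{m}_0}}$ recorded above. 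This is \eqref{eq:coneconv_convsemigroup_1dexponent}.

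The conceptual content is light; the work is in the bookkeeping. I expect the main obstacle to lie in the second paragraph: making sure that $\{\alpha_t^{\psi_k}\}$ really is furnished by the one-dimensional theory for a Lévy exponent of the form \eqref{eq:coneconv_convsemigroup_1dexponent_asmp}, and establishing the $L^2$-absolute continuity of ${1 \over \zeta_k}(\mu_t^{\psi_k} \conv{k} (\zeta_k \ccdot \mathrm{m}_0))$ — this is precisely where the hypothesis $e^{-t\psi_k} \in L^2(\bm{\rho_k})$ cannot be dispensed with, for otherwise the $L^2(\bm{\rho_k})$- and $L^2(M)$-pairings in the final computation need not make sense. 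By contrast, the fact that $\mathrm{m}_0$ is complex-valued rather than positive causes no trouble once one notes $w_{-k,\lambda} = \overline{w_{k,\lambda}}$ and $\bm{\rho_{-k}} = \bm{\rho_k}$, which keep all the conjugations harmless.
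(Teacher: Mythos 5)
Your proposal is correct and follows essentially the same route as the paper: invoke Proposition \ref{prop:SLappendix_convsemigr_props}(b) to obtain the one-dimensional $\convdiam{k}$-convolution semigroup $\{\alpha_t^{\psi_k}\}$ with $\mathcal{F}_{\ell_k}$-transform $e^{-t\psi_k}$, use the hypothesis $e^{-t\psi_k(\bm{\cdot})} \in L^2(\bm{\rho_k})$ to get the absolutely continuous representation $\alpha_t^{\psi_k}(dx) = (\mathcal{F}_{\ell_k}^{-1}e^{-t\psi_k(\bm{\cdot})})(x)\,B_k(x)dx$ with density in $L^1(\mathbb{R}^+, B_k(x)dx)$, and then repeat the computation from Proposition \ref{prop:coneconv_rbm_convsemigroup}. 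The paper's proof is exactly this, stated more tersely.
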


We observe that, since $e^{-t\lambda} \in L^2(\bm{\rho_k})$ for all $t > 0$, the assumption $e^{-t\psi_k(\bm{\cdot})} \in L^2(\bm{\rho_k})$ is automatically satisfied whenever $c > 0$ in the right hand side of \eqref{eq:coneconv_convsemigroup_1dexponent_asmp}.

\begin{proof}
By Proposition \ref{prop:SLappendix_convsemigr_props}(b), there exists a $\convdiam{k}$-convolution semigroup $\{\alpha_t^{\psi_k}\}_{t \geq 0}$ such that $(\mathcal{F}_{\ell_k} \, \alpha_t^{\psi_k})(\lambda) = e^{-t\psi_k(\lambda)}$. Using Proposition \ref{prop:SLappendix_eigenexp} and the assumption $e^{-t\psi_k(\bm{\cdot})} \in L^2(\bm{\rho_k})$, we deduce that $\alpha_t^{\psi_k}(dx) = (\mathcal{F}_{\ell_k}^{-1} e^{-t\psi_k(\bm{\cdot})})(x) B_k(x) dx$, where $\mathcal{F}_{\ell_k}^{-1} e^{-t\psi_k(\bm{\cdot})} \in L^1(\mathbb{R}^+,\, B_k(x)dx)$. The result can now be proved using the same argument as in Proposition \ref{prop:coneconv_rbm_convsemigroup} above.
\end{proof}

The sesquilinear form  $\mathcal{E}^{\psi_k} : \mathcal{D}(\mathcal{E}^{\psi_k}) \times \mathcal{D}(\mathcal{E}^{\psi_k}) \longrightarrow \mathbb C$ associated with the Markovian self-adjoint operator $-\psi_k(-\Delta_N)$, defined as
\[
\mathcal{D}(\mathcal{E}^{\psi_k}) = \mathcal{D}\bigl( \sqrt{\psi_k(-\Delta_N)} \mskip0.7\thinmuskip \bigr), \qquad \mathcal{E}^{\psi_k}(u,v) = \bigl\langle \sqrt{\psi_k(-\Delta_N)} \, u, \, \sqrt{\psi_k(-\Delta_N)} \, v \bigr\rangle_{L^2(M)},
\]
is a regular Dirichlet form on $L^2(M)$. (We can prove this claim using the upper bound \eqref{eq:SLaapendix_expon_ineq} and the proof of Proposition 3.1 of \cite{mcgillivray1997}.) Accordingly, the result stated before Corollary \ref{cor:coneconv_rbm_convsemigroup} ensures that there exists a Hunt process $\{X_t\}_{t \geq 0}$ with state space $M$ such that $(e^{-t\psi_k(-\Delta_N)} h)(\bm{\xi}) = \mathbb{E}_{\bm{\xi}}[h(X_t)]$, and therefore the convolution semigroup property \eqref{eq:coneconv_convsemigroup_1dexponent} translates into the Lévy-like representation
\[
\mathbb{E}_{\mathrm{m}_0}[h(X_t)] = \int_M h\, d(\widetilde{\mskip 0.5\thinmuskip\mu\,}\phantom{\!}_t^{\psi_k} \convstar{k} \mathrm{m}_0) \qquad \bigl(h \in L^2(M), \; t \geq 0\bigr)
\]
for the law of the process $\{X_t\}$. (Here $\mathrm{m}_0$ is any complex measure satisfying the assumptions in Proposition \ref{prop:coneconv_convsemigroup_1dexponent}.) The representation \eqref{eq:coneconv_gentransl_BMlevy_v2} for the law of reflected Brownian motion on $(M,g)$ is a particular case of this result.

\begin{remark}
In general we cannot state a counterpart of Corollary \ref{cor:coneconv_rbm_convsemigroup} for the semigroup $e^{-t\psi_k(-\Delta_N)}$. This would only be possible if $\psi_k(\lambda)$ did not depend on $k$, i.e.\ if a given function $\psi(\lambda)$ could be written, for each $k = 0,1,\ldots$, as $c_k\lambda + \int (1-\widetilde{v}_{k,\lambda}) \, d\tau_k$ with $c_k \geq 0$ and $\tau_k$ measures satisfying the conditions above, but there are no reasons to expect that this is possible other than in the trivial case $\psi(\lambda) = c \lambda$. (See \cite{zeuner1995}, where it is shown that the stable infinitely divisible measures for the convolution $\convdiam{k}$ are not the same for different values of $k$.)
\end{remark}

\section{Examples} \label{sec:examples}

We now review some special cases in which the theory of special functions provides further information on the eigenfunction expansion of $\Delta_N$ and the associated convolution structure. We start with an example where the solutions $w_{k,\lambda}$ can be expressed in terms of the Whittaker function of the second kind, and the Fourier decomposition gives rise to a family of Sturm-Liouville operators which are generators of drifted Bessel processes.

\begin{example} \label{exam:coneconv_exam_conehalf}
Consider the case $A(x) = x^{1/2}$, so that the Riemannian metric on $M = \mathbb{R}^+ \times \mathbb{T}$ is $g = dx^2 + x \, d\theta^2$, the volume form is $d\omega = \sqrt{x} \, dx d\theta$ and the Laplace-Beltrami operator on $(M, g)$ is $\Delta = \partial_x^2 + {1 \over 2x} \partial_x + {1 \over x} \partial_\theta^2$.
\begin{enumerate}
\item[\textbf{\itshape(i)}]
\emph{Let $\bm{M}_{\alpha, \nu}(z) := z^\nu M_{\alpha,\nu}(z)$, where $M_{\alpha,\nu}(z)$ denotes the Whittaker function of the first kind \cite[\S13.14]{dlmf}. The unique solution of the boundary value problem \eqref{eq:coneconv_wsol_bvp} is given by
\begin{equation} \label{eq:coneconv_exam_Mwhit_wsol}
w_{k,\lambda}(x,\theta) = e^{i2k\pi\theta} \bm{M}_{\!{2(k\pi)^2 i \over \sqrt{\lambda}}, -{1 \over 4}}(2ix\sqrt{\lambda}).
\end{equation}}%
\end{enumerate}
Indeed, the function $v_{k,\lambda}(x) = e^{-\pi i/8} (2x\sqrt{\lambda})^{-1/4} M_{\!{2(k\pi)^2 i \over 2\sqrt{\lambda}}, -{1 \over 4}}(2ix\sqrt{\lambda})$ is, according to \cite[Equation 2.1.2.108]{polyaninzaitsev2003}, a solution of $-\Delta_k \, v = \lambda v$, and we can use the results of \cite[\S13.14(iii) and \S 13.15(ii)]{dlmf} to check that $v_{k,\lambda}(0) = 1$ and $(A v_{k,\lambda}')(0) = 0$.
\begin{enumerate}
\item[\textbf{\itshape(ii)}] \emph{Let $\mb{r}_{\bm{\rho_k}}(\lambda) := 2^{-3/2} \pi^{-2} \lambda^{-1/4} \exp\bigl(-{2 k^2 \pi^3 \over \sqrt{\lambda}}\bigr) \bigl|{\Gamma\bigl({1 \over 4} - {2(k\pi)^2 i \over \sqrt{\lambda}}\bigr)}\bigr|^2$. The integral operator $\bm{\mathcal{F}}: L^2(M) \longrightarrow \bigoplus_{k \in \mathbb{Z}} L^2(\mathbb{R}^+, \mb{r}_{\bm{\rho_k}}(\lambda) d\lambda)$ defined by
\[
(\bm{\mathcal{F}} h)_k(\lambda) := \int_0^\infty \int_0^1 h(x,\theta) \, e^{-i2k\pi\theta} d\theta \, \bm{M}_{\!{2(k\pi)^2 i \over \sqrt{\lambda}}, -{1 \over 4}}(2ix\sqrt{\lambda}) \, x^{1/2} dx
\]
is a spectral representation of the Laplace-Beltrami operator (cf.\ Proposition \ref{prop:coneconv_Feigenexp}), and its inverse is given by
\[
(\bm{\mathcal{F}}^{-1} \{\varphi_k\})(\bm{\xi}) = \sum_{k \in \mathbb{Z}} \int_0^\infty \varphi_k(\lambda) \, e^{i2k\pi\theta} \bm{M}_{\!{2(k\pi)^2 i \over \sqrt{\lambda}}, -{1 \over 4}}(2ix\sqrt{\lambda}) \, \mb{r}_{\bm{\rho_k}}(\lambda) d\lambda.
\]}%
\end{enumerate}
By Proposition \ref{prop:coneconv_Feigenexp}, to prove \textbf{\emph{(ii)}} we only need to show that the spectral measure of the self-adjoint realization of $\Delta_k$ determined by the boundary condition $(Av')(0)= 0$ is given by $\bm{\rho_k}(d\lambda) = \mb{r}_{\bm{\rho_k}}(\lambda) d\lambda$. But this fact is a consequence of the general results of \cite{linetsky2004b} on spectral representations associated with the Sturm-Liouville operator $\bm{\ell}_{\nu,\mu} = - {d^2 \over dx^2} - ({2\nu + 1 \over x} + \mu){d \over dx}$. Indeed, it follows from \cite[Proposition 1]{linetsky2004b} (see also \cite{titchmarsh1962}) that the integral transforms
\begin{align*}
(\mathcal{Q}f)(\tau) & = (2\tau)^{-1/4} \int_0^\infty f(x) \, x^{1/4} \, e^{8(k\pi)^2x - \pi i/8} M_{\!{2(k\pi)^2 i \over \tau}, -{1 \over 4}}(2ix\tau) \, dx \\
(\mathcal{Q}^{-1} \eta)(x) & = {x^{1/4} \, e^{-8(k\pi)^2x - \pi i/8}  \over 2^{3/4} \, \pi^2} \int_0^\infty \eta(\tau) \, M_{\!{2(k\pi)^2 i \over \tau}, -{1 \over 4}}(2ix\tau) \, \tau^{1/4} \exp\Bigl(-{2k^2\pi^3 \over \tau}\Bigr) \, \biggl|\Gamma\biggl({1 \over 4} - {2(k\pi)^2 i \over \tau}\biggr)\biggr|^2 d\tau
\end{align*}
define an isometric isomorphism $L^2(\mathbb{R}^+,x^{1/2}e^{(4k\pi)^2 x}) \longrightarrow L^2\bigl(\mathbb{R}^+, ({\tau \over 2})^{1/2}\, \pi^{-2}\exp(-{2k^2\pi^3 \over \tau}) \, \bigl|\Gamma({1 \over 4} - {2(k\pi)^2 i \over \tau})\bigr|^2 d\tau \bigr)$ satisfying $\mathcal{Q}(\ell_{-{1 \over 4}, 8(k\pi)^2} f)(\lambda) = \lambda \ccdot (\mathcal{Q}f)(\lambda)$. Since the operators $\ell_{-{1 \over 4}, 8(k\pi)^2}$ and $\Delta_k$ are related via an elementary change of variables, we easily conclude that $\bm{\rho_k}(d\lambda) = \mb{r}_{\bm{\rho_k}}(\lambda) d\lambda$.
\begin{enumerate}
\item[\textbf{\itshape(iii)}] \emph{For each $k \in \mathbb{N}_0$ and $\bm{\xi_j} = (x_j,\theta_j) \in M$ ($j=1,2$) there exists a positive measure $\bm{\gamma}_{k,\bm{\xi_1}, \bm{\xi_2}}$ on $M$ such that for all $\tau \in \mathbb{C}$ the generalized eigenfunctions \eqref{eq:coneconv_exam_Mwhit_wsol} satisfy
\begin{equation} \label{eq:coneconv_exam_Mwhit_prodform}
e^{i2k\pi(\theta_1+\theta_2)} \bm{M}_{\!{2(k\pi)^2 i \over \tau}, -{1 \over 4}}(2ix_1\tau)\, \bm{M}_{\!{2(k\pi)^2 i \over \tau}, -{1 \over 4}}(2ix_2\tau) = \int_M e^{i2k\pi\theta_3} \bm{M}_{\!{2(k\pi)^2 i \over \tau}, -{1 \over 4}}(2ix_3\tau)\, \bm{\gamma}_{k,\bm{\xi_1},\bm{\xi_2}}(d\bm{\xi_3}).
\end{equation}
The support of measure $\bm{\gamma}_{k,\bm{\xi_1},\bm{\xi_2}}$ is the set $[|x_1-x_2|,x_1+x_2] \times \{\theta_1 + \theta_2\}$.
}

\item[\textbf{\itshape(iii')}] \emph{When $k = 0$, the product formula \eqref{eq:coneconv_exam_Mwhit_prodform} reduces to
\begin{equation} \label{eq:coneconv_exam_Mwhit_prodform_0}
\bm{J}_{-{1 \over 4}}(x_1\tau) \, \bm{J}_{-{1 \over 4}}(x_2\tau) = \int_M \bm{J}_{-{1 \over 4}}(x_3\tau) \, \bm{\gamma}_{0,\bm{\xi_1},\bm{\xi_2}}(d\bm{\xi_3})
\end{equation}
where $\bm{J}_\alpha(\tau x) := 2^\alpha \Gamma(\alpha+1) (\tau x)^{-\alpha} J_\alpha(\tau x)$ and $J_\alpha$ is the Bessel function of the first kind. The measures $\bm{\gamma}_{0,\bm{\xi_1},\bm{\xi_2}}$ in \eqref{eq:coneconv_exam_Mwhit_prodform_0} are explicitly given by 
\begin{align*}
\bm{\gamma}_{0,\bm{\xi_1},\bm{\xi_2}}(d\bm{\xi_3}) & = {2^{3/2} \Gamma({3 \over 4}) \over \sqrt{\pi} \, \Gamma({1 \over 4})} \bigl[ (x_3^2 - (x_1-x_2)^2) ((x_1+x_2)^2 - x_3^2) \bigr]^{-3/4} \\
& \qquad\qquad \times \sqrt{x_1 x_2} \, x_3 \mathds{1}_{[|x_1-x_2|,x_1+x_2]}(x_3) \, dx_3 \, \delta_{\theta_1+\theta_2}(d\theta_3).
\end{align*}
}
\end{enumerate}
Property \emph{\textbf{(iii)}} is a particular case of Proposition \ref{prop:coneconv_prodform}. The identity $\bm{M}_{0,-{1 \over 4}}(2ix\tau) = \bm{J}_{-{1 \over 4}}(x\tau)$ (cf.\ \cite[\S10.27 and \S13.18(iii)]{dlmf}) leads to  \eqref{eq:coneconv_exam_Mwhit_prodform_0}. The closed-form expression for the measures $\bm{\gamma}_{0,\bm{\xi_1},\bm{\xi_2}}(d\bm{\xi_3})$ follows from the well-known product formula for the Bessel function of the first kind \cite{hirschman1960,watson1944}. We mention that the convolution $(\mu \convstar{0} \nu)(\bm{\cdot}) = \int_M \int_M \bm{\gamma}_{0,\bm{\xi_1}, \bm{\xi_2}}(\bm{\cdot}) \, \mu(d\bm{\xi_1}) \, \nu(d\bm{\xi_2})$ is (modulo the product with the trivial convolution on the torus) a particular case of the Bessel-Kingman convolution \cite{kingman1963}, which is one of most notable examples of Sturm-Liouville hypergroups \cite{bloomheyer1994,rentzschvoit2000}.

It is natural to conjecture that the measures $\bm{\gamma}_{k,\bm{\xi_1},\bm{\xi_2}}$ ($k = 1,2,\ldots$) can also be written in closed form in terms of classical special functions. If this is true, determining such a closed form expression is likely to require a detailed and nontrivial analysis of the properties of the Whittaker function of the first kind. (Compare with e.g.\ \cite{koornwinderschwartz1997,laine1980,sousaetal2019a}, where nontrivial product formulas have been determined for other special functions.)

According to \cite{boscainprandi2016}, one can formally interpret the manifold $(M,g)$ as a cone-like surface of revolution $\mathcal{S} = \{(t,r(t)\cos\theta,r(t)\sin\theta) \mid t > 0, \, \theta \in \mathbb{T}\}$ with profile $r(t) \sim \sqrt{t}$ as $t \downarrow 0$. The properties of self-adjoint extensions of the Laplace-Beltrami operator (and the corresponding Markovian semigroups) on such cone-like manifolds have been widely studied, see \cite{boscainprandi2016} and references therein. As a particular case of Corollary \ref{cor:coneconv_rbm_convsemigroup}, we obtain the following convolution semigroup property for the heat semigroup generated by the Neumann realization of the Laplace-Beltrami operator on $(M,g)$:
\begin{enumerate}
\item[\textbf{\itshape(iv)}] \emph{If $\mathrm{m}_0 \in \mathcal{M}_\mathbb{C}(M)$ satisfies the absolute continuity assumption of Corollary \ref{cor:coneconv_rbm_convsemigroup}, then the transition probabilities of the reflected Brownian motion $\{W_t\}$ on the manifold $(M,g)$ with initial distribution $\mathrm{m}_0$ can be written as
\begin{equation} \label{eq:coneconv_exam_Mwhit_rbm_convsemigroup}
\begin{aligned}
\mathbb{E}_{\mathrm{m}_0}[h(W_t)] & \equiv \int_M (e^{t\Delta_N} h)(\bm{\xi})\, \mathrm{m}_0(d\bm{\xi}) \\
& = \sum_{k \in \mathbb{Z}} \int_M e^{i2k\pi\theta\,} \widehat{h}_k(x) \, \bigl(\widetilde{\mu}_t^k \convstar{k} \bm{\mathrm{m}}_{0,-k}\bigr)(d\bm{\xi}) \qquad \bigl(h \in L^2(M), \; t \geq 0\bigr)
\end{aligned}
\end{equation}
where $\{\widetilde{\mu}_t^k\}_{t \geq 0}$ is a convolution semigroup with respect to the convolution $\convstar{k}$ defined by $(\mu \convstar{k} \nu)(\bm{\cdot}) = \int_M \int_M \bm{\gamma}_{k,\bm{\xi_1}, \bm{\xi_2}}(\bm{\cdot}) \, \mu(d\bm{\xi_1}) \, \nu(d\bm{\xi_2})$,\, $\widehat{h}_k(x) := \int_0^1 e^{-i2k\pi\vartheta} h(x,\vartheta) d\vartheta$ and the measures $\bm{\mathrm{m}}_{0,-k}$ are defined as in Corollary \ref{cor:coneconv_rbm_convsemigroup}.
}
\end{enumerate}
As we saw earlier, the convolution semigroups $\{\widetilde{\mu}_t^k\}$ are of the form $({\alpha_t^k \over \zeta_k}) \otimes \delta_0$, where $\{\alpha_t^k\}$ is the law of the one-dimensional diffusion process (started at $x=0$) generated by the Sturm-Liouville operator $\ell_k$ defined in \eqref{eq:coneconv_wsol_pf3}. The differential equation $\ell_k u = \lambda u$ can be transformed, by the change of dependent variable $U(x) = \zeta_k(x) e^{-2k^2 x} u(x)$, into an equation of the form $\mathfrak{L}_k U = \Lambda U$ ($\Lambda \in \mathbb{C}$), where $\mathfrak{L}_k = -{d^2 \over dx^2} - \bigl({1 \over 2x} + (4k\pi)^2\bigr){d \over dx}$, i.e.\ $\mathfrak{L}_k$ is the infinitesimal generator of a Bessel process with constant drift $(4k\pi)^2$ \cite{linetsky2004b}. Therefore, the identity \eqref{eq:coneconv_exam_Mwhit_rbm_convsemigroup} shows that the transition probabilities of $\{W_t\}$ can be decomposed in terms of transition probabilities of (one-dimensional) drifted Bessel processes.

Finally, we call attention to the following convolution semigroup representation for Markovian semigroups generated by fractional powers of the Laplace-Beltrami operator, which can be deduced from Proposition \ref{prop:coneconv_convsemigroup_1dexponent}:
\begin{enumerate}
\item[\textbf{\itshape(v)}] \emph{Let $\mathrm{m}_0 \in \mathcal{M}_\mathbb{C}(M)$ satisfy the assumptions of Proposition \ref{prop:coneconv_convsemigroup_1dexponent} and $(\widehat{\mathrm{m}_0})_j = 0$ for each $j \neq 0$. Let $0 < q < 1$. Then the Markovian semigroup generated by the operator $-(-\Delta_N)^q$ is such that
\[
\int_M (e^{-t(-\Delta_N)^q} h)(\bm{\xi})\, \mathrm{m}_0(d\bm{\xi}) = \int_M h(\xi) \, \bigl(\nu_{q,t} \convstar{0} \mathrm{m}_0\bigr)(d\bm{\xi}) \qquad \bigl(h \in L^2(M), \; t \geq 0\bigr)
\]
where $\{\nu_{q,t}\}_{t \geq 0}$ is a $\convstar{0}$-convolution semigroup.
}
\end{enumerate}
(To prove \emph{\textbf{(v)}}, we also need to recall the following special property of the Bessel-Kingman convolution \cite[Theorem 2]{urbanik1988}: for each $0 < q < 1$ there exists a measure $\sigma_q \in \mathcal{P}(\mathbb{R}_0^+)$ which is infinitely divisible with respect to the Bessel-Kingman convolution $\convstar{0}$ and such that $\int_{\mathbb{R}_0^+} \bm{J}_{-{1 \over 4}}(x\sqrt{\lambda}) \, \sigma_q(dx) = e^{-\lambda^q}$. It is easy to check that $e^{-t\lambda^q} \in L^2(\bm{\rho_0})$ for all $t > 0$.)
\end{example}

\begin{example} \label{exam:coneconv_exam_conebeta}
Consider now the more general case $A(x) = x^\beta$ with $0 < \beta < 1$. The corresponding Riemannian metric, $g = dx^2 + x^{2\beta} d\theta^2$, endows the space $M = \mathbb{R}^+ \times \mathbb{T}$ with a metric structure which, like in the previous example, can be formally interpreted as that of a surface of revolution with profile $r(t) \sim t^{\beta}$ as $t \downarrow 0$.

If $\beta \neq {1 \over 2}$, the solution of the boundary value problem \eqref{eq:coneconv_wsol_bvp} and the spectral measures $\bm{\rho_k}$ ($k=1,2,\ldots$) can no longer be written in closed form. Nevertheless, the convolution semigroup property of the Laplace-Beltrami operator $\Delta = \partial_x^2 + {\beta \over x} \partial_x + {1 \over x^{2\beta}} \partial_\theta^2$ on the cone-like manifold $(M,g)$, stated in property \emph{\textbf{(iv)}} of the previous example, continues to hold in this more general setting.

For $k=0$, the solutions of $-\Delta_0 v(x) \equiv -v''(x) - {\beta \over x} v'(x) = \lambda v(x)$ are the normalized Bessel functions with parameter ${\beta-1 \over 2}$. Therefore, we have the following extension of property \emph{\textbf{(iii')}} of the preceding example: \emph{the product formula
\[
\bm{J}_{\beta - 1 \over 2}(x_1\tau) \, \bm{J}_{\beta - 1 \over 2}(x_2\tau) = \int_M \bm{J}_{\beta - 1 \over 2}(x_3\tau) \, \bm{\gamma}_{0,\bm{\xi_1},\bm{\xi_2}}(d\bm{\xi_3}) \qquad (\tau \in \mathbb{C})
\]
holds for all $\bm{\xi_1}, \bm{\xi_2}\in M$, where the measures $\bm{\gamma}_{0,\bm{\xi_1},\bm{\xi_2}}$ are given by
\begin{align*}
\bm{\gamma}_{0,\bm{\xi_1},\bm{\xi_2}}(d\bm{\xi_3}) & = {2^{2-\beta} \Gamma({\beta + 1 \over 2}) \over \sqrt{\pi} \, \Gamma({\beta \over 2})} \bigl[ (x_3^2 - (x_1-x_2)^2) ((x_1+x_2)^2 - x_3^2) \bigr]^{\beta/2 - 1} \\
& \qquad\qquad \times (x_1 x_2)^{1-\beta} \, x_3 \mathds{1}_{[|x_1-x_2|,x_1+x_2]}(x_3) \, dx_3 \, \delta_{\theta_1+\theta_2}(d\theta_3).
\end{align*}}%
The convolution semigroup representation for $\{e^{-t(-\Delta_N)^q}\}_{t \geq 0}$,  formulated in property \emph{\textbf{(v)}} of the previous example, also extends to the case $0 < \beta < 1$ without any essential change.
\end{example}

In the latter example, the limiting case $\beta = 0$ corresponds to the standard metric structure on the cylinder $\mathbb{R}^+ \times \mathbb{T}$, which is a trivial case because a convolution associated with the Laplace-Beltrami operator $\Delta = \partial_x^2 + \partial_\theta^2$ can be introduced in a trivial way (namely, by taking the product of one-dimensional convolutions). In this trivial case, the convolutions introduced in the previous sections have a particularly simple structure:

\begin{example} \label{exam:coneconv_exam_conetriv}
If $A \equiv 1$, the Fourier decomposition \eqref{Eq Fourier decomposition DN Laplacian} yields the Sturm-Liouville operators $\Delta_k = {\partial_x^2} - (2k\pi)^2$. The eigenfunction expansion \eqref{eq:coneconv_Feigenexp_dir}--\eqref{eq:coneconv_Feigenexp_inv} is simply a composition of a Fourier series in the variable $\theta$ and a cosine Fourier transform in the variable $x$,
\begin{align*}
(\bm{\mathcal{F}}h)_k(\lambda) & = \int_0^\infty \! \int_0^1 h(\bm{\xi}) e^{-i2k\pi\theta} d\theta\, \cos(x\widetilde{\lambda}_k) \, dx \qquad (\widetilde{\lambda}_k = \sqrt{\lambda - (2k\pi)^2}) \\
(\bm{\mathcal{F}}^{-1} \{\varphi_k\})(\bm{\xi}) & = {1 \over \pi} \sum_{k \in \mathbb{Z}} \int_{(2k\pi)^2}^\infty \varphi_k(\lambda) e^{i2k\pi\theta} \cos(x\widetilde{\lambda}_k) \, \widetilde{\lambda}_k^{-1} d\lambda,
\end{align*}
and the product formula $w_{k,\lambda}(\bm{\xi_1}) \, w_{k,\lambda}(\bm{\xi_2}) = \int_M w_{k,\lambda}\, d\bm{\gamma}_{k,\bm{\xi_1},\bm{\xi_2}}$ (where $w_{k,\lambda}(\bm{\xi_1}) = e^{-i2k\pi\theta} \cos(x\widetilde{\lambda}_k)$) holds for the measures $\bm{\gamma}_{k,\bm{\xi_1},\bm{\xi_2}} = {1 \over 2}(\delta_{|x_1-x_2|} + \delta_{x_1+x_2}) \otimes \delta_{\theta_1+\theta_2}$, which do not depend on $k$. The convolution $\bm{\star} \equiv \convstar{k}$ is such that 
\[
\delta_{\bm{\xi_1}} \bm{\star} \delta_{\bm{\xi_2}} = (\delta_{x_1} \convdiam{\!\mathrm{sym}\!} \delta_{x_2}) \otimes (\delta_{\theta_1} \convdiam{\mathbb{T}} \delta_{\theta_2}),
\]
i.e.\ it can be interpreted as a product of the convolution $\convdiam{\!\mathrm{sym}\!}$ of the symmetric hypergroup on $\mathbb{R}_0^+$ and the ordinary convolution $\convdiam{\mathbb{T}}$ on $\mathbb{T}$ (this is a product hypergroup structure, cf.\ \cite[Definition 1.5.29 and Example 3.5.73]{bloomheyer1994}). In turn, the convolution $\conv{k}$ of Definition \ref{def:coneconv_convk} is such that
\[
\delta_{\bm{\xi_1}} \conv{k} \delta_{\bm{\xi_2}} = (\delta_{x_1} \convdiam{\!\mathrm{ch}_k\!} \delta_{x_2}) \otimes (\delta_{\theta_1} \convdiam{\mathbb{T}} \delta_{\theta_2}),
\]
where $\delta_{x_1} \convdiam{\!\mathrm{ch}_k\!} \delta_{x_2} = {\cosh(2k\pi|x_1-x_2|) \over 2\cosh(2k\pi x_1)\cosh(2k\pi x_2)} \delta_{|x_1-x_2|} + {\cosh(2k\pi(x_1+x_2)) \over 2\cosh(2k\pi x_1)\cosh(2k\pi x_2)} \delta_{x_1+x_2}$ is the convolution of a cosh hypergroup \cite[Example 3.5.71]{bloomheyer1994}. It is interesting to note that the convolution $\conv{k}$ is a modification of the convolution $\bm{\star}$, as defined in the theory of hypergroups \cite[Section 2.3]{bloomheyer1994}. We also observe that, since $\bm{\star}$ is the convolution of the symmetric hypergroup, Corollary \ref{cor:coneconv_rbm_convsemigroup} specializes to the following fact: \emph{under the assumption on $\mathrm{m}_0 \in \mathcal{M}_\mathbb{C}(M)$ given in Corollary \ref{cor:coneconv_rbm_convsemigroup}, the transition probabilities of the reflected Brownian motion $\{W_t\}$ on $(M,g)$ are such that
\[
\mathbb{E}_{\mathrm{m}_0}[h(W_t)] = \sum_{k \in \mathbb{Z}} e^{-(2k\pi)^2 t}\int_{\mathbb{R}_0^+} \widehat{h}_k(x) \, \bigl(p\rule[-.3\baselineskip]{0pt}{0.8\baselineskip}_{\mathbb{R}_0^+,t}\, \convdiam{\!\mathrm{sym}\!} (\widehat{\mathrm{m}_0})_{-k}\bigr)(dx) \qquad \bigl(h \in L^2(M), \; t \geq 0\bigr)
\]
where $\widehat{h}_k(x) := \int_0^1 e^{-i2k\pi\vartheta} h(x,\vartheta) d\vartheta$, and $p_{\mathbb{R}_0^+,t}$ is the law of a reflected Brownian motion on $\mathbb{R}_0^+$ started at $0$, which satisfies $p_{\mathbb{R}_0^+,t+s} = p_{\mathbb{R}_0^+,t} \convdiam{\mathrm{sym}} p_{\mathbb{R}_0^+,s}$.
}
\end{example}

Next we give two other particular cases of metrics $g$ where, as in Example \ref{exam:coneconv_exam_conebeta}, the corresponding families of convolutions $\conv{k}$ are related with well-known one-dimensional generalized convolutions:

\begin{example}
Consider $A(x) = (\sinh x)^{2\alpha + 1} (\cosh x)^{2\beta + 1}$, which satisfies condition \eqref{eq:coneconv_Ahyp} provided that $-{1 \over 2} \leq \beta \leq \alpha < 0$ with $\alpha \neq -{1 \over 2}$. The action of the Laplace-Beltrami operator \eqref{eq:coneconv_laplbeltr} on functions which do not depend on $\theta$ is the same as that of the Sturm-Liouville operator $\Delta_0 = \partial_x^2 + [(2\alpha + 1)\coth(x) + (2\beta + 1)\tanh(x)] \partial_x$. The solution of the boundary value problem $-\Delta_0 v = \lambda v$, $v(0) = 1$, $(Av')(0) = 0$ is the \emph{Jacobi function}
\[
v_{0,\lambda}(x) = {}_2F_1\Bigl(\tfrac{1}{2}(\sigma - i\tau), \tfrac{1}{2}(\sigma + i\tau); \alpha + 1; -(\sinh x)^2\Bigr) \qquad (\sigma = \alpha + \beta + 1,\; \lambda = \tau^2 + \sigma^2)
\]
for which the product formula $v_{0,\lambda}(x_1) \, v_{0,\lambda}(x_2) = \int_0^\infty v_{0,\lambda}\, d\pi_{x_1,x_2}^{[0]}$ holds with
\begin{align*}
\pi_{x_1,x_2}^{[0]}(dx_3) = \, &  {2^{-2\sigma} \Gamma(\alpha+1) (\cosh x_1 \, \cosh x_2 \, \cosh x_3)^{\alpha - \beta - 1} \over \sqrt{\pi}\, \Gamma(\alpha + {1 \over 2}) (\sinh x_1 \, \sinh x_2 \, \sinh x_3)^{2\alpha}} \times \\
& \times (1-Z^2)^{\alpha - 1/2} {\,}_2F_1\Bigl( \alpha + \beta, \alpha - \beta; \alpha + \tfrac{1}{2}; \tfrac{1}{2}(1-Z) \Bigr) \mathds{1}_{[|x_1-x_2|,x_1+x_2]}(x_3) A(x_3) dx_3
\end{align*}
where $Z := {(\cosh x_1)^2 + (\cosh x_2)^2 + (\cosh x_3)^2 - 1 \over 2\cosh x_1 \, \cosh x_2 \, \cosh x_3}$, see \cite{koornwinder1984}. The convolution determined by this product formula is the so-called \emph{Jacobi convolution}, which also gives rise to a notable example of a Sturm-Liouville hypergroup \cite[Example 3.5.64]{bloomheyer1994}.

It follows from Proposition \ref{prop:coneconv_prodform} that the generalized eigenfunctions of the Sturm-Liouville operator $\Delta_k = \Delta_0 - (2k\pi)^2 (\sinh x)^{-4\alpha - 2} (\cosh x)^{-4\beta - 2}$ also admit a similar product formula, whose measures are also supported on $[|x_1-x_2|,x_1+x_2]$. The results of the previous sections therefore show that the Jacobi convolution naturally extends into a convolution structure associated with the Laplace-Beltrami operator on $(M,g)$.
\end{example}

\begin{example}
Consider $A(x) = (1+x)^2$. The first Fourier component of the Laplace-Beltrami operator \eqref{eq:coneconv_laplbeltr} is $\Delta_0 = \partial_x^2 + {2 \over 1+x} \partial_x$. The solution of the boundary value problem $-\Delta_0 v = \lambda v$, $v(0) = 1$, $(Av')(0) = 0$ is the function
\[
v_{0,\lambda}(x) =
\begin{cases}
{1 \over 1+x} [\cos(\tau x) + {1 \over \tau} \sin(\tau x)], & \tau > 0 \\
1, & \tau = 0
\end{cases}
\qquad (\lambda = \tau^2)
\]
and the product formula for this function is $v_{0,\lambda}(x_1) \, v_{0,\lambda}(x_2) = \int_0^\infty v_{0,\lambda}\, d\pi_{x_1,x_2}^{[0]}$, where the measures are given by \cite[Example 4.10]{zeuner1992}
\begin{align*}
\pi_{x_1,x_2}^{[0]} & = {1 \over 2(1+x_1)(1+x_2)}\bigl[(1+|x_1-x_2|)\delta_{x_1-x_2}(dx_3) \\
& \qquad\quad + (1+x_1+x_2)\delta_{x_1+x_2}(dx_3) + (1+x_3)\mathds{1}_{[|x_1-x_2|,x_1+x_2]}(x_3) dx_3 \bigr].
\end{align*}
The convolution determined by this product formula gives rise to the so-called \emph{square hypergroup}. It follows from \cite[Theorem 3.14]{zeuner1992} that all the product formula measures $\pi_{x_1,x_2}^{[k]}$ associated  with the Fourier components $\Delta_k$ (cf.\ proof of Proposition \ref{prop:coneconv_prodform}) share with $\pi_{x_1,x_2}^{[0]}$ the property of having both a discrete component supported on $\{|x_1-x_2|,x_1+x_2\}$ and an absolutely continuous component supported on $[|x_1-x_2|,x_1+x_2]$. 

The Riemannian metric $g = dx^2 + (1+x)^4 d\theta^2$ satisfies the additional assumption in Remark \ref{rmk:coneconv_heatkern_existsuffcond}; therefore, there exists a heat kernel for the Laplace-Beltrami operator $\Delta = \partial_x^2 + {2 \over 1+x} \partial_x + {1 \over (1+x)^4} \partial_\theta^2$ and, as we saw in Proposition \ref{prop:coneconv_prodform_extended}, our results on the existence of product formulas not depending on $\lambda$ extend to all the functions $e^{t\Delta_N} w_{k,\lambda} = e^{-t\lambda} w_{k,\lambda}$\, ($t \geq 0$).
\end{example}

In all the examples above, the support of the convolution $\delta_{\bm{\xi_1}} \convstar{k} \delta_{\bm{\xi_2}} = \bm{\gamma}_{k,\bm{\xi_1}, \bm{\xi_2}}$ does not depend on the parameter $k$. Our final example shows that this is not always the case:

\begin{example}
Let $\phi \in \mathrm{C}_\mathrm{c}^\infty(\mathbb{R}_0^+)$ be a nonnegative decreasing function with $\supp(\phi) = [0,S]$ and let $A(x) = \exp(\int_0^x \phi(y)dy)$. We know that $\pi_{x_1,x_2}^{[k]} = \delta_{x_1} \convdiam{k} \delta_{x_2}$, where $\convdiam{k}$ is the convolution associated with the Sturm-Liouville operator $\ell_k = -{1 \over B_k}{d \over dx}(B_k {d \over dx})$. According to the general result of \cite[Proposition 5.7]{sousaetal2019b} on the support of convolutions associated with Sturm-Liouville operators, we have
\[
\supp(\pi_{x_1,x_2}^{[0]}) = \begin{cases}
[|x_1-x_2|, x_1+x_2], & \min\{x_1,x_2\} \leq 2S \\
[|x_1-x_2|, 2S + |x_1-x_2|] \cup [x_1+x_2-2S, x_1+x_2] & \min\{x_1,x_2\} > 2S
\end{cases}
\]
and
\[
\supp(\pi_{x_1,x_2}^{[k]}) = [|x_1-x_2|, x_1+x_2], \qquad k \geq 1.
\]
\end{example}

\section{Product formulas and convolutions associated with elliptic operators on subsets of $\mathbb{R}^2$} \label{sec:ellipticR2}

In the remainder of this paper, we will show that the techniques used above can also be used to construct families of generalized convolution operators associated with elliptic differential operators on $\mathbb{R}^+ \times I \subset \mathbb{R}^2$ of the general form 
\[
\bm{\mathcal{G}}^\wp = \partial_x^2 + {A'(x) \over A(x)} \partial_x + {1 \over A(x)^2} \wp_z, \qquad \bigl(x \in \mathbb{R}^+,\, z \in (a,b)\bigr)
\]
where $\wp_z = {1 \over \mathfrak{r}(z)} \bigl( \mathfrak{p}(z) \partial_z^2 + \mathfrak{p}'(z) \partial_z \bigr)$ belongs to a class of Sturm-Liouville differential operators on the interval $(a,b)$, $-\infty \leq a < b \leq \infty$. It will be assumed that the coefficients $\mathfrak{p}, \mathfrak{r}$ are such that $\mathfrak{p}, \mathfrak{r} > 0$,\, $\mathfrak{p}, \mathfrak{r}$ are locally absolutely continuous on the interior of $I$ and the endpoint $a$ is regular or entrance (cf.\ Equation \eqref{eq:bkg_diffusion_fellerBC} of the Appendix). As in the previous sections, the coefficient $A(x)$ is assumed to satisfy the conditions \eqref{eq:coneconv_Ahyp} and $\lim_{x \to \infty} A(x) = \infty$.

In what follows we set $I = [a,b)$ if $b$ is an exit or natural endpoint of $\wp$ and $I = [a,b]$ if the endpoint $b$ is regular or entrance. We shall write $M = \mathbb{R}_0^+ \times I$ and $\omega^\wp\bigl(d(x,z)\bigr) = A(x) dx\, \mathfrak{r}(z) dz$.

Let $\psi_\eta$ be the unique solution of the boundary value problem $-\wp_z(u) = \eta \mskip0.5\thinmuskip u$, $u(a) = 1$, $(\mathfrak{p}u')(a) = 0$ (cf.\ Lemma \ref{lem:SLappendix_boundaryvaluepb}). The eigenfunction expansion of the operator $\wp$ with Neumann boundary conditions (cf.\ Proposition \ref{prop:SLappendix_eigenexp}) yields the integral transform
\[
(\mathcal{J}_{\wp} g)(\eta) := \int_a^b g(z) \, \psi_\eta(z) \, \mathfrak{r}(z) dz, \qquad (\mathcal{J}_{\wp}^{-1} \varphi)(z) := \int_{\mathbb{R}_0^+} \varphi(\eta) \, \psi_\eta(z) \, \bm{\sigma}(d\eta)
\]
which is an isometric isomorphism between $L^2(\mathfrak{r}) \equiv L^2((a,b), \mathfrak{r}(z)dz)$ and $L^2(\mathbb{R}_0^+, \bm{\sigma})$.

If the endpoint $b$ is regular, entrance or exit (cf.\ Proposition \ref{prop:SLappendix_eigenexp}), then the spectral measure $\bm{\sigma}$ is discrete and the inverse integral transform is written as $(\mathcal{J}_{\wp}^{-1} \varphi)(z) = \sum_{k=1}^\infty {1 \over \|\psi_{\eta_k}\|^2}\mskip0.5\thinmuskip \varphi(\eta_k) \, \psi_{\eta_k}(z)$, where the $\eta_k$ are eigenvalues of $\wp$. In these conditions, the application of the eigenfunction expansion to functions $h(x,z) \in L^2(M,\omega^\wp)$ yields the decomposition
\[
L^2(M,\omega^\wp) = \bigoplus_{k = 1}^\infty H_{\eta_k}^\wp, \qquad H_\eta^\wp := \{\psi_\eta(z) v(x) \mid v \in L^2(A)\}.
\]
A similar expansion also holds if $b$ is natural, with the direct sums replaced by direct integrals \cite[\S7.4]{folland2016}. Note also that if $u \in H_\eta^\wp \cap \mathrm{C}_\mathrm{c}^\infty(M)$ then $\bm{\mathcal{G}}^\wp u = \mathcal{G}_{\eta_k} u$, where $\mathcal{G}_\eta := \partial_x^2 + {A'(x) \over A(x)} \partial_x - {\eta \over A(x)^2}$.

The following result is a counterpart of Proposition \ref{prop:coneconv_Feigenexp}:

\begin{proposition}
For each $(\lambda,\eta) \in \mathbb{C} \times \mathbb{R}_0^+$, there exists a unique solution $w_{\lambda,\eta}^\wp \in \{\psi_\eta(z) v(x) \mid v \in \mathrm{C}(\mathbb{R}_0^+)\}$ of the boundary value problem
\[
-\bm{\mathcal{G}}^\wp u = \lambda u, \qquad\;\; u(0,z) = \psi_\eta(z), \qquad\;\; u^{[1]}(0,z) = 0.
\]
There exists a locally finite positive Borel measure $\bm{\rho}^\wp$ on $(\mathbb{R}_0^+)^2$ such that the map $h \mapsto \bm{\mathcal{F}}_{\!\wp} h$, where
\begin{equation} \label{eq:coneconvSLcompact_Feigenexp_dir}
(\bm{\mathcal{F}}_{\!\wp} h)(\lambda,\eta) := \int_M h(x,z) \, w_{\lambda,\eta}^\wp(x,z) \, \omega^\wp(d(x,z)) \qquad \bigl(\lambda,\eta \geq 0\bigr),
\end{equation}
is an isometric isomorphism $\bm{\mathcal{F}}_{\!\wp}: L^2(M,\omega^\wp) \longrightarrow L^2((\mathbb{R}_0^+)^2,\bm{\rho}^\wp)$ whose inverse is given by
\begin{equation} \label{eq:coneconvSLcompact_Feigenexp_inv}
(\bm{\mathcal{F}}_{\!\wp}^{-1} \Phi)(x,z) = \int_{(\mathbb{R}_0^+)^2} \Phi(\lambda,\eta) \, w_{\lambda,\eta}^\wp(x,z) \, \bm{\rho}^\wp(d(\lambda,\eta)).
\end{equation}
The convergence of the integral in \eqref{eq:coneconvSLcompact_Feigenexp_dir} is understood with respect to the norm of $L^2((\mathbb{R}_0^+)^2,\bm{\rho}^\wp)$ and the convergence of the integral in \eqref{eq:coneconvSLcompact_Feigenexp_inv} is understood with respect to the norm of $L^2(M,\omega^\wp)$.

If $b$ is regular, entrance or exit, then $\bm{\rho}^\wp(\Lambda_1 \times \Lambda_2) = \sum_{\eta_k \in \Lambda_2} {1 \over \|\psi_{\eta_k}\|^2} \bm{\rho}_{\eta_k}^\wp(\Lambda_1)$, where $\bm{\rho}_\eta^\wp$ is the spectral measure of (the Neumann self-adjoint extension of) the Sturm-Liouville operator $\smash{\mathcal{G}_\eta}\rule[-0.05\baselineskip]{0pt}{0.78\baselineskip}$, and the eigenfunction expansion \eqref{eq:coneconvSLcompact_Feigenexp_dir}--\eqref{eq:coneconvSLcompact_Feigenexp_inv} reduces to
\begin{gather*}
\bm{\mathcal{F}}_{\!\wp}: L^2(M,\omega^\wp) \longrightarrow \bigoplus_{k=1}^\infty L^2(\mathbb{R}_0^+, \bm{\rho}_{\eta_k}^\wp), \qquad \bm{\mathcal{F}}_{\!\wp} h \equiv \bigl( (\bm{\mathcal{F}}_{\!\wp} h)(\bm{\cdot},\eta_1), (\bm{\mathcal{F}}_{\!\wp} h)(\bm{\cdot},\eta_2), \ldots \bigr) \\
(\bm{\mathcal{F}}_{\!\wp}^{-1} \{\varphi_k\})(x,z) = \sum_{k=1}^\infty {1 \over \|\psi_{\eta_k}\|^2} \int_{\mathbb{R}_0^+} \varphi_k(\lambda) \, w_{\lambda,\eta_k}^\wp(x,z) \, \bm{\rho}_{\eta_k}^\wp(d\lambda).
\end{gather*}
\end{proposition}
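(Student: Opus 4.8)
The plan is to follow the scheme of the proof of Proposition \ref{prop:coneconv_Feigenexp}, with the Fourier decomposition in the angular variable replaced by the eigenfunction expansion $\mathcal{J}_\wp$ of the Sturm--Liouville operator $\wp$. First I would handle the separable solution: any element of $\{\psi_\eta(z) v(x) \mid v \in \mathrm{C}(\mathbb{R}_0^+)\}$ solving $-\bm{\mathcal{G}}^\wp u = \lambda u$ must, since $\wp_z \psi_\eta = -\eta\mskip0.7\thinmuskip\psi_\eta$, be of the form $w_{\lambda,\eta}^\wp(x,z) = \psi_\eta(z)\, v_{\lambda,\eta}(x)$, where $v_{\lambda,\eta}$ solves the one-dimensional boundary value problem $-\mathcal{G}_\eta v = \lambda v$, $v(0) = 1$, $(Av')(0) = 0$. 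Since $A$ satisfies \eqref{eq:coneconv_Ahyp}, the operator $\mathcal{G}_\eta$ falls under the Appendix, so existence and uniqueness of $v_{\lambda,\eta}$ (and the entire, exponential-type dependence on $\lambda$, exactly as in Lemma \ref{lem:coneconv_wsol}) follow from Lemma \ref{lem:SLappendix_boundaryvaluepb}. This gives the first assertion.

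Next I would establish that the decomposition of $L^2(M,\omega^\wp)$ along the eigenfunctions $\psi_\eta$ is compatible with the Neumann realization of $\bm{\mathcal{G}}^\wp$: writing $\widetilde{u}(\cdot,\eta) := (\mathcal{J}_\wp u(x,\cdot))(\eta)$, the sesquilinear form of $\bm{\mathcal{G}}^\wp$ splits as $\mathcal{E}^\wp(u,v) = \int_{\mathbb{R}_0^+} \mathcal{E}^\wp_\eta(\widetilde{u}(\cdot,\eta), \widetilde{v}(\cdot,\eta))\, \bm{\sigma}(d\eta)$, where $\mathcal{E}^\wp_\eta$ is the (symmetric, positive, closed) form of $\mathcal{G}_\eta$ on $L^2(A)$. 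This is proved just as in Section \ref{sec:eigenexp} (with $\bigoplus_{k} H^\wp_{\eta_k}$ when $b$ is regular, entrance or exit, and the direct integral $\int_{\mathbb{R}_0^+}^{\oplus} H_\eta^\wp\, \bm{\sigma}(d\eta)$ when $b$ is natural, cf.\ \cite[\S7.4]{folland2016}), and it yields that the Neumann realization of $\bm{\mathcal{G}}^\wp$ acts fibrewise as the Neumann realization of $\mathcal{G}_\eta$.

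With these two ingredients, I would define $\bm{\mathcal{F}}_{\!\wp}$ by applying $\mathcal{J}_\wp$ in $z$ and then the spectral representation $\mathcal{F}_{\mathcal{G}_\eta}$ of $\mathcal{G}_\eta$ (Proposition \ref{prop:SLappendix_eigenexp}) in $x$, fibre by fibre; by Fubini one gets $(\bm{\mathcal{F}}_{\!\wp} h)(\lambda,\eta) = (\mathcal{F}_{\mathcal{G}_\eta}\widetilde{h}(\cdot,\eta))(\lambda)$ whenever the integral \eqref{eq:coneconvSLcompact_Feigenexp_dir} converges absolutely, and in general as an $L^2$-limit. The spectral measure is $\bm{\rho}^\wp(d(\lambda,\eta)) := \bm{\rho}_\eta^\wp(d\lambda)\, \bm{\sigma}(d\eta)$, a genuine Borel measure on $(\mathbb{R}_0^+)^2$ once the family $\eta \mapsto \bm{\rho}_\eta^\wp$ is known to be measurable. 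The isometry is the chain
\[
\|h\|_{L^2(M,\omega^\wp)}^2
= \int_{\mathbb{R}^+} \|\widetilde{h}(x,\cdot)\|_{L^2(\bm{\sigma})}^2\, A(x)\, dx
= \int_{\mathbb{R}_0^+} \|\mathcal{F}_{\mathcal{G}_\eta}\widetilde{h}(\cdot,\eta)\|_{L^2(\bm{\rho}_\eta^\wp)}^2\, \bm{\sigma}(d\eta)
= \|\bm{\mathcal{F}}_{\!\wp} h\|_{L^2((\mathbb{R}_0^+)^2,\bm{\rho}^\wp)}^2,
\]
where the first step is Plancherel for $\mathcal{J}_\wp$ together with Tonelli, and the second is Plancherel for $\mathcal{F}_{\mathcal{G}_\eta}$ integrated against $\bm{\sigma}$. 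Surjectivity and the inversion formula \eqref{eq:coneconvSLcompact_Feigenexp_inv} follow because each one-dimensional transform is onto with the stated inverse and the fibred structure is preserved under the composition. Finally, in the regular/entrance/exit case $\bm{\sigma} = \sum_{k} \|\psi_{\eta_k}\|^{-2}\, \delta_{\eta_k}$, so all the direct integrals collapse to direct sums and the displayed reduction is immediate.

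I expect the main obstacle to be the natural-endpoint case: making the direct-integral decomposition rigorous, i.e.\ establishing the joint measurability of $(\lambda,\eta) \mapsto v_{\lambda,\eta}(x)$ and the measurable dependence $\eta \mapsto \bm{\rho}_\eta^\wp$, so that $\bm{\rho}^\wp$ is well-defined and the fibrewise identities above may legitimately be integrated in $\eta$. Once the machinery of \cite[\S7.4]{folland2016} is in place (or, alternatively, one invokes the multidimensional Weyl--Titchmarsh--Kodaira theory of \cite{garding1954}, \cite[Theorem XIV.6.6]{dunfordschwartz1963} and uses separation of variables only to identify the kernels $w_{\lambda,\eta}^\wp$), everything else is the routine bookkeeping outlined above.
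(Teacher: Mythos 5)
Your proposal is correct for the case where $b$ is regular, entrance or exit — there it coincides with the paper's proof, which simply repeats the method of Proposition \ref{prop:coneconv_Feigenexp} with the Fourier series replaced by the discrete expansion $\mathcal{J}_\wp$. Where you genuinely diverge is the natural-endpoint case. You propose a direct-integral disintegration $L^2(M,\omega^\wp) \cong \int^{\oplus} H_\eta^\wp\,\bm{\sigma}(d\eta)$ followed by a fibrewise application of Proposition \ref{prop:SLappendix_eigenexp}, yielding the explicit product description $\bm{\rho}^\wp(d(\lambda,\eta)) = \bm{\rho}_\eta^\wp(d\lambda)\,\bm{\sigma}(d\eta)$. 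The paper instead truncates the domain to $M_N = [0,N]\times[a,N]$ with Neumann conditions at the artificial boundaries, obtains a fully discrete two-parameter expansion there, and lets $N \to \infty$ using the limiting-spectral-function techniques of \cite{browne1972} for singular multiparameter Sturm--Liouville problems; the measure $\bm{\rho}^\wp$ is then simply the limit object, with no disintegration claimed. The trade-off is exactly the obstacle you flag at the end: your route requires proving that $\eta \mapsto \bm{\rho}_\eta^\wp$ is a measurable (indeed, via continuity of the Weyl $m$-function in the coefficient $\eta/A^2$, weakly continuous) field of measures and that $(\lambda,\eta)\mapsto v_{\lambda,\eta}(x)$ admits a jointly measurable version, so that Tonelli and the fibrewise Plancherel identities may be integrated in $\eta$ — this is precisely the step your outline defers to ``the machinery of \cite[\S7.4]{folland2016}'' without carrying it out, and it is the nontrivial part of the argument rather than bookkeeping. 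The paper's truncation route avoids that issue entirely at the cost of a less explicit $\bm{\rho}^\wp$; your route, if the measurability were supplied (it is true here, since the coefficients of $\mathcal{G}_\eta$ depend continuously on $\eta$), would actually strengthen the statement by exhibiting $\bm{\rho}^\wp$ as $\bm{\rho}_\eta^\wp(d\lambda)\,\bm{\sigma}(d\eta)$ also when $b$ is natural, consistently with the discrete case where $\bm{\sigma} = \sum_k \|\psi_{\eta_k}\|^{-2}\delta_{\eta_k}$.
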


\begin{proof}
The result for $b$ regular, entrance or exit can be proved in a direct way using the same method as in Proposition \ref{prop:coneconv_Feigenexp}.

If $b$ is natural, start by considering the operator $\bm{\mathcal{G}}^\wp$ on the restricted domain $M_N = [0,N] \times [a,N]$, where $\max\{0,a\} < N < \infty$. Applying first the eigenfunction expansion of the Sturm-Liouville operator $\wp$ on the interval $[a,N]$ (with boundary condition $u'(N) = 0$) and then the eigenfunction expansion of the Sturm-Liouville operators $\mathcal{G}_\eta$ on $[0,N]$ (also with $u'(N) = 0$), we obtain a discrete eigenfunction expansion of the form
\begin{align*}
(\bm{\mathcal{F}}_{\wp,N\,} h)(\lambda_{\mathsmaller{k,N}},\eta_{\mathsmaller{k,N}}) & = \int_{M_N} h(x,z) \, w_{\lambda_{\mathsmaller{k,N}},\eta_{\mathsmaller{k,N}}}^\wp(x,z) \, \omega^\wp(d(x,z)) \\
(\bm{\mathcal{F}}_{\wp,N}^{-1} \{c_k\})(x,z) & = \sum_{k = 1}^\infty {c_k \over \| w_{\lambda_{\mathsmaller{k,N}},\eta_{\mathsmaller{k,N}}} \|^2} \, w_{\lambda_{\mathsmaller{k,N}},\eta_{\mathsmaller{k,N}}}(x,z).
\end{align*}
Using the techniques of \cite{browne1972}, one can show that in the limit $N \to \infty$ this discrete expansion gives rise to an eigenfunction expansion of the general form \eqref{eq:coneconvSLcompact_Feigenexp_dir}--\eqref{eq:coneconvSLcompact_Feigenexp_inv}, where $\bm{\rho}^\wp$ is the limiting spectral measure.
\end{proof}

As before, we will use the shorthand notation $\bm{\xi} = (x,z)$, $\bm{\xi_1} = (x_1, z_1)$, etc.\ for points of $M$.

\begin{proposition}[Product formula for $w_{\lambda,\eta}^\wp$] Suppose that there exists a family of measures $\{\pi_{z_1,z_2}^\wp\}_{z_1,z_2 \in I} \subset \mathcal{P}(I)$ such that
\begin{equation} \label{eq:coneconvSLcompact_prodform_1Dhyp}
\psi_\eta(z_1) \, \psi_\eta(z_2) = \int_I \psi_\eta \, d\pi_{z_1,z_2}^\wp \qquad (z_1, z_2 \in I, \; \eta \in \supp(\bm{\sigma})).
\end{equation}
Then for each $\eta \in \supp(\bm{\sigma})$ and $\bm{\xi_1}, \bm{\xi_2} \in M$ there exists a positive measure $\bm{\gamma}_{\eta,\bm{\xi_1},\bm{\xi_2}}^\wp$ on $M$ such that the product $w_{\lambda,\eta}^\wp(\bm{\xi_1}) \, w_{\lambda,\eta}^\wp(\bm{\xi_2})$ admits the integral representation
\begin{equation} \label{eq:coneconvSLcompact_prodform}
w_{\lambda,\eta}^\wp(\bm{\xi_1}) \, w_{\lambda,\eta}^\wp(\bm{\xi_2}) = \int_M w_{\lambda,\eta}^\wp(\bm{\xi_3})\, \bm{\gamma}_{\eta,\bm{\xi_1},\bm{\xi_2}}^\wp(d\bm{\xi_3}), \qquad \bm{\xi_1},\bm{\xi_2} \in M, \; \lambda \in \mathbb{C},\; \eta \in \supp(\bm{\sigma}).
\end{equation}
\end{proposition}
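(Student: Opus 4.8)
The plan is to follow the proof of Proposition~\ref{prop:coneconv_prodform} almost verbatim, using separation of variables to split the desired two-dimensional product formula into the one-dimensional product formula \eqref{eq:coneconvSLcompact_prodform_1Dhyp} for $\psi_\eta$ and a Sturm-Liouville product formula in the $x$-variable for the operator $\mathcal{G}_\eta$. First I would record that, since $\wp_z\psi_\eta = -\eta\,\psi_\eta$, for any $v$ one has $\bm{\mathcal{G}}^\wp\bigl(\psi_\eta(z)v(x)\bigr) = \psi_\eta(z)\,\mathcal{G}_\eta v(x)$; hence $w^\wp_{\lambda,\eta}(x,z) = \psi_\eta(z)\,v_{\eta,\lambda}(x)$, where $v_{\eta,\lambda}$ is the unique solution (Lemma~\ref{lem:SLappendix_boundaryvaluepb}) of $-\mathcal{G}_\eta v = \lambda v$, $v(0)=1$, $(Av')(0)=0$, the boundary conditions of the two-dimensional problem being matched because $u(0,z)=\psi_\eta(z)v(0)$ and $u^{[1]}(0,z)=\psi_\eta(z)(Av')(0)$.

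Next I would bring $\mathcal{G}_\eta$ into the normalized Sturm-Liouville form used in the Appendix, exactly as in Lemma~\ref{lem:coneconv_sol_normaliz} but with $(2k\pi)^2$ replaced by $\eta$: set $\zeta_\eta(x) := \cosh\bigl(\sqrt{\eta}\int_0^x \tfrac{dy}{A(y)}\bigr)$ (finite and continuous by \eqref{eq:coneconv_Ahyp}), $B_\eta := A\,\zeta_\eta^2$, $\ell_\eta(g) := -\tfrac{1}{B_\eta}(B_\eta g')'$, and $\widetilde v_{\eta,\lambda} := v_{\eta,\lambda}/\zeta_\eta$; then $\widetilde v_{\eta,\lambda}$ solves $\ell_\eta(\widetilde v)=\lambda\widetilde v$, $\widetilde v(0)=1$, $(B_\eta\widetilde v')(0)=0$, and the elementary identities of Lemma~\ref{lem:coneconv_sol_normaliz} — which used only $A>0$, $A'/A$ nonnegative and decreasing, and $\int_0^1 dy/A<\infty$, none of which needed $2k\pi$ to be an integer multiple of $2\pi$ — show that $p=r=B_\eta$ satisfy the Appendix's assumption \eqref{eq:SLappendix_mainassump}. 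I expect this transcription to be the only point requiring genuine (though entirely routine) verification.

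With $\ell_\eta$ thus covered by the hypotheses of the Appendix, I would apply the existence theorem for Sturm-Liouville product formulas (Theorem~\ref{thm:SLappendix_prodform}) to obtain a family $\{\pi^{[\eta]}_{x_1,x_2}\}_{x_1,x_2\in\mathbb{R}_0^+}\subset\mathcal{P}(\mathbb{R}_0^+)$, supported in $[|x_1-x_2|,x_1+x_2]$, with $\widetilde v_{\eta,\lambda}(x_1)\widetilde v_{\eta,\lambda}(x_2)=\int_{\mathbb{R}_0^+}\widetilde v_{\eta,\lambda}\,d\pi^{[\eta]}_{x_1,x_2}$ for all $x_1,x_2\in\mathbb{R}_0^+$, $\lambda\in\mathbb{C}$. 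Writing $v_{\eta,\lambda}=\zeta_\eta\widetilde v_{\eta,\lambda}$ and combining with \eqref{eq:coneconvSLcompact_prodform_1Dhyp} gives, for $\eta\in\supp(\bm{\sigma})$,
\[
w^\wp_{\lambda,\eta}(\bm{\xi_1})\,w^\wp_{\lambda,\eta}(\bm{\xi_2}) = \zeta_\eta(x_1)\,\zeta_\eta(x_2)\int_I\!\!\int_{\mathbb{R}_0^+}\psi_\eta(z_3)\,\widetilde v_{\eta,\lambda}(x_3)\,\pi^{[\eta]}_{x_1,x_2}(dx_3)\,\pi^\wp_{z_1,z_2}(dz_3),
\]
and since $\psi_\eta(z_3)\,\widetilde v_{\eta,\lambda}(x_3)=w^\wp_{\lambda,\eta}(\bm{\xi_3})/\zeta_\eta(x_3)$ this is precisely \eqref{eq:coneconvSLcompact_prodform} with the positive (finite) measure
\[
\bm{\gamma}^\wp_{\eta,\bm{\xi_1},\bm{\xi_2}}(d\bm{\xi_3}) := \frac{\zeta_\eta(x_1)\,\zeta_\eta(x_2)}{\zeta_\eta(x_3)}\bigl(\pi^{[\eta]}_{x_1,x_2}\otimes\pi^\wp_{z_1,z_2}\bigr)(d\bm{\xi_3}).
\]
The identity holds for every $\lambda\in\mathbb{C}$ and every $\eta\in\supp(\bm{\sigma})$ because Theorem~\ref{thm:SLappendix_prodform} yields the $x$-product formula for complex $\lambda$ while \eqref{eq:coneconvSLcompact_prodform_1Dhyp} is assumed for all $\eta\in\supp(\bm{\sigma})$; and a support statement for $\bm{\gamma}^\wp_{\eta,\bm{\xi_1},\bm{\xi_2}}$, if wanted, follows from $\supp(\pi^{[\eta]}_{x_1,x_2})\subset[|x_1-x_2|,x_1+x_2]$ together with $\supp(\pi^\wp_{z_1,z_2})$.
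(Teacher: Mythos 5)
Your proposal is correct and follows essentially the same route as the paper's own proof: separate variables to write $w^\wp_{\lambda,\eta}(x,z)=\psi_\eta(z)\,\zeta_\eta(x)\,\widetilde v_{\lambda,\eta}(x)$, verify assumption \eqref{eq:SLappendix_mainassump} for $\ell_\eta$ exactly as in Lemma \ref{lem:coneconv_sol_normaliz} with $(2k\pi)^2$ replaced by $\eta$, apply Theorem \ref{thm:SLappendix_prodform}, and tensor the resulting measure $\pi^{[\eta]}_{x_1,x_2}$ with the assumed $\pi^\wp_{z_1,z_2}$, weighting by $\zeta_\eta(x_1)\zeta_\eta(x_2)/\zeta_\eta(x_3)$. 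Your explicit check that the normalization argument never uses integrality of $k$ is precisely the content the paper compresses into ``arguing as in the proof of Proposition \ref{prop:coneconv_prodform}.''
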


\begin{proof}
It is straightforward that $w_{\lambda,\eta}^\wp(x,z) = \psi_\eta(z) \, \zeta_\eta(x) \, \widetilde{v}_{\!\lambda,\eta}(x)$, where $\zeta_\eta(x) := \cosh\bigl(\sqrt{\eta} \int_0^x \! {dy \over A(y)}\bigr)$ and $\widetilde{v}_{\!\lambda,\eta}$ is the solution of 
\[
-{1 \over B_\eta}(B_\eta \widetilde{v}')' = \lambda \widetilde{v}, \qquad \widetilde{v}(0) = 1, \qquad (B_\eta \widetilde{v}')(0) = 0
\]
where $B_\eta(x) = A(x) \zeta_\eta(x)^2$. Arguing as in the proof of Proposition \ref{prop:coneconv_prodform}, we deduce that the product formula \eqref{eq:coneconvSLcompact_prodform} holds for the positive measures
\[
\bm{\gamma}_{\eta,\bm{\xi_1}, \bm{\xi_2}}^\wp(d\bm{\xi_3}) =  {\zeta_\eta(x_1) \zeta_\eta(x_2) \over \zeta_\eta(x_3)} \bm{\nu}_{\eta,\bm{\xi_1}, \bm{\xi_2}}^\wp(d\bm{\xi_3})
\]
where $\bm{\nu}_{\eta,\bm{\xi_1}, \bm{\xi_2}}^\wp := \pi_{x_1,x_2}^{[\eta]} \otimes \pi_{z_1,z_2}^\wp$ and $\pi_{x_1,x_2}^{[\eta]}$ is the measure of the product formula for $\widetilde{v}_{\lambda,\eta}$.
\end{proof}

Sufficient conditions for the existence of $\{\pi_{z_1,z_2}^\wp\}_{z_1,z_2 \in I} \subset \mathcal{P}(I)$ such that \eqref{eq:coneconvSLcompact_prodform_1Dhyp} holds have been determined by various authors \cite[pp.\ 311--314]{berezansky1998}, \cite[pp.\ 234--245]{bloomheyer1994}, \cite{sousaetal2019b}. We highlight the following examples:

\begin{example}
\textbf{(a)} If $I = [0,\pi/2]$ and the coefficients $\mathfrak{p} = \mathfrak{r}$ satisfy
\begin{equation} \label{eq:coneconvSLcompact_SLzexample1}
\mathfrak{p}\Bigl({\pi \over 2} - z\Bigr) = \mathfrak{p}(z), \qquad {\mathfrak{p}' \over \mathfrak{p}} \text{ decreasing on } \Bigl[0, {\pi \over 4}\Bigr], \qquad {\mathfrak{p}' \over \mathfrak{p}}(z) =  2\alpha_0 \cot(2z) + \alpha_1(z)
\end{equation}
where $\alpha_0 > 0$ and $\alpha_1 \in \mathrm{C}^\infty[0,\pi/2]$ satisfies $\alpha_1(0) = 0$, then there exists $\{\pi_{z_1,z_2}^\wp\}_{z_1,z_2 \in I} \subset \mathcal{P}(I)$ satisfying \eqref{eq:coneconvSLcompact_prodform_1Dhyp}. \\[-8pt]

\textbf{(b)} With $I$, $\alpha_0$ and $\alpha_1$ as above, the same is true if we replace \eqref{eq:coneconvSLcompact_SLzexample1} by
\[
\mathfrak{p}^{(2j+1)}\Bigl({\pi \over 2}\Bigr) = 0 \text{ for } j = 0,1,2,\ldots, \qquad {\mathfrak{p}' \over \mathfrak{p}} \text{ decreasing on } \Bigl[0, {\pi \over 2}\Bigr], \qquad {\mathfrak{p}' \over \mathfrak{p}}(z) =  \alpha_0 \cot(z) + \alpha_1(z). \vspace{2pt}
\]

\textbf{(c)} For $I = [a,\infty)$, let 
\[
\alpha(z) = \int_c^z \sqrt{\mathfrak{r}(\zeta) \over \mathfrak{p}(\zeta)} d\zeta, \qquad \alpha^{-1} \text{ its inverse function }, \qquad R(y) = \sqrt{\mathfrak{p}(\alpha^{-1}(y)) \, \mathfrak{r}(\alpha^{-1}(y))}
\]
where $a < c < \infty$ is a fixed point. If $\alpha(\infty) = \infty$ and the function $R' \over R$ is decreasing and nonnegative, then there exists $\{\pi_{z_1,z_2}^\wp\}_{z_1,z_2 \in I} \subset \mathcal{P}(I)$ satisfying \eqref{eq:coneconvSLcompact_prodform_1Dhyp}.
\end{example}

\begin{definition}
Suppose that there exists $\{\pi_{z_1,z_2}^\wp\}_{z_1,z_2 \in I} \subset \mathcal{P}(I)$ such that \eqref{eq:coneconvSLcompact_prodform_1Dhyp} holds, and let $\eta \in \supp(\bm{\sigma})$,\, $\lambda \geq 0$ and $\mu, \nu \in \mathcal{M}_{\mathbb{C}}(M)$. The measure
\[
(\mu \conv{\!\eta,\wp\!} \nu)(\bm{\cdot}) = \int_M \int_M \bm{\nu}_{\eta,\bm{\xi_1}, \bm{\xi_2}}^\wp(\bm{\cdot}) \, \mu(d\bm{\xi_1}) \, \nu(d\bm{\xi_2})
\]
is called the \emph{$\mathcal{G}_\eta$-convolution} of the measures $\mu$ and $\nu$. The functions
\[
(\bm{\mathcal{F}}_{\!\wp\,} \mu)(\lambda,\eta) = \int_M {w_{\lambda,\eta}^\wp(\bm{\xi}) \over \zeta_\eta(x)}\, \mu(d\bm{\xi}) \qquad \text{and} \qquad 
(\bm{\mathcal{T}}_{\!\eta,\wp}^\mu h)(\bm{\xi}) := \int_M h \, d(\delta_{\bm{\xi}} \conv{\eta,\wp} \mu)
\]
are, respectively, the \emph{$\bm{\mathcal{G}}^\wp$-Fourier transform} of the measure $\mu$ and the \emph{$\mathcal{G}_\eta$-translation} of a function $h$ by $\mu$.
\end{definition}

Unsurprisingly, the \emph{$\mathcal{G}_\eta$-convolution} shares many properties with the $\Delta_k$-convolution studied in the previous sections, among which the following:

\begin{proposition}
Assume that there exists $\{\pi_{z_1,z_2}^\wp\}_{z_1,z_2 \in I} \subset \mathcal{P}(I)$ such that \eqref{eq:coneconvSLcompact_prodform_1Dhyp} holds. Assume also that $e^{-t\mskip0.5\thinmuskip \bm{\pmb\cdot}} \! \in L^2(\mathbb{R}_0^+,\bm{\sigma})$ for all $t > 0$.
\begin{enumerate}[itemsep=0pt,topsep=4pt]
\item[\textbf{(a)}] For each $\eta \in \supp(\bm{\sigma})$, the space $(\mathcal{M}_\mathbb{C}(M),\conv{\!\eta,\wp\!})$, equipped with the total variation norm, is a commutative Banach algebra over $\mathbb{C}$ whose identity element is the Dirac measure $\delta_{(0,a)}$. Moreover, the subset $\mathcal{P}(M)$ is closed under the $\mathcal{G}_\eta$-convolution.

\item[\textbf{(b)}] $\bigl(\bm{\mathcal{F}}_{\!\wp}(\mu \conv{\!\eta,\wp\!} \nu)\bigr)(\lambda,\eta) = (\bm{\mathcal{F}}_{\!\wp\,}\mu)(\lambda,\eta) \ccdot (\bm{\mathcal{F}}_{\!\wp\,}\nu)(\lambda,\eta)$\: for all $\lambda \geq 0$ and $\eta \in \supp(\bm{\sigma})$.

\item[\textbf{(c)}] Each measure $\mu \in \mathcal{M}_\mathbb{C}(M)$ is uniquely determined by $\bm{\mathcal{F}}_{\!\wp\,} \mu$.
\end{enumerate}
Set $\Sigma := \supp(\bm{\sigma})$ if $I = [a,b]\,$ and set $\Sigma := \mathbb{R}_0^+$ if $I = [a,b)$. In the latter case, assume also that $\lim_{z \uparrow b} \psi_\eta(z) = 0$ for all $\eta > 0$. Then:
\begin{enumerate}[itemsep=0pt,topsep=4pt]
\item[\textbf{(d)}] Let $\{\mu_n\}$ be a sequence of measures belonging to $\mathcal{M}_+(M)$ whose $\bm{\mathcal{G}}^\wp$-Fourier transforms are such that
\[
(\bm{\mathcal{F}}_{\!\wp\,} \mu_n)(\lambda,\eta) \xrightarrow[\,n \to \infty\,]{} f(\lambda,\eta) \qquad \text{pointwise in } (\lambda,\eta) \in \mathbb{R}_0^+ \times \Sigma
\]
for some real-valued function $f$ such that
\[
\begin{cases}
f(\bm{\cdot},0) \text{ is continuous at a neighbourhood of zero} & \text{ if } b \text{ is regular or entrance} \\
f \text{ is continuous at a neighbourhood of } (0,0) & \text{ if } b \text{ is exit or natural}.
\end{cases}
\]
Then $\mu_n \warrow \mu$ for some measure $\mu \in \mathcal{M}_+(M)$ such that $\bm{\mathcal{F}}_{\!\wp\,} \mu \equiv f$.

\item[\textbf{(e)}] For each $\eta \in \Sigma$ the mapping $(\mu,\nu) \mapsto \mu \conv{\!\eta,\wp\!} \nu$ is continuous in the weak topology.

\item[\textbf{(f)}] If $h \in \mathrm{C}_\mathrm{b}(M)$ (respectively $\mathrm{C}_0(M)$) then $\bm{\mathcal{T}}_{\!\eta,\wp}^\mu h \in \mathrm{C}_\mathrm{b}(M)$ (resp.\ $\mathrm{C}_0(M)$) for all $\mu  \in \mathcal{M}_\mathbb{C}(M)$.

\item[\textbf{(g)}] Let $1 \leq p \leq \infty$, $\mu \in \mathcal{M}_+(M)$ and $h \in L_{\eta,\wp}^p := L^p\bigl(M, B_\eta(x) dx \, \mathfrak{r}(z)dz\bigr)$. The $\mathcal{G}_\eta$-translation $(\bm{\mathcal{T}}_{\!\eta,\wp}^\mu h)(x)$ is a Borel measurable function of $x \in M$, and we have
\[
\|\bm{\mathcal{T}}_{\!\eta,\wp}^\mu h\|_{L_{\eta,\wp}^p} \leq \|\mu\| \ccdot \|h\|_{L_{\eta,\wp}^p}.
\]
\end{enumerate}
\end{proposition}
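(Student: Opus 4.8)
The plan is to obtain (a)--(g) by transcribing, essentially line by line, the corresponding results of Section~\ref{sec:prodformconv} — the Banach algebra statement, the linearization identity \eqref{eq:coneconv_transf_convtrivial}, Proposition~\ref{prop:coneconv_fourmeas_props}(ii),(iv), Corollary~\ref{cor:coneconv_weakcont} and Proposition~\ref{prop:coneconv_transl_props} — under two substitutions: the role played in Section~\ref{sec:prodformconv} by the ordinary convolution on the torus $\mathbb{T}$ is now played by the one-dimensional generalized convolution on $I$ with product-formula measures $\{\pi_{z_1,z_2}^\wp\}$, and the role of $\ell_k$ by $\ell_\eta := -\tfrac{1}{B_\eta}(B_\eta\,\cdot')'$ with $B_\eta = A\,\zeta_\eta^2$. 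Exactly as in Lemma~\ref{lem:coneconv_sol_normaliz}, one first checks that $B_\eta$ satisfies the standing assumption \eqref{eq:SLappendix_mainassump}, so that Propositions~\ref{prop:SLappendix_eigenexp}, \ref{prop:SLappendix_convtransl_props}, \ref{prop:SLappendix_fourmeas_props} and Lemma~\ref{lem:SLappendix_solprops} all apply to $\ell_\eta$, the hypothesis $e^{-t\cdot}\in L^2(\bm\sigma)$ securing the analogous facts in the $z$-variable. Then (a)--(c) are immediate. The algebra properties of $\conv{\!\eta,\wp\!}$ follow from the product structure $\bm\nu_{\eta,\bm\xi_1,\bm\xi_2}^\wp = \pi_{x_1,x_2}^{[\eta]}\otimes\pi_{z_1,z_2}^\wp$ together with the corresponding properties of the two one-dimensional hypergroups; the identity is $\delta_{(0,a)}$ because $\pi_{0,x}^{[\eta]} = \delta_x$ and $\pi_{a,z}^\wp = \delta_z$ (the latter since $\psi_\eta(a)=1$), and $\mathcal{P}(M)$ is stable since each $\bm\nu_{\eta,\bm\xi_1,\bm\xi_2}^\wp$ is a probability measure. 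Part (b) follows by applying \eqref{eq:coneconvSLcompact_prodform} as in the proof of \eqref{eq:coneconv_transf_convtrivial}. For (c), using $w_{\lambda,\eta}^\wp/\zeta_\eta = \psi_\eta(z)\,\widetilde v_{\lambda,\eta}(x)$ one sees that $(\bm{\mathcal{F}}_{\!\wp}\mu)(\cdot,\eta)\equiv 0$ forces $\mathcal{F}_{\ell_\eta}\widehat\mu_\eta\equiv 0$ for the complex measure $\widehat\mu_\eta(J):=\int_{J\times I}\psi_\eta(z)\,\mu(d(x,z))$ on $\mathbb{R}_0^+$, hence $\widehat\mu_\eta = 0$ by Proposition~\ref{prop:SLappendix_fourmeas_props}(ii); fixing $f\in\mathrm{C}_\mathrm{c}(\mathbb{R}_0^+)$ and setting $\mu_f(B):=\int_{\mathbb{R}_0^+\times B}f(x)\,\mu(d(x,z))$ gives $\int_I\psi_\eta\,d\mu_f = 0$ for all $\eta\in\supp(\bm\sigma)$, so $\mu_f = 0$ by the corresponding measure-uniqueness for the one-dimensional transform $\mathcal{J}_\wp$, and since $f$ is arbitrary, $\mu = 0$.

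Part (d) is the technical core: one splits the tightness of $\{\mu_n\}$ into the two coordinate directions by exploiting the degenerate specializations $\zeta_0\equiv 1$, $\psi_0\equiv 1$, $\widetilde v_{\lambda,0}\equiv\widetilde v_{0,\eta}\equiv 1$. Evaluating at $\eta=0$ gives $(\bm{\mathcal{F}}_{\!\wp}\mu_n)(\lambda,0) = \int_M \widetilde v_{\lambda,0}(x)\,\mu_n(d\bm\xi)$, a functional of the $x$-marginal only, so the averaging argument of Proposition~\ref{prop:coneconv_fourmeas_props}(iv) — using $\widetilde v_{\lambda,0}(x)\to 0$ as $x\to\infty$ (Lemma~\ref{lem:SLappendix_solprops}(b)) and the continuity of $f(\cdot,0)$ near $0$ — yields $\limsup_n\mu_n\bigl([\beta,\infty)\times I\bigr) < \eps$ for a suitable $\beta$. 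If $I=[a,b]$ this is already tightness; if $I=[a,b)$, evaluating instead at $\lambda=0$ gives $(\bm{\mathcal{F}}_{\!\wp}\mu_n)(0,\eta) = \int_M \psi_\eta(z)\,\mu_n(d\bm\xi)$, a functional of the $z$-marginal only, and the same averaging — now using the standing hypothesis $\psi_\eta(z)\to 0$ as $z\uparrow b$ for $\eta>0$ and the continuity of $f(0,\cdot)$ near $0$ — gives tightness in the $z$-direction; combining, $\{\mu_n\}$ is tight. Prohorov's theorem, part (c) for uniqueness, and the elementary fact that $\mu_n\warrow\mu$ entails $\bm{\mathcal{F}}_{\!\wp}\mu_n\to\bm{\mathcal{F}}_{\!\wp}\mu$ pointwise (since $w_{\lambda,\eta}^\wp/\zeta_\eta\in\mathrm{C}_\mathrm{b}(M)$) then complete (d) exactly as in Proposition~\ref{prop:coneconv_fourmeas_props}(iv).

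With (d) in hand, (e)--(g) follow their Section~\ref{sec:prodformconv} counterparts. For (e), $\bigl(\bm{\mathcal{F}}_{\!\wp}(\delta_{\bm\xi_1}\conv{\!\eta,\wp\!}\delta_{\bm\xi_2})\bigr)(\lambda,\eta') = \bigl(\int_I\psi_{\eta'}\,d\pi_{z_1,z_2}^\wp\bigr)\bigl(\int_{\mathbb{R}_0^+}\widetilde v_{\lambda,\eta'}\,d\pi_{x_1,x_2}^{[\eta]}\bigr)$ depends continuously on $(\bm\xi_1,\bm\xi_2)$ by the weak continuity of $(x_1,x_2)\mapsto\pi_{x_1,x_2}^{[\eta]}$ (Proposition~\ref{prop:SLappendix_convtransl_props}(b)) and of $(z_1,z_2)\mapsto\pi_{z_1,z_2}^\wp$, so (d) gives the weak continuity of $(\bm\xi_1,\bm\xi_2)\mapsto\delta_{\bm\xi_1}\conv{\!\eta,\wp\!}\delta_{\bm\xi_2}$, and bilinearity plus Fubini extend it to all $\mu,\nu\in\mathcal{M}_\mathbb{C}(M)$, as in Corollary~\ref{cor:coneconv_weakcont}. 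Part (f) copies Proposition~\ref{prop:coneconv_transl_props}(a)--(b): the $\mathrm{C}_\mathrm{b}$ case is immediate from (e), and for $\mathrm{C}_0$ one shows $\delta_{\bm\xi}\conv{\!\eta,\wp\!}\mu\varrow\bm 0$ as $\bm\xi\to\infty$ in $M$ (i.e.\ $x\to\infty$, or $z\uparrow b$ when $I=[a,b)$) by the reasoning of the proof of Proposition~\ref{prop:coneconv_transl_props}(b) and \cite[Remark 4.7]{sousaetal2020}, using $\widetilde v_{\lambda,\eta'}(x)\to 0$ and $\psi_{\eta'}(z)\to 0$. For (g), as in Proposition~\ref{prop:coneconv_transl_props}(c), for $h(x,z)=f(x)g(z)$ one has $\bigl(\bm{\mathcal{T}}_{\!\eta,\wp}^{(x_2,z_2)}h\bigr)(x_1,z_1) = (\mathcal{T}_{\ell_\eta}^{x_2}f)(x_1)\ccdot(\mathcal{T}_\wp^{z_2}g)(z_1)$, each factor an $L^p$-contraction in its own weight (Proposition~\ref{prop:SLappendix_convtransl_props}(e) and its $\wp$-analogue), so $\|\bm{\mathcal{T}}_{\!\eta,\wp}^{(x,z)}h\|_{L_{\eta,\wp}^p}\le\|h\|_{L_{\eta,\wp}^p}$; density of finite linear combinations of indicators of rectangles in $L_{\eta,\wp}^p$ and Minkowski's integral inequality upgrade this to arbitrary $h\in L_{\eta,\wp}^p$ and $\mu\in\mathcal{M}_+(M)$, the measurability being routine.

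I expect the main obstacle to be part (d): one must carry out the tightness bookkeeping correctly in each coordinate direction and verify that the regularity hypothesis on $f$ — continuity near $0$ in the relevant single variable when $b$ is regular or entrance, or jointly near $(0,0)$ when $b$ is exit or natural — is exactly what the averaging estimate consumes, in particular that the specializations $\zeta_0\equiv\widetilde v_{0,\eta}\equiv\psi_0\equiv 1$ genuinely reduce $(\bm{\mathcal{F}}_{\!\wp}\mu_n)(\cdot,0)$ and $(\bm{\mathcal{F}}_{\!\wp}\mu_n)(0,\cdot)$ to one-dimensional transforms. A secondary point worth making explicit is that the one-dimensional convolution on $I$ furnished by \eqref{eq:coneconvSLcompact_prodform_1Dhyp} belongs to the broader class of \cite{berezansky1998,bloomheyer1994,sousaetal2019b} rather than being a Sturm-Liouville convolution on $\mathbb{R}_0^+$, so one should cite explicitly that it carries the weak-continuity, $L^p$-contraction, vague-vanishing and measure-uniqueness properties invoked above.
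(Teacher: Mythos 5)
Your transcription strategy matches the paper's for parts (a), (b), (e) and (f), which the paper itself dispatches as ``analogous to the $\Delta_k$ case'', and your proof of (d) is a correct variant of the paper's: you obtain tightness coordinate-wise from the degenerate slices $(\bm{\mathcal{F}}_{\!\wp\,}\mu_n)(\bm{\cdot},0)$ and $(\bm{\mathcal{F}}_{\!\wp\,}\mu_n)(0,\bm{\cdot})$ (using $\psi_0\equiv\widetilde v_{0,\eta}\equiv\zeta_0\equiv 1$), whereas the paper averages $1-\psi_\eta(z)\,\widetilde v_{\lambda,\eta}(x)$ jointly over a quarter-disc $V_\delta=\{\lambda^2+\eta^2<\delta\}$; both arguments consume exactly the stated continuity hypotheses on $f$ and then finish with Prokhorov, so this difference is cosmetic.

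The genuine gap is in (g), with a milder instance of the same problem in the last step of (c). You invoke ``the $\wp$-analogue of Proposition \ref{prop:SLappendix_convtransl_props}(e)'' for the $L^p(I,\mathfrak{r})$-contractivity of $\mathcal{T}_\wp^{z_0}$, and you concede at the end that this should be ``cited''; but there is nothing to cite: the appendix results are proved under \eqref{eq:SLappendix_mainassump}, which $\wp$ is not assumed to satisfy, and the bare existence of the product formula \eqref{eq:coneconvSLcompact_prodform_1Dhyp} with probability measures does \emph{not} by itself imply that $\mathfrak{r}(z)dz$ is an invariant measure for the translations $\mathcal{T}_\wp^{z_0}$ — which is precisely what the contraction estimate needs. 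Supplying this is the entire content of the paper's proof of (g): one shows that $\mathcal{T}_\wp^{z_0}$ is symmetric on $L^2(I,\mathfrak{r})$ by computing, for the Gaussian semigroup $\{\varkappa_t\}$,
\[
\int_I g \; d\bigl(\varkappa_t \convdiam{\wp} \delta_{z_1} \convdiam{\wp} \delta_{z_2}\bigr) = \int_I g(z)\, q_t(z_1,z_2,z)\, \mathfrak{r}(z)\,dz, \qquad q_t(z_1,z_2,z_3)=\int_{\mathbb{R}_0^+} e^{-t\eta}\psi_\eta(z_1)\psi_\eta(z_2)\psi_\eta(z_3)\,\bm{\sigma}(d\eta),
\]
whose kernel is symmetric in its arguments, and then lets $t\downarrow 0$; since $\mathcal{T}_\wp^{z_0}$ is trivially an $L^\infty$-contraction, duality gives the $L^1$ case and interpolation the general $L^p$ case. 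This is also where the standing hypothesis $e^{-t\mskip0.5\thinmuskip\bm{\cdot}}\in L^2(\mathbb{R}_0^+,\bm{\sigma})$ is actually consumed. The same heat regularization, $g=\lim_{t\downarrow 0}\int_{\mathbb{R}_0^+} e^{-t\eta}(\mathcal{J}_\wp g)(\eta)\,\psi_\eta\,\bm{\sigma}(d\eta)$ for $g\in\mathrm{C}_\mathrm{c}(I)$, is how the paper closes part (c); your appeal to a ``measure-uniqueness for $\mathcal{J}_\wp$'' is true but has to be proved by this argument rather than quoted. You should write out these two regularization arguments explicitly; everything else in your proposal stands.
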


\begin{proof}
We will only prove \textbf{\emph{(c)}}, \textbf{\emph{(d)}} and \textbf{\emph{(g)}} because the proof of parts \textbf{\emph{(a)}}--\textbf{\emph{(b)}} and \textbf{\emph{(e)}}--\textbf{\emph{(f)}} are analogous to those of the corresponding results for the $\Delta_k$-convolution and translation. \\[-8pt]

\textbf{\emph{(c)}} Let $\mu$ be such that $(\bm{\mathcal{F}}_{\!\wp\,}\mu)(\lambda,\eta) = 0$ for $\lambda \geq 0$ and $\eta \in \supp(\bm{\sigma})$. For $f \in \mathrm{C}_\mathrm{c}(\mathbb{R}_0^+)$ and $g \in \mathrm{C}_\mathrm{c}(I)$ we have
\begin{align*}
\int_M f(x) \, g(z) \, \mu(d(x,z)) & = \lim_{t \downarrow 0} \int_M f(x) \int_{\mathbb{R}_0^+} e^{-t\eta} (\mathcal{J}_{\wp} g)(\eta) \, \psi_\eta(z) \, \bm{\sigma}(d\eta) \, \mu(d(x,z)) \\
& = \lim_{t \downarrow 0} \int_{\mathbb{R}_0^+} e^{-t\eta} \, (\mathcal{J}_{\wp} g)(\eta) \int_M f(x) \widehat{\mu}_\eta(dx) \, \bm{\sigma}(d\eta) \\
& = 0
\end{align*}
where the measure $\widehat{\mu}_\eta$ is defined as $\int_{\mathbb{R}_0^+} f \, d\widehat{\mu}_\eta = \int_M f(x) \psi_\eta(z) \mu\bigl(d(x,z)\bigr)$, and the argument in the proof of Proposition \ref{prop:coneconv_fourmeas_props}(ii) yields the last equality. The conclusion that $\mu = 0$ also follows in the same way. \\[-8pt]

\textbf{\emph{(d)}} For $I = [a,b]$, the proof is identical to that of Proposition \ref{prop:coneconv_fourmeas_props}(iv). For the case $I = [a,b)$, fix $\eps > 0$ and notice that the hypothesis (together with the integral mean value theorem) ensures that we can pick $\delta > 0$ such that
\[
\biggl| {2 \over \delta} \int_{V_\delta} \bigl( f(0,0) - f(\lambda,\eta) \bigr) \mskip 0.5\thinmuskip d(\lambda,\eta) \biggr| < \eps
\]
where $V_\delta := \{(\lambda,\eta) \in (\mathbb{R}_0^+)^2 \mid \lambda^2 + \eta^2 < \delta\}$. In addition, we know that $\lim_{z \to \infty} \psi_\eta(z) = 0$ and $\lim_{x \to \infty} \widetilde{v}_{\lambda,\eta}(x) = 0$ for all $\lambda,\eta > 0$ (Lemma \ref{lem:SLappendix_solprops}(b)), and thus there exist $0 < \beta_1 < \infty$ and $a < \beta_2 < b$ such that 
\[
\int_{V_\delta} \bigl( 1 - \psi_\eta(z) \widetilde{v}_{\lambda,\eta}(x)\bigr) \mskip 0.5\thinmuskip d(\lambda,\eta) \geq {\delta \over 2} \qquad \text{for all } (x,z) \in M \setminus [0,\beta_1] \times [a,\beta_2].
\]
Hence
\begin{align*}
\mu_n\bigl(M \setminus [0,\beta_1] \times [a,\beta_2]\bigr) & \leq {2 \over \delta} \int_{M \setminus [0,\beta_1] \times [a,\beta_2]} \int_{V_\delta} \bigl( 1 - \psi_\eta(z) \widetilde{v}_{\lambda,\eta}(x)\bigr) \mskip 0.5\thinmuskip d(\lambda,\eta) \, \mu_n(d\bm{\xi}) \\
& \leq {2 \over \delta} \int_{V_\delta} \bigl( (\bm{\mathcal{F}}_{\!\wp\,} \mu_n)(0,0) - (\bm{\mathcal{F}}_{\!\wp\,} \mu_n)(\lambda,\eta) \bigr) \mskip 0.5\thinmuskip d(\lambda,\eta)
\end{align*}
and consequently
\begin{align*}
\limsup_{n \to \infty} \mu_n\bigl(M \setminus [0,\beta_1] \times [a,\beta_2]\bigr) & \leq {2 \over \delta} \limsup_{n \to \infty}\! \int_{V_\delta} \bigl((\bm{\mathcal{F}}_{\!\wp\,} \mu_n)(0,0) - (\bm{\mathcal{F}}_{\!\wp\,} \mu_n)(\lambda,\eta)\bigr) \mskip 0.5\thinmuskip d(\lambda,\eta) \\
& = {2 \over \delta} \int_{V_\delta} \bigl(f(0,0) - f(\lambda,\eta)\bigr)\mskip 0.5\thinmuskip d(\lambda,\eta) \\
& < \eps.
\end{align*}
This shows that $\{\mu_n\}$ is a tight sequence of measures. Applying Prokhorov's theorem and the reasoning from the proof of Proposition \ref{prop:coneconv_fourmeas_props}(iv), the desired conclusion follows. \\[-8pt]

\textbf{\emph{(g)}} It suffices to prove that the generalized translation operator $(\mathcal{T}_\wp^{z_0} g)(z) := \int_I g \, d\pi_{z,z_0}^\wp$ is a contraction operator on $L^p(I,\mathfrak{r})$ (we then obtain the result by arguing as in Proposition \ref{prop:coneconv_transl_props}(c)). Let $\convdiam{\wp}$ be the convolution determined by the product formula \eqref{eq:coneconvSLcompact_prodform_1Dhyp} and let $\{\varkappa_t\}_{t \geq 0}$ the $\convdiam{\wp}$-Gaussian convolution semigroup generated by $\wp$. For $g \in \mathrm{C}_\mathrm{c}(I)$ we have
\begin{align*}
\int_I g(z) \bigl(\mu_t \convdiam{\wp} \delta_{z_1} \convdiam{\wp} \delta_{z_2}\bigr)(dz) & = \lim_{s \downarrow 0} \int_I (\mathcal{T}_\wp^{\varkappa_s} g)(z) \bigl(\mu_t \convdiam{\wp} \delta_{z_1} \convdiam{\wp} \delta_{z_2}\bigr)(dz) \\
& = \lim_{s \downarrow 0} \int_{\mathbb{R}_0^+} (\mathcal{J}_\wp g)(\eta) \, e^{-(t+s)\eta\, } \psi_\eta(z_1) \, \psi_\eta(z_2) \, \bm{\sigma}(d\eta) \\
& = \int_I g(z) \, q_t(z_1,z_2,z) \, \mathfrak{r}(z) dz
\end{align*}
where $q_t(z_1,z_2,z_3) := \int_{\mathbb{R}_0^+} e^{-t\eta} \, \psi_\eta(z_1) \, \psi_\eta(z_2) \, \psi_\eta(z_3) \, \bm{\sigma}(d\eta)$. Consequently, for $f,g \in \mathrm{C}_\mathrm{c}(I)$ we have
\[
\Bigl\langle \mathcal{T}_\wp^{\raisebox{3pt}{\scalebox{0.7}{$\mu_t \convdiam{\wp} \delta_{z_0}$}}} f,\, g \Bigr\rangle_{\!L^2(I,\mathfrak{r})\!} = \int_a^b \int_a^b g(z_1) q_t(z_0,z,z_1) \mathfrak{r}(z_1) dz_1 \, f(z) \mathfrak{r}(z) dz = \Bigl\langle f,\, \mathcal{T}_\wp^{\raisebox{3pt}{\scalebox{0.7}{$\mu_t \convdiam{\wp} \delta_{z_0}$}}} g \Bigr\rangle_{\!L^2(I,\mathfrak{r})\!}
\]
and by continuity it follows that $\bigl\langle \mathcal{T}_\wp^{z_0} f,\, g \bigr\rangle_{\!L^2(I,\mathfrak{r})\!} = \bigl\langle f,\, \mathcal{T}_\wp^{z_0} g \bigr\rangle_{\!L^2(I,\mathfrak{r})\!}$. Since $\mathcal{T}_\wp^{z_0}$ is clearly a contraction on $L^\infty(I,\mathfrak{r})$, by duality we find that it is also a contraction on $L^1(I,\mathfrak{r})$; hence, by interpolation, it is a contraction on $L^p(I,\mathfrak{r})$ for $1 \leq p \leq \infty$.
\end{proof}

Notions such as infinite divisibility and convolution semigroups with respect to the $\mathcal{G}_{\eta}$-convolution can also be defined like in the previous section, giving rise to a Lévy-Khintchine type representation and to Feller semigroups on $\mathrm{C}_0(M)$. The details are left to the reader.

\begin{remark} \label{rmk:coneconv_prodform_multiparam}
The above result on the existence of a product formula for the functions $w_{\lambda,\eta}^\wp$ can be interpreted in the context of the theory of multiparameter Sturm-Liouville spectral problems. 

First we recall some known results. Consider the system of Sturm-Liouville equations
\begin{equation} \label{eq:coneconv_prodform_multiparam_eq1}
- (p_m u_m')'(x_m) + (q_m u_m)(x_m) = \sum_{n=1}^N \lambda_n (r_{mn} u_m)(x_m) \qquad (m = 1,\ldots, N)
\end{equation}
where $N \in \mathbb{N}$ and $a_m \leq x_m \leq b_m$, together with boundary conditions at the endpoints $a_m$ and $b_m$ of the form 
\begin{equation} \label{eq:coneconv_prodform_multiparam_eq2}
u_m(a_m) \cos(\vartheta_m) = u_m'(a_m) \sin(\vartheta_m), \qquad u_m(b_m) \cos(\vartheta_m') = u_m'(b_m) \sin(\vartheta_m') \qquad (m = 1, \ldots, N).
\end{equation}
Let us assume that the intervals $I_m = [a_m,b_m]$ are bounded, the functions $p_m$, $q_m$, $r_{mn}$ are sufficiently well-behaved and $r(x) = \det\{r_{mn}(x_m)\} > 0$ for $x = (x_1,\ldots,x_N) \in \prod_{m=1}^N I_m$. If $\lambda = (\lambda_1, \ldots, \lambda_N)$ is chosen such that for each $m$ there exists a nontrivial solution $u_m(x_m;\lambda)$ of \eqref{eq:coneconv_prodform_multiparam_eq1}--\eqref{eq:coneconv_prodform_multiparam_eq2}, then the function $u(x;\lambda) = \prod_{i=1}^N u_m(x_m;\lambda)$ is said to be an eigenfunction of the system \eqref{eq:coneconv_prodform_multiparam_eq1}--\eqref{eq:coneconv_prodform_multiparam_eq2} corresponding to the eigenvalue $\lambda$. 

By the completeness theorem for multi-parameter eigenvalue problems \cite{faierman1969}, the following Fourier-like expansion holds:
\begin{equation} \label{eq:coneconv_rmkmultiparam_Finv}
h(x) = \sum_k (\bm{F}h)(\lambda^{(k)}) \, u(x;\lambda^{(k)}),
\end{equation}
where 
\[ (\bm{F}h)(\lambda^{(k)}) := \int_a^b h(x)\, u(x;\lambda^{(k)})\, r(x) dx,
\]
$\lambda^{(k)}$ are the eigenvalues of \eqref{eq:coneconv_prodform_multiparam_eq1}--\eqref{eq:coneconv_prodform_multiparam_eq2}, and $\int_a^b = \int_{a_1}^{b_1} \ldots \int_{a_N}^{b_N}$.

Similar results have been established for (singular) systems where some of the intervals $[a_m,b_m]$ are unbounded; in this case, the sum in \eqref{eq:coneconv_rmkmultiparam_Finv} is, in general, replaced by a Stieltjes integral with respect to a spectral function \cite{browne1972,browne1977} However, compared to one-dimensional Sturm-Liouville operators, much less is known regarding the spectral properties of such singular systems \cite{atkinsonmingarelli2011,sleeman2008}.

Returning to our eigenfunction expansion \eqref{eq:coneconvSLcompact_Feigenexp_dir}--\eqref{eq:coneconvSLcompact_Feigenexp_inv} for the elliptic operator $\bm{\mathcal{G}}^\wp$, we can now reinterpret it as a Fourier-like expansion for the system of differential equations \eqref{eq:coneconv_prodform_multiparam_eq1} with $N = 2$, $x_1 \in \mathbb{R}_0^+$, $x_2 \in [a,b]$, $\lambda_1 = \lambda$, $\lambda_2 = \eta$, $p_1 = r_{11} = A$, $p_2 = p$, $r_{22} = r$, $r_{12} = {1 \over A}$ and $q_1 = q_2 = r_{21} = 0$.

In the theory of product formulas and convolutions associated with one-dimensional Sturm-Liouville equations $- (p u')' + q u = \lambda r u$, a crucial requirement is that the measures of the product formula should not depend on the spectral parameter $\lambda$, cf.\ e.g.\ \cite{chebli1995,koornwinder1984,sousaetal2020}. Similarly, the measures of product formula \eqref{eq:coneconvSLcompact_prodform} for the generalized eigenfunctions $w_{\lambda,\eta}^\wp$ do not depend on one of the spectral parameters (the measures $\gamma_{\eta,\bm{\xi_1}, \bm{\xi_2}}$ are independent of $\lambda$); this is a fundamental property which (as we saw above) enables us to develop the theory of $\mathcal{G}_\eta$-convolutions. This suggests that the natural way to introduce the notion of a product formula for a general Sturm-Liouville system \eqref{eq:coneconv_prodform_multiparam_eq1} (regular or singular, with suitable boundary conditions) is as follows:\\[2pt]
\emph{Let $1 \leq s \leq N$. The system \eqref{eq:coneconv_prodform_multiparam_eq1} is said to admit a \emph{$(\lambda_1,\ldots, \lambda_s)$-product formula} if for each $x^{(1)}, x^{(2)} \in I := \prod_{m=1}^N I_m$ there exists a positive measure $\bm{\gamma}_{x^{(1)}, x^{(2)}}^{\lambda_{s+1}, \ldots, \lambda_N}$ on $I$ such that the product $u(x^{(1)};\lambda)\, u(x^{(2)};\lambda)$ admits the representation
\begin{equation} \label{eq:coneconv_multiparam_prodformdef}
u(x^{(1)};\lambda)\, u(x^{(2)};\lambda) = \int_I u(x;\lambda) \, \bm{\gamma}_{x^{(1)}, x^{(2)}}^{\lambda_{s+1}, \ldots, \lambda_N}(dx), \qquad \lambda_1, \ldots, \lambda_N \geq 0.
\end{equation}}%

As far as we are aware, this paper is the first to establish the existence of product formulas of the form \eqref{eq:coneconv_multiparam_prodformdef} for a general family of \emph{nontrivial} Sturm-Liouville systems of the form \eqref{eq:coneconv_prodform_multiparam_eq1}; here, the word `nontrivial' means that $r_{mn} \neq 0$ for some $m \neq n$. (The few previous results on product formulas of the form \eqref{eq:coneconv_multiparam_prodformdef} only apply to very special cases where the measure is independent of all spectral parameters, see \cite{nessibitrimeche1997,trimeche1995}.) Developing a general theory of product formulas for nontrivial systems of Sturm-Liouville equations is an interesting problem which is left open for further investigation.
\end{remark}

\appendix
\setcounter{section}{-1}
\section{Generalized convolution structures for Sturm-Liouville operators}  \label{sec:appendix_slconv}
\renewcommand{\thesection}{A}

One-dimensional convolutions associated with Sturm-Liouville operators have been extensively studied in recent work of the authors \cite{sousaetal2020,sousaetal2019b}. For convenience, in this appendix we summarize some fundamental results on this topic.

Let 
\[
\ell(u)(x) := {1 \over r(x)} \Bigl( -(pu')'(x) + q(x)u(x) \Bigr), \qquad x \in (a,b) \subset \mathbb{R}
\]
be a Sturm-Liouville expression whose coefficients are such that $p, r > 0$ on $(a,b)$, $p, r$ are locally absolutely continuous and $q$ is locally integrable on $(a,b)$.

Consider the integrals
\begin{align*}
& I_a = \int_a^c \int_a^y {dx \over p(x)} \bigl(r(y) + q(y)\bigr) dy, && \hspace{-.08\linewidth} J_a = \int_a^c \int_y^c {dx \over p(x)} \bigl(r(y) + q(y)\bigr) dy \\
& I_b = \int_c^b \int_y^b {dx \over p(x)} \bigl(r(y) + q(y)\bigr) dy, && \hspace{-.08\linewidth} J_b = \int_c^b \int_c^y {dx \over p(x)} \bigl(r(y) + q(y)\bigr) dy.
\end{align*}
The Feller boundary classification for $\ell$ is as follows: the endpoint $e \in \{a,b\}$ is said to be:
\begin{equation} \label{eq:bkg_diffusion_fellerBC}
\begin{array}{llll}
\text{\emph{regular}} & \text{if} & I_e < \infty, & \!\!\!J_e < \infty; \\
\text{\emph{exit}} & \text{if} & I_e < \infty, & \!\!\!J_e = \infty;
\end{array}
\qquad\quad
\begin{array}{llll}
\text{\emph{entrance}} & \text{if} & I_e = \infty, & \!\!\!J_e < \infty; \\
\text{\emph{natural}} & \text{if} & I_e = \infty, & \!\!\!J_e = \infty
\end{array}
\end{equation}
(the classification is independent of the choice of $c$). Throughout the appendix we assume that the endpoint $a$ is regular or entrance.

\begin{lemma} \label{lem:SLappendix_boundaryvaluepb}
The boundary value problem
\[
\ell(v) = \lambda v \quad (a < x < b, \; \lambda \in \mathbb{C}), \qquad\; v(a) = 1, \qquad\; (pv')(a) = 0
\]
has a unique solution $v_\lambda(\bm{\cdot})$. Moreover, $\lambda \mapsto v_\lambda(x)$ is, for fixed $x$, an entire function of exponential type.
\end{lemma}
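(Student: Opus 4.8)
The plan is to reduce the boundary value problem to an equivalent Volterra integral equation of the second kind and to solve it by successive approximations, reading off uniqueness and the analytic dependence on $\lambda$ from the same scheme.

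Writing $\ell(v)=\lambda v$ in the form $(pv')'=(q-\lambda r)v$, integrating from $a$ and using $(pv')(a)=0$ gives $(pv')(x)=\int_a^x(q(s)-\lambda r(s))v(s)\,ds$; dividing by $p$, integrating once more with $v(a)=1$, and swapping the order of integration, one obtains
\[
v(x)=1+\int_a^x\bigl(q(s)-\lambda r(s)\bigr)\biggl(\int_s^x\frac{dt}{p(t)}\biggr)v(s)\,ds .
\]
Conversely, any $v\in\mathrm{C}[a,b)$ solving this equation has $pv'\in\mathrm{AC}_{\mathrm{loc}}(a,b)$, satisfies the ODE a.e.\ on $(a,b)$, and attains both boundary values (the condition $(pv')(a)=0$ holds because the right-hand integrand is integrable up to $a$). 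What makes everything work is that the standing hypotheses together with the assumption that $a$ is \emph{regular or entrance} ensure that the Volterra kernel $K_\lambda(x,s):=(q(s)-\lambda r(s))\int_s^x\frac{dt}{p(t)}$ is integrable in $s$ up to $a$, uniformly for $x$ in compact subintervals of $[a,b)$: indeed $|K_\lambda(x,s)|\le(1+|\lambda|)\bigl(r(s)+|q(s)|\bigr)\int_s^x\frac{dt}{p(t)}$, and $c(x):=\int_a^x\bigl(r(s)+|q(s)|\bigr)\bigl(\int_s^x\frac{dt}{p(t)}\bigr)ds<\infty$ for every $x<b$ — which, in the case $q=0$ relevant to the applications, is controlled by the finiteness of the Feller integral $J_a$ from the classification \eqref{eq:bkg_diffusion_fellerBC}.

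Given this, I would run the standard Picard iteration: $v_0\equiv1$, $v_{n+1}(x):=\int_a^xK_\lambda(x,s)v_n(s)\,ds$, and $v_\lambda:=\sum_{n\ge0}v_n$. A routine induction (using that $x\mapsto c(x)$ is nondecreasing) yields $|v_n(x;\lambda)|\le\frac{\bigl(c(x)(1+|\lambda|)\bigr)^n}{n!}$, so the series converges absolutely and locally uniformly in $(x,\lambda)\in[a,b)\times\mathbb{C}$ and its sum solves the integral equation — establishing existence. For uniqueness, the difference $w$ of two solutions satisfies the homogeneous equation $w(x)=\int_a^xK_\lambda(x,s)w(s)\,ds$, and the same estimate gives $|w(x)|\le\frac{(c(x)(1+|\lambda|))^n}{n!}\sup_{[a,x]}|w|\to0$, i.e.\ $w\equiv0$. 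Finally, since $K_\lambda$ is affine in $\lambda$, each $v_n(x;\cdot)$ is a polynomial in $\lambda$ of degree $\le n$, so the locally uniform convergence of the series makes $\lambda\mapsto v_\lambda(x)$ entire, and the bound $|v_\lambda(x)|\le e^{c(x)(1+|\lambda|)}=e^{c(x)}e^{c(x)|\lambda|}$ identifies it as an entire function of exponential type (at most $c(x)$).

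The main obstacle is the entrance endpoint. If $a$ is regular, then $1/p$, $r$, $q$ are all integrable near $a$, the first-order linear system for $\bigl(v,\,pv'\bigr)$ has locally integrable coefficient matrix there, and the assertion is classical Carathéodory/Picard–Lindelöf theory. If $a$ is merely an entrance endpoint, then $1/p$ is \emph{not} integrable near $a$ (indeed $I_a=\infty$), so that direct approach fails; the whole argument then rests on the observation that it is the iterated integral inside $K_\lambda$ — governed precisely by $J_a<\infty$ — and not $\int_a^x\frac{dt}{p(t)}$ by itself, that needs to be controlled near $a$, and on checking that the $v_\lambda$ produced by the iteration does attain the data $v(a)=1$, $(pv')(a)=0$ in the limiting sense. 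Everything else is bookkeeping.
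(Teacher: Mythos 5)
Your proof is correct. The paper gives no argument of its own for this lemma — the stated proof is just a citation to \cite{sousaetal2019b}, Lemma 2.1 — and your reduction to a Volterra equation followed by Picard iteration is precisely the standard argument that underlies such results at a regular or entrance endpoint: the whole point, which you identify correctly, is that Tonelli converts the iterated integral $\int_a^x p(t)^{-1}\int_a^t(q-\lambda r)v\,ds\,dt$ into the $J_a$-type quantity $c(x)=\int_a^x(r+|q|)(s)\bigl(\int_s^x dt/p(t)\bigr)ds$, which is finite exactly under the standing hypothesis on $a$, even though $1/p$ itself fails to be integrable there in the entrance case. Two small points of bookkeeping. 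First, the factorial in $|v_n(x)|\le\bigl(c(x)(1+|\lambda|)\bigr)^n/n!$ does not follow from monotonicity of $c$ alone (that would only give $c(x)^{n+1}/n!$ at the next step); one should set $C_x(s):=\int_a^s(r+|q|)(u)\bigl(\int_u^x dt/p(t)\bigr)du$, note $c(s)\le C_x(s)$ for $s\le x$ because the kernel $\int_s^x dt/p$ is nondecreasing in $x$, and integrate $C_x'(s)C_x(s)^n$ to get $c(x)^{n+1}/(n+1)!$ — the standard Volterra device. Second, your assertion that $(pv')(x)=\int_a^x(q-\lambda r)v\,ds\to0$ as $x\downarrow a$ uses integrability of $r+|q|$ near $a$, which is not a standing hypothesis but does follow from $J_a<\infty$ because $\int_y^c dx/p(x)$ is bounded away from zero for $y$ near $a$. (Relatedly, the Feller integrals in the paper are written with $r+q$ rather than $r+|q|$; as you observe, $q\equiv0$ in all the applications, so this is immaterial.) With these glosses, the equivalence of the boundary value problem and the integral equation for bounded continuous $v$, the uniqueness via the homogeneous equation, and the entirety and exponential-type bound $|v_\lambda(x)|\le e^{c(x)(1+|\lambda|)}$ all go through as you describe.
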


\begin{proof}
See \cite[Lemma 2.1]{sousaetal2019b}. (The result of \cite{sousaetal2019b} is stated for the case $q \equiv 0$, but the general case can be proved in a similar way.)
\end{proof}

\begin{proposition} \label{prop:SLappendix_eigenexp}
The operator $\mathcal{L}:\mathcal{D}(\mathcal{L}) \longrightarrow L^2(r) \equiv L^2\bigl((a,b),r(z)dz\bigr)$, where
\begin{align*}
\mathcal{D}(\mathcal{L}) & = \begin{cases}
\bigl\{ u \in L^2(r) \bigm| u, u' \!\in \mathrm{AC}_{\mathrm{loc}}(a,b), \; \ell(u) \in L^2(r), \; (pu')(a) = 0 \bigr\} \hspace{-2mm} & \text{ if } b \text{ is exit or natural}\\[1pt]
\bigl\{ u \in L^2(r) \bigm| u, u' \!\in \mathrm{AC}_{\mathrm{loc}}(a,b), \; \ell(u) \in L^2(r), \; (pu')(a) = (pu')(b) = 0 \bigr\} \hspace{-2mm} & \text{ otherwise}\\
\end{cases} \\[2pt]
\mathcal{L} u & = \ell(u), \qquad u \in \mathcal{D}(\mathcal{L})
\end{align*}
is a positive self-adjoint operator. There exists a locally finite positive Borel measure $\bm{\rho}_\ell$ on $\mathbb{R}_0^+$ such that the integral operator $\mathcal{F}_\ell: L^2(r) \longrightarrow L^2(\mathbb{R}_0^+, \bm{\rho}_\ell)$ defined by
\begin{equation} \label{eq:SLappendix_eigenexp_dir}
(\mathcal{F}_{\ell\,} f)(\lambda) := \int_a^b f(x) \, v_\lambda(x) \, r(x) dz
\end{equation}
is an isometric isomorphism whose inverse is given by
\begin{equation} \label{eq:SLappendix_eigenexp_inv}
(\mathcal{F}_\ell^{-1} \varphi)(x) = \int_{\mathbb{R}_0^+} \varphi(\lambda) \, v_\lambda(x) \, \bm{\rho}_\ell(d\lambda).
\end{equation}
(The convergence of the integrals above is understood with respect to the norm of $L^2(\mathbb{R}_0^+, \bm{\rho}_\ell)$ and $L^2(r)$ respectively.) The operator $\mathcal{F}_\ell$ is a spectral representation of $\mathcal{L}$, i.e.\ we have
\begin{align*}
& \mathcal{D}(\mathcal{L}) = \biggl\{f \in L^2(r) \biggm| \int_{\mathbb{R}_0^+} \lambda^2 \bigl|(\mathcal{F}_\ell f)(\lambda)\bigr|^2 \bm{\rho}_\ell(d\lambda) < \infty \biggr\}\\[2pt]
& \bigl(\mathcal{F}_\ell (\mathcal{L} f)\bigr) (\lambda) = \lambda \ccdot (\mathcal{F}_\ell f)(\lambda), \qquad\quad f \in \mathcal{D}(\mathcal{L}).
\end{align*}
Moreover, if $f \in \mathcal{D}(\mathcal{L})$ then $f(x) = \int_{\mathbb{R}_0^+} (\mathcal{F}_{\ell\,}f)(\lambda) \, v_\lambda(x) \, \bm{\rho}_\ell(d\lambda)$ for all $x \in (a,b)$, where the integral converges absolutely and locally uniformly.

If $b$ is regular, entrance or exit, then $\mathcal{L}$ has purely discrete spectrum $0 = \lambda_1 < \lambda_2 \leq \ldots \to \infty$, and the isomorphism \eqref{eq:SLappendix_eigenexp_dir}--\eqref{eq:SLappendix_eigenexp_inv} reduces to 
\begin{align*}
\mathcal{F}_\ell: L^2(r) \longrightarrow \ell_2\bigl(\tfrac{1}{\| v_{\lambda_k} \|^2}\bigr), \qquad \mathcal{F}_{\ell\,} f \equiv \bigl( (\mathcal{F}_{\ell\,} f)(\lambda_1), (\mathcal{F}_{\ell\,} f)(\lambda_2), \ldots \bigr), \qquad (\mathcal{F}_\ell^{-1} \{c_k\})(x) = \sum_{k = 1}^\infty {c_k \, v_{\lambda_k}(x) \over \| v_{\lambda_k} \|^2}
\end{align*}
where $\ell_2\bigl(\frac{1}{\| v_{\lambda_k} \|^2}\bigr)$ denotes the weighted sequence space whose norm is $\|\{c_k\}\| = \bigl(\sum_{k=1}^\infty {|c_k|^2 \over \| v_{\lambda_k} \|^2}\bigr)^{1/2}$.
\end{proposition}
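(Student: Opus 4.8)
The plan is to obtain the statement from the Weyl--Titchmarsh--Kodaira theory of singular Sturm--Liouville operators, keeping careful track of the role of the Feller classification of the endpoint $b$. First I would verify that $(\mathcal{L},\mathcal{D}(\mathcal{L}))$ is self-adjoint. Working with the minimal operator $\mathcal{L}_{\min}$ (the closure of $\ell$ on $\mathrm{C}_{\mathrm c}^\infty(a,b)$) and its adjoint $\mathcal{L}_{\max}$, one checks via Green's formula that the conditions appearing in $\mathcal{D}(\mathcal{L})$ single out exactly a self-adjoint restriction of $\mathcal{L}_{\max}$: at $a$ (always regular or entrance) the reflecting condition $(pu')(a)=0$ is imposed when $a$ is regular and is automatically satisfied on $\mathcal{D}(\mathcal{L}_{\max})$ when $a$ is entrance; at $b$ the analogous condition $(pu')(b)=0$ enters the domain precisely when $b$ is regular or entrance, whereas when $b$ is exit or natural the restriction by the condition at $a$ alone is already self-adjoint (Weyl limit-point/limit-circle analysis at $b$). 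Positivity then follows from nonnegativity of the form $\int_a^b\bigl(p\,|u'|^2+q\,|u|^2\bigr)\,dx$ on $\mathcal{D}(\mathcal{L})$ --- the integrated boundary terms vanish because of the imposed (or automatic) conditions --- so $\sigma(\mathcal{L})\subset[0,\infty)$; note that $v_0\equiv 1$ lies in $\mathcal{D}(\mathcal{L})$ with $\mathcal{L}v_0=0$, which will later pin down the bottom of the spectrum.

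Next I would construct the spectral measure $\bm{\rho}_\ell$ by the classical $m$-function route. For $\lambda\in\mathbb{C}\setminus[0,\infty)$ there is, up to a scalar, a unique solution $\phi_\lambda$ of $\ell(u)=\lambda u$ that is square-integrable near $b$ and meets the boundary condition there; writing $\phi_\lambda=\widetilde{v}_\lambda+m(\lambda)\,v_\lambda$ in terms of the solution $v_\lambda$ of Lemma \ref{lem:SLappendix_boundaryvaluepb} and a second, linearly independent solution $\widetilde{v}_\lambda$ defines the Herglotz (Weyl--Titchmarsh) function $m(\lambda)$, and the resolvent $(\mathcal{L}-\lambda)^{-1}$ is the integral operator with Green kernel $W^{-1}v_\lambda(x\wedge y)\,\phi_\lambda(x\vee y)$. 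Stieltjes inversion of $m$ (the Titchmarsh--Kodaira formula) then yields a locally finite positive Borel measure $\bm{\rho}_\ell$ on $\mathbb{R}_0^+$, and one proves the Parseval identity $\|f\|_{L^2(r)}=\|\mathcal{F}_\ell f\|_{L^2(\bm{\rho}_\ell)}$ first for $f$ in a dense set (e.g.\ $\mathrm{C}_{\mathrm c}^\infty(a,b)$, by applying the functional calculus to $\mathrm{Im}\,\langle(\mathcal{L}-\lambda)^{-1}f,f\rangle$ and letting $\lambda$ approach the real axis), then by continuity for all $f\in L^2(r)$; surjectivity of $\mathcal{F}_\ell$ follows because its range is closed and its orthogonal complement in $L^2(\bm{\rho}_\ell)$ is trivial. \emph{This completeness step is the main obstacle}: it is where the analytic content --- control of $\phi_\lambda$ near $b$ for all four endpoint types, the Herglotz structure of $m$, and the location of the spectrum --- really has to be invoked, while the remainder is essentially bookkeeping.

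Once $\mathcal{F}_\ell$ is known to be unitary, the intertwining relation $\bigl(\mathcal{F}_\ell(\mathcal{L}f)\bigr)(\lambda)=\lambda\ccdot(\mathcal{F}_\ell f)(\lambda)$ for $f\in\mathcal{D}(\mathcal{L})$ is obtained by integrating $\int_a^b(\ell f)\,v_\lambda\,r\,dx$ by parts twice, using $\ell(v_\lambda)=\lambda v_\lambda$ and the vanishing of the boundary terms; the domain identity $\mathcal{D}(\mathcal{L})=\{f\in L^2(r):\int_{\mathbb{R}_0^+}\lambda^2|(\mathcal{F}_\ell f)(\lambda)|^2\,\bm{\rho}_\ell(d\lambda)<\infty\}$ is then the abstract assertion that $\mathcal{F}_\ell$ diagonalizes the self-adjoint operator $\mathcal{L}$, proved through the functional calculus. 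For the absolute and locally uniform convergence of the inversion integral when $f\in\mathcal{D}(\mathcal{L})$, I would estimate by Cauchy--Schwarz $\int_{\mathbb{R}_0^+}|(\mathcal{F}_\ell f)(\lambda)|\,|v_\lambda(x)|\,\bm{\rho}_\ell(d\lambda)\le\bigl\|(1+\lambda)(\mathcal{F}_\ell f)\bigr\|_{L^2(\bm{\rho}_\ell)}\bigl(\int_{\mathbb{R}_0^+}(1+\lambda)^{-2}|v_\lambda(x)|^2\,\bm{\rho}_\ell(d\lambda)\bigr)^{1/2}$, and observe that the last factor is (a constant times) the diagonal Green kernel $G(x,x;-1)$, hence finite and bounded uniformly for $x$ in compact subsets of $(a,b)$; dominated convergence then gives the claim, and a similar argument yields the pointwise inversion formula $f(x)=\int_{\mathbb{R}_0^+}(\mathcal{F}_\ell f)(\lambda)\,v_\lambda(x)\,\bm{\rho}_\ell(d\lambda)$.

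Finally, suppose $b$ is regular, entrance or exit. Then $v_\lambda$ and $\phi_\lambda$ are square-integrable up to their respective endpoints, so the Green kernel of $(\mathcal{L}+1)^{-1}$ is Hilbert--Schmidt on $L^2(r)$ and $\mathcal{L}$ has compact resolvent, hence purely discrete spectrum $0\le\lambda_1<\lambda_2\le\cdots\to\infty$. The eigenfunction $v_0\equiv 1$ shows $\lambda_1=0$, and it is simple because a ground state cannot change sign (Sturm oscillation theory). The spectral measure then collapses to $\bm{\rho}_\ell=\sum_{k}\|v_{\lambda_k}\|^{-2}\delta_{\lambda_k}$ --- the weights are forced by the Parseval identity --- so \eqref{eq:SLappendix_eigenexp_dir}--\eqref{eq:SLappendix_eigenexp_inv} becomes the stated eigenfunction series and $\mathcal{F}_\ell$ is the asserted isomorphism onto the weighted $\ell_2$-space. (Since most of this is classical --- and already carried out in \cite{sousaetal2019b} in the case $q\equiv 0$ --- in practice I would cite those sources for the routine parts and confine the new work to the endpoint-classification bookkeeping.)
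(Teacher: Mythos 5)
Your proposal is correct and follows essentially the same route as the paper: the paper's proof of this proposition is simply a citation to \cite{sousaetal2019b} (Proposition 2.5 and Lemma 2.6) and \cite[Section 5]{linetsky2004a}, and those references carry out precisely the classical Weyl--Titchmarsh--Kodaira programme you outline (self-adjointness via the endpoint classification, the $m$-function and Stieltjes inversion, Parseval and completeness, the Green-kernel bound for locally uniform convergence, and compact resolvent in the non-natural case). The only caveat is minor and inherited from the statement itself: the identification $\lambda_1=0$ via $v_0\equiv 1$ presupposes $q\equiv 0$ and $1\in L^2(r)$, which is the setting in which the paper actually applies the result.
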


\begin{proof}
See \cite[Proposition 2.5 and Lemma 2.6]{sousaetal2019b}, \cite[Section 5]{linetsky2004a}.
\end{proof}

\begin{proposition} \label{prop:SLappendix_semigr_heatkern}
The self-adjoint operator $-\mathcal{L}$ is the generator of a Markovian semigroup $\{e^{-t\mathcal{L}}\}_{t \geq 0}$ on $L^2(r)$. For $t>0$, the operators $e^{-t\mathcal{L}}$ are given by
\begin{equation} \label{eq:SLappendix_semigr}
(e^{-t\mathcal{L}} h)(x) = \int_a^b h(y)\, p_\ell(t,x,\xi)\, r(\xi)d\xi \qquad \bigl(h \in L^2(r), \; x \in (a,b)\bigr)
\end{equation}
where the kernel is defined by the integral
\begin{equation} \label{eq:SLappendix_heatkern}
p_\ell(t,x_1,x_2) = \int_{\mathbb{R}_0^+} e^{-t\lambda} \, v_\lambda(x_1) \, v_\lambda(x_2)\, \bm{\rho}_\ell(d\lambda) \qquad \bigl(t > 0, \; x_1,x_2 \in (a,b)\bigr).
\end{equation}
The right hand side of \eqref{eq:SLappendix_heatkern} is (for fixed $t>0$) absolutely and uniformly convergent on compact squares of $(a,b) \times(a,b)$.

Suppose also that $a$ is regular and $b$ is exit or natural. Then the integral in \eqref{eq:SLappendix_heatkern} converges absolutely and uniformly on compact squares of $[a,b) \times [a,b)$. Moreover, the restriction of $e^{-t\mathcal{L}}$ to $L^2(r) \cap \mathrm{C}_0[a,b)$ extends into a strongly continuous contraction semigroup on $\mathrm{C}_0[a,b)$ which can be represented by the right hand side of \eqref{eq:SLappendix_semigr}, which is convergent for all $h \in \mathrm{C}_0[a,b)$ and $x \in [a,b)$.
\end{proposition}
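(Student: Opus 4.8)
The plan is to build everything on the spectral representation $\mathcal{F}_\ell$ furnished by Proposition~\ref{prop:SLappendix_eigenexp}. Since $(\mathcal{L},\mathcal{D}(\mathcal{L}))$ is a nonnegative self-adjoint operator, the spectral theorem gives immediately that $-\mathcal{L}$ generates a strongly continuous contraction semigroup $\{e^{-t\mathcal{L}}\}_{t\ge 0}$ on $L^2(r)$, acting in the spectral picture by $(\mathcal{F}_\ell e^{-t\mathcal{L}}h)(\lambda)=e^{-t\lambda}(\mathcal{F}_\ell h)(\lambda)$. For the Markovian property I would observe that the closed quadratic form of $\mathcal{L}$, which on the core $\mathrm{C}_\mathrm{c}^\infty(a,b)$ agrees with $\mathcal{E}_\ell(u,v)=\int_a^b\bigl(p\,u'\overline{v'}+q\,u\overline{v}\bigr)dx$, is a Dirichlet form: it is Markovian because every normal contraction $T$ with $T(0)=0$ satisfies $|(Tu)'|\le|u'|$ a.e., whence $\mathcal{E}_\ell(Tu,Tu)\le\mathcal{E}_\ell(u,u)$. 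By the correspondence between Dirichlet forms and sub-Markovian semigroups \cite{davies1989,fukushimaetal2011}, $e^{-t\mathcal{L}}$ is then positivity preserving and $L^\infty$-contractive.

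To obtain the kernel representation, fix $t>0$ and $h\in L^2(r)$. Since $e^{-t\mathcal{L}}h\in\mathcal{D}(\mathcal{L})$, the pointwise inversion formula of Proposition~\ref{prop:SLappendix_eigenexp} gives, for every $x\in(a,b)$,
\[
(e^{-t\mathcal{L}}h)(x)=\int_{\mathbb{R}_0^+}e^{-t\lambda}\,(\mathcal{F}_{\ell}h)(\lambda)\,v_\lambda(x)\,\bm{\rho}_\ell(d\lambda).
\]
The crux of the matter is the a priori estimate
\[
\int_{\mathbb{R}_0^+}e^{-t\lambda}\,M_K(\lambda)^2\,\bm{\rho}_\ell(d\lambda)<\infty ,\qquad M_K(\lambda):=\sup_{x\in K}|v_\lambda(x)| ,
\]
valid for every compact $K\subset(a,b)$ and every $t>0$. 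I would derive it by combining the growth bound $M_K(\lambda)\le C_K\,e^{\tau_K\sqrt{\lambda}}$ for $\lambda\ge 0$ (a standard consequence of Gronwall's inequality applied to the Volterra integral equation for $v_\lambda$, in the spirit of Lemma~\ref{lem:SLappendix_boundaryvaluepb}) with the classical Weyl/Tauberian bound $\bm{\rho}_\ell([0,\Lambda])=O(\sqrt{\Lambda})$ for the spectral measure of a one-dimensional Sturm--Liouville operator \cite{titchmarsh1962,linetsky2004a}. Granting this estimate, for $h\in\mathrm{C}_\mathrm{c}(a,b)$ Tonelli's theorem lets me insert $(\mathcal{F}_\ell h)(\lambda)=\int_a^b h(\xi)v_\lambda(\xi)r(\xi)\,d\xi$ and interchange the order of integration, which yields \eqref{eq:SLappendix_semigr} with $p_\ell$ defined by \eqref{eq:SLappendix_heatkern}; the identity extends to all $h\in L^2(r)$ by density. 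The same estimate together with the Cauchy--Schwarz inequality in $\lambda$ shows that the integral in \eqref{eq:SLappendix_heatkern} is absolutely and uniformly convergent on $K\times K$ for each compact $K\subset(a,b)$; symmetry of $p_\ell$ is clear, and the Chapman--Kolmogorov identity follows from $e^{-(t+s)\lambda}=e^{-t\lambda}e^{-s\lambda}$ and the reproducing property of $\{v_\lambda\}$ encoded in $\mathcal{F}_\ell$.

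Now suppose in addition that $a$ is regular and $b$ is exit or natural. Because $a$ is regular, $v_\lambda(\cdot)$ extends continuously up to $x=a$ with $v_\lambda(a)=1$, so $M_K(\lambda)$ and the whole argument above remain valid for compact $K\subset[a,b)$; this upgrades the convergence in \eqref{eq:SLappendix_heatkern} to absolute and uniform convergence on compact squares of $[a,b)\times[a,b)$. Applying \eqref{eq:SLappendix_semigr} to an increasing sequence approximating $\mathds{1}$ and using sub-Markovianity gives $\int_a^b p_\ell(t,x,\xi)r(\xi)\,d\xi=(e^{-t\mathcal{L}}\mathds{1})(x)\le 1$, so for $h\in\mathrm{C}_0[a,b)$ the right-hand side of \eqref{eq:SLappendix_semigr} defines a contraction for every $x\in[a,b)$. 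It remains to check that this prescription gives a strongly continuous semigroup on $\mathrm{C}_0[a,b)$; this is a classical Feller-type statement \cite{titchmarsh1962,linetsky2004a} which I would verify by showing that $e^{-t\mathcal{L}}$ maps $L^2(r)\cap\mathrm{C}_0[a,b)$ into $\mathrm{C}_0[a,b)$ --- continuity in $x$ from the local uniform convergence of \eqref{eq:SLappendix_heatkern} together with the semigroup property, and vanishing at $b$ from the fact that $b$ is an exit or natural endpoint (so the corresponding diffusion does not re-enter $(a,b)$ through $b$) --- and then extending by density using the sup-norm contractivity $\|e^{-t\mathcal{L}}h\|_\infty\le\|h\|_\infty$ obtained above.

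I expect the single real obstacle to be the a priori bound in the second paragraph --- equivalently, the local finiteness of the diagonal $p_\ell(t,x,x)$ --- since it requires quantitative control of the spectral measure $\bm{\rho}_\ell$ against the $\lambda$-growth of the solutions $v_\lambda$, and this is exactly where one must invoke either Weyl-type spectral asymptotics or a contour-integral argument exploiting the order-$\tfrac12$ growth of $\lambda\mapsto v_\lambda(x)$. Once this estimate is in place, the remaining steps (Fubini, density, symmetry, Chapman--Kolmogorov, and the boundary-behaviour arguments tied to the Feller classification of $b$) are routine.
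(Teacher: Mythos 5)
The paper itself does not prove this proposition; it simply cites \cite[Proposition 2.7]{sousaetal2019b} and \cite{fukushima2014}. Your outline is the standard spectral-theoretic route that those references follow (spectral theorem for the semigroup, Dirichlet-form argument for sub-Markovianity, pointwise inversion plus Fubini for the kernel, Feller theory at the endpoint $b$), and the overall architecture is sound. Two steps, however, are not actually carried out, and one of them contains a claim that is wrong as stated.

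First, the a priori estimate $\int_{\mathbb{R}_0^+} e^{-t\lambda} M_K(\lambda)^2\,\bm{\rho}_\ell(d\lambda)<\infty$. You correctly identify this as the crux, and the growth bound $M_K(\lambda)\leq C_K e^{\tau_K\sqrt{\lambda}}$ is indeed obtainable from the successive-approximation series behind Lemma \ref{lem:SLappendix_boundaryvaluepb}. But the asserted Weyl bound $\bm{\rho}_\ell([0,\Lambda])=O(\sqrt{\Lambda})$ is false in general for the singular operators covered here: already for the operators $\Delta_0$ of Example \ref{exam:coneconv_exam_conehalf} one has $\bm{\rho}_0(d\lambda)\propto\lambda^{-1/4}d\lambda$, hence $\bm{\rho}_0([0,\Lambda])\sim c\,\Lambda^{3/4}$, and for an entrance endpoint $a$ the exponent depends on the degeneracy of $A$ near $a$. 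What you actually need, and what is available in this generality, is only a polynomial (indeed any subexponential) bound; the clean source is the Nevanlinna--Herglotz representation of the principal Weyl--Titchmarsh function, which gives $\int_{\mathbb{R}_0^+}(1+\lambda^2)^{-1}\bm{\rho}_\ell(d\lambda)<\infty$. Alternatively, one can bypass spectral asymptotics altogether: the locally uniform absolute convergence of the inversion integral for $f\in\mathcal{D}(\mathcal{L})$ stated in Proposition \ref{prop:SLappendix_eigenexp}, combined with the closed graph theorem applied to the evaluation functionals $f\mapsto f(x)$ on $\mathcal{D}(\mathcal{L})$ with its graph norm, yields $\sup_{x\in K}\int(1+\lambda)^{-2}|v_\lambda(x)|^2\bm{\rho}_\ell(d\lambda)<\infty$ directly, which is all the estimate requires since $e^{-t\lambda}(1+\lambda)^2$ is bounded. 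As written, your derivation of the key estimate rests on an incorrect intermediate claim, even though the estimate itself is true and repairable.

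Second, the Feller part. The sup-norm contractivity via $\int_a^b p_\ell(t,x,\xi)r(\xi)d\xi\leq 1$ is fine, and strong continuity on $\mathrm{C}_0[a,b)$ does follow from pointwise continuity at $t=0$ for a positive contraction semigroup that maps $\mathrm{C}_0$ into itself. But the statement that $(e^{-t\mathcal{L}}h)(x)\to 0$ as $x\uparrow b$ for $h\in\mathrm{C}_0[a,b)$ is justified only by the heuristic that the diffusion "does not re-enter through $b$"; this is precisely where the exit/natural classification of $b$ must be used analytically (e.g.\ via the behaviour of the decaying solution at $b$, or via estimates of the type in Lemma \ref{lem:SLappendix_solprops}(b) showing $\int_K p_\ell(t,x,\xi)r(\xi)d\xi\to 0$ as $x\uparrow b$), and no such argument is given. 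These two points are exactly the content that the cited references supply; the rest of your proposal (Tonelli for $h\in\mathrm{C}_\mathrm{c}$, density, symmetry, Chapman--Kolmogorov via the Parseval identity) is routine and correct.
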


\begin{proof}
See \cite[Proposition 2.7]{sousaetal2019b}, \cite{fukushima2014}.
\end{proof}

Next we restrict our attention to the case $q \equiv 0$ and state some further properties of the generalized eigenfunctions $v_\lambda(x)$.

\begin{lemma} \label{lem:SLappendix_solprops}
\textbf{(a)} If $q \equiv 0$ and $x \mapsto p(x) r(x)$ is an increasing function, then $|v_\lambda(x)| \leq 1$ for all $a \leq x < b$ and $\lambda \geq 0$. \\[-9pt]

\noindent\textbf{(b)} Let $S(\xi) := \sqrt{p(\gamma^{-1}(\xi)) \, r(\gamma^{-1}(\xi))}$, where $\gamma(x) = \int_c^x\! \smash{\sqrt{r(y) \over p(y)}} dy$ and $\gamma^{-1}$ is its inverse function. (Here $c \in (a,b)$ is a fixed point; if $\sqrt{r(y) \over p(y)}$ is integrable near $a$, then we may also take $c=a$.) Assume that \vspace{-2pt}
\begin{equation} \label{eq:SLappendix_mainassump}
\begin{minipage}{0.84\linewidth}
$q \equiv 0$,\, $\gamma(b) = \int_c^b\! \smash{\sqrt{r(y) \over p(y)}} dy = \infty$,\, and there exists $\eta \in \mathrm{C}^1(\gamma(a),\infty)$ such that $\eta \geq 0$, the functions $\bm{\phi}_\eta := {S' \over S} - \eta\rule[-0.05\baselineskip]{0pt}{\baselineskip}$, $\:\bm{\psi}_\eta := {1 \over 2} \eta' - {1 \over 4} \eta^2 + {S' \over 2S} \ccdot \eta$ are both decreasing on $(\gamma(a),\infty)$ and $\bm{\phi}_\eta$ satisfies $\rule[-0.05\baselineskip]{0pt}{0.85\baselineskip}\lim_{\xi \to \infty} \bm{\phi}_\eta(\xi) = 0$.
\end{minipage}
\end{equation}
Then the following assertions are equivalent:
\begin{itemize}[itemsep=-0.5pt,topsep=2pt,leftmargin=0.7cm]
\item $\lim_{x \uparrow b} p(x)r(x) = \infty$;
\item $\lim_{x \uparrow b} v_\lambda(x) = 0$ for all $\lambda > 0$.
\end{itemize}
\end{lemma}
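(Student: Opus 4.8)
\emph{The plan} is to treat (a) by a monotone‑energy argument and to reduce (b), via the Liouville transformation already present in \eqref{eq:SLappendix_mainassump}, to a one–dimensional asymptotic problem. For (a), take $\lambda>0$ (the case $\lambda=0$ is trivial, $v_0\equiv1$) and set $\Phi_\lambda(x):=v_\lambda(x)^2+\tfrac1\lambda\,[(pv_\lambda')(x)]^2/(p(x)r(x))$; using $(pv_\lambda')'=-\lambda r v_\lambda$ one finds $\Phi_\lambda'(x)=-[(pv_\lambda')(x)]^2\,(pr)'(x)/\big(\lambda\,[p(x)r(x)]^2\big)\le0$ when $pr$ is increasing, while $\Phi_\lambda(a)=1$ since $(pv_\lambda')(a)=0$, so $v_\lambda^2\le\Phi_\lambda\le1$. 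For (b), substitute $\xi=\gamma(x)$, $\widetilde v_\lambda(\xi):=v_\lambda(\gamma^{-1}(\xi))$: since $q\equiv0$ this turns $\ell(v)=\lambda v$ into $-\tfrac1S(S\widetilde v_\lambda')'=\lambda\widetilde v_\lambda$ on $(\alpha,\infty)$ with $\alpha:=\gamma(a)$, preserves the boundary conditions, and satisfies $p(x)r(x)=S(\gamma(x))^2$. Thus one may assume $p=r=S$, so that $\lim_{x\uparrow b}p(x)r(x)=\infty$ becomes $S_\infty:=\lim_{\xi\to\infty}S(\xi)=\infty$; moreover $S'/S=\bm{\phi}_\eta+\eta\ge0$ (as $\bm{\phi}_\eta$ decreases to $0$), so $S$ is non‑decreasing and $S_\infty\in(0,\infty]$ exists, and $(S^2)'\ge0$, so (a) gives $|\widetilde v_\lambda|\le1$. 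Finally the substitution $\widetilde v_\lambda=S^{-1/2}W_\lambda$ produces $W_\lambda''+(\lambda-Q)W_\lambda=0$ with $Q=\bm{\psi}_\eta+\tfrac14\bm{\phi}_\eta^2+\tfrac12\bm{\phi}_\eta'$, and I record the identity $\big(\tfrac{S'}{2S}\big)^2=\tfrac14(\eta+\bm{\phi}_\eta)^2=\bm{\psi}_\eta+\tfrac14\bm{\phi}_\eta^2-\tfrac12\eta'=:Q_0-\tfrac12\eta'$, hence $\log S^{1/2}(\xi)=\int_{\xi_0}^{\xi}\sqrt{Q_0-\tfrac12\eta'}\,dt+O(1)$ for a fixed $\xi_0\in(\alpha,\infty)$, where $Q_0$ is non‑increasing and $\tfrac12\bm{\phi}_\eta'\in L^1(\xi_0,\infty)$ (as $\bm{\phi}_\eta$ is monotone and bounded). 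The goal becomes: $S_\infty=\infty$ iff $\widetilde v_\lambda(\xi)\to0$ as $\xi\to\infty$ for every $\lambda>0$.

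For the contrapositive of the forward implication I would argue: if $S_\infty<\infty$ then $\int_\alpha^\infty(S'/S)<\infty$, so $\eta,\bm{\phi}_\eta\in L^1$, and (using the monotonicity of $\bm{\psi}_\eta$ to control the $\eta'$–term) $Q\in L^1(\xi_0,\infty)$. By Levinson's asymptotic integration theorem the equation $W_\lambda''+(\lambda-Q)W_\lambda=0$ then has a fundamental system $e^{\pm i\sqrt\lambda\,\xi}(1+o(1))$; since $\widetilde v_\lambda(\alpha)=1$ the solution $W_\lambda$ is nontrivial, so $\limsup_{\xi\to\infty}|W_\lambda(\xi)|>0$, and as $S^{-1/2}$ is bounded below (on $[\xi_0,\infty)$, where $S\le S_\infty$) we conclude $\widetilde v_\lambda=S^{-1/2}W_\lambda\not\to0$.

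For the converse, assume $S_\infty=\infty$ and put $\ell_0:=\lim_{\xi\to\infty}Q_0\in[-\infty,Q_0(\xi_0)]$. If $\ell_0<\lambda$, then $\lambda-Q_0$ is eventually positive and non‑decreasing, so the quantity $V_\lambda^2+(V_\lambda')^2/(\lambda-Q_0)$ attached to a solution $V_\lambda$ of $V_\lambda''+(\lambda-Q_0)V_\lambda=0$ is eventually non‑increasing, whence $V_\lambda$ is bounded, and a variation‑of‑parameters/Grönwall estimate against the $L^1$–perturbation $\tfrac12\bm{\phi}_\eta'$ makes $W_\lambda$ bounded; since $S\to\infty$, $\widetilde v_\lambda=S^{-1/2}W_\lambda\to0$. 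If $\ell_0\ge\lambda$ (so $\ell_0$ is finite, $Q_0$ is bounded and $Q_0-\lambda\ge0$), WKB/Levinson bounds for the disconjugate equation $V_\lambda''=(Q_0-\lambda)V_\lambda$ give $W_\lambda(\xi)=O\big(\exp\int_{\xi_0}^\xi\sqrt{(Q_0-\lambda)_+}\big)$, while the identity above yields $\log S^{1/2}(\xi)-\int_{\xi_0}^\xi\sqrt{(Q_0-\lambda)_+}\,dt=\int_{\xi_0}^\xi\frac{\lambda-\tfrac12\eta'}{\sqrt{Q_0-\tfrac12\eta'}+\sqrt{(Q_0-\lambda)_+}}\,dt+O(1)\to+\infty$, because $\lambda>0$ and $\eta$ is bounded (so the numerator integrates to $\ge\lambda\,\xi-O(1)$ and the denominator is bounded). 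Hence again $\widetilde v_\lambda=S^{-1/2}W_\lambda\to0$.

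\emph{The main obstacle} is the auxiliary input I have been using as ``$\eta$ is bounded'' (equivalently, that $Q\in L^1$ in the contrapositive): one must extract from the monotonicity of $\bm{\psi}_\eta=\tfrac14\eta^2+\tfrac12\eta\bm{\phi}_\eta+\tfrac12\eta'$ (together with $\eta\ge0$ and $\bm{\phi}_\eta$ decreasing to $0$) that $S'/S$, equivalently $\eta$, is tame — e.g.\ bounded, using that at any local maximum $\xi^*$ of $\eta$ one has $\tfrac14\eta(\xi^*)^2\le\bm{\psi}_\eta(\xi^*)\le\bm{\psi}_\eta(\xi_0)$. The other delicate point is the borderline sub‑case $\ell_0=\lambda$, where $Q_0-\lambda\to0^+$ and the growth bound for $W_\lambda$ must be obtained by Hartman--Wintner type estimates rather than plain Liouville--Green (the comparison integral still diverges since $\lambda>0$). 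Once these are settled, the Levinson asymptotics and the monotone‑quantity and Grönwall computations are routine.
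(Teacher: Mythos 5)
The paper itself gives no proof of this lemma: it only cites Lemma 2.3 and Proposition 3.6 of \cite{sousaetal2019b}, so there is no in-paper argument to compare against line by line. Your part (a) is correct and is the standard argument: with $q\equiv 0$ the quantity $\Phi_\lambda=v_\lambda^2+(pv_\lambda')^2/(\lambda pr)$ satisfies $\Phi_\lambda'=-(pv_\lambda')^2(pr)'/(\lambda(pr)^2)\le 0$ and $\Phi_\lambda(a^+)=1$ (a little care is needed at $x=a$ when $p(a)r(a)=0$, using $(pv_\lambda')(x)=-\lambda\int_a^x rv_\lambda$ and Cauchy--Schwarz). In part (b), the Liouville reduction to $p=r=S$, the identity $Q=\bm{\psi}_\eta+\tfrac14\bm{\phi}_\eta^2+\tfrac12\bm{\phi}_\eta'$, the boundedness of $\eta$ (your local-maximum argument against the monotonicity of $\bm{\psi}_\eta$ does work, and also yields $\eta'\le 2\bm{\psi}_\eta(\xi_0)$), and the contrapositive direction ($S_\infty<\infty\Rightarrow\eta,\bm{\phi}_\eta\in L^1\Rightarrow\bm{\psi}_\eta\in L^1\Rightarrow\eta'\in L^1\Rightarrow Q\in L^1$, then Levinson asymptotics force $v_\lambda\not\to 0$) are all sound.

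The genuine gap is in the converse direction in the regime $\ell_0=\lim Q_0\ge\lambda$ (equivalently $\bm{\psi}_\eta\ge\lambda$ everywhere, so the bound $\eta'\le 2\bm{\psi}_\eta(\xi_0)$ does not make $\lambda-\tfrac12\eta'$ nonnegative). Two steps do not close there. First, the divergence of $\log S^{1/2}(\xi)-\int_{\xi_0}^\xi\sqrt{(Q_0-\lambda)_+}\,dt$ is obtained by bounding $\int N/D$ below by $K^{-1}\int N$ with $N=\lambda-\tfrac12\eta'$ and $D\le K$; this inequality fails for sign-changing $N$, since $D=\sqrt{Q_0-\tfrac12\eta'}+\sqrt{Q_0-\lambda}$ can be small precisely on the set where $\eta'>2\lambda$, i.e.\ where $N<0$, so the negative contribution is not controlled by an upper bound on the denominator. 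One would need something like $(\eta')_+\in L^1$ or a one-sided bound on $\eta'$ compatible with $\lambda$, which does not follow from \eqref{eq:SLappendix_mainassump}. Second, the growth bound $W_\lambda=O\bigl(\exp\int\sqrt{(Q_0-\lambda)_+}\bigr)$ is asserted via ``WKB/Levinson''; in the non-oscillatory regime such bounds require variation or regularity hypotheses on $Q_0-\lambda$ (and one must still absorb the $L^1$ perturbation $\tfrac12\bm{\phi}_\eta'$), and they degenerate exactly in the borderline case $\ell_0=\lambda$ that you flag. Until these two points are repaired, the implication $\lim_{x\uparrow b}p(x)r(x)=\infty\Rightarrow\lim_{x\uparrow b}v_\lambda(x)=0$ is not established by your argument.
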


\begin{proof}
See \cite[Lemma 2.3 and Proposition 3.6]{sousaetal2019b}.
\end{proof}

\begin{theorem}[Product formula for $v_\lambda$] \label{thm:SLappendix_prodform}
Assume that \eqref{eq:SLappendix_mainassump} holds. Then there exists a family of measures $\{\pi_{x_1,x_2}\}_{x_1,x_2 \in [a,b)} \subset \mathcal{P}[a,b)$ such that we have
\[
v_\lambda(x_1) \, v_\lambda(x_2) = \int_{[a,b)} v_\lambda(x_3)\, \pi_{x_1,x_2}(dx_3) \qquad \text{for all }\, x_1, x_2 \in [a,b), \; \lambda \in \mathbb{C}.
\]
If $p \equiv r$ and $a = \gamma(a) = 0$, then $\supp(\pi_{x_1,x_2}) \subset [|x_1-x_2|,x_1+x_2]$.
\end{theorem}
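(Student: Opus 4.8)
The plan is to prove the theorem in two stages: first establish the existence of the family $\{\pi_{x_1,x_2}\}$, then prove the support inclusion under the extra hypothesis $p\equiv r$, $a=\gamma(a)=0$. For the existence, the essential idea is to pass to the Liouville-type normal form so that the problem becomes a perturbation of the classical cosine product formula. Concretely, after the change of independent variable $\xi = \gamma(x)$ and the substitution $g(\xi) = \sqrt{S(\xi)}\,v_\lambda(\gamma^{-1}(\xi))$, the equation $\ell(v)=\lambda v$ (with $q\equiv 0$) becomes a Schrödinger equation $-g'' + Q(\xi)\,g = \lambda g$ on $(\gamma(a),\infty)$, and the assumption \eqref{eq:SLappendix_mainassump} is precisely what guarantees that the potential $Q$, when further conjugated using the auxiliary function $\eta$, splits into the nonnegative decreasing pieces $\bm\phi_\eta$ and $\bm\psi_\eta$. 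One then studies the two-variable function $u(\xi_1,\xi_2) := v_\lambda(x_1)\,v_\lambda(x_2)$, which solves the hyperbolic (one space dimension) equation $\ell_{x_1}u = \ell_{x_2}u$ with Cauchy data on $\{\xi_1 = \gamma(a)\}$ prescribed by $v_\lambda$ and $(pv')_\lambda$. The key step is to write down the Riemann function / d'Alembert-type representation of the solution of this Cauchy problem and to show — using the sign conditions on $\bm\phi_\eta$, $\bm\psi_\eta$ and a maximum-principle or successive-approximation argument — that the representing kernel is a nonnegative measure of total mass one, independent of $\lambda$. Since $\lambda\mapsto v_\lambda(x)$ is entire of exponential type (Lemma \ref{lem:SLappendix_boundaryvaluepb}), once the identity holds for $\lambda\geq 0$ it extends to all $\lambda\in\mathbb C$ by analytic continuation. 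This is exactly the PDE technique of Zeuner and of the authors' earlier work, so I would cite \cite{zeuner1992,sousaetal2019b} for the detailed hyperbolic estimates rather than reproduce them.

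For the support statement, assume now $p\equiv r$ and $a=\gamma(a)=0$. Then $\gamma$ is the identity, $S\equiv\sqrt{pr}=p$, and the normal-form equation lives on $(0,\infty)$ with the regular endpoint at $0$ and reflecting (Neumann-type) boundary condition there. The hyperbolic equation $\ell_{x_1}u=\ell_{x_2}u$ has characteristics $x_1\pm x_2=\text{const}$, so by finite propagation speed the value of the solution at $(x_1,x_2)$ depends only on the Cauchy data in the interval of dependence, which — after reflecting the characteristic that would exit through $x=0$ against the boundary — is exactly $[\,|x_1-x_2|,\,x_1+x_2\,]$. Hence $\supp(\pi_{x_1,x_2})\subset[|x_1-x_2|,x_1+x_2]$. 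I would phrase this via the explicit domain of dependence of the Riemann representation together with the boundary reflection, appealing again to \cite[Section 3 and Proposition 5.7]{sousaetal2019b} where this localisation is carried out in the same generality.

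The main obstacle is the first stage: proving that the kernel in the d'Alembert/Riemann representation is a \emph{positive} measure and has mass one. Positivity is not automatic from the hyperbolic equation alone; it is forced by the structure assumption \eqref{eq:SLappendix_mainassump}, and the delicate point is to organise the Picard iteration for the Riemann function so that each term is nonnegative — this is where the monotonicity of $\bm\phi_\eta$ and $\bm\psi_\eta$ and the limit $\bm\phi_\eta(\xi)\to 0$ enter. The normalisation (total mass $1$) follows by evaluating the product formula at $\lambda=0$, where $v_0\equiv 1$. Since all of this machinery is already developed in \cite{sousaetal2019b}, the cleanest route is to verify that the coefficients $p=r=B_k$ of $\ell_k$ satisfy \eqref{eq:SLappendix_mainassump} — which Lemma \ref{lem:coneconv_sol_normaliz} does in the body of the paper — and then invoke the cited existence and support results directly.
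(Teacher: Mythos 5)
Your proposal is correct and ends up exactly where the paper does: the paper's proof of Theorem \ref{thm:SLappendix_prodform} is simply a citation to Section 4 and Subsection 5.3 of \cite{sousaetal2019b}, and your sketch of the hyperbolic/Riemann-function technique (with positivity forced by \eqref{eq:SLappendix_mainassump}, mass one from $\lambda=0$, analytic continuation in $\lambda$, and the domain-of-dependence argument for the support) accurately describes what that reference carries out. No gap to report.
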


\begin{proof}
See \cite[Section 4 and Subsection 5.3]{sousaetal2019b}.
\end{proof}

\begin{proposition} \label{prop:SLappendix_fourmeas_props}
Assume that \eqref{eq:SLappendix_mainassump} holds. Let $\mathcal{F}_\ell$ be the \emph{$\ell$-Fourier transform of measures} defined by
\begin{equation} 	\label{eq:SLappendix_fourmeas_def}
(\mathcal{F}_{\ell\,} \mu)(\lambda) := \int_{[a,b)} v_\lambda(x) \, \mu(dx) \qquad (\mu \in \mathcal{P}[a,b),\, \lambda \geq 0).
\end{equation}
Then:
\begin{enumerate}[itemsep=0pt,topsep=4pt]
\item[\textbf{(i)}] $\mathcal{F}_{\ell\,} \mu$ is continuous on $\mathbb{R}_0^+$. Moreover, if the family $\{\mu_j\} \subset \mathcal{M}_\mathbb{C}[a,b)$ is tight and uniformly bounded, then $\{\mathcal{F}_{\ell\,} \mu_j\}$ is equicontinuous on $\mathbb{R}_0^+$.

\item[\textbf{(ii)}] Let $\mu_1, \mu_2 \in \mathcal{M}_\mathbb{C}[a,b)$. If $\mathcal{F}_{\ell\,} \mu_1 \equiv \mathcal{F}_{\ell\,} \mu_2$, then $\mu_1 = \mu_2$.

\item[\textbf{(iii)}]
Let $\{\mu_n\} \subset \mathcal{M}_+[a,b)$, $\mu \in \mathcal{M}_+[a,b)$, and suppose that $\mu_n \warrow \mu$. Then
\[
\mathcal{F}_{\ell\,} \mu_n \xrightarrow[\,n \to \infty\,]{} \mathcal{F}_{\ell\,} \mu \qquad \text{uniformly on compact sets.}
\]
\end{enumerate}
\end{proposition}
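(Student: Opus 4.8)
The plan is to prove the three parts in turn, with the uniform bound $|v_\lambda(x)| \le 1$ for $a \le x < b$, $\lambda \ge 0$ as the basic analytic input: this is Lemma \ref{lem:SLappendix_solprops}(a), applicable because \eqref{eq:SLappendix_mainassump} contains $q \equiv 0$ and forces $\bm{\phi}_\eta = S'/S - \eta \ge 0$ (being decreasing with limit $0$), whence $S$, and therefore $x \mapsto p(x)r(x)$, is increasing. I would also use the standard fact that $(\lambda,x) \mapsto v_\lambda(x)$ is jointly continuous on $\mathbb{R} \times [a,b)$, which follows from the Picard iteration for the integral equation equivalent to $\ell(v) = \lambda v$, $v(a) = 1$, $(pv')(a) = 0$ (uniformly convergent on compact sets, with jointly continuous iterates). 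For \emph{part (i)}: continuity of $\mathcal{F}_\ell\mu$ on $\mathbb{R}_0^+$ is dominated convergence against the finite measure $|\mu|$ (using $|v_\lambda|\le1$ and continuity of $\lambda \mapsto v_\lambda(x)$). For equicontinuity at a point $\lambda_0 \in \mathbb{R}_0^+$ of a tight, uniformly bounded family $\{\mu_j\}$ with $C_0 := \sup_j\|\mu_j\|$: given $\eps > 0$, use tightness to choose $\beta \in (a,b)$ with $|\mu_j|\bigl([a,b)\setminus[a,\beta]\bigr) < \eps$ for all $j$; on the compact set $\bigl[\max(0,\lambda_0-1),\lambda_0+1\bigr]\times[a,\beta]$ the function $v$ is uniformly continuous, so, splitting $\int v_\lambda\,d\mu_j$ over $[a,\beta]$ and its complement (where $|v_\lambda - v_{\lambda_0}| \le 2$), one gets $|\mathcal{F}_\ell\mu_j(\lambda) - \mathcal{F}_\ell\mu_j(\lambda_0)| \le C_0\,\omega(|\lambda-\lambda_0|) + 2C_0\eps$ for $|\lambda-\lambda_0| < 1$, where $\omega$ is the modulus of continuity of $v$ on that compact set; this is the claimed equicontinuity.

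For \emph{part (ii)}, set $\nu := \mu_1 - \mu_2$, so that $\int_{[a,b)} v_\lambda\,d\nu = 0$ for all $\lambda \ge 0$, and the goal is $\nu = 0$. Fix $g \in \mathrm{C}_\mathrm{c}[a,b) \subset L^2(r)$ and $t > 0$; then $e^{-t\mathcal{L}}g$ lies in $\mathcal{D}(\mathcal{L})$, so the pointwise inversion in Proposition \ref{prop:SLappendix_eigenexp} gives $e^{-t\mathcal{L}}g(x) = \int_{\mathbb{R}_0^+} e^{-t\lambda}(\mathcal{F}_\ell g)(\lambda)\,v_\lambda(x)\,\bm{\rho}_\ell(d\lambda)$. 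Since $e^{-t\lambda} \in L^2(\bm{\rho}_\ell)$ — because $\int_{\mathbb{R}_0^+} e^{-2t\lambda}\,\bm{\rho}_\ell(d\lambda) = p_\ell(2t,a,a) < \infty$ by Proposition \ref{prop:SLappendix_semigr_heatkern}, using $v_\lambda(a) = 1$ — and $\mathcal{F}_\ell g \in L^2(\bm{\rho}_\ell)$ with $|v_\lambda| \le 1$, Fubini against the finite measure $\nu$ gives
\[
\int_{[a,b)} (e^{-t\mathcal{L}}g)\,d\nu = \int_{\mathbb{R}_0^+} e^{-t\lambda}\,(\mathcal{F}_\ell g)(\lambda)\Bigl(\int_{[a,b)} v_\lambda\,d\nu\Bigr)\bm{\rho}_\ell(d\lambda) = 0.
\]
Letting $t \downarrow 0$ and using $e^{-t\mathcal{L}}g \to g$ uniformly (strong continuity of $\{e^{-t\mathcal{L}}\}$ on $\mathrm{C}_0[a,b)$, resp.\ on $\mathrm{C}[a,b]$ if $b$ is regular or entrance, from Proposition \ref{prop:SLappendix_semigr_heatkern}) together with finiteness of $\nu$, we obtain $\int g\,d\nu = 0$; density of $\mathrm{C}_\mathrm{c}[a,b)$ in $\mathrm{C}_0[a,b)$ then forces $\nu = 0$.

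\emph{Part (iii)} is then short. Pointwise, $\mathcal{F}_\ell\mu_n(\lambda) \to \mathcal{F}_\ell\mu(\lambda)$ for each $\lambda$ because $v_\lambda \in \mathrm{C}_\mathrm{b}[a,b)$ and $\mu_n \warrow \mu$. A weakly convergent sequence in $\mathcal{M}_+[a,b)$ is uniformly bounded (test against the constant $1$) and tight (being weakly convergent on the Polish space $[a,b)$), so part (i) makes $\{\mathcal{F}_\ell\mu_n\}$ equicontinuous on $\mathbb{R}_0^+$; an equicontinuous, pointwise convergent sequence of functions converges uniformly on compact sets, which is the assertion.

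The step I expect to be most delicate is the double limit in \emph{part (ii)}: one must simultaneously secure Fubini for the regularized pairing $\int(e^{-t\mathcal{L}}g)\,d\nu$ — which hinges on $e^{-t\lambda} \in L^2(\bm{\rho}_\ell)$, itself extracted from convergence of the heat-kernel series of Proposition \ref{prop:SLappendix_semigr_heatkern} at the regular endpoint $a$ — and the passage $t \downarrow 0$ inside the integral against $\nu$, which relies on sup-norm strong continuity of $\{e^{-t\mathcal{L}}\}$. Everything else is comparatively soft, resting only on the bound $|v_\lambda| \le 1$, joint continuity of $v_\lambda(x)$, and standard facts (tightness, Prohorov's theorem, Arzelà–Ascoli).
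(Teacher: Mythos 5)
Parts (i) and (iii) of your argument are correct, and your preliminary observations are sound: since $\bm{\phi}_\eta$ is decreasing with limit $0$ it is nonnegative, so $S'/S \geq \eta \geq 0$, $S$ is increasing, $pr$ is increasing, and Lemma \ref{lem:SLappendix_solprops}(a) gives $|v_\lambda| \leq 1$; the continuity, equicontinuity and Arzelà--Ascoli steps are then routine. (The paper itself only cites the companion paper for this proposition, so there is no internal proof to compare against.)

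The gap is in part (ii), and it is concentrated at the endpoint $a$. The standing assumption of the appendix allows $a$ to be regular \emph{or entrance}, and both analytic inputs you yourself flag as delicate are covered by your citations only in the regular case. First, the claim $\int_{\mathbb{R}_0^+} e^{-2t\lambda}\,\bm{\rho}_\ell(d\lambda) = p_\ell(2t,a,a) < \infty$ requires the integral \eqref{eq:SLappendix_heatkern} to converge at $x_1 = x_2 = a$; Proposition \ref{prop:SLappendix_semigr_heatkern} guarantees convergence only on compact squares of $(a,b)\times(a,b)$ in general, and extends this to $[a,b)\times[a,b)$ only under the extra hypothesis that $a$ is \emph{regular} and $b$ is exit or natural. (The condition on $b$ is automatic under \eqref{eq:SLappendix_mainassump}: $S$ increasing gives $\int_0^\zeta d\xi/S(\xi) \geq \zeta/S(\zeta)$, hence $J_b = \int_0^{\infty}\bigl(\int_0^\zeta d\xi/S\bigr)S(\zeta)\,d\zeta = \infty$ — but you should record this.) When $a$ is entrance, which occurs in the basic Bessel--Kingman examples of index $\geq 0$, the finiteness of $\int e^{-2t\lambda}\bm{\rho}_\ell(d\lambda)$ is true but is not contained in any of the quoted statements, and it cannot be extracted from $\int e^{-t\lambda} v_\lambda(x)^2\,\bm{\rho}_\ell(d\lambda) < \infty$ at interior points $x$, since there $v_\lambda(x) \to 0$ as $\lambda \to \infty$. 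Second, the same caveat applies to the sup-norm convergence $\|e^{-t\mathcal{L}}g - g\|_\infty \to 0$: Proposition \ref{prop:SLappendix_semigr_heatkern} provides the Feller semigroup on $\mathrm{C}_0[a,b)$ only for $a$ regular, and an $L^2$/almost-everywhere substitute does not suffice because $\nu$ need not be absolutely continuous with respect to $r(x)dx$. (A third, smaller instance of the same issue: the pointwise inversion formula of Proposition \ref{prop:SLappendix_eigenexp} is stated for $x \in (a,b)$, while $\nu$ may charge $\{a\}$.) Your proof of (ii) is therefore complete when $a$ is regular, but in the entrance case you must either establish $e^{-t\lambda} \in L^2(\bm{\rho}_\ell)$ and the boundary continuity separately, or route around the heat semigroup — for instance by pairing $\nu$ with generalized translates via Proposition \ref{prop:SLappendix_convtransl_props}(c), i.e.\ showing $\int f\, d(\delta_x \convdiam{\ell} \nu) = 0$ for $x \in (a,b)$ and letting $x \downarrow a$ using weak continuity.
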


\begin{proof}
See \cite[Proposition 5.2]{sousaetal2019b}.
\end{proof}

The \emph{$\ell$-convolution} and the \emph{$\ell$-translation operator} are respectively defined by
\begin{align*}
(\mu \convdiam{\ell} \nu)(d\xi) & := \int_{[a,b)} \int_{[a,b)} \pi_{x,y}(d\xi) \, \mu(dx) \, \nu(dy), & \hspace{-10em} \mu, \nu \in \mathcal{M}_\mathbb{C}[a,b) \\
(\mathcal{T}_\ell^\mu f)(x) & := \int_{[a,b)} f \, d(\delta_x \convdiam{\ell} \mu), & \hspace{-10em} \mu \in \mathcal{M}_\mathbb{C}[a,b), \; x \in [a,b), \; f \in L^p(r)
\end{align*}
where $L^p(r) \equiv L^p\bigl((a,b),r(z)dz\bigr)$\, ($1 \leq p \leq \infty$).

\begin{proposition} \label{prop:SLappendix_convtransl_props}
Assume that \eqref{eq:SLappendix_mainassump} holds. \\[2pt]
\textbf{(a)} $\mu = \mu_1 \convdiam{\ell} \mu_2$ if and only if $(\mathcal{F}_{\ell\,}\mu)(\lambda) = (\mathcal{F}_{\ell\,}\mu_1)(\lambda) \ccdot (\mathcal{F}_{\ell\,}\mu_2)(\lambda)$ for all $\lambda \geq 0$ \quad $(\mu, \mu_1, \mu_2 \in \mathcal{M}_\mathbb{C}[a,b))$.
\\[-9pt]

\noindent\textbf{(b)} The $\convdiam{\ell}$ convolution is weakly continuous: if $\mu_n \warrow \mu$ and $\nu_n \warrow \nu$, then $\mu_n \convdiam{\ell} \nu_n \warrow \mu \convdiam{\ell} \nu$. \\[-9pt]

\noindent\textbf{(c)} If $f \in \mathrm{C}_\mathrm{c}^2[a,b)$ with $f' \in \mathrm{C}_\mathrm{c}(a,b)$, then 
\[
\int_{[a,b)} f \: d(\delta_x \convdiam{\ell} \mu) = \int_{\mathbb{R}_0^+} (\mathcal{F}_{\ell\,} f)(\lambda) \, (\mathcal{F}_{\ell\,} \mu)(\lambda) \, v_\lambda(x) \, \bm{\rho}_\ell(d\lambda) \qquad \text{for all } \mu \in \mathcal{M}_\mathbb{C}[a,b),\: x \in (a,b).
\]

\noindent\textbf{(d)} Suppose that $\lim_{x \uparrow b} p(x) r(x) = \infty$, and let $\mu \in \mathcal{M}_\mathbb{C}[a,b)$. Then $\delta_x \convdiam{\ell} \mu \varrow \bm{0}$ as $x \uparrow b$, where $\bm{0}$ is the zero measure. \\[-9pt]

\noindent\textbf{(e)} Let $1 \leq p \leq \infty$ and $\mu \in \mathcal{M}_+[a,b)$. The $\ell$-translation $\mathcal{T}_\ell^\mu$ is a bounded operator on $L^p(r)$ such that 
\[
\|\mathcal{T}_\ell^\mu f\|_{L^p(r)} \leq \|\mu\| \ccdot \|f\|_{L^p(r)} \qquad \text{for all } f \in L^p(r).
\]

\noindent\textbf{(f)} Let $1 \leq p_1, p_2 \leq \infty$ with ${1 \over p_1} + {1 \over p_2} \geq 1$, and let $f \in L^{p_1\!}(r)$, $g \in L^{p_2\!}(r)$. Then the $\ell$-convolution
\[
(f \convdiam{\ell} g)(x) := \int_a^b (\mathcal{T}_\ell^y f)(x)\, g(y)\, r(y) dy
\]
(where $\mathcal{T}_\ell^y \equiv \mathcal{T}_\ell^{\delta_y}$) is well-defined and satisfies
\[
\| f \convdiam{\ell} g \|_{L^s(r)} \leq \| f \|_{L^{p_1\!}(r)} \| g \|_{L^{p_2\!}(r)}, \qquad \text{where }\, s = {1 \over 1/p_1 + 1/p_2 - 1}.
\]
\end{proposition}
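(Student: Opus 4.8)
The plan is to derive all six statements from the three substantive ingredients already available: the product formula of Theorem~\ref{thm:SLappendix_prodform}, the spectral expansion of Proposition~\ref{prop:SLappendix_eigenexp}, and the uniqueness/continuity properties of the $\ell$-Fourier transform of measures in Proposition~\ref{prop:SLappendix_fourmeas_props} (all going back to \cite{sousaetal2019b,sousaetal2020}); the point is to combine them in the right order. For \textbf{(a)}, the ``only if'' direction follows by inserting the product formula $v_\lambda(x_1)v_\lambda(x_2)=\int_{[a,b)}v_\lambda\,d\pi_{x_1,x_2}$ into the definition of $\convdiam{\ell}$ and applying Fubini (permissible since $\|\mu_i\|<\infty$ and $|v_\lambda|\le 1$ by Lemma~\ref{lem:SLappendix_solprops}(a), whose hypothesis is contained in \eqref{eq:SLappendix_mainassump}), and the ``if'' direction then follows from injectivity of $\mathcal{F}_\ell$, i.e.\ Proposition~\ref{prop:SLappendix_fourmeas_props}(ii), since $\mu_1\convdiam{\ell}\mu_2\in\mathcal{M}_\mathbb{C}[a,b)$ and its $\ell$-Fourier transform is $(\mathcal{F}_\ell\mu_1)(\mathcal{F}_\ell\mu_2)$. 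For \textbf{(c)} one first checks that every $f\in\mathrm{C}_\mathrm{c}^2[a,b)$ with $f'\in\mathrm{C}_\mathrm{c}(a,b)$ lies in $\mathcal{D}(\mathcal{L})$ (the boundary conditions hold because $f'$ vanishes near $a$ and $f$ near $b$, and $\ell(f)$ is bounded with support compact in $(a,b)$), so Proposition~\ref{prop:SLappendix_eigenexp} gives the absolutely and locally uniformly convergent expansion $f(y)=\int_{\mathbb{R}_0^+}(\mathcal{F}_\ell f)(\lambda)\,v_\lambda(y)\,\bm{\rho}_\ell(d\lambda)$, with $\mathcal{F}_\ell f\in L^1(\bm{\rho}_\ell)$; substituting this into $\int f\,d(\delta_x\convdiam{\ell}\mu)=\int\bigl(\int f\,d(\delta_x\convdiam{\ell}\delta_y)\bigr)\mu(dy)$, interchanging the $\bm{\rho}_\ell$-, $\pi_{x,y}$- and $\mu$-integrals (justified by $\mathcal{F}_\ell f\in L^1(\bm{\rho}_\ell)$, $|v_\lambda|\le 1$, $\|\mu\|<\infty$), and using the product formula together with $(\mathcal{F}_\ell\mu)(\lambda)=\int v_\lambda\,d\mu$ yields the asserted identity.

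Parts \textbf{(b)} and \textbf{(d)} are the continuity statements. For \textbf{(b)} the crucial input is the weak continuity of $(x,y)\mapsto\delta_x\convdiam{\ell}\delta_y$, which is part of the construction of the kernels $\pi_{x,y}$ in Theorem~\ref{thm:SLappendix_prodform} (it is one of the hypergroup axioms verified there); granting it, $(x,y)\mapsto\int h\,d(\delta_x\convdiam{\ell}\delta_y)$ belongs to $\mathrm{C}_\mathrm{b}([a,b)^2)$ for every $h\in\mathrm{C}_\mathrm{b}[a,b)$, and since $\mu_n\otimes\nu_n\warrow\mu\otimes\nu$ on $[a,b)^2$ one passes to the limit in $\int h\,d(\mu_n\convdiam{\ell}\nu_n)=\int\!\int\bigl(\int h\,d(\delta_x\convdiam{\ell}\delta_y)\bigr)\mu_n(dx)\,\nu_n(dy)$. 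For \textbf{(d)} one applies the representation (c) to $g\in\mathrm{C}_\mathrm{c}^2[a,b)$ with $g'\in\mathrm{C}_\mathrm{c}(a,b)$: as $x\uparrow b$, Lemma~\ref{lem:SLappendix_solprops}(b) (using $\lim_{x\uparrow b}p(x)r(x)=\infty$) gives $v_\lambda(x)\to 0$ for all $\lambda>0$, and since $\bm{\rho}_\ell$ carries no mass at $\lambda=0$ under this hypothesis, dominated convergence with dominating function $|(\mathcal{F}_\ell g)(\mathcal{F}_\ell\mu)|\in L^1(\bm{\rho}_\ell)$ gives $\int g\,d(\delta_x\convdiam{\ell}\mu)\to 0$; density of such $g$ in $\mathrm{C}_0[a,b)$ and the uniform bound $\|\delta_x\convdiam{\ell}\mu\|\le\|\mu\|$ then upgrade this to $\delta_x\convdiam{\ell}\mu\varrow\bm{0}$.

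For \textbf{(e)}--\textbf{(f)} it suffices to treat $\mu=\delta_y$ and then reach arbitrary $\mu\in\mathcal{M}_+[a,b)$ via $\mathcal{T}_\ell^\mu f=\int\mathcal{T}_\ell^y f\,\mu(dy)$ and Minkowski's integral inequality (the needed measurability of $(x,y)\mapsto(\mathcal{T}_\ell^y f)(x)$ coming from (b)). For $\mu=\delta_y$: the case $p=\infty$ is immediate since $\pi_{x,y}$ is a probability measure; the case $p=2$ follows from (c), which yields $(\mathcal{F}_\ell\mathcal{T}_\ell^y f)(\lambda)=v_\lambda(y)(\mathcal{F}_\ell f)(\lambda)$ and hence a contraction on $L^2(r)$ because $|v_\lambda(y)|\le 1$; and the case $p=1$ follows because (c), together with $v_\lambda$ being real for $\lambda\ge 0$, makes $\mathcal{T}_\ell^y$ symmetric on $L^2(r)$ — equivalently $\int\!\int h(x)g(z)\,\pi_{x,y}(dz)\,r(x)\,dx=\int\!\int h(x)g(z)\,\pi_{z,y}(dx)\,r(z)\,dz$ — so that positivity of $\pi_{x,y}$ and $\mathcal{T}_\ell^y 1=1$ give $\|\mathcal{T}_\ell^y f\|_{L^1(r)}\le\|f\|_{L^1(r)}$; Riesz--Thorin interpolation then delivers the contraction on $L^p(r)$ for all $1\le p\le\infty$. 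Young's inequality (f) is obtained from these $L^p$ contractions by the classical bilinear interpolation argument, as in the proof of Proposition~1.III.5 of \cite{trimeche1997}.

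The genuinely analytic content — and hence the main obstacle — lies not in the manipulations above but in the two inputs they rest on, both of which are established in \cite{sousaetal2019b}: first, the weak continuity of $(x,y)\mapsto\delta_x\convdiam{\ell}\delta_y$ used in (b) and in the measurability part of (e), which is not a formal consequence of the Fourier-transform identities and must be extracted from the analysis of the hyperbolic Cauchy problem $\ell_{x_1}u=\ell_{x_2}u$ defining the kernels $\pi_{x,y}$; and second, the careful passage from the smooth test functions in (c) to arbitrary $L^p$ classes, i.e.\ the identification of the integral operator $\mathcal{T}_\ell^y$ with the spectral multiplier $\mathcal{F}_\ell^{-1}\!\circ(\text{mult.\ by }v_\lambda(y))\circ\mathcal{F}_\ell$, which is what legitimises the $L^1$-symmetry argument. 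A secondary point requiring care is verifying that, under \eqref{eq:SLappendix_mainassump} together with $\lim_{x\uparrow b}p(x)r(x)=\infty$, the spectral measure $\bm{\rho}_\ell$ has no atom at $\lambda=0$, which is precisely what makes the dominated-convergence step in (d) conclude convergence to the zero measure rather than to a nonzero limit.
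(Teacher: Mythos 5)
The paper does not actually prove this proposition: its ``proof'' is the citation \emph{See \cite[Corollary 5.2 and Proposition 6.4]{sousaetal2020}, \cite[Proposition 5.4 and Lemma 5.5]{sousaetal2019b}}. Your reconstruction is therefore necessarily a different route, and it is a sensible one: it assembles (a)--(f) from Theorem \ref{thm:SLappendix_prodform}, Proposition \ref{prop:SLappendix_eigenexp} and Proposition \ref{prop:SLappendix_fourmeas_props} in essentially the same way the paper proves its own two-dimensional analogues (Corollary \ref{cor:coneconv_weakcont}, Proposition \ref{prop:coneconv_transl_props}, and parts (e)--(g) of the proposition in Section \ref{sec:ellipticR2}, which use exactly your symmetry-plus-duality-plus-interpolation argument for the $L^1$ bound and the same appeal to \cite{trimeche1997} for Young's inequality). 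Parts (a), (e), (f) are correct as written, and your observation in (d) that $\bm{\rho}_\ell(\{0\})=0$ is right, although it already follows from \eqref{eq:SLappendix_mainassump} alone: there $\gamma(b)=\infty$ and $S$ is nondecreasing, so $\int_a^b r\,dy=\int_{\gamma(a)}^{\infty}S(\xi)\,d\xi=\infty$, hence $v_0\equiv 1\notin L^2(r)$ and $0$ is not an eigenvalue.

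Two steps do not close using only what the appendix states. First, in (b) you attribute the weak continuity of $(x_1,x_2)\mapsto\delta_{x_1}\convdiam{\ell}\delta_{x_2}$ to Theorem \ref{thm:SLappendix_prodform}; that theorem asserts only the existence of the measures $\pi_{x_1,x_2}$, and the only place the appendix records weak continuity of the convolution is Proposition \ref{prop:SLappendix_hypergroup}, which requires the extra hypotheses $p\equiv r$, $a=\gamma(a)=0$ and is, for Dirac measures, precisely the statement being proved --- so inside the appendix this step is either circular or an undisguised appeal to \cite{sousaetal2019b}. You do flag it as external input at the end, but it should not be presented as something ``verified there''. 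Second, in (c) the Fubini interchange needs $\mathcal{F}_{\ell\,}f\in L^1(\bm{\rho}_\ell)$, and this does not follow from the ``absolute and locally uniform convergence'' clause of Proposition \ref{prop:SLappendix_eigenexp}: absolute convergence of $\int(\mathcal{F}_{\ell\,}f)(\lambda)v_\lambda(x)\,\bm{\rho}_\ell(d\lambda)$ at interior points gives no lower bound on $|v_\lambda(x)|$ and hence no control of $\int|\mathcal{F}_{\ell\,}f|\,d\bm{\rho}_\ell$. The gap is fillable ($f\in\mathcal{D}(\mathcal{L})$ gives $(1+\lambda)\mathcal{F}_{\ell\,}f\in L^2(\bm{\rho}_\ell)$, and $\int(1+\lambda)^{-2}\bm{\rho}_\ell(d\lambda)<\infty$ by the Herglotz representation of the Weyl function), but as written the claim is unjustified, and (c) is the hinge on which your proofs of (d) and (e) turn.
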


\begin{proof}
See \cite[Corollary 5.2 and Proposition 6.4]{sousaetal2020}, \cite[Proposition 5.4 and Lemma 5.5]{sousaetal2019b}.
\end{proof}

\begin{proposition} \label{prop:SLappendix_hypergroup}
If \eqref{eq:SLappendix_mainassump} holds with $p \equiv r$ and $a = \gamma(a) = 0$, then $\bigl(\mathbb{R}_0^+,\convdiam{\ell}\bigr)$ is a commutative hypergroup (in the sense of \cite{bloomheyer1994,jewett1975}) with identity element $\delta_0$ and trivial involution, i.e.\ the following axioms hold:
\begin{itemize}[itemsep=0pt,topsep=3pt,leftmargin=0.7cm]
\item $\bigl(\mathbb{R}_0^+,\convdiam{\ell}\bigr)$, equipped with the total variation norm, is a commutative Banach algebra over $\mathbb{C}$ whose identity element is the measure $\delta_0$;
\item If $\mu, \nu \in \mathcal{P}(\mathbb{R}_0^+)$, then $\mu \convdiam{\ell} \nu \in \mathcal{P}(\mathbb{R}_0^+)$;
\item $(\mu,\nu) \mapsto \mu \convdiam{\ell} \nu$ is continuous in the weak topology of measures;
\item $(x_1,x_2) \mapsto \supp(\delta_{x_1} \convdiam{\ell} \delta_{x_2})$ is continuous from $\mathbb{R}_0^+ \times \mathbb{R}_0^+$ into the space of compact subsets of $\mathbb{R}_0^+$, and we have $0 \in \supp(\delta_{x_1} \convdiam{\ell} \delta_{x_2})$ if and only if $x_1 = x_2$.
\end{itemize}
\end{proposition}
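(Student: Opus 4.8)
The plan is to verify, one by one, the hypergroup axioms as formulated in \cite[Definition 1.1.1]{bloomheyer1994} (equivalently \cite{jewett1975}), drawing on the results collected earlier in this appendix. The convolution $\convdiam{\ell}$ is, by construction, the bilinear extension to $\mathcal{M}_\mathbb{C}(\mathbb{R}_0^+)$ of the assignment $(x_1,x_2) \mapsto \pi_{x_1,x_2} = \delta_{x_1}\convdiam{\ell}\delta_{x_2}$ supplied by Theorem \ref{thm:SLappendix_prodform}; since the hypotheses $p \equiv r$ and $a = \gamma(a) = 0$ are in force, each $\pi_{x_1,x_2}$ is a probability measure whose support is contained in the compact interval $[|x_1-x_2|,x_1+x_2]$. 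I would first record the algebraic axioms: bilinearity and the norm inequality $\|\mu \convdiam{\ell}\nu\| \leq \|\mu\|\,\|\nu\|$ (with equality for positive measures) are immediate from positivity of the $\pi_{x_1,x_2}$ and the Fubini-type definition of $\convdiam{\ell}$, which simultaneously shows that $\mathcal{P}(\mathbb{R}_0^+)$ is $\convdiam{\ell}$-closed and that each $\supp(\delta_{x_1}\convdiam{\ell}\delta_{x_2})$ is compact. Commutativity and associativity I would deduce from Proposition \ref{prop:SLappendix_convtransl_props}(a), which identifies $\convdiam{\ell}$ with pointwise multiplication of $\ell$-Fourier transforms, together with the injectivity of $\mathcal{F}_\ell$ (Proposition \ref{prop:SLappendix_fourmeas_props}(ii)). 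The same Fourier characterization pins down the identity element: $(\mathcal{F}_\ell \delta_0)(\lambda) = v_\lambda(0) = 1$, or equivalently $\supp(\pi_{0,x}) \subset \{x\}$ forces $\pi_{0,x} = \delta_x$, whence $\delta_0 \convdiam{\ell}\mu = \mu$ for all $\mu$ after extending by bilinearity and continuity. Weak continuity of $(\mu,\nu)\mapsto \mu\convdiam{\ell}\nu$ is precisely Proposition \ref{prop:SLappendix_convtransl_props}(b).

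Next I would handle the involution, taken here to be the identity map: it is trivially an involutive homeomorphism fixing $\delta_0$, and compatibility with $\convdiam{\ell}$ reduces to commutativity, $(\delta_{x_1}\convdiam{\ell}\delta_{x_2})^\vee = \delta_{x_1}\convdiam{\ell}\delta_{x_2} = \delta_{x_2}\convdiam{\ell}\delta_{x_1}$. This leaves the two support axioms. For the first I must show $0 \in \supp(\delta_{x_1}\convdiam{\ell}\delta_{x_2})$ if and only if $x_1 = x_2$. The ``only if'' part is immediate from $\supp(\pi_{x_1,x_2}) \subset [|x_1-x_2|,x_1+x_2]$: if $x_1 \neq x_2$ then $|x_1-x_2| > 0$ and $0$ lies outside this interval. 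For the ``if'' part I need that $\pi_{x,x}$ charges every neighbourhood of $0$; the plan is to extract this from the construction of $\pi_{x_1,x_2}$ in \cite{sousaetal2019b} as (essentially) the solution of the hyperbolic Cauchy problem $\ell_{x_1} u = \ell_{x_2} u$, which shows that the lower endpoint $|x_1-x_2|$ of the dependence cone always belongs to the support, so in particular $0 \in \supp(\pi_{x,x})$.

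The hard part will be the last axiom: continuity of $(x_1,x_2) \mapsto \supp(\delta_{x_1}\convdiam{\ell}\delta_{x_2})$ into the space of compact subsets of $\mathbb{R}_0^+$ with the Michael topology, i.e.\ that $\{(x_1,x_2) : \supp(\pi_{x_1,x_2}) \cap U \neq \emptyset\}$ and $\{(x_1,x_2) : \supp(\pi_{x_1,x_2}) \subset V\}$ are open for all open $U, V \subset \mathbb{R}_0^+$. The plan is to combine two ingredients from \cite{sousaetal2019b,sousaetal2020}: first, the weak continuity of $(x_1,x_2) \mapsto \pi_{x_1,x_2}$ together with the uniform localisation $\supp(\pi_{x_1,x_2}) \subset [|x_1-x_2|,x_1+x_2]$, which already yields upper semicontinuity of the support map and, via the endpoints, control from below; and second, the explicit structure of $\pi_{x_1,x_2}$ (an absolutely continuous part whose density is positive and varies continuously on the interior of the cone, together with atoms at the moving endpoints), which excludes the sudden appearance or disappearance of gaps in the support. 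Since all of these facts are already available in the cited works, the proof in the end amounts to assembling them and checking the Michael-topology conditions; I would therefore present it as a synthesis, referring to \cite{bloomheyer1994,sousaetal2019b,zeuner1992} for the parts that reproduce verbatim known arguments.
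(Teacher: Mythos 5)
The paper offers no proof of this proposition beyond the citation ``See [Subsection 5.3]{sousaetal2019b}'', and your outline is essentially the verification that that subsection carries out (Banach-algebra and identity axioms via the multiplicativity and injectivity of $\mathcal{F}_\ell$, probability preservation and weak continuity from Theorem \ref{thm:SLappendix_prodform} and Proposition \ref{prop:SLappendix_convtransl_props}(b), and the support axioms from the explicit description of $\supp(\pi_{x_1,x_2})$), so the approach matches. One technical caution: weak continuity of $(x_1,x_2) \mapsto \pi_{x_1,x_2}$ yields only the \emph{lower} Vietoris condition (openness of $\{(x_1,x_2) : \supp(\pi_{x_1,x_2}) \cap U \neq \emptyset\}$), not upper semicontinuity as you assert; the upper condition, together with the claim $|x_1-x_2| \in \supp(\pi_{x_1,x_2})$ needed for the ``$0 \in \supp$ iff $x_1 = x_2$'' axiom, genuinely requires the precise support description of \cite[Proposition 5.7]{sousaetal2019b} --- the paper's own final example shows the support can have a gap --- which you correctly defer to that reference, exactly as the paper itself does.
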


\begin{proof}
See \cite[Subsection 5.3]{sousaetal2019b}.
\end{proof}

The measures $\{\mu_t\}_{t \geq 0} \subset \mathcal{P}[a,b)$ are said to be an \emph{$\ell$-convolution semigroup} if
\[
\mu_s \convdiam{\ell} \mu_t = \mu_{s+t} \text{ for all } s, t \geq 0, \qquad \mu_0 = \delta_a \qquad \text{ and } \;\; \mu_t \warrow \delta_a \text{ as } t \downarrow 0.
\]

\begin{proposition} \label{prop:SLappendix_convsemigr_props}
Let $\ell$ be a Sturm-Liouville expression with $p \equiv r$, $q \equiv 0$ and $a = \gamma(a) = 0$. Suppose that \eqref{eq:SLappendix_mainassump} holds and that $\lim_{x \uparrow b} p(x) = \infty$. \\[2pt]
\noindent\textbf{(a)} For $t > 0$, let $\alpha_t^\ell$ be the measure defined by $\alpha_t^\ell(dx) := p_\ell(t,0,x) r(x) dx$, where $p_\ell(t,x_1,x_2)$ is the kernel \eqref{eq:SLappendix_heatkern}. Set $\alpha_0^\ell = \delta_0$. Then $\{\alpha_t^\ell\}_{t \geq 0}$ is an $\ell$-convolution semigroup such that 
\[
\lim_{t \downarrow 0} {1 \over t} \alpha_t^\ell\mskip0.5\thinmuskip[\eps, \infty) = 0 \qquad \text{for every }\, \eps > 0. 
\]

\noindent\textbf{(b)} Let $\psi(\lambda)$ be a function which can be written as
\[
\psi(\lambda) = c\lambda + \int_{\mathbb{R}^+} (1-v_\lambda(x)) \, \tau(dx) \qquad (\lambda \geq 0)
\]
for some $c \geq 0$ and some positive measure $\tau$ on $\mathbb{R}^+$ such that $\tau$ is finite on the complement of any neighbourhood of $0$ and satisfies $\int_{\mathbb{R}^+} (1-v_\lambda(x)) \, \tau(dx) < \infty$ for $\lambda \geq 0$. Then there exists an $\ell$-convolution semigroup $\{\mu_t\}_{t \geq 0}$ such that $(\mathcal{F}_{\ell\,} \mu_t)(\lambda) = e^{-t\psi(\lambda)}$. Moreover, there exists a constant $C > 0$ independent of $\lambda$ such that
\begin{equation} \label{eq:SLaapendix_expon_ineq}
\psi(\lambda) \leq C(1+\lambda) \qquad \text{for all } \lambda \geq 0.
\end{equation}
\end{proposition}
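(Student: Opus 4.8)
The plan is to reduce everything to the $\ell$-Fourier transform $\mathcal{F}_\ell$, using its multiplicativity (Proposition~\ref{prop:SLappendix_convtransl_props}(a)), its injectivity (Proposition~\ref{prop:SLappendix_fourmeas_props}(ii)), and the Lévy-type continuity theorem for $\mathcal{F}_\ell$ (the one-dimensional counterpart of Proposition~\ref{prop:coneconv_fourmeas_props}(iv), established in \cite{sousaetal2019b}; it applies here since $p\equiv r$ and $\lim_{x\uparrow b}p(x)=\infty$ give $\lim_{x\uparrow b}p(x)r(x)=\infty$, so $v_\lambda(x)\to0$ as $x\uparrow b$ for every $\lambda>0$ by Lemma~\ref{lem:SLappendix_solprops}(b)). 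For \textbf{(a)}, I would first show $\alpha_t^\ell(dx)=\bigl(\mathcal{F}_\ell^{-1}e^{-t\,\bm{\cdot}}\bigr)(x)\,r(x)\,dx$. Indeed $e^{-t\,\bm{\cdot}}\in L^2(\bm{\rho}_\ell)$ for all $t>0$ (Proposition~\ref{prop:SLappendix_semigr_heatkern}), and since $v_\lambda(a)=1$ the formula \eqref{eq:SLappendix_heatkern} gives $p_\ell(t,a,\cdot)=\mathcal{F}_\ell^{-1}e^{-t\,\bm{\cdot}}$ in $L^2(r)$; as $e^{-t\mathcal{L}}$ is Markovian this function also lies in $L^1(r)$, so its $\mathcal{F}_\ell$-transform as a measure equals $e^{-t\,\bm{\cdot}}$ on $\supp(\bm{\rho}_\ell)$, hence on all of $\mathbb{R}_0^+$ by continuity (Proposition~\ref{prop:SLappendix_fourmeas_props}(i)). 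That $\alpha_t^\ell\in\mathcal{P}[a,b)$, i.e.\ $\alpha_t^\ell([a,b))=(\mathcal{F}_\ell\alpha_t^\ell)(0)=1$, is conservativeness of $\{e^{-t\mathcal{L}}\}$, which holds because under the standing hypotheses (in particular $\gamma(b)=\infty$ and $\lim_{x\uparrow b}p(x)=\infty$) the endpoint $b$ is a natural or entrance boundary. Then $\alpha_0^\ell=\delta_a$ is immediate, $(\mathcal{F}_\ell\alpha_s^\ell)(\mathcal{F}_\ell\alpha_t^\ell)=e^{-(s+t)\lambda}=\mathcal{F}_\ell\alpha_{s+t}^\ell$ yields $\alpha_s^\ell\convdiam{\ell}\alpha_t^\ell=\alpha_{s+t}^\ell$ via Propositions~\ref{prop:SLappendix_convtransl_props}(a) and \ref{prop:SLappendix_fourmeas_props}(ii), and $\alpha_t^\ell\warrow\delta_a$ as $t\downarrow0$ because $(\mathcal{F}_\ell\alpha_t^\ell)(\lambda)=e^{-t\lambda}\to1$ pointwise. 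For the local property, the diffusion $\{X_t\}$ generated by $\mathcal{L}$ and started at $a$ has continuous paths (there is no killing term), so $\alpha_t^\ell[\eps,b)=\mathbb{P}_a(X_t\geq\eps)\leq\mathbb{P}_a\bigl(\sup_{s\leq t}X_s\geq\eps\bigr)=o(t)$; alternatively, testing against $f\in\mathrm{C}^2$ with $0\le f\le1$, $f\equiv0$ near $a$, $f\equiv1$ on $[\eps,b)$ gives (modulo a truncation when $b=\infty$) $\tfrac1t\alpha_t^\ell[\eps,b)\le\tfrac1t\bigl((e^{-t\mathcal{L}}f)(a)-f(a)\bigr)\to-\ell(f)(a)=0$.

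For \textbf{(b)}, the building blocks are the Gaussian semigroup $\{\alpha_{ct}^\ell\}$ from part (a), with $\mathcal{F}_\ell\alpha_{ct}^\ell=e^{-tc\lambda}$, and, for $\tau$ of \emph{finite} mass, the $\ell$-Poisson measure $\mb{e}_\ell(t\tau)$ characterised by $(\mathcal{F}_\ell\mb{e}_\ell(t\tau))(\lambda)=\exp\bigl((\mathcal{F}_\ell(t\tau))(\lambda)-t\|\tau\|\bigr)=\exp\bigl(-t\!\int_{\mathbb{R}^+}(1-v_\lambda)\,d\tau\bigr)$, which again defines a convolution semigroup. In the finite-$\tau$ case one sets $\mu_t:=\alpha_{ct}^\ell\convdiam{\ell}\mb{e}_\ell(t\tau)$, so that $\mathcal{F}_\ell\mu_t=e^{-t\psi}$ and $\{\mu_t\}$ is a convolution semigroup by the argument of part (a). For a general $\sigma$-finite $\tau$ finite away from $0$, I would approximate: put $\tau_n:=\mathds{1}_{[1/n,\infty)}\tau$, $\psi_n(\lambda):=c\lambda+\int(1-v_\lambda)\,d\tau_n$, and let $\{\mu_t^{(n)}\}$ be the convolution semigroup from the finite case applied to $\tau_n$. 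Since $1-v_\lambda\geq0$ we have $\psi_n\uparrow\psi$, hence $\mathcal{F}_\ell\mu_t^{(n)}=e^{-t\psi_n}\downarrow e^{-t\psi}$ pointwise, and $e^{-t\psi}$ is continuous on $\mathbb{R}_0^+$ by dominated convergence (see the majorant below); the continuity theorem then produces $\mu_t\in\mathcal{P}[a,b)$ with $\mu_t^{(n)}\warrow\mu_t$ and $\mathcal{F}_\ell\mu_t=e^{-t\psi}$, and the semigroup property together with $\mu_t\warrow\delta_a$ pass to the limit exactly as in part (a).

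For the bound \eqref{eq:SLaapendix_expon_ineq}, the key a priori estimate is $0\leq 1-v_\lambda(x)\leq\lambda\,G(x)$ for $\lambda\geq0$, $x\in[a,b)$, where $G(x):=\int_a^x\frac1{p(s)}\int_a^s r(y)\,dy\,ds$ is continuous with $G(a)=0$ and finite on $[a,b)$ (because $a$ is regular or entrance); the lower bound is $|v_\lambda|\le1$ (Lemma~\ref{lem:SLappendix_solprops}(a)) and the upper bound follows by inserting $|v_\lambda|\le1$ into the Volterra representation $v_\lambda(x)=1+\lambda\int_a^x\frac1{p(s)}\int_a^s v_\lambda(y)r(y)\,dy\,ds$. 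Splitting $\int_{\mathbb{R}^+}(1-v_\lambda)\,d\tau$ over $(0,1]$ and $(1,\infty)$ and using $\int_{(0,1]}G\,d\tau<\infty$ (which near $a$ is comparable to the hypothesis $\int_{(0,1]}(1-v_1)\,d\tau<\infty$) together with $\tau((1,\infty))<\infty$ gives $\psi(\lambda)\leq\bigl(c+\int_{(0,1]}G\,d\tau\bigr)\lambda+2\tau((1,\infty))\leq C(1+\lambda)$. The same inequality $1-v_\lambda(x)\leq\min\{2,\lambda\,G(x)\}$ provides the $\tau$-integrable majorant $\min\{2,\lambda_0 G(x)\}$ used for the continuity argument in the previous paragraph.

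The step I expect to be most delicate is, in part (a), verifying that $\alpha_t^\ell$ is genuinely a \emph{probability} measure, i.e.\ that no mass escapes at $b$ — this is exactly where $\gamma(b)=\infty$ and $\lim_{x\uparrow b}p(x)=\infty$ must be used — and making the local property $\tfrac1t\alpha_t^\ell[\eps,b)\to0$ rigorous when $b=\infty$, where one also needs a uniform tail bound for small $t$. In part (b) the crux is the a priori bound $0\le1-v_\lambda(x)\le\lambda\,G(x)$, which simultaneously supplies \eqref{eq:SLaapendix_expon_ineq} and the domination needed to pass from finite to $\sigma$-finite Lévy measures; once this is in hand, the rest is routine manipulation of the $\ell$-Fourier transform.
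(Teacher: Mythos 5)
Your proposal is correct, but on the one point the paper actually proves in detail — the estimate \eqref{eq:SLaapendix_expon_ineq} — it takes a genuinely different route. The paper disposes of part (a) and the existence statement in (b) by citing \cite{sousaetal2020}, and then proves the bound by working with the already-constructed semigroup: it estimates $n\bigl(1-e^{-\psi(\lambda)/n}\bigr)=n\int_{[a,b)}(1-v_\lambda)\,d\mu_{1/n}$, splitting the integral at a point $\beta$ where $|v_{\lambda_1}|\le\tfrac12$ and comparing against two fixed reference values $\lambda_1,\lambda_2$ via $1-v_\lambda\le\lambda\,\eta_1$ and $1-v_{\lambda_2}\ge\tfrac{\lambda_2}{2}\eta_1$ on $[a,\beta]$, before letting $n\to\infty$. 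You instead bound $\psi(\lambda)$ directly through the Lévy measure $\tau$, using the same two ingredients ($|v_\lambda|\le1$ and the Volterra bound $1-v_\lambda\le\lambda\,G$ with $G=\eta_1$) but replacing the paper's far-field comparison with the hypothesis that $\tau$ is finite away from $0$, and replacing its near-field comparison with the observation that $\int_{(0,1]}G\,d\tau<\infty$ follows from $\int(1-v_1)\,d\tau<\infty$ since $1-v_1\ge\tfrac12 G$ near $a$. Your version is more direct and buys the integrable majorant $\min\{2,\lambda G\}$ that you then reuse for the finite-to-$\sigma$-finite approximation; the paper's version is insensitive to how $\psi$ is represented, since it only ever touches $\mu_{1/n}$. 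Both are sound.

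Two small slips, neither fatal. First, your Volterra representation has the wrong sign: from $(pv_\lambda')'=-\lambda r v_\lambda$ and $(pv_\lambda')(a)=0$ one gets $v_\lambda(x)=1-\lambda\int_a^x\frac1{p(s)}\int_a^s v_\lambda r$, so $1-v_\lambda=+\lambda\int\!\!\int v_\lambda r$ (as the paper writes); the inequality $0\le 1-v_\lambda\le\lambda G$ you need is unaffected. Second, in part (a) your conservativeness remark lists ``natural or entrance'' for $b$; under the standing hypotheses one always has $J_b=\infty$ (since $\gamma(b)=\infty$ forces $b=\infty$ and $p\to\infty$), so $b$ is exit or natural, and what must actually be ruled out is the exit case ($I_b<\infty$). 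Since the paper delegates all of part (a) and the construction of $\{\mu_t\}$ to \cite{sousaetal2020}, these sketched steps — conservativeness at $b$ and the pointwise evaluation $\tfrac1t((e^{-t\mathcal{L}}f)(a)-f(a))\to-\ell(f)(a)$ — are exactly the ones you correctly flag as needing the cited machinery rather than a two-line argument.
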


\begin{proof}
Part (a) and the first statement in part (b) follow from \cite[Theorem 7.3 and Propositions 7.11--7.12]{sousaetal2020}.

To prove the estimate \eqref{eq:SLaapendix_expon_ineq}, start by picking $\lambda_1 > 0$. We know that $\lim_{x \uparrow b} v_{\lambda_1}(x) = 0$ (Lemma \ref{lem:SLappendix_solprops}(b)), hence there exists $\beta \in (a,b)$ such that $|v_{\lambda_1}(x)| \leq {1 \over 2}$ for all $\beta \leq x < b$. Combining this with Lemma \ref{lem:SLappendix_solprops}(a), we deduce that for all $\lambda \geq 0$ we have
\begin{equation} \label{eq:shypPDE_infdiv_bound_pf1}
\begin{aligned}
n \int_{[\beta,b)\!} \bigl(1-v_\lambda(x)\bigr) \mu_{1/n}(dx) & \leq 2n \int_{[\beta,b)\!} \mu_{1/n}(dx) \\
& \leq 4n \int_{[\beta,b)\!}\bigl( 1-v_{\lambda_1}(x) \bigr) \mu_{1/n}(dx) \\
& \leq 4n \bigl(1-(\mathcal{F}_\ell \mu_{1/n})(\lambda_1)\bigr) \leq 4\psi(\lambda_1).
\end{aligned}
\end{equation}
Next, choose $\lambda_2 > 0$ such that $1 - v_\lambda(x) < {1 \over 2}$ for all $0 \leq \lambda \leq \lambda_2$ and all $a < x \leq \beta$. (This is possible because of the boundedness of the family of derivatives  $\{\partial_\lambda v_{(\cdot)}(x)\}_{x \in (a, \beta]}$, cf.\ \cite[pp.\ 5--6]{sousaetal2019b}.) Defining $\eta_1(x) := \int_a^x {1 \over p(y)} \int_a^y r(\xi) d\xi\, dy$, we obtain
\[
1 - v_{\lambda_2}(x) = \lambda_2 \int_a^x {1 \over p(y)} \int_a^y v_{\lambda_2}(\xi) \mskip0.6\thinmuskip r(\xi) d\xi\, dy \geq {\lambda_2 \over 2} \eta_1(x) \qquad \text{for all } a \leq x \leq \beta.
\]
On the other hand, by Lemma \ref{lem:SLappendix_solprops}(a) we have $1 - v_\lambda(x) \leq \lambda \int_a^x {1 \over p(y)} \int_a^y |v_\lambda(\xi)| \mskip0.6\thinmuskip r(\xi) d\xi\, dy \leq \lambda \mskip0.6\thinmuskip \eta_1(x)$ for all $x \in [a,b)$ and $\lambda \geq 0$. Consequently,
\begin{equation} \label{eq:shypPDE_infdiv_bound_pf2}
\begin{aligned}
n \int_{[a,\beta)\!} \bigl(1-v_\lambda(x)\bigr) \mu_{1/n}(dx) & \leq \lambda n \int_{[a,\beta)\!} \eta_1(x)\, \mu_{1/n}(dx) \\
& \leq {2\lambda n \over \lambda_2} \int_{[a,\beta)\!} \bigl(1 - v_{\lambda_2}(x)\bigr) \mu_{1/n}(dx) \\
& \leq {2\lambda n \over \lambda_2} \bigl(1-(\mathcal{F}_\ell \mu_{1/n})(\lambda_2)\bigr) \leq {2\lambda \over \lambda_2} \psi(\lambda_2).
\end{aligned}
\end{equation}
Combining \eqref{eq:shypPDE_infdiv_bound_pf1} and \eqref{eq:shypPDE_infdiv_bound_pf2} one sees that for all $n \in \mathbb{N}$ and $\lambda \geq 0$ we have $n(1-e^{-\psi(\lambda)/n}) \leq C (1+\lambda)$, where $C = \max\bigl\{4\psi_\mu(\lambda_1), {2 \over \lambda_2} \psi_\mu(\lambda_2)\bigr\}$. The conclusion follows by taking the limit as $n \to \infty$.
\end{proof}

\section*{Acknowledgements}

The first and third authors were partially supported by CMUP, which is financed by national funds through FCT – Fundação para a Ciência e a Tecnologia, I.P., under the project with reference\linebreak UIDB/00144/2020. The first author was also supported by the grant PD/BD/135281/2017, under the FCT PhD Programme UC|UP MATH PhD Program. The second author was partially supported by the project CEMAPRE/REM – UIDB/05069/2020 – financed by FCT through national funds.

\linespread{1.125}
\renewcommand{\bibname}{References} 
\begin{small}

\end{small}


\begin{thebibliography}{9}

\bibitem{atkinsonmingarelli2011} F. V.\ Atkinson, A.\ B.\ Mingarelli, \href{http://dx.doi.org/10.1201/b10511}{\emph{Multiparameter Eigenvalue Problems -- Sturm-Liouville Theory}}, CRC Press, Boca Raton (2011).
%

\bibitem{barlowetal2009} M.\ T.\ Barlow, R.\ F.\ Bass, Z.-Q.\ Chen, M.\ Kassmann, \href{http://dx.doi.org/10.1090/S0002-9947-08-04544-3}{\emph{Non-local Dirichlet forms and symmetric jump processes}}, Trans.\ Amer.\ Math.\ Soc.\ \textbf{361}, pp.\ 1963–1999 (2009).
%

\bibitem{basshsu1991} R.\ F.\ Bass, P.\ Hsu, \href{http://dx.doi.org/10.1214/aop/1176990437}{\emph{Some potential theory for reflecting Brownian motion in H\"{o}lder and Lipschitz domains}}, Ann.\ Probab.\ \textbf{19}, no.\ 2, pp.\ 486--508 (1991).
%

\bibitem{berezansky1998} Y.\ M.\ Berezansky, A.\ A.\ Kalyuzhnyi, \href{http://dx.doi.org/10.1007/978-94-017-1758-8}{\emph{Harmonic Analysis in Hypercomplex Systems}}, Kluwer Academic Publishers, Dordrecht (1998).
%

\bibitem{bloomheyer1994} W.\ R.\ Bloom, H.\ Heyer, \href{http://dx.doi.org/10.1515/9783110877595}{\emph{Harmonic Analysis of Probability Measures on Hypergroups}}, Walter de Gruyter, Berlin (1994).
%

\bibitem{boscainprandi2016} U.\ Boscain, D.\ Prandi, \href{http://dx.doi.org/10.1016/j.jde.2015.10.011}{\emph{Self-adjoint extensions and stochastic completeness of the Laplace–Beltrami operator on conic and anticonic surfaces}}, J.\ Differential Equations \textbf{260}, pp.\ 3234--3269 (2016).
%

\bibitem{boscainneel2019+} U.\ Boscain, R.\ W.\ Neel, \href{http://dx.doi.org/10.1214/20-ecp299}{\emph{Extensions of Brownian motion to a family of Grushin-type singularities}}, Electron.\ Commun.\ Probab.\ \textbf{25}, no.\ 29, pp.\ 1–12 (2020).
%

\bibitem{browne1972} P.\ J.\ Browne, \href{http://dx.doi.org/10.1016/0022-0396(72)90006-X}{\emph{A singular multi-parameter eigenvalue problem in second order ordinary differential equations}}, J.\ Differential Equations \textbf{12}, pp.\ 81--94 (1972).
%

\bibitem{browne1977} P.\ J.\ Browne, \href{http://dx.doi.org/10.1016/0022-247X(77)90065-8}{\emph{Abstract multiparameter theory I}}, J.\ Math.\ Anal.\ Appl.\ \textbf{60}, no.\ 1, pp.\ 259-273 (1977).
%

\bibitem{chebli1995} H. Chebli, \href{http://dx.doi.org/10.1090/conm/183/02055}{\emph{Sturm-Liouville Hypergroups}}, in: \emph{Applications of hypergroups and related measure algebras: A joint summer research conference on applications of hypergroups and related measure algebras, July 31-August 6, 1993, Seattle, WA}, American Mathematical Society, Providence RI, pp.\ 71--88 (1995).
%

\bibitem{cheeger1980} J.\ Cheeger, \href{http://dx.doi.org/10.1090/pspum/036/573430}{\emph{On the Hodge theory of Riemannian pseudomanifolds}}, in: \emph{Geometry of the Laplace Operator, Proc.\ Sympos.\ Pure Math.\ vol.\ XXXVI}, American Mathematical Society, Providence RI, pp.\ 91--146 (1980).
%

\bibitem{choullietal2015} M.\ Choulli, L.\ Kayser, E.\ M.\ Ouhabaz, \href{http://dx.doi.org/10.1017/S0004972715000611}{\emph{Observations on Gaussian upper bounds for Neumann heat kernels}}, Bull.\ Aust.\ Math.\ Soc.\ \textbf{92}, pp.\ 429–439 (2015).
%

\bibitem{courant1962} R.\ Courant, \emph{Methods of Mathematical Physics -- Vol. II: Partial Differential Equations}, Wiley, New York (1962).
%

\bibitem{davies1989} E.\ B.\ Davies, \href{http://dx.doi.org/10.1017/CBO9780511566158}{\emph{Heat Kernels and Spectral Theory}}, Cambridge University Press, Cambridge (1989).
%

\bibitem{dymmckean1972} H.\ Dym, H.\ P.\ McKean, \emph{Fourier Series and Integrals}, Academic Press, New York (1972).
%

\bibitem{dunfordschwartz1963} N.\ Dunford, J.\ T.\ Schwartz, \emph{Linear Operators -- Part II: Spectral Theory}, Wiley, New York (1963).
%

\bibitem{faierman1969} M.\ Faierman, \href{http://dx.doi.org/10.1016/0022-0396(69)90112-0}{\emph{The completeness and expansion theorems associated with the multi-parameter eigenvalue problem in ordinary differential equations}}, J.\ Differential Equations \textbf{5}, pp.\ 197--213 (1969).
%

\bibitem{folland2016} G.\ B.\ Folland, \href{http://dx.doi.org/10.1201/b19172}{\emph{A Course in Abstract Harmonic Analysis}}, Second Edition, CRC Press, Boca Raton (2016).
%

\bibitem{fukushimaetal2011} M.\ Fukushima, Y.\ Oshima, M.\ Takeda, \href{http://dx.doi.org/10.1515/9783110218091}{\emph{Dirichlet Forms and Symmetric Markov Processes}}, Walter De Gruyter, Berlin (2011).
%

\bibitem{fukushima2014} M.\ Fukushima, \href{http://dx.doi.org/10.2969/jmsj/06610289}{\emph{On general boundary conditions for one-dimensional diffusions with symmetry}}, J.\ Math.\ Soc.\ Japan \textbf{66}, no.\ 1, pp.\ 289--316 (2014).
%

\bibitem{garding1954} L.\ G\aa{}rding, \emph{Application of the Theory of Direct Integrals of Hilbert Spaces to some Integral and Differential Operators}, Lect.\ Ser.\ Inst.\ Fluid Dynam.\ Appl.\ Math.\ \textbf{11} (1954).
%

\bibitem{grigoryantelcs2012} A.\ Grigor'yan, A.\ Telcs, \href{http://dx.doi.org/10.1214/11-aop645}{\emph{Two-sided estimates of heat kernels on metric measure spaces}}, Ann.\ Probab.\ \textbf{40}, no.\ 3, pp.\ 1212--1284 (2012).
%

\bibitem{hirschman1960} I.\ I.\ Hirschman, \href{http://dx.doi.org/10.1007/BF02786854}{\emph{Variation diminishing Hankel transforms}}, J.\ Analyse Math.\ \textbf{8}, pp.\ 307--336 (1960).
%

\bibitem{jewett1975} R.\ I.\ Jewett, \href{http://dx.doi.org/10.1016/0001-8708(75)90002-X}{\emph{Spaces with an Abstract Convolution of Measures}}, Adv.\ Math.\ \textbf{18}, no.\ 1, pp.\ 1--101 (1975).
%

\bibitem{kingman1963} J.\ F.\ C.\ Kingman, \href{http://dx.doi.org/10.1007/BF02391808}{\emph{Random walks with spherical symmetry}}, Acta Math.\ \textbf{109}, pp.\ 11--53 (1963).
%

\bibitem{klenke2014} A.\ Klenke, \href{http://dx.doi.org/10.1007/978-1-4471-5361-0}{\emph{Probability Theory -- A Comprehensive Course}}, Second Edition, Springer, London (2014).
%

\bibitem{koornwinder1984} T.\ H.\ Koornwinder, \href{http://dx.doi.org/10.1007/978-94-010-9787-1_1}{\emph{Jacobi functions and analysis on noncompact semisimple Lie groups}}, in: \emph{Special functions: group theoretical aspects and applications} (R.\ A.\ Askey, T.\ H.\ Koornwinder, W.\ Schempp, eds.), Reidel, Dordrecht, pp.\ 1--85 (1984).
%

\bibitem{koornwinderschwartz1997} T.\ H.\ Koornwinder, A.\ L.\ Schwartz, \href{http://dx.doi.org/10.1007/s003659900058}{\emph{Product formulas and associated hypergroups for orthogonal polynomials on the simplex and on a parabolic biangle}}, Constr.\ Approx.\ \textbf{13}, no.\ 4, pp.\ 537--567 (1997).
%

\bibitem{laine1980} T.\ P.\ Laine, \href{http://dx.doi.org/10.1137/0511012}{\emph{The product formula and convolution structure for the generalized Chebyshev polynomials}}, SIAM J.\ Math.\ Anal.\ \textbf{11}, pp.\ 133--146 (1980).
%

\bibitem{linetsky2004a} V.\ Linetsky, \href{http://dx.doi.org/10.1142/S0219024904002451}{\emph{The spectral decomposition of the option value}}, Int.\ J.\ Theor.\ Appl.\ Finance \textbf{7}, no.\ 3, pp.\ 337--384 (2004). 
%

\bibitem{linetsky2004b} V.\ Linetsky, \href{http://dx.doi.org/10.1017/S0021900200014339}{\emph{The spectral representation of Bessel processes with constant drift:\ applications in queuing and finance}}, J.\ Appl.\ Probab., \textbf{41}, no.\ 2, pp.\ 327--344 (2004).
%

\bibitem{mcgillivray1997} I.\ McGillivray, \href{http://dx.doi.org/10.1515/form.1997.9.229}{\emph{A recurrence condition for some subordinated strongly local Dirichlet forms}},  Forum Math.\ \textbf{9}, no.\ 2, pp.\ 229--246 (1997).
%

\bibitem{nessibitrimeche1997} M.\ M.\ Nessibi, K.\ Trimèche, \href{http://dx.doi.org/10.1006/jmaa.1997.5299}{\emph{Inversion of the Radon Transform on the Laguerre hypergroup by using generalized wavelets}}, J.\ Math.\ Anal.\ Appl.\ \textbf{208}, no.\ 2, pp.\ 337--363 (1997).
%

\bibitem{dlmf} F.\ W.\ J.\ Olver, D.\ W.\ Lozier, R.\ F.\ Boisvert, C.\ W.\ Clark (Eds.), \emph{NIST Handbook of Mathematical Functions}, Cambridge University Press, Cambridge, 2010.
%

\bibitem{polyaninzaitsev2003} A.\ D.\ Polyanin, V.\ F.\ Zaitsev, \href{http://dx.doi.org/10.1201/9781420035339}{\emph{Handbook of Exact Solutions for Ordinary Differential Equations}}, CRC Press, Boca Raton (2003).
%

\bibitem{prandietal2018} D.\ Prandi, L.\ Rizzi, M.\ Seri, \href{http://dx.doi.org/10.4171/jst/226}{\emph{Quantum confinement on non-complete Riemannian manifolds}}, J.\ Spectr.\ Theory \textbf{8}, pp.\ 1221--1280 (2018).
%

\bibitem{rentzsch1998} C.\ Rentzsch, \emph{A Levy Khintchine type representation of convolution semigroups on commutative hypergroups}, Probab.\ Math.\ Statist.\ \textbf{18}, no.\ 1, pp.\ 185--198 (1998).
%

\bibitem{rentzschvoit2000} C.\ Rentzsch, M.\ Voit, \href{http://dx.doi.org/10.1090/conm/261/04135}{\emph{Lévy Processes on Commutative Hypergroups}}, in: \emph{Probability on Algebraic Structures: AMS Special Session on Probability on Algebraic Structures, March 12-13, 1999, Gainesville, Florida} (G.\ Budzban, P.\ Feinsilver, A.\ Mukherjea, eds.), American Mathematical Society, Providence RI (2000).
%

\bibitem{schmudgen2012} K.\ Schmüdgen, \href{http://dx.doi.org/10.1007/978-94-007-4753-1}{\emph{Unbounded Self-adjoint Operators on Hilbert Space}}, Springer, Dordrecht (2012).
%

\bibitem{simmons1963} G.\ F.\ Simmons, \emph{Introduction to Topology and Modern Analysis}, McGraw-Hill, New York (1963).
%

\bibitem{sleeman2008} B.\ D.\ Sleeman, \href{http://dx.doi.org/10.1088/1751-8113/41/1/015209}{\emph{Multiparameter spectral theory and separation of variables}},  J.\ Phys.\ A \textbf{41}, no.\ 1, 015209 (2008).
%

\bibitem{sousaetal2019a} R.\ Sousa, M.\ Guerra, S.\ Yakubovich, \href{http://dx.doi.org/10.1016/j.jmaa.2019.03.009}{\emph{On the product formula and convolution associated with the index Whittaker transform}}. J.\ Math.\ Anal.\ Appl.\ \textbf{475}, no.\ 1, pp.\ 939--965 (2019).
%

\bibitem{sousaetal2020} R.\ Sousa, M.\ Guerra, S.\ Yakubovich, \emph{The hyperbolic maximum principle approach to the construction of generalized convolutions}, in: \emph{Special Functions and Analysis of Differential Equations} (P.\ Agarwal, R.\ P.\ Agarwal, M.\ Ruzhansky, eds.), CRC Press, Boca Raton (2020).
%

\bibitem{sousaetal2019b} R.\ Sousa, M.\ Guerra, S.\ Yakubovich, \href{https://arxiv.org/pdf/1901.11021.pdf}{\emph{Sturm-Liouville hypergroups without the compactness axiom}}, Preprint, arXiv:1901.11021 (2019).
%

\bibitem{titchmarsh1962} E.\ C.\ Titchmarsh, \emph{Eigenfunction Expansions Associated with Second-Order Differential Equations}, Clarendon, Oxford (1962).
%

\bibitem{trimeche1995} K.\ Trimèche, \href{http://dx.doi.org/10.1090/conm/183/02072}{\emph{Generalized transmutation and translation operators associated with partial differential operators }}, in: \emph{Applications of hypergroups and related measure algebras: A joint summer research conference on applications of hypergroups and related measure algebras, July 31-August 6, 1993, Seattle, WA}, American Mathematical Society, Providence RI, pp.\ 71--88 (1995).
%

\bibitem{trimeche1997} K.\ Trimèche, \href{http://dx.doi.org/10.1201/9780203753712}{\emph{Generalized Wavelets and Hypergroups}}, Gordon and Breach, Amsterdam (1997).
%

\bibitem{urbanik1988} K.\ Urbanik, \href{http://dx.doi.org/10.1007/978-94-009-3859-5_11}{\emph{Analytical Methods in Probability Theory}}, in: \emph{Transactions of the Tenth Prague Conference on Information Theory, Statistical Decision Functions, Random Processes, Vol.\ A}, Reidel, Dordrecht, pp.\ 151--163 (1988).
%

\bibitem{volkovich1988} V.\ E.\ Volkovich, \href{http://dx.doi.org/10.1007/BF01083639}{\emph{Infinitely divisible distributions in algebras with stochastic convolution}}, Journal of Soviet Mathematics \textbf{40}, no.\ 4, pp.\ 459--467 (1988).
%

\bibitem{watson1944} G.\ N.\ Watson, \emph{A Treatise on the Theory of Bessel Functions}, Second Edition, Cambridge University Press, Cambridge (1944).
%

\bibitem{zeuner1992} H.\ Zeuner, \href{http://dx.doi.org/10.1007/BF02571436}{\emph{Moment functions and laws of large numbers on hypergroups}}, Math.\ Z.\ \textbf{211}, pp.\ 369--407 (1992).
%

\bibitem{zeuner1995} H.\ Zeuner, \href{http://dx.doi.org/10.1515/JAA.1995.213}{\emph{Domains of attraction with inner norming on Sturm-Liouville hypergroups}}, J.\ Appl.\ Anal.\ \textbf{1}, no.\ 2, pp.\ 213--221 (1995).
%

\end{thebibliography}
\end{document}